\newtheorem{theorem}{Theorem}[section]
\newtheorem{definition}{Definition}[section]
\newtheorem{lemma}{Lemma}[section]
\newtheorem{remark}{Remark}[section]
\newtheorem{proposition}{Proposition}[section]
\newtheorem{corollary}{Corollary}[section]
\numberwithin{equation}{section}
\newcommand{\enmo}[1]{{#1}^{[\textbf{n-1}]}}
\newcommand{\enmt}[1]{{#1}^{[\textbf{n-2}]}}
\newcommand{\hsp}{\hspace{.5mm}}
\newcommand{\be}{\begin{equation}}
\newcommand{\ee}{\end{equation}}
\newcommand{\eIan}{e_{Ia}^{[\bf n]}}
\newcommand{\en}[1]{{#1}^{[\bf n]}}
\newcommand{\ze}[1]{{#1}^{[\bf 0]}}
\newcommand{\uI}{\underline{I}}
\newcommand{\tr}{\text{tr} \hspace{.5mm}}
\title{A localized construction of Kasner-like singularities}
\author{Nikolaos Athanasiou\footnote{Department of Mathematics \& Applied Mathematics, University of Crete, Voutes Campus, 70013 Heraklion, Greece, n.athanasiou@uoc.gr.}\,\, and Grigorios Fournodavlos\footnote{Department of Mathematics \& Applied Mathematics, University of Crete, Voutes Campus, 70013 Heraklion, Greece, gfournodavlos@uoc.gr.} \footnote{Institute of Applied and Computational Mathematics,
FORTH, 70013 Heraklion, Greece.}}
\date{}
\begin{document}

\maketitle

\begin{abstract}
We construct local, in spacetime, singular solutions to the Einstein vacuum equations that exhibit Kasner-like behavior in their past boundary. Our result can be viewed as a localization (in space) of the construction in \cite{FL}. We also prove a refined uniqueness statement and give a simple argument that generates general asymptotic data for Kasner-like singularities, enjoying all expected degrees of freedom, albeit only locally in space. The key difference of the present work with \cite{FL} is our use of a first order symmetric hyperbolic formulation of the Einstein vacuum equations, relative to the connection coefficients of a parallelly propagated orthonormal frame which is adapted to the Gaussian time foliation. This makes it easier to localize the construction, since elliptic estimates are no longer required to complete the energy argument.
\end{abstract}

\tableofcontents

\section{Introduction}

Ever since the discovery of the first explicit solutions to the Einstein equations containing a Big Bang singularity, by Kasner (1921) and Friedmann (1922), there have been attempts to understand the nature of the general cosmological singularity. Kasner-like singularities are a specific class of spacelike singularities whose leading order behavior toward the singularity resembles that of a Kasner solution at each spatial point on the singular hypersurface. More precisely, the first approximation of the spacetime metric reads: 
\begin{align}\label{Kasner-like}
{\bf g}\cong -dt^2+\sum_{i=1}^3  t^{2p_i(x)}\omega^i\otimes\omega^i,\qquad\omega^i=\sum_{j=1}^3c_{ij}(x)dx^j,
\end{align}
where $(t,x)\in (0,T]\times\Sigma$, for some closed spatial topology $\Sigma$. Here, the singularity is synchronized at the limiting hypersurface $t=0$.

The asymptotic behavior \eqref{Kasner-like} first appeared in the heuristic analysis of \cite{LK}, which gave the false conclusion that general solutions do not contain singularities. Instead, what the latter work indicated is that Kasner-like behavior is non-generic in 1+3 vacuum. In this setting, the Kasner exponents must verify the algebraic relations
\begin{align}\label{sum.pi}
\sum_{i=1}^3 p_i(x)=\sum_{i=1}^3 p_i^2(x)=1,\qquad \text{for all $x\in\Sigma$},
\end{align}
which forces one Kasner exponent to be negative, say $p_1(x)<0$, and the other two to be positive, $p_2(x),p_3(x)>0$. In \cite{LK}, they concluded that \eqref{Kasner-like} is consistent with the Einstein vacuum equations
\begin{align}\label{EVE}
{\bf R}_{\mu\nu}=0,
\end{align}
if and only if 
\begin{align}\label{omega1}
\omega^1(x)\wedge \text{d}\omega^1(x)=0,\qquad\text{for all $x\in\Sigma$}.
\end{align}
However, \eqref{omega1} eliminates one of the gravitational degrees of freedom and there is a priori no reason for it to be valid. In fact, condition \eqref{omega1} implies that we can make a change of spatial coordinates,\footnote{Notice that \eqref{omega1} is an integrability condition. Using the Frobenius theorem, we can assume that $\omega^1$ is a multiple of a coordinate 1-form.} such that \eqref{Kasner-like} becomes
\begin{align}\label{Kasner-like2}
{\bf g}\cong -dt^2+\sum_{i,j=1}^3  t^{2\max\{p_i(x),p_j(x)\}}c_{ij}(x)dx^idx^j.
\end{align}
Here, the degrees of freedom are interpreted in a function counting sense and correspond to the functions $p_i(x),c_{ij}(x)$. The latter functions can also be viewed as the asymptotic data for the Einstein vacuum equations at the singularity (see Definition \ref{defn1}).
 
Later, in the subsequent work \cite{BKL}, a more involved heuristic argument was put forth, which concluded that the general Big Bang singularity is oscillatory, in the sense that along a timelike curve of fixed $x$, as $t\rightarrow0$, there is an infinite number of ``bounces'' swapping the Kasner exponents in a specific, but chaotic, manner. Nevertheless, in the interval between two bounces the solution is still be modeled by a Kasner-like singularity, a different one in each interval. This is typically referred to as the BKL conjecture. 

The BKL heuristics have been extended to various settings, arriving to similar conclusions. We refer the interested reader to \cite{BH} for an overview of the subject. We should only note that, surprisingly, in the presence of a scalar field \cite{BK}, a stiff fluid \cite{Bar} or in sufficiently high spatial dimensions \cite{DHS}, the oscillations are silenced and Kasner-like behavior at the Big Bang singularity is expected to be generic. This is also called the sub-critical regime.

Given the complicated nature of the oscillatory scenario, the rigorous evidence in its favor is scarce, restricted to the homogeneous class of solutions \cite{BD,HUN,RT,Rin0}.\footnote{In specific symmetric settings with only one bounce, the recent works \cite{Li,Li2} are the first to go beyond homogeneity.}
On the other hand, Kasner-like singularities have by now been understood to a sufficient extent. The main types of results that exist are roughly divided into the following categories:
\begin{enumerate}
\item {\bf Gowdy symmetry.} Classification of generic solutions, in the polarized class \cite{CIM} and in the more general unpolarized setting \cite{Li,Rin,Rin2,Rin3}. The behavior is Kasner-like, apart from finitely many points (for unpolarized solutions) where the asymptotic data can form discontinuities at $t=0$, called spikes.\footnote{Constructions of singular solutions with spikes have been achieved in \cite{HUL,ML,RW}.}

\item {\bf Constructions.} Starting with the Kasner-like ansatz \eqref{Kasner-like}, the goal is to solve the Einstein equations for a remainder which is better behaved as $t\rightarrow0$. Numerous constructions have been achieved in various settings \cite{ABIL,ABIL2,AR,CBIM,DHRW,FL,Fr,IM,KR,Kl,Ren,St}. In the sub-critical regime, the constructed solutions enjoy all gravitational degrees of freedom, while in 1+3 vacuum the condition \eqref{omega1} is satisfied by assumption. 

\item {\bf Stability.} Given initial data away from the singularity, close to explicit Kasner data, it was first established in \cite{RS,RS2} that a Kasner-like singularity will form in the past, in the near isotropic case for the Einstein-scalar field and stiff fluid models. This result has been extended to $\mathbb{S}^3$ spatial topology \cite{Sp}, negatively curved spatial topology \cite{FU}, the Einstein-Euler-scalar field model \cite{BO2}, the
Einstein-Vlasov-scalar field model \cite{FU2}, and a localized stability result has been achieved in \cite{BO}. A moderate, but not isotropic, range of Kasner exponents was treated in \cite{RS3}, for sufficiently higher spatial dimensions. Stable Kasner-like singularity formation for the full range of Kasner exponents in the sub-critical regime was proven in \cite{FRS}. The latter result has been recently extended to general Kasner-like initial data in \cite{GPR}, always within the sub-critical regime. To have stable singularity formation outside of the sub-critical regime, a specific class of initial data is required which guarantees that an integrability condition, like \eqref{omega1}, is verified, hence, silencing any oscillations (instabilities). One such class is $U(1)$ polarized symmetry \cite{FRS} (see also \cite{AF}).

\item {\bf Conditional.} Kasner-like behavior is derived by assuming scale-invariant curvature bounds \cite{Lo,Lo2}. Alternatively, bounds on the renormalized Weingarten map result in a complete description of the geometry near the Kasner-like singularity \cite{Rin4,Rin5}.  
\end{enumerate}

\subsection{Goal and motivation}

Our goal in the present paper is to localize the construction of Kasner-like singularities in 1+3 vacuum that was achieved in \cite{FL}. Before the latter work, all previous constructions of Kasner-like singularities had been restricted to either symmetry or analyticity. One of the main new ingredients in \cite{FL} was a novel estimate of the second fundamental form of the level sets of Gaussian time, using a formulation of the Einstein equations at third order in the metric. Among other things, this required the derivation of elliptic estimates, due to a derivative loss issue. For this reason, the construction is not easily localizable. One would have to show that the overall sign of the boundary terms generated when localizing the energy estimates is favorable. Although this might be possible, we prefer to work instead with a first order symmetric hyperbolic formulation of the Einstein vacuum equations, at first order in the second fundamental form of the Gaussian time slices, which is more easily localizable, see Sections \ref{subsec:framework}, \ref{sec:ADM} for more details on our framework of preference.

Having a localized construction of Kasner-like singularities could prove useful in various ways. Firstly, there is no reason why the entire singularity should be Kasner-like. Since different spatial points on a Big Bang hypersurface have disjoint future cones, once we zoom sufficiently close to the singularity, it is entirely possible that part of the singularity is oscillatory, while in some other region Kasner-like. Hence, a local patch of a Kasner-like singularity could be seen as part of a solution with more complicated singular behavior. Moreover, in connection with black hole singularities, assuming that part of them is spacelike and non-oscillatory, one could envision that a local patch of a Kasner-like singularity could be attached to another part of the singularity which happens to be null, see \cite{Li,Mo} for spherically symmetric examples.

Another motivation for our work is related to the definition of asymptotic data for Kasner-like singularities and it is described in the next subsection.

\subsection{Asymptotic data in 1+3 vacuum}

We are working in 1+3 vacuum, where Kasner-like singularities are expected to be non-generic. Hence, condition \eqref{omega1} must be satisfied. In other words, the metric will have the leading order behavior \eqref{Kasner-like2}.
The following definition is taken from \cite{FL}, only localized in a subset of $\mathbb{T}^3$.
\begin{definition}[Gauge dependent data]\label{defn1}
Let $c_{ij},p_i : [0,\delta]^3\subset\mathbb{T}^3 \to \mathbb{R}$ be smooth functions, for $i,j=1,2,3$. We say that they form a vacuum initial data set on the singularity, if they satisfy the following conditions:
\begin{enumerate}
\item \label{def1.1} $c_{ii}(x)>0$ and $c_{ij}(x)=c_{ji}(x)$, for all $i,j= 1, 2, 3$ and $x\in[0,\delta]^3$.

\item \label{def1.2} $p_1(x)<p_2(x)<p_3(x)$ and $\sum_{i=1}^3 p_i(x) = \sum_{i=1}^3 p_i^2(x) =1$, for all $x\in[0,\delta]^3$. 

\item \label{def1.3} For $i=1,2,3$, there holds 
\begin{align}\label{momentum}
\sum_{l=1}^3 \begin{bmatrix} \frac{\partial_i c_{ll}}{c_{ll}}(p_l-p_i) + 2\partial_l {\kappa_i}^l+ \mathbbm{1}_{\{l>i\}} \frac{\partial_l (c_{11}c_{22}c_{33})}{c_{11}c_{22}c_{33}} {\kappa_i}^l\end{bmatrix}=0,
\end{align}
where ${\kappa_i}^i :=-p_i$ (no summation), ${\kappa_i}^l =0$ if $l <i$ and ${\kappa_1}^2 := (p_1-p_2)\frac{c_{12}}{c_{22}},  {\kappa_2}^3 := (p_2-p_3)\frac{c_{23}}{c_{33}},  {\kappa_1}^3 := (p_2-p_1) \frac{c_{12}c_{23}}{c_{22}c_{33}} +(p_1-p_3)\frac{c_{13}}{c_{33}}$.
\end{enumerate} 
\end{definition} 
Point \ref{def1.1} in Definition \ref{defn1} implies that a metric with the asymptotic profile \eqref{Kasner-like2} is indeed a Lorentzian metric for sufficiently small times. The distinct Kasner exponents assumption in point \ref{def1.2} should not be necessary to define asymptotic data for Kasner-like singularities. However, contact points between the two positive Kasner exponents introduce technical difficulties that we do not want to deal with here. There are five constraints in Definition \ref{defn1}, two algebraic 
and three differential, which correspond asymptotically to the Hamiltonian and momentum constraints:
\begin{align}
\label{Ham.const}R-|k|^2+(\text{tr}k)^2=&\,0,\\
\label{mom.const}\text{div}k-\nabla\text{tr}k=&\,0,
\end{align}
as well as the constant mean curvature condition.
\begin{remark}[Degrees of freedom]\label{rem:deg.free}
The degrees of freedom for general solutions to \eqref{EVE} are four. There are six $c_{ij}$'s, three $p_i$'s, and five constraints in Definition \ref{defn1}. This leaves roughly four free functions (see Section \ref{sec:asym}). However, the asymptotic profile \eqref{Kasner-like2} allows for a coordinate change $\widetilde{x}^3=f(x^1,x^2,x^3)$, without changing its form. This extra gauge freedom could for example fix $c_{33}=1$. Thus, we are left with three functional degrees of freedom. According to the heuristic analysis in \cite{LK}, this is the largest class of metrics with Kasner-like behavior we could expect. 
\end{remark}
Definition \ref{defn1} relies on the fact that there exists a certain coordinate system $(t,x^1,x^2,x^3)$, such that the asymptotic profile of the spacetime metric is given by \eqref{Kasner-like}, from which one can read off the asymptotic data $p_i,c_{ij}$ from that profile. Therefore, the previous data are gauge dependent.
A second, covariant, definition of asymptotic data for Kasner-like singularities was subsequently given by Ringstr\"om \cite{Rin6}, proving that it locally coincides with Definition \ref{defn1}. 
\begin{definition}[Covariant data]\label{defn2}
Let $(\overline{\mathcal{M}}, \overline{h})$ be a smooth, $3$-dimensional Riemannian manifold and let $\mathcal{K}$ be a smooth $(1,1)$-tensor field defined on $\overline{\mathcal{M}}$. The triplet $(\overline{\mathcal{M}},\overline{h},\mathcal{K})$ is called a non-degenerate, quiescent vacuum initial data set on the singularity if:
\begin{enumerate}
\item \label{def2.1} $\mathrm{tr}\mathcal{K}=(\mathrm{tr}\mathcal{K})^2=1$, $\mathcal{K}$ is symmetric with respect to $\overline{h}$,
and $\mathrm{div}_{\overline{h}}\mathcal{K} =0$.
\item \label{def2.2} The eigenvalues $p_1<p_2<p_3$ of $\mathcal{K}$ are distinct.
\item \label{def2.3} $[\mathcal{X}_2,\mathcal{X}_3]^1=0$ on $\overline{\mathcal{M}}$, where $\mathcal{X}_A$ are a basis of eigenvectors of $\mathcal{K}$ corresponding to the eigenvalues $p_A$, $A=1,2,3$.
\end{enumerate}
\end{definition}
The tensor $\mathcal{K}$ corresponds to the limit of the renormalized Weingarten map, as $t\rightarrow0$, while $\overline{h}$ corresponds to the spatial part of the metric \eqref{Kasner-like2} without the powers of $t$. Hence, the first part of point \ref{def2.1} in Definition \ref{defn2} corresponds to the algebraic Kasner relations \eqref{sum.pi}. The second part of point \ref{def2.1} corresponds \cite{Rin6} to the differential constraint \eqref{momentum}, while point \ref{def2.3} guarantees that condition \eqref{omega1} is satisfied, when the metric is assumed to have the form \eqref{Kasner-like}. Definition \ref{defn2} requires no coordinate system to make sense of asymptotic initial data sets. However, in order to relate the latter to Kasner-like singularities, it still relies on the choice of a time foliation for the definition the renormalized Weingarten map. 

Definition \ref{defn2}, together with our localized construction (see Section \ref{subsec:main.thm}), can be used to obtain an entire Kasner-like singularity, as in \cite{FL}, but without requiring the existence of a global, gauge-dependent (Definition \ref{defn1}) asymptotic data set, see Corollary \ref{cor:exist.entire}. Interestingly, a more general construction was performed in the recent work \cite{Fr}, starting with covariant data (in the spirit of Definition \ref{defn2}, see \cite{Rin6}), which includes a non-linear scalar field and a cosmological constant. The energy argument in \cite{Fr} is closely related to the one employed in \cite{FL}. In particular, elliptic estimates are still needed to complete the construction.

\subsection{Brief framework}\label{subsec:framework}

Consider a 1+3 splitting of the spacetime metric, relative to Gaussian time:
\begin{align}\label{metric}
{\bf g}=-dt^2+g_{ij}dx^idx^j,
\end{align}
where $(t,x)\in (0,T]\times[0,\delta]^3$, such that the singularity is synchronized at $t=0$. This makes it easier to compare our results to the usual Kasner-like ansatz \eqref{Kasner-like2}.

Also, we consider an orthonormal frame of the form 
\begin{align}\label{frame}
e_0=\partial_t,\qquad e_I=e_{Ia}\partial_a,
\end{align}
which is parallelly propagated along $e_0$:
\begin{align}\label{De0eI}
D_{e_0}e_0=D_{e_0}e_I=0,
\end{align}
where $D$ is the Levi-Civita connection of ${\bf g}$. We then formulate the Einstein vacuum equations as a first order symmetric hyperbolic system in the connection coefficients of the frame (see Section \ref{sec:ADM}):
\begin{align}\label{k.gamma}
k_{IJ}={\bf g}(D_{e_I}e_J,e_0)=k_{JI},\qquad
\gamma_{IJB}={\bf g}(D_{e_I}e_J,e_B)=-\gamma_{IBJ}.
\end{align}
Here, $k_{IJ}$ is the second fundamental form of the constant $t$-hypersurfaces, $\Sigma_t$, contracted against the frame components $e_I,e_J$. 


\subsection{Main results}\label{subsec:main.thm}

The main existence theorem that we prove is the following.
\begin{theorem}[Existence]\label{thm:exist}
Let $p_i(x),c_{ij}(x)$ be smooth asymptotic data satisfying Definition \ref{defn1} (see Proposition \ref{prop:asym} for an existence statement). There exists a Lorentzian metric ${\bf g}^{[\bf n]}=-dt^2+g^{[\bf n]}_{ij}dx^idx^j$,
defined by the variables $e_{Ia}^{[\bf n]},k^{[\bf n]}_{IJ},\gamma_{IJB}^{[\bf n]}$ of an iterative procedure (carried out in Section \ref{sec:approx.sol}), which satisfies:
\begin{align}\label{exist.approx}
\begin{split}
|\partial_x^\alpha(g^{[\bf n]}_{ij}-c_{ij}t^{2p_{\max\{i,j\}}})|\leq &\,C_{\alpha,n}t^{2p_{\max\{i,j\}}+\varepsilon},\qquad
|\partial_x^\alpha{\bf R}^{[\bf n]}_{\mu\nu}|\leq C_{\alpha,n}t^{-2+n\varepsilon},\\
\varepsilon=\min_{x\in[0,\delta]^3}&\{1-p_3(x)>0,\,p_3(x)-p_2(x)\}>0,
\end{split}
\end{align}
for any $n\in\mathbb{N}$, multi-index $\alpha$, indices $\mu,\nu$ (contracted relative to the frame), and $(t,x)\in (0,t_n]\times[0,\delta]^3$, for some $t_n$ sufficiently small. 

Moreover, for any $s\ge4$ and $N_0\in\mathbb{N}$, there exists $n=n_{N_0,s}$ sufficiently large and a $C^2$ Lorentzian metric ${\bf g}={\bf g}^{[\bf n]}+{\bf g}^{(d)}$, defined by the corresponding reduced variables 
\begin{align}\label{exist.d}
e_{Ia}=e^{[\bf n]}_{Ia}+e^{(d)}_{Ia},\qquad k_{IJ}=k^{[\bf n]}_{IJ}+k^{(d)}_{IJ},\qquad\gamma_{IJB}=\gamma^{[\bf n]}_{IJB}+\gamma_{IJB}^{(d)},
\end{align}
such that it solves the Einstein vacuum equations and satisfies the estimate (see Section \ref{subsec:exist} for the definition of the norms):
\begin{align}\label{exist.d.est}
\|e^{(d)}\|^2_{H^s(U_t)}+\|k^{(d)}\|^2_{H^s(U_t)}+\|\gamma^{(d)}\|^2_{H^s(U_t)}\leq t^{2N_0},
\end{align}
in a local domain $\{U_t\}_{t\in(0,T_{N_0,s,n}]}\subset(0,t_n]\times[0,\delta]^3$ with spacelike future boundary (see Section \ref{subsec:domain} for the precise definition).
\end{theorem}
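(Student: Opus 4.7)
The plan is to proceed in two stages matching the two assertions. For the approximate metric ${\bf g}^{[\bf n]}$, I would build the iterates inductively, with ${\bf g}^{[0]}$ read directly off the Kasner-like profile \eqref{Kasner-like2}. At stage $n$, one substitutes the ansatz $e^{[\bf n]} = e^{[\bf n-1]} + \delta e$ (and similarly for $k$, $\gamma$) into the first-order symmetric hyperbolic system of Section \ref{sec:ADM}, where $\delta e, \delta k, \delta\gamma$ are polynomial in $t$ with exponents of the form $q + m\varepsilon$ chosen to cancel the leading residue of ${\bf g}^{[\bf n-1]}$. At leading singular order in $t$, this collapses to a linear ODE system in $t$ (parametrized by $x$) for the new coefficients, whose principal matrix is diagonalized by the Kasner exponents $p_i$. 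The distinct-eigenvalue hypothesis (Definition \ref{defn1}, part \ref{def1.2}) together with $\varepsilon > 0$ rules out resonances, allowing one to solve the ODE explicitly. The algebraic and differential constraints of Definition \ref{defn1} are precisely what is needed to make the two leading orders (corresponding asymptotically to \eqref{Ham.const}, \eqref{mom.const} and the CMC condition) consistent; each correction step then gains a factor $t^{\varepsilon}$ in the residual Ricci tensor, delivering \eqref{exist.approx} after $n$ iterations.

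For the exact solution, substituting \eqref{exist.d} into the full symmetric hyperbolic system and subtracting the approximate equations satisfied by ${\bf g}^{[\bf n]}$ yields a quasilinear symmetric hyperbolic system for the triple $(e^{(d)}, k^{(d)}, \gamma^{(d)})$. Its principal part is a small perturbation of the Kasner background, its lower-order terms are singular in $t$ with exactly the $1/t$ behaviour dictated by $k^{[\bf n]}$, and its source term is controlled in $H^s$ by the residual Ricci tensor, hence of pointwise size $O(t^{-2+n\varepsilon})$. I would produce an exact solution by solving this system on $[\tau, T]$ with vanishing data at $t = \tau > 0$, proving estimates uniform in $\tau$, and passing to $\tau \to 0$. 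Localization is inherited from finite speed of propagation for symmetric hyperbolic systems: the characteristic cones of the reduced system stay close to those of the background, and the domain $U_t$ of Section \ref{subsec:domain} is designed to have a spacelike future boundary with respect to ${\bf g}^{[\bf n]}$, so that the boundary integrals generated by integration by parts carry the correct sign and no data are required on the lateral faces.

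The crux is closing the weighted $H^s$ energy estimate against the singular background. Commuting the system with up to $s \geq 4$ spatial derivatives, contracting with the symmetrizer, and using the symmetry of $k_{IJ}$ together with the antisymmetry of $\gamma_{IJB}$ in the last two slots to eliminate the top-order commutator terms should yield a differential inequality of the schematic form
\begin{align*}
\frac{d}{dt} E_s(t) \leq \frac{C_s}{t}\, E_s(t) + C_s\, t^{-1+n\varepsilon}\, \sqrt{E_s(t)},
\end{align*}
where $E_s(t) := \|e^{(d)}\|_{H^s(U_t)}^2 + \|k^{(d)}\|_{H^s(U_t)}^2 + \|\gamma^{(d)}\|_{H^s(U_t)}^2$ and the $C_s/t$ factor stems from the leading $-p_i/t$ behaviour of $k^{[\bf n]}_{IJ}$. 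Setting $g = \sqrt{E_s}$, multiplying by the integrating factor $t^{-C_s/2}$, and integrating from $\tau$ to $t$ yields, uniformly in $\tau$, a bound of the form $g(t) \lesssim t^{n\varepsilon}/(n\varepsilon - C_s/2)$ as long as $n\varepsilon > C_s/2$; consequently, choosing $n = n_{N_0,s}$ with $n\varepsilon \geq N_0 + C_s$ produces \eqref{exist.d.est}. The remaining work is a standard bootstrap on the nonlinear terms (which is permissible since $s \geq 4$ provides a Sobolev embedding into $C^2$) and a weak-$*$ compactness argument to extract the $\tau \to 0$ limit. The decisive advantage over \cite{FL} is that, working with the first-order symmetric hyperbolic formulation, no derivative loss occurs and no elliptic estimate is needed; this is what ultimately makes the localization to $U_t$ possible.
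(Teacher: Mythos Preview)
Your overall strategy---build an approximate solution by iteration, then solve for a remainder via weighted energy estimates on a truncated time interval $[\tau,T]$ with vanishing data at $\tau$ and pass to $\tau\to0$---matches the paper's architecture. The energy inequality you sketch is also the right shape: the $1/t$ coefficient comes from $k^{[\bf 0]}$, the source decays like $t^{M}$ with $M\to\infty$ as $n\to\infty$, and choosing $n$ large enough relative to $N_0$ and $s$ closes the bootstrap. Two points are worth noting on the comparison side. First, the paper's iteration for the approximate solution (Section~\ref{sec:approx.sol}) does not substitute into the full symmetric hyperbolic system nor posit a polynomial-in-$t$ ansatz; instead it uses the simpler transport equations \eqref{e0.e}--\eqref{e0.k} to define decoupled ODEs \eqref{eIi.it}--\eqref{kIJ.it} for $e^{[\bf n]}_{Ia},k^{[\bf n]}_{IJ}$, which are solved by integrating factors. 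Your Fuchsian-style expansion could plausibly be made to work, but it is a different mechanism. Second, the localization to $U_t$ is not quite automatic from finite propagation speed: the paper must choose the slope parameter $\sigma$ in \eqref{flow.vec} large enough (see \eqref{choice.sigma}) so that the boundary flux on $\mathcal{H}$ carries the correct sign for the \emph{modified} system, whose characteristic structure differs from that of the wave equation.

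There is, however, a genuine gap. The symmetric hyperbolic system \eqref{e0.kIJ.mod2}--\eqref{e0.gamma_IJB.mod2} that you solve is a \emph{modification} of the Einstein equations: it was obtained by adding multiples of the Hamiltonian and momentum constraints to the evolution equations for $k_{IJ}$ and $\gamma_{IJB}$ (this is what produces the $\delta_{IJ}$ and $\delta_{IB}$ terms in \eqref{e0.kIJ.mod}--\eqref{e0.gamma_IJB.mod}). Consequently, the triple $(e_{Ia},k_{IJ},\gamma_{IJB})$ you construct does not a priori define a torsion-free connection, nor does the associated metric ${\bf g}$ automatically satisfy ${\bf R}_{\mu\nu}=0$. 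The paper devotes Section~\ref{sec:EVE} to this: one introduces the torsion $C_{\alpha\mu\nu}$ of the connection \eqref{D.tilde} defined by $k,\gamma$, derives a separate first-order symmetric hyperbolic system \eqref{C.R.eq} for $(C_{IJB},\widetilde{\bf R}_{I0},\widetilde{\bf R}_{00},\widetilde{\bf R}_{IJ}-\widetilde{\bf R}_{JI})$, and runs a second weighted energy argument (Proposition~\ref{prop:EVE}) showing these quantities vanish identically, using that they vanish to high order at $t=0$ by construction of the approximate solution (Lemma~\ref{lem:tilde.approx}). Without this step your argument produces a solution to the modified reduced system, not to \eqref{EVE}; you should add it.
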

\begin{proof}
The first part of the theorem regarding the approximate solution ${\bf g}^{[\bf n]}$ follows from Theorem \ref{thm:approx.sol} and Lemma \ref{lem:pre1}. The second part about the actual solution follows from Theorem \ref{thm:loc.exist} and Proposition \ref{prop:EVE}. 
\end{proof}
\begin{remark}[Choice of parameters and domain]
For convenience, we define the domain of definition $\{U_t\}_{t\in(0,T_{N_0,s,n}]}$ relative to the approximate solution ${\bf g}^{[\bf n]}$. It is strictly contained in the domain of dependence of $U_0$, having spacelike future boundary. This is true for ${\bf g}$ as well, since the remainder term ${\bf g}^{(d)}$ is lower order. Once we have fixed $N_0$, which is tied to the decay that we wish the remainder terms $e^{(d)}_{Ia},k^{(d)}_{IJ},\gamma_{IJB}^{(d)}$ to have, as $t\rightarrow0$, $n$ is taken sufficiently large and the domain is fixed. Increasing $s$ only shrinks the time of existence $T_{N_0,s,n}$, such that the estimate \eqref{exist.d.est} remains valid.
\end{remark}
\begin{remark}\label{rem:asym.reg}
The asymptotic data $p_i,c_{ij}$ need not be smooth for Theorem \ref{thm:exist} to hold. However, working with other regularity classes, like $C^r$, would require keeping track of the dependency of $r$ on the other parameters. For example, our method of proof would require that $r$ tends to infinity, as $\varepsilon
\rightarrow0$ or $n\rightarrow+\infty$.
\end{remark}
The next two theorems deal with the uniqueness and regularity of the solution in Theorem \ref{thm:exist}.
\begin{theorem}[Smoothness]\label{thm:smooth}
The solution ${\bf g}$ furnished by Theorem \ref{thm:exist} is smooth, provided $n,N_0$ are taken sufficiently large.
\end{theorem}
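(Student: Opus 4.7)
The plan is to bootstrap regularity by invoking Theorem \ref{thm:exist} repeatedly at higher Sobolev orders and identifying the resulting solutions via the uniqueness statement advertised in the abstract. First, note that the approximate solution ${\bf g}^{[\bf n]}$ is smooth in $(t,x)$ on $(0,t_n]\times [0,\delta]^3$ for every $n$, since the iterative procedure of Section \ref{sec:approx.sol} produces it from the smooth asymptotic data $p_i,c_{ij}$ by finitely many algebraic operations, integrations in $t$, and spatial differentiations; hence the question of smoothness of ${\bf g}={\bf g}^{[\bf n]}+{\bf g}^{(d)}$ reduces to smoothness of the remainder triple $(e^{(d)},k^{(d)},\gamma^{(d)})$ in the domain $\{U_t\}$.

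The key step is to show that for every $s\geq 4$, the remainder satisfies an estimate of the form \eqref{exist.d.est} (with the corresponding $H^s$ norm) on a possibly smaller but nontrivial sub-domain. To achieve this, fix any $s'>s$ and choose $N_0'$ sufficiently large (depending on $s'$, $n$ and the decay rates appearing in \eqref{exist.approx}). Applying Theorem \ref{thm:exist} with parameters $(s',N_0')$ and a sufficiently large $n'\geq n$, we obtain a second solution ${\bf g}'={\bf g}^{[\bf n']}+{\bf g}^{\prime(d)}$ on some local domain $\{U'_t\}_{t\in (0,T_{N_0',s',n'}]}$ satisfying the analogous bound in the $H^{s'}$-norm. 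Since the approximate solutions ${\bf g}^{[\bf n]}$ and ${\bf g}^{[\bf n']}$ share the same asymptotic profile and differ only by corrections of order $t^{n\varepsilon}$ and $t^{n'\varepsilon}$ respectively (as per \eqref{exist.approx}), both ${\bf g}$ and ${\bf g}'$ realize the same asymptotic data at $t=0$ in the sense of Definition \ref{defn1}.

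Next I would invoke the refined uniqueness statement (the one announced in the abstract, to be proven later in the paper) which identifies two vacuum solutions sharing the same asymptotic data, thereby concluding ${\bf g}={\bf g}'$ on the overlap $U_t\cap U'_t$. Consequently, $(e^{(d)},k^{(d)},\gamma^{(d)})$ agrees with $(e^{\prime(d)},k^{\prime(d)},\gamma^{\prime(d)})+({\bf g}^{[\bf n']}-{\bf g}^{[\bf n]})\text{-corrections}$, the latter smooth piece being explicit, and hence inherits $H^{s'}$-regularity on any compactly contained subdomain. Since $s'$ was arbitrary, Sobolev embedding yields $C^k$-regularity for every $k$, i.e.\ smoothness.

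The main obstacle will be technical rather than conceptual: one must verify that the uniqueness statement applies to the pair $({\bf g},{\bf g}')$ with the regularity that each separately possesses, and that the shrinking of the domain as $s'\to\infty$ is controlled uniformly away from $t=0$, so that on any fixed compact slab $\{t_0\leq t\leq T\}$ within the original domain $U_t$ one captures all Sobolev orders. This is where the choice of $N_0$ sufficiently large is used: taking $N_0$ large guarantees that the higher-order energy estimates of Section \ref{subsec:exist} close without forcing the time of existence $T_{N_0,s,n}$ to collapse below $t_0$. Once this uniform lower bound is established, the smoothness conclusion follows immediately from the embedding $\bigcap_{s}H^s_{\mathrm{loc}}\hookrightarrow C^\infty$.
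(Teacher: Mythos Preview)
Your overall strategy matches the paper's: produce higher-$s$ solutions via Theorem~\ref{thm:exist}, identify them with the original via uniqueness, and conclude smoothness. However, there is a real gap in your handling of the domain shrinkage.

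You assert that ``taking $N_0$ large guarantees that the higher-order energy estimates \ldots\ close without forcing the time of existence $T_{N_0,s,n}$ to collapse below $t_0$.'' This is not substantiated and the paper does not argue this way. The constant $C_*$ in Proposition~\ref{prop:en.est} depends on derivatives of the $p_i$ up to order $s$, so increasing $s$ forces larger $N_0$, hence larger $n$, and the constants $C_{\alpha,n}$ governing the error terms (and the choice of $T$) grow with $n$. There is no mechanism in the energy argument preventing $T_{N_0',s',n'}\to 0$ as $s'\to\infty$.

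The paper's fix is different and uses two ingredients you omit. First, it defines the domain $\{U_t\}$ for the higher-$s_1$ solution relative to ${\bf g}^{[\bf n_0]}$ (the \emph{original} approximate metric), not ${\bf g}^{[\bf n_1]}$; this costs nothing in the existence proof and ensures both solutions live on the same slicing so that uniqueness (point $(i)$ of Theorem~\ref{thm:uniq}, requiring $N_0,N_1>M_0$) applies directly. Second, and crucially, after identifying ${\bf g}=\widetilde{\bf g}$ on the smaller interval $(0,T_{N_1,s_1,n_1}]$, the paper invokes the \emph{standard continuation criterion} for first order symmetric hyperbolic systems: since ${\bf g}$ is $H^{s_0}$-regular with uniform bounds on the remaining slab $[T_{N_1,s_1,n_1},T_{N_0,s_0,n_0}]$, the $H^{s_1}$ regularity known at time $T_{N_1,s_1,n_1}$ propagates forward to the full original time interval. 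This persistence-of-regularity step is what allows the argument to close despite the shrinking of $T$ as $s$ grows; it is the missing idea in your proposal.
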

\begin{theorem}[Uniqueness]\label{thm:uniq}
$(i)$ Let ${\bf g},\widetilde{\bf g}$ be two solutions to \eqref{EVE} of the form \eqref{metric}, satisfying 
\begin{align}\label{uniq.cond.a}
\begin{split}
|\partial_x^{\alpha_1}(e_{Ia}-e^{[\bf 0]}_{Ia})|\leq C t^{p_I-2\min\{p_I,p_a\}+\varepsilon},\qquad|\partial_x^{\alpha_1}(\widetilde{e}_{Ia}-e^{[\bf 0]}_{Ia})|\leq C t^{p_I-2\min\{p_I,p_a\}+\varepsilon},\\
|\partial_x^{\alpha_1}(k_{IJ}-k^{[\bf 0]}_{IJ})|\leq C t^{-1+|p_I-p_J|+\varepsilon},\qquad|\partial_x^{\alpha_2}(\widetilde{k}_{IJ}-k^{[\bf 0]}_{IJ})|\leq C t^{-1+|p_I-p_J|+\varepsilon},
\end{split}
\end{align}
for all $|\alpha_1|\leq2$, $|\alpha_2|\leq 1$, and $(t,x)\in \{U_t\}_{t\in(0,T]}$, where the latter domain is defined relative to ${\bf g}^{\bf[ n]}$, as in Section \ref{subsec:domain}, for some $n$ sufficiently large. Also, assume that
\begin{align}\label{uniq.cond.b}
\|e-\widetilde{e}\|^2_{H^s(U_t)}+\|k-\widetilde{k}\|^2_{H^s(U_t)}+\|\gamma-\widetilde{\gamma}\|^2_{H^s(U_t)}\leq t^{2M_0},
\end{align}
for all $t\in (0,T]$, for some $s\ge4$, and an $M_0$ sufficiently large, how large depending only on the $L^\infty(U_0)$ norms of the $p_i$'s. Then the two solutions coincide.

$(ii)$ Let ${\bf g},\widetilde{\bf g}$ be two solutions to \eqref{EVE} of the form \eqref{metric}, satisfying 
\begin{align}\label{uniq.cond2}
\begin{split}
|\partial_x^\alpha(e_{Ia}-e^{[\bf 0]}_{Ia})|\leq C_\alpha t^{p_I-2\min\{p_I,p_a\}+\varepsilon},\qquad|\partial_x^\alpha(\widetilde{e}_{Ia}-e^{[\bf 0]}_{Ia})|\leq C_\alpha t^{p_I-2\min\{p_I,p_a\}+\varepsilon},\\
|\partial_x^\alpha(k_{IJ}-k^{[\bf 0]}_{IJ})|\leq C_\alpha t^{-1+|p_I-p_J|+\varepsilon},\qquad|\partial_x^\alpha(\widetilde{k}_{IJ}-k^{[\bf 0]}_{IJ})|\leq C_\alpha t^{-1+|p_I-p_J|+\varepsilon},
\end{split}
\end{align}
for all $(t,x)\in \{U_t\}_{t\in(0,T_{N_0,s,n}]}$, and $|\alpha|\leq M_1$, for some $M_1$ sufficiently large, how large depending only on $\varepsilon$. Then the two solutions coincide in $\{U_t\}_{t\in(0,T_{N_0,s,n}]}$.
\end{theorem}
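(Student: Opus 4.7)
The plan is to reduce both parts of Theorem~\ref{thm:uniq} to a weighted energy estimate for the linear symmetric hyperbolic system satisfied by the differences
$$\delta e := e-\widetilde e,\qquad \delta k := k-\widetilde k,\qquad \delta\gamma := \gamma-\widetilde\gamma,$$
obtained by subtracting the two copies of the reduced first-order system of Section~\ref{sec:ADM}. The principal part of this difference system is symmetric hyperbolic with coefficients determined by $(e,k,\gamma)$ and $(\widetilde e,\widetilde k,\widetilde\gamma)$, while the zeroth-order terms are linear in $(\delta e,\delta k,\delta\gamma)$ with multipliers built from these two solutions and their first spatial derivatives. Since $\{U_t\}$ has spacelike future boundary (contained in the domain of dependence of $U_0$), the standard $L^2$ energy identity produces no uncontrolled boundary contribution.

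For part $(i)$, I commute $\partial_x^\alpha$, $|\alpha|\le s$, with the difference system and form an $H^s$ energy identity. The leading singular behavior $k_{IJ} \sim -p_I\delta_{IJ}/t + O(t^{-1+\varepsilon})$ generates a term $(A_s/t)\mathcal E_s(t)$ in the identity, where
$$\mathcal E_s(t) := \|\delta e\|_{H^s(U_t)}^2 + \|\delta k\|_{H^s(U_t)}^2 + \|\delta\gamma\|_{H^s(U_t)}^2,$$
and $A_s$ depends only on $\|p_i\|_{L^\infty(U_0)}$ and $s$. The pointwise hypotheses~\eqref{uniq.cond.a} (together with Sobolev embedding, thanks to $s\ge 4$) ensure that all remaining coefficient terms and all commutator contributions are bounded uniformly in $t$, giving $\tfrac{d}{dt}\mathcal E_s(t) \le (A_s/t)\mathcal E_s(t) + C\mathcal E_s(t)$. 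Gronwall then yields
$$\mathcal E_s(t) \le (t/\tau)^{A_s} e^{C(t-\tau)}\, \mathcal E_s(\tau),\qquad 0<\tau\le t,$$
and the hypothesis~\eqref{uniq.cond.b} at level $\tau$ forces the right-hand side to $0$ as $\tau\to 0^+$, provided $2M_0>A_s$; hence $\mathcal E_s \equiv 0$.

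For part $(ii)$, the $H^s$ closeness \eqref{uniq.cond.b} is not assumed, and must be derived from \eqref{uniq.cond2}. The plan is a bootstrap of the decay of the difference: the triangle inequality gives the crude rates $|\partial_x^\alpha \delta e_{Ia}| \lesssim t^{p_I-2\min\{p_I,p_a\}+\varepsilon}$, $|\partial_x^\alpha \delta k_{IJ}| \lesssim t^{-1+|p_I-p_J|+\varepsilon}$ for $|\alpha|\le M_1$. Inserting these into the linear difference system, whose vertical structure is an ODE in $t$ driven by the singular diagonal $-p_I/t$ (with spatial derivative coupling losing a fixed number of derivatives at each step), and integrating from $t=0$ upward, I gain a factor $t^\varepsilon$ in pointwise decay at a cost of $O(1)$ spatial derivatives per iteration. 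Iterating on the order of $M_0/\varepsilon$ times yields the rate $t^{M_0}$ in a Sobolev norm of order $s\ge 4$, verifying \eqref{uniq.cond.b} and reducing part $(ii)$ to part $(i)$; this dictates the dependence of $M_1$ on $\varepsilon$.

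The principal obstacle is the sharp bookkeeping in the energy identity of step $(i)$: I must verify that the constant $A_s$ multiplying the singular factor $1/t$ depends only on $\|p_i\|_{L^\infty}$ (and $s$), rather than on the solutions themselves. This relies on exploiting the symmetry $k_{IJ}=k_{JI}$ and the skew-symmetry $\gamma_{IJB}=-\gamma_{IBJ}$ inherent in the reduced system to cancel the potentially growing diagonal $(1/t)$-contributions coming from the trace $\mathrm{tr}\, k = -1/t + O(t^{-1+\varepsilon})$, and on ensuring that the subleading, $O(t^{-1+\varepsilon})$, pieces of $k$ contribute only bounded terms to the energy identity thanks to~\eqref{uniq.cond.a}. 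Once this is in place, the weighted Gronwall argument and the bootstrap in part $(ii)$ are standard.
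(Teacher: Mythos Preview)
Your overall strategy is correct and matches the paper's, but there is a regularity mismatch in part~$(i)$ that you should fix. You propose to run the weighted Gronwall argument at the $H^s$ level for $s\ge 4$, but the hypothesis~\eqref{uniq.cond.a} only gives $C^2$ pointwise control on $e,\widetilde e,k,\widetilde k$ (and hence $C^1$ on $\gamma,\widetilde\gamma$ via~\eqref{gammaIJB}). That is precisely enough to close an $L^2$ energy estimate on $(\delta e,\delta k,\delta\gamma)$, but not an $H^s$ one: commuting $\partial_x^\alpha$ with the coefficient vector fields $e_C=e_{Cb}\partial_b$ generates terms with up to $|\alpha|$ derivatives of $e_{Cb}$, which you do not control. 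Sobolev embedding does not help here, since~\eqref{uniq.cond.b} bounds only the \emph{differences} in $H^s$, not the solutions themselves. Relatedly, your claim that $A_s$ depends only on $\|p_i\|_{L^\infty}$ is only true for $s=0$: commuting $\partial_x^\alpha$ with the $p_I/t$ terms produces $\partial_x^\beta p_I$ factors. The paper therefore runs the argument at $s=0$ (an $L^2$ estimate), for which $C_*$ depends only on $\|p_i\|_{L^\infty}$, and uses only the $L^2$ content of~\eqref{uniq.cond.b}. With that adjustment your Gronwall step is exactly the paper's.

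For part~$(ii)$ your plan is a valid variant. The paper takes a slightly different route: rather than iterating directly on the differences $(\delta e,\delta k)$, it iterates on each solution's deviation from the approximate metric, i.e.\ on $e^{(d)}=e-e^{[\bf n]}$ and $k^{(d)}=k-k^{[\bf n]}$ (and likewise for $\widetilde{\bf g}$), using the ODEs~\eqref{e0.eI.d} and the analogue~$\partial_t k^{(d)}_{IJ}+t^{-1}k^{(d)}_{IJ}=R_{IJ}-R_{IJ}^{[\bf n]}+\ldots$, gaining $t^\varepsilon$ per step at a cost of two spatial derivatives (the Ricci difference is second order in $e$). After $\sim M_0/\varepsilon$ steps one gets $\|e^{(d)}\|_{H^4}+\cdots\lesssim t^{M_0}$ for each solution, and~\eqref{uniq.cond.b} follows by the triangle inequality. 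Your direct bootstrap on $(\delta e,\delta k)$ is structurally the same and should close equally well; the paper's detour through $e^{(d)}$ just reuses the machinery already set up in Section~\ref{sec:actual.sol}.
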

%
%
Point $(i)$ in Theorem \ref{thm:uniq} is analogous to \cite[Theorem 1.7]{FL}. Point $(ii)$ does not require the two solutions to coincide to a large polynomial order. The idea here is that the latter property can be inferred from \eqref{uniq.cond2} by using the equations, see Section \ref{sec:smooth.uniq} for the proof of Theorem \ref{thm:uniq}.

Combining Theorems \ref{thm:exist}, \ref{thm:uniq} with the (local) equivalence \cite{Rin6} of Definitions \ref{defn1}, \ref{defn2}, we can obtain the existence of an entire Kasner-like singularity having covariant asymptotic data (see \cite{Fr} for a more general construction starting with covariant data).
\begin{corollary}\label{cor:exist.entire}
Let $(\mathbb{T}^3,\overline{h},\mathcal{K})$ be a triplet of smooth asymptotic data satisfying Definition \ref{defn2}. Then, there exists a solution ${\bf g}$ to \eqref{EVE}, defined in $(0,T]\times\mathbb{T}^3$, and a finite covering of local domains $\{U_t^\ell\}_{t\in(0,T]}$, such that in each one ${\bf g}$ satisfies Theorem \ref{thm:exist}, with gauge dependent asymptotic data $p_i,c_{ij}$ that are induced from $\overline{h},\mathcal{K}$.
\end{corollary}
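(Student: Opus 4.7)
The plan is to cover $\mathbb{T}^3$ by finitely many coordinate patches on which the covariant data $(\overline{h},\mathcal{K})$ corresponds to gauge-dependent data satisfying Definition \ref{defn1}, apply Theorem \ref{thm:exist} on each patch to obtain a local solution, and glue these solutions on overlaps using the uniqueness statement of Theorem \ref{thm:uniq}. Specifically, the local equivalence of Definitions \ref{defn1} and \ref{defn2} established in \cite{Rin6} supplies, for every $x_0\in\mathbb{T}^3$, a smooth coordinate chart $\psi_{x_0}\colon V_{x_0}\to[0,\delta]^3$ in which the pushforward of $(\overline{h},\mathcal{K})$ yields smooth asymptotic data $p_i^{x_0},c_{ij}^{x_0}$ satisfying Definition \ref{defn1}; the coordinate freedom $\widetilde{x}^3=f(x^1,x^2,x^3)$ of Remark \ref{rem:deg.free} is used to place the chart in the required form. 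Compactness then extracts a finite subcover $\{V_\ell\}_{\ell=1}^L$, and Theorem \ref{thm:exist} together with Theorem \ref{thm:smooth} produces on each chart a smooth Lorentzian metric $\mathbf{g}^\ell$ of the form \eqref{metric} defined on a domain $\{U_t^\ell\}_{t\in(0,T_\ell]}$ with spacelike future boundary. After possibly refining the cover so that the $U_T^\ell$'s still cover $\mathbb{T}^3$ for a common $T\le\min_\ell T_\ell$, we have a family of candidate local solutions.

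Next I would perform the gluing on any overlap $V_\ell\cap V_{\ell'}$ by pulling $\mathbf{g}^{\ell'}$ back through the transition map $\phi_{\ell\ell'}=\psi_{\ell'}\circ\psi_\ell^{-1}$. Because Gaussian time is intrinsically defined, as the proper time along unit geodesics orthogonal to the slices normalised so that $t\to 0$ at the singularity, the pullback preserves the form \eqref{metric} and produces a second solution in the $\ell$-chart. Both solutions then have leading order Kasner profile determined by the \emph{same} intrinsic tensors $(\overline{h},\mathcal{K})$: the $p_i$'s, being eigenvalues of a $(1,1)$-tensor, transform as scalars; the $c_{ij}$'s transform covariantly under $\phi_{\ell\ell'}$; and the parallelly propagated orthonormal frame, thanks to the distinct-eigenvalue assumption in Definition \ref{defn2}.2, is uniquely determined by the eigenvectors of $\mathcal{K}$ up to a sign on each eigenspace, fixable by continuity on the connected overlap. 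Smoothness (Theorem \ref{thm:smooth}) combined with the approximation bounds \eqref{exist.approx} then furnishes, for either solution, the pointwise estimates \eqref{uniq.cond2} to arbitrary order $|\alpha|\le M_1$, so Theorem \ref{thm:uniq}(ii) forces the two pulled-back solutions to coincide on the overlap of their domains. The local pieces therefore glue into a single smooth solution $\mathbf{g}$ on $(0,T]\times\mathbb{T}^3$, whose restriction to each $V_\ell$ satisfies Theorem \ref{thm:exist} with the induced data $p_i^\ell,c_{ij}^\ell$.

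The hardest part is the overlap matching: one must verify that, after the coordinate pullback and after reconciling the two adapted frames up to signs, the two local solutions genuinely share the same zeroth order profile $\mathbf{g}^{[\mathbf{0}]}$ together with the derivative estimates required to trigger Theorem \ref{thm:uniq}(ii). This verification rests on the covariant content of $\mathcal{K}$ and on the rigidity ensured by the distinct-eigenvalue hypothesis in Definition \ref{defn2}.2, which eliminates the frame rotation ambiguity that would otherwise plague the parallelly propagated orthonormal frame adapted to the Gaussian foliation. Once this matching is in place, everything else in the argument is a routine application of the results already proved.
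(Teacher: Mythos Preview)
Your argument follows the same skeleton as the paper's: localize via \cite{Rin6}, apply Theorem~\ref{thm:exist} on each chart, use compactness for a finite cover, and glue with Theorem~\ref{thm:uniq}. The only real difference is that you appeal to part~$(ii)$ of Theorem~\ref{thm:uniq} (pointwise $C^{M_1}$ bounds, which in turn requires Theorem~\ref{thm:smooth}), whereas the paper uses part~$(i)$: by taking $n,N_0$ large so that \eqref{exist.d.est} holds with $N_0>M_0$, condition~\eqref{uniq.cond.b} follows from the triangle inequality once both local solutions are referred to the same approximate metric $\mathbf{g}^{[\mathbf{n}]}$. This is slightly more economical since it avoids the detour through smoothness. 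Both routes, however, need the overlap-matching step you flag --- that the pulled-back $\ell'$-solution satisfies \eqref{uniq.cond.a} (respectively \eqref{uniq.cond2}) relative to the $\ell$-chart's $e^{[\bf 0]},k^{[\bf 0]}$ --- and the paper's proof leaves this implicit. Your observation that the distinct-eigenvalue hypothesis fixes the asymptotic eigenframe of $\mathcal{K}$ up to signs, together with the intrinsic nature of $(\overline{h},\mathcal{K})$, is the correct mechanism for this verification.
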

\begin{proof}
For every $q\in\mathbb{T}^3$, there exist \cite{Rin6} local coordinates $x^1,x^2,x^3$ around $q$ and asymptotic data $p_i,c_{ij}$ satisfying Definition \ref{defn1}, which correspond to the given covariant data, ie. the $p_i$'s are eigenvalues of $\mathcal{K}$ and $h=\sum_{i,j=1}^3c_{ij}dx^idx^j$. Then, Theorem \ref{thm:exist} applies to yield a local patch of a Kasner-like singularity. By compactness, there exists $T>0$ sufficiently small and a finite number of local domains $\{U_t^\ell\}_{t\in(0,T]}$ covering $(0,T]\times\mathbb{T}^3$. In the overlapping regions the solutions coincide by Theorem \ref{thm:uniq} by taking $n,N_0$ sufficiently large to ensure that condition \eqref{uniq.cond.b} is satisfied.
\end{proof}

\subsection{Method of proof}

The basic idea in such constructions is simple. First, we construct an approximate solution ${\bf g}^{[\bf n]}$ to a suitable degree and then solve for a remainder ${\bf g}^{(d)}$ to upgrade it to an actual solution. What makes it hard to implement is the complicated nature of the Einstein vacuum equations. 

{\it Step 1. Constructing an approximate solution.} The evolution equations \eqref{e0.e}, \eqref{e0.k} satisfied by $e_{Ia},k_{IJ}$ (recall the notation from Section \ref{subsec:framework}) are useful for constructing an approximate solution, in a local domain of the form $(0,T]\times[0,\delta]^3$. We use them to define an iteration scheme in Section \ref{sec:approx.sol} that completely decouples all variables. The iterates $e^{[\bf n]}_{Ia},k_{IJ}^{[\bf n]}$ define a metric ${\bf g}^{[\bf n]}$ whose Ricci tensor we prove that it vanishes, as $t\rightarrow0$, to an increasing order in $n$, recall \eqref{exist.approx}. These derivations are analogous to those in \cite[Section 2]{FL}, constructing $g_{ij}^{[\bf n]},(k^{[\bf n]})_i{}^j$. One interesting difference concerning the second fundamental form is that $k_{IJ}^{[\bf n]}$ is asymptotically diagonal, instead of upper triangular for $(k^{[\bf n]})_i{}^j$ in \cite{FL}.
The asymptotic momentum constraint, expressed in terms of the frame coefficients, takes the form \eqref{frame.mom2}, which we show in Lemma \ref{lem:pre2} to be equivalent to \eqref{momentum}. It is used to show the approximate propagation of the constraints \eqref{Ham.const}-\eqref{mom.const}.

{\it Step 2. Solving for a remainder.} To construct an actual solution to the Einstein vacuum equations with the desired behavior, we make the ansatz 
\begin{align}\label{ansatz}
{\bf g}={\bf g}^{\bf [n]}+{\bf g}^{(d)}
\end{align}
and solve for the remainder term ${\bf g}^{(d)}$ in weighted norms which guarantee that the variables associated to ${\bf g}^{(d)}$ vanish, as $t\rightarrow0$, to a sufficiently high order. The main steps in this process are the following:
\begin{enumerate}
\item Express the Einstein vacuum equations relative to a first order symmetric hyperbolic system for $k_{IJ},\gamma_{IJB}$, see \eqref{e0.kIJ.mod}-\eqref{e0.gamma_IJB.mod}. Here, a modification of the original equations \eqref{e0.k}, \eqref{RIJ}, \eqref{gammaIJB.eq} is achieved following \cite{FSm}. 

\item Fix the local domain of definition of the solution, such that the overall boundary terms generated in an energy argument have a favorable sign (see Section \ref{subsec:domain} and the proof of Proposition \ref{prop:en.est}). 

\item Produce a solution to the modified evolution equations \eqref{e0.kIJ.mod}-\eqref{e0.gamma_IJB.mod} for $k_{IJ},\gamma_{IJB}$, by deriving weighted energy estimates for the remainder terms $k^{(d)}_{IJ},\gamma^{(d)}_{IJB}$ in a domain of the form $\{U_t\}_{t\in[\eta,T]}$, defined for convenience relative to ${\bf g}^{[\bf n]}$ (Section \ref{subsec:domain}), with trivial data. The use of large $t$ weights is necessary to absorb terms in the equations having $t^{-1}$ coefficients (see Sections \ref{subsec:rem.eq}, \ref{subsec:en.est}). Fortunately, the magnitude of these coefficients depends only on the asymptotic data, which allows for uniform estimates in $n,\eta$, provided $s,N_0$ are fixed (recall Theorem \ref{thm:exist}). Passing to the limit $\eta\rightarrow0$, we obtain a time interval of existence $(0,T]$, where $t=0$ corresponds to the singularity (Sections\ref{subsec:exist}).

\item Retrieve a solution to the full Einstein vacuum equations by propagating the vanishing of constraints off of the singularity (Section \ref{sec:EVE}). 
\end{enumerate}
The symmetrization of the first order system for $k_{IJ},\gamma_{IJB}$ and how to recover \eqref{EVE} are adopted from \cite{FSm}. This is the key difference from \cite{FL}, where instead a second order system for $k_i{}^j$ was used. Elliptic estimates are no longer required to complete the energy argument. Also, the weighted energy estimates are slightly simpler, because we define our norms $H^s$ norms relative to partial derivatives, not covariant ones, and all orders have the same $t$-weights. 

{\it Step 3. Uniqueness and smoothness.} If two solutions satisfy point $(i)$ of Theorem \ref{thm:uniq}, then by subtracting them and deriving an estimate similar to \eqref{exist.d.est} (with trivial initial data), it follows that they must be equal. Hence, to prove the second uniqueness statement in Theorem \ref{thm:uniq}, it remains to show that \eqref{uniq.cond2} implies that the two solutions satisfy \eqref{uniq.cond.b}. Condition \eqref{uniq.cond2} firstly implies that the two solutions have the same asymptotic data. Hence, they have the same approximate solution ${\bf g}^{\bf [n]}$. The idea now is to use the equations to iteratively improve the behavior of the remainder terms of each solution, see Section \ref{subsec:uniq}.
Every improvement by $t^\varepsilon$ costs two spatial derivatives, hence, the requirement of $M_1>0$ being sufficiently large in point $(ii)$ of Theorem \ref{thm:uniq}.

To obtain the smoothness of solutions (for smooth asymptotic data), we increase the number of derivatives $s$ in the existence norms. Some care is needed because increasing $s$ might also require to increase $n$, which in turn changes ${\bf g}^{[\bf n]}$, relative to which the local domain is defined. We overcome this issue by defining the domain relative to the minimum of $n$'s, which does not affect the overall estimates (see Section \ref{subsec:smooth}). Employing point $(i)$ of Theorem \ref{thm:uniq} and standard unique continuation criteria yields the corresponding increased regularity of the original solution. Since $s$ can be taken arbitrarily large, the smoothness property follows.

\subsection{Acknowledgements}

G.F. would like to thank Jonathan Luk and Hans Ringstr\"om for useful discussions. 
N.A. and G.F.
gratefully acknowledge the support of the ERC starting grant 101078061 SINGinGR, under the European Union's Horizon Europe program for research and innovation, and the H.F.R.I. grant 7126, under the 3rd call for H.F.R.I. research projects to support post-doctoral researchers.

\section{Local existence for the asymptotic constraint equations}\label{sec:asym}

Definitions \ref{defn1}, \ref{defn2} give conditions that characterize the asymptotic data of Kasner-like singularities. However, in both works \cite{FL,Rin} there is no mention as to whether such general asymptotic data exist. In this section, we present a simple argument that gives existence of localized asymptotic data sets. Also, from our argument one can infer the freedom one has in choosing such initial data sets on the singularity. Interestingly, constructing a global initial data set, in the sense of Definition \ref{defn2} or the analogue of Definition \ref{defn1} on the entire $\mathbb{T}^3$, is more intricate than it seems and we shall not discuss it here. 
%
%
%
\begin{proposition}\label{prop:asym}
Let $p_1,p_2,p_3:[0,\delta]^3\to \mathbb{R}$ be smooth functions satisfying $p_1(x)<p_2(x)<p_3(x)$, as well as $\sum_{i=1}^3 p_i(x) = \sum_{i=1}^3 p_i^2(x) =1$, for all $x\in[0,\delta]^3$. Then, for any freely prescribed smooth functions $\alpha \in \{c_{11},c_{22}>0\}$, $\beta \in \{c_{33}>0,\kappa_2{}^3\}$, $\gamma \in \{\kappa_1{}^2, \kappa_1{}^3\}:$ $[0,\delta]^3\to \mathbb{R}$, there exist smooth functions $\epsilon, \zeta,\eta: [0,\delta]^3\to \mathbb{R}$ such that 
\begin{align}
\begin{Bmatrix} \alpha, \beta,\gamma,\epsilon,\zeta,\eta \end{Bmatrix} = \begin{Bmatrix} c_{11},  c_{22}, c_{33}, \kappa_1{}^2,  \kappa_1{}^3,  \kappa_2{}^3 \end{Bmatrix} 
\end{align}
and such that Definition \ref{defn1} holds, where $c_{12},c_{13},c_{23}$ are uniquely determined by the values of $\kappa_1{}^2,\kappa_1{}^3,\kappa_2{}^3$. Moreover, the functions $\epsilon,\zeta,\eta$ are unique up to a choice of three 2-variable functions.
\end{proposition}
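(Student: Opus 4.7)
The plan is to treat the three asymptotic momentum constraints \eqref{momentum}, one for each $i=1,2,3$, as a system of linear first-order PDEs in the three unknowns $\epsilon,\zeta,\eta$, and to exploit a triangular structure inherited from $\kappa_i{}^l=0$ for $l<i$. This structure will let me solve the three equations successively, each as a linear first-order ODE in a single spatial direction, and each introducing exactly one free function of the remaining two variables as initial datum.

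I would begin with the $i=3$ constraint, which, after using $\kappa_3{}^1=\kappa_3{}^2=0$, $\kappa_3{}^3=-p_3$, and the vanishing of the indicator $\mathbbm{1}_{\{l>3\}}$, reduces to
\begin{equation*}
\frac{\partial_3 c_{11}}{c_{11}}(p_1-p_3)+\frac{\partial_3 c_{22}}{c_{22}}(p_2-p_3)=2\partial_3 p_3,
\end{equation*}
involving neither $c_{33}$ nor any $\kappa$-coefficients. Since $\alpha$ prescribes either $c_{11}$ or $c_{22}$, and $p_1-p_3,\,p_2-p_3$ are nowhere vanishing, this is a linear ODE in $x^3$ for the remaining diagonal coefficient, integrable on $[0,\delta]^3$ with one free smooth initial function of $(x^1,x^2)$ on $\{x^3=0\}$. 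Linearity in the logarithm of the unknown preserves positivity once positive initial data are chosen.

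Next, with $c_{11},c_{22}$ determined, I would turn to the $i=2$ equation, which using $\kappa_2{}^1=0$, $\kappa_2{}^2=-p_2$ becomes
\begin{equation*}
\frac{\partial_2 c_{11}}{c_{11}}(p_1-p_2)+\frac{\partial_2 c_{33}}{c_{33}}(p_3-p_2)-2\partial_2 p_2+2\partial_3\kappa_2{}^3+\frac{\partial_3(c_{11}c_{22}c_{33})}{c_{11}c_{22}c_{33}}\kappa_2{}^3=0.
\end{equation*}
The unknowns in this equation form a subset of $\{c_{33},\kappa_2{}^3\}$ and one of them is prescribed by $\beta$. If $\kappa_2{}^3$ is the unknown one gets a linear first-order ODE in $x^3$ for $\kappa_2{}^3$; if $c_{33}$ is the unknown one gets, using $p_3-p_2\neq 0$, a linear first-order ODE in $x^2$ for $\ln c_{33}$ (so positivity is preserved). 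Either way, integration with one free function of the complementary two variables yields a smooth solution. The $i=1$ equation is handled identically: after substitution it reads
\begin{equation*}
-2\partial_1 p_1+\frac{\partial_1 c_{22}}{c_{22}}(p_2-p_1)+\frac{\partial_1 c_{33}}{c_{33}}(p_3-p_1)+2\partial_2\kappa_1{}^2+2\partial_3\kappa_1{}^3+\frac{\partial_2(c_{11}c_{22}c_{33})}{c_{11}c_{22}c_{33}}\kappa_1{}^2+\frac{\partial_3(c_{11}c_{22}c_{33})}{c_{11}c_{22}c_{33}}\kappa_1{}^3=0,
\end{equation*}
and depending on whether $\gamma$ is $\kappa_1{}^2$ or $\kappa_1{}^3$, the complementary unknown satisfies a linear first-order ODE in $x^3$ or $x^2$. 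Finally, $c_{12},c_{13},c_{23}$ are recovered algebraically from the formulas in Definition~\ref{defn1}, using that $p_1-p_2,\,p_1-p_3,\,p_2-p_3$ are nowhere zero.

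The only real point to check—more of a bookkeeping verification than a genuine obstacle—is the triangular decoupling: that the $i=3$ equation involves neither any off-diagonal $\kappa$'s nor $c_{33}$, that the $i=2$ equation involves no $\kappa_1$-components, and that in every equation the coefficient of the highest derivative of the unknown never vanishes. The first two are immediate from $\kappa_i{}^l=0$ for $l<i$ combined with the identity $p_l-p_l=0$ that kills the $c_{ll}$-contribution when $l=i$; the third is exactly the distinct-Kasner-exponent hypothesis of the proposition. Smoothness of $\epsilon,\zeta,\eta$ on $[0,\delta]^3$ then follows from standard linear ODE theory with smooth coefficients, and the three arbitrary initial functions introduced at the three steps account precisely for the claimed uniqueness up to a choice of three two-variable functions.
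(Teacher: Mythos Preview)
Your approach is essentially the same as the paper's: exploit the triangular structure of \eqref{momentum} in $i=3,2,1$ order, solving each constraint as a linear first-order equation for one unknown, with one free two-variable function as initial datum at each step.

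There is one small inaccuracy. In the $i=2$ equation, when $\kappa_2{}^3$ is prescribed and $c_{33}$ is the unknown, the equation is \emph{not} a pure ODE in $x^2$ for $\ln c_{33}$: the last term $\dfrac{\partial_3(c_{11}c_{22}c_{33})}{c_{11}c_{22}c_{33}}\kappa_2{}^3$ contains $\kappa_2{}^3\,\partial_3\ln c_{33}$, so the unknown appears with both $\partial_2$ and $\partial_3$ derivatives. What you actually get is a linear transport equation for $\ln c_{33}$ along the vector field $(p_2-p_3)\partial_2-\kappa_2{}^3\partial_3$. Since $p_2-p_3\neq 0$ everywhere, the characteristics are transverse to $\{x^2=0\}$ and the equation is still solvable on $[0,\delta]^3$ with one free smooth function of $(x^1,x^3)$ as initial datum; positivity is preserved because the equation is linear in $\ln c_{33}$. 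So the conclusion and the free-data count are unaffected, but your description of this case as ``a linear first-order ODE in $x^2$'' overlooks the $\partial_3$ contribution. The paper makes exactly this observation and, to avoid the transport equation, opts to prescribe $c_{33}$ and solve the resulting $x^3$-ODE for $\kappa_2{}^3$ instead.
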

\begin{remark}
From the statement of the previous proposition, it would seem that we are free to prescribe four functions, e.g. $p_1,c_{11},c_{33}$, and $\kappa_1{}^2$ (which amounts to choosing $c_{12}$). However, $c_{33}$ can be fixed to 1 by choosing the coordinate function $x^3$ appropriately to begin with. Hence, the functional degrees of freedom are three. Also, the three 2-variable functions that we are free to prescribe come from suitably integrating \eqref{momentum}, for each $i=1,2,3$.
\end{remark}
\begin{proof}
We begin by writing out \eqref{momentum} for $i=3$:
\begin{align} \label{one}
 (p_3-p_1) \partial_3 \log(c_{11}) + (p_3-p_2) \partial_3 \log(c_{22})+2\partial_3p_3 =0,
\end{align}
The latter equation can be solved for either $c_{11}$ or $c_{22}$. Without loss of generality, let us freely prescribe $c_{22}>0$. Then $c_{11}$ is uniquely determined by \eqref{one}, up to a choice of a 2-variable function:
\begin{align}\label{one.sol}
\log c_{11}(x^1,x^2,x^3)=\log c_{11}(x^1,x^2,0)-\int^{x^3}_0\big\{\frac{p_3-p_2}{p_3-p_1}\partial_3\log c_{22}+\frac{2\partial_3p_3}{p_3-p_1} \big\}(x^1,x^2,s)ds
\end{align}
Next, we expand \eqref{momentum} for $i=2$:
\begin{align}\label{two}
(p_2-p_1)\partial_2 \log c_{11} + (p_2-p_3)\partial_2 \log c_{33} + 2 \partial_2 p_2 -2 \partial_3 \kappa_2{}^3 -\kappa_2{}^3 \sum_{l=1}^3 \partial_3 \log c_{ll}=0.
\end{align}
Since $c_{11},c_{22}$ are already fixed, we can either freely prescribe $c_{33}>0$ and solve for $\kappa_2{}^3$ or vice versa. In the latter case, solving for $\log c_{33}$ amounts to integrating a transport equation in the direction $(p_2-p_3)\partial_2-\kappa_2{}^3\partial_3$, which is possible thanks to the condition $p_2<p_3$. For convenience, we freely prescribe $c_{33}>0$ and rewrite \eqref{two} as follows:
\begin{align}\label{two2}
\partial_3 \kappa_2{}^3 +\frac{1}{2}[\partial_3\log(c_{11}c_{22}c_{33})]\kappa_2{}^3=\frac{1}{2}(p_2-p_1)\partial_2 \log c_{11} + \frac{1}{2}(p_2-p_3)\partial_2 \log c_{33} + \partial_2 p_2
\end{align}
Hence, we easily solve for $\kappa_2{}^3$ via integrating factors, in a unique manner up to a choice of a 2-variable function. 

Lastly, we expand \eqref{momentum} for $i=1$: 
\begin{align}\label{three}
\begin{split}
&(p_1-p_2)\partial_1\log c_{22}+(p_1-p_3)\partial_1\log c_{33} + 2 \partial_1 p_1 -2  \partial_2 \kappa_1{}^2 \\&- 2 \partial_3 \kappa_1{}^3- \kappa_1{}^2 \sum_{l=1}^3 \partial_2 \log c_{ll} -\kappa_1{}^3 \sum_{l=1}^3 \partial_3 \log c_{ll}=0.
\end{split}
\end{align}
It is clear that we can freely prescribe either $\kappa_1{}^2$ or $\kappa_1{}^3$ and solve for the other via integrating factors, using \eqref{three} as above. Once more, we have the freedom of choosing an initial condition, which amounts to a 2-variable function. This completes the proof of the proposition. 
\end{proof}

\section{The Einstein vacuum equations as a symmetric first order hyperbolic ADM-type system}\label{sec:ADM}

We have defined the frame and connection coefficients $e_{Ia},k_{IJ},\gamma_{IJB}$ in Section \ref{subsec:framework}.
Define also the inverse transformation 
\begin{align}\label{coframe}
\partial_b=\omega_{bC} e_C,\qquad \omega_{bC}e_{Ca}=\delta_{ba},\qquad e_{Ia}\omega_{aC}=\delta_{IC},
\end{align}
where repeated indices are summed, unless underlined or otherwise stated.
Then the following equations hold \cite{FSm}:\footnote{Note: In \cite{FSm}
the sign convention for $k_{IJ}$ is opposite than the one that we use.}
\begin{align}
\label{e0.e}\partial_te_{Ia}=&\,k_{IC}e_{Ca}\\
\label{e0.omega}\partial_t\omega_{bC}=&-k_{CD}\omega_{bD}\\
\label{e0.k}\partial_tk_{IJ}-\text{tr}k \, k_{IJ}=&\,R_{IJ}-{\bf R}_{IJ},
\end{align}
where $R_{IJ},{\bf R}_{IJ}$ are the Ricci curvatures contracted with $e_I,e_J$ of $g,{\bf g}$ respectively. 
These are analogous to the ADM equations for the first and second fundamental forms of $\Sigma_t$. The actual solution that we construct is in vacuum, hence, the last term in \eqref{e0.k} can be set to zero. We keep it in the RHS, however, to keep track of the extra terms coming from the approximate solution that is constructed in Section \ref{sec:approx.sol}.

On the other hand, the spatial Ricci can be expressed in terms of $\gamma_{IJB},e_I^a,\omega_b^C$ as follows:
\begin{align}\label{RIJ}
R_{IJ}=&\,e_C \gamma_{IJC}-e_I\gamma_{CJC}
-\gamma_{CID}\gamma_{DJC}-\gamma_{IJD}\gamma_{CCD},\\
\label{gammaIJB}\gamma_{IJB}=&\,\frac{1}{2}\big\{\omega_{aB}(e_Ie_{Ja}-e_Je_{Ia})
-\omega_{aI}(e_Je_{Ba}-e_Be_{Ja})
+\omega_{aJ}(e_Be_{Ia}-e_Ie_{Ba})\big\}.
\end{align}
We will use \eqref{e0.e}-\eqref{gammaIJB} in Section \ref{sec:approx.sol} to construct an approximate solution to the Einstein vacuum equations, as $t\rightarrow0$, via an iteration scheme. 

We will not exploit the formula \eqref{gammaIJB} for local well-posedness. Instead, we couple \eqref{e0.k} to the following equation satisfied by $\gamma_{IJB}$:
\begin{align}\label{gammaIJB.eq}
\partial_t\gamma_{IJB}-k_{IC}\gamma_{CJB}=&\,e_Bk_{JI}-e_Jk_{BI}\\
\notag&+\gamma_{JBC}k_{CI}+\gamma_{JIC}k_{BC}-\gamma_{BJC}k_{CI}-\gamma_{BIC}k_{JC}
\end{align}
To derive energy estimates in Section \ref{subsec:en.est}, we consider the symmetrized system satisfied by $k_{IJ},\gamma_{IJB}$, following \cite{FSm}:
\begin{align}
\label{e0.kIJ.mod}\partial_tk_{IJ}-\text{tr}k\,k_{IJ}=&\,\frac{1}{2}\bigg[e_C\gamma_{IJC}-e_I\gamma_{CJC}
+e_C\gamma_{JIC}-e_J\gamma_{CIC}\\
\notag&-\gamma_{CID}\gamma_{DJC}-\gamma_{CCD}\gamma_{IJD}
-\gamma_{CJD}\gamma_{DIC}-\gamma_{CCD}\gamma_{JID}\bigg]\\
\notag&+\frac{1}{2}\delta_{IJ}\bigg[2e_D\gamma_{CDC}+\gamma_{CED}\gamma_{DEC}
+\gamma_{CCD}\gamma_{EED}+k_{CD}k_{CD}-(k_{CC})^2\bigg]\\
\notag&-\frac{1}{2}({\bf R}_{IJ}+{\bf R}_{JI})
+\delta_{IJ}({\bf R}_{00}+\frac{1}{2}{\bf R}),\\
\label{e0.gamma_IJB.mod} \partial_t\gamma_{IJB}-k_{IC}\gamma_{CJB}
=&\,e_Bk_{JI}-e_Jk_{BI}\\
\notag&+\gamma_{JBC}k_{CI}+\gamma_{JIC}k_{BC}-\gamma_{BJC}k_{CI}-\gamma_{BIC}k_{JC}\\
\notag&-\delta_{IB}\bigg[e_Ck_{CJ}-\gamma_{CCD}k_{DJ}-\gamma_{CJD}k_{CD}-e_J\text{tr}k\bigg]\\
\notag&+\delta_{IJ}\bigg[e_Ck_{CB}-\gamma_{CCD}k_{DB}-\gamma_{CBD}k_{CD}-e_B\text{tr}k\bigg]\\
\notag&-\delta_{IB}{\bf R}_{0J}+\delta_{IJ}{\bf R}_{0B},
\end{align}
coupled to equation \eqref{e0.e}. 
\begin{remark}
The previous equations are valid for any metric of the form \eqref{metric} and its associated connection coefficients \eqref{k.gamma}. However, we will drop all ${\bf R}_{\mu\nu}$ terms in the derivations of the energy estimates below, since we are interested in producing a vacuum solution.
\end{remark}
%
%

\section{An approximate solution to the Einstein vacuum equations}\label{sec:approx.sol}

We define the following iteration scheme for the equations \eqref{e0.e}, \eqref{e0.k}:
\begin{align}
\label{eIi.it}\partial_t \eIan - k_{\underline{I} \underline{I}}^{[\bf{n}]}e_{\underline{I}a}^{[\bf{n}]} = &\sum_{C\neq I} k_{IC}^{[\bf{n-1}]}e_{Ca}^{[\bf{n-1}]},\\
\label{kIJ.it}\partial_tk_{IJ}^{\bf [n]}-k^{[\bf n-1]}_{CC}k_{IJ}^{[\bf n]}=&\,R^{[\bf n-1]}_{IJ},
\end{align}
for $n\ge1$ and $t\in(0,T]$, where underlined indices are not summed. Here, $R_{IJ}^{[\bf n-1]}$ is the Ricci curvature of the metric $g^{[\bf n-1]}$ for which the frame $e_I^{\bf [n-1]}=e_{Ia}^{[\bf n-1]}\partial_a$ is orthonormal. Similarly, we denote by ${\bf R}_{\mu\nu}^{[\bf n]}$ the Ricci curvature of the spacetime metric 
\begin{align}\label{bf.gn}
{\bf g}^{[\bf n]}=-dt^2+g^{[\bf n]}.
\end{align}
We set the zeroth iterates equal to: 
\begin{align}
\label{eIi.kIJ.0}e_{Ia}^{[\bf 0]}=
\left\{\begin{array}{ll}
  f_{\underline{I}a}(x) t^{-p_{\underline{I}}(x)}  & I\leq a \\
    0 & I>a
\end{array}\right.,\qquad\qquad
k_{IJ}^{[\bf 0]}
=-\delta_{\underline{I}J}\frac{p_{\underline{I}}(x)}{t},
\end{align}
To complete the iteration scheme, we require that the following asymptotic initial conditions hold: 
\begin{align}\label{asym.cond.it}
\lim_{t\rightarrow0^+} t^{p_{\underline{I}}(x)}e^{[\bf n]}_{\underline{I}a}=\left\{\begin{array}{ll}
  f_{Ia}(x)  & I\leq a \\
    0 & I>a
\end{array}\right.,
\qquad\qquad\lim_{t\rightarrow0^+}tk_{IJ}^{[\bf n]}=-\delta_{\underline{I}J}p_{\underline{I}}(x).
\end{align}
%
Also, define the inverse frame components through the relations 
\begin{align}\label{omega.it}
\partial_b=\omega^{\bf [n]}_{bC}e_C^{[\bf n]},\qquad
\omega_{bC}^{[\bf n]}e_{Ca}^{[\bf n]}=\delta_{ba},\qquad 
e_{Ia}^{[\bf n]}\omega_{aC}^{[\bf n]}=\delta_{IC}.
\end{align}
The equation \eqref{eIi.it} and relations \eqref{omega.it} imply the equation
\begin{align}\label{dt.omegabC.it}
   \partial_t\omega^{[\bf n]}_{bC}+k^{[\bf n]}_{\underline{C}\hsp\underline{C}}\omega^{[\bf n]}_{b\underline{C}}=- \sum_{D}\sum_{E\neq D} \omega_{bD}^{[\bf n]}  k_{DE}^{[\bf{n-1}]} \big(e_{Ea}^{[\bf{n-1}]} \hsp\omega_{aC}^{[\bf n]}\big),
\end{align}
for $n\ge1$. The zeroth iterates $\omega^{[\bf 0]}_{bC}$ are computed using \eqref{omega.it}, see \eqref{omega.0}. The condition \eqref{asym.cond.it} implies that  there exist functions $h_{bC}(x)$ such that
\begin{align}\label{omega.asym}
\lim_{t\rightarrow0^+}t^{-p_{\underline C}(x)}\omega^{[\bf n]}_{b\underline C}=\left\{\begin{array}{ll}
h_{bC}(x)& b\leq C\\
0&  b>C
\end{array}\right.,\qquad \omega^{[\bf 0]}_{bC}=\left\{\begin{array}{ll}
h_{b\underline{C}}(x)t^{p_{\underline{C}}}(x)& b\leq C\\
0&  b>C
\end{array}\right..
\end{align}
%
%
\begin{remark}\label{Asymptotic differential condition}
The frame coefficients $f_{Ia}$ are in one to one correspondence with the metric coefficients $c_{ij}$ in Definition \ref{defn1}, see Lemma \ref{lem:pre1}. Moreover, the asymptotic differential condition \eqref{momentum} is equivalent to 
\begin{align}\label{frame.mom}
E_{\underline{I}} p_{\underline{I}}+\sum_J(p_J-p_{\underline{I}})E_{\underline{I}}\log (f_{JJ})-\sum_J\sum_{I\leq a\leq J}(p_J-p_{\underline{I}})h_{aJ}E_Jf_{\underline{I}a}=0,
\end{align}
for $I=1,2,3$, where $E_I=\sum_{a\ge I} f_{Ia}\partial_a$, see Lemma \ref{lem:frame.mom}. By convention, the last sum in \eqref{frame.mom} does not appear when $I>J$.
\end{remark}
\begin{theorem}\label{thm:approx.sol}
Let $p_i(x),f_{Ia}(x)\in C^\infty([0,\delta]^3)$ and let 
\begin{align}\label{varepsilon}
\varepsilon:=\min_x\{1-p_3(x),\,p_3(x)-p_2(x) \}>0.
\end{align}
Then for every $n\ge1$ there exists a unique solution to the iteration scheme \eqref{eIi.it}-\eqref{asym.cond.it} in $(0,t_n]\times[0,\delta]^3$, for some $t_n=t_n(p_i,f_{Ia})$ sufficiently small, 
such that the following points hold: 
\begin{enumerate}
    \item \label{item.approx1} For every multi-index $\alpha$, every $I,J,C,a,b$, the functions $e^{[\bf n]}_{Ia},\omega^{[\bf n]}_{bC},k^{[\bf n]}_{IJ}$ satisfy:
    \begin{align}\label{eIa.n-0.est}
|\partial^\alpha_x( e^{[\bf n]}_{ Ia}- e^{\bf [0]}_{ Ia})|\leq &\left\{\begin{array}{ll}
 C_{\alpha,n}t^{-p_I+\varepsilon}  & I\leq a, \\
 C_{\alpha,n}t^{p_I-2p_a+\varepsilon}  & I>a,
 \end{array}\right.\\
  \label{omegabC.n-0.est}
|\partial^\alpha_x(
\omega^{[\bf n]}_{bC}- \omega^{\bf [0]}_{bC})|\leq &\left\{\begin{array}{ll}
 C_{\alpha,n}t^{p_C+\varepsilon}  & b\leq C, \\
    C_{\alpha,n}t^{2p_b-p_C+\varepsilon} & b>C,
\end{array}\right.\\
        \label{kIJ.n-1.est}
      |\partial^\alpha_x(k_{IJ}^{[\bf n]}-k_{IJ}^{[\bf 0]})|\leq&\, 
    C_{\alpha,n}t^{-1+\varepsilon+|p_I-p_J|},
    \end{align}
    for all $(t,x)\in(0,t_n]\times[0,\delta]^3$.
    \item  \label{item.approx2} For every multi-index $\alpha$, every $I,J$, the spatial Ricci curvature satisfies:
    \begin{align}\label{Ric.n.est}
       |\partial^\alpha_x R_{IJ}^{\bf [n]}|\leq 
    C_{\alpha,n}t^{-2+\varepsilon+|p_I-p_J|},
    \end{align}
     for all $(t,x)\in(0,t_n]\times[0,\delta]^3$.
    \item  \label{item.approx3} If in addition the constraint \eqref{frame.mom} is satisfied, then for every multi-index $\alpha$, every $\mu,\nu=0,I,J$, the spacetime Ricci curvature satisfies:  
    \begin{align}\label{Ric4.n.est}
|\partial^\alpha_x {\bf R}_{\mu\nu}^{{\bf [n]}}|\leq 
 C_{\alpha,n}t^{-2+n\varepsilon},
 \end{align}
  for all $(t,x)\in(0,t_n]\times[0,\delta]^3$.
    \end{enumerate}
\end{theorem}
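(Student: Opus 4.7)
The proof proceeds by induction on $n$. At each level, the iteration scheme decouples completely: \eqref{kIJ.it} is a linear first-order ODE in $t$ for $k^{[\bf n]}_{IJ}$, and once this is solved \eqref{eIi.it} becomes a linear ODE for $e^{[\bf n]}_{Ia}$. The strategy is to solve both by integrating factors, anchoring each ODE at the asymptotic boundary $t=0^+$ via \eqref{asym.cond.it}, and to propagate weighted estimates inductively in $n$. The base case $n=0$ is direct: all differences in \eqref{eIa.n-0.est}--\eqref{kIJ.n-1.est} vanish by construction, and \eqref{Ric.n.est} for $R^{[\bf 0]}_{IJ}$ follows from \eqref{RIJ}--\eqref{gammaIJB} applied to the explicit Kasner frame --- the upper-triangular structure of $f_{Ia}$ produces the index-dependent weight $|p_I-p_J|$, while the leading $t^{-2}$ piece cancels thanks to the Kasner algebraic relations.

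For the inductive step, assume points 1 and 2 hold at level $n-1$. Since $\sum_I p_I=1$, the hypothesis on $k^{[\bf n-1]}$ gives $k^{[\bf n-1]}_{CC}=-t^{-1}+O(t^{-1+\varepsilon})$, so the integrating factor for \eqref{kIJ.it} is $t\cdot(1+O(t^\varepsilon))$. Rewriting
\[
\partial_t\bigl(t\,k^{[\bf n]}_{IJ}\bigr)=t\,R^{[\bf n-1]}_{IJ}+\bigl(1+t\,k^{[\bf n-1]}_{CC}\bigr)k^{[\bf n]}_{IJ},
\]
integrating from $0$ using $\lim_{s\to0^+}s\,k^{[\bf n]}_{IJ}=-\delta_{\underline I J}p_{\underline I}$, and applying Gr\"onwall produces \eqref{kIJ.n-1.est} at level $n$. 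Spatial derivatives commute freely with the ODE since coefficients are smooth in $x$. Next, \eqref{eIi.it} for each $(I,a)$ has coefficient $k^{[\bf n]}_{II}=-p_I/t+O(t^{-1+\varepsilon})$, hence integrating factor $\sim t^{p_I}$; integrating the source $\sum_{C\neq I}k^{[\bf n-1]}_{IC}e^{[\bf n-1]}_{Ca}$ against it and combining inductive bounds yields \eqref{eIa.n-0.est}. Then \eqref{omegabC.n-0.est} follows algebraically from \eqref{omega.it}, and point 2 is obtained by substituting the level-$n$ estimates into \eqref{gammaIJB}--\eqref{RIJ} and expanding around the zeroth iterate, with leading-order cancellations again from the Kasner algebraic relations.

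Point 3 is the most delicate part, and is where I expect the main obstacle. The starting identity, obtained by comparing \eqref{e0.k} applied to ${\bf g}^{[\bf n]}$ with the iteration \eqref{kIJ.it}, reads
\[
{\bf R}^{[\bf n]}_{IJ}=\bigl(R^{[\bf n]}_{IJ}-R^{[\bf n-1]}_{IJ}\bigr)+\bigl(\mathrm{tr}\,k^{[\bf n]}-\mathrm{tr}\,k^{[\bf n-1]}\bigr)k^{[\bf n]}_{IJ},
\]
so the $IJ$-components reduce to differences of consecutive iterates. Rerunning the ODE argument on the differences $k^{[\bf n]}-k^{[\bf n-1]}$ and $e^{[\bf n]}-e^{[\bf n-1]}$ --- whose sources are themselves differences of the preceding iterates --- shows that each iteration improves them by a factor $t^\varepsilon$, inductively producing the $t^{n\varepsilon}$ gain for ${\bf R}^{[\bf n]}_{IJ}$. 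The genuinely hard part, however, is controlling the constraint components ${\bf R}^{[\bf n]}_{00}$ and ${\bf R}^{[\bf n]}_{0I}$, which are not directly targeted by the iteration. The plan is to derive transport equations for these constraint quantities from the contracted Bianchi identities for ${\bf g}^{[\bf n]}$, and to exploit the hypothesis \eqref{frame.mom} (equivalent to the momentum constraint \eqref{momentum} by Lemma \ref{lem:frame.mom}) together with the Kasner algebraic relations to guarantee that they vanish sufficiently fast as $t\to0^+$ at every iteration level. Propagating this forward in $t$, combined with the $IJ$-estimates above, produces the uniform $t^{-2+n\varepsilon}$ decay on $(0,t_n]$ required by \eqref{Ric4.n.est}.
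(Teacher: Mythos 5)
Your points 1 and 2 follow the paper's own line: induction on $n$, solving the decoupled ODEs \eqref{kIJ.it}, \eqref{eIi.it} by integrating factors anchored at $t=0^+$ via \eqref{asym.cond.it}, Cramer's rule for $\omega^{[\bf n]}$, and plugging the level-$n$ frame bounds into \eqref{RIJ}--\eqref{gammaIJB} for $R^{[\bf n]}_{IJ}$. One small correction: there is no cancellation from the Kasner algebraic relations in \eqref{Ric.n.est}; the bound $t^{-2+\varepsilon+|p_I-p_J|}$ comes directly from the weights of the frame and co-frame products (the $\varepsilon$ gain reflecting $\max_x p_3<1$), with the algebraic relations entering only in point 3 through the constraints.

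The genuine gap is in your treatment of ${\bf R}^{[\bf n]}_{IJ}$: your starting identity tacitly identifies $k^{[\bf n]}_{IJ}$, the iterate produced by \eqref{kIJ.it}, with the actual second fundamental form $\widetilde{k}^{[\bf n]}_{IJ}=\omega^{[\bf n]}_{aJ}\partial_t e^{[\bf n]}_{Ia}$ of the $t$-slices of ${\bf g}^{[\bf n]}$. These are \emph{not} the same tensor: the frame iteration \eqref{eIi.it} mixes levels, using $k^{[\bf n]}_{\underline{I}\underline{I}}$ on the diagonal but $\sum_{C\neq I}k^{[\bf n-1]}_{IC}e^{[\bf n-1]}_{Ca}$ as the off-diagonal source, so $\widetilde{k}^{[\bf n]}_{IJ}\neq k^{[\bf n]}_{IJ}$ off the diagonal. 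Since \eqref{e0.k} applied to ${\bf g}^{[\bf n]}$ holds with $\widetilde{k}^{[\bf n]}$, comparison with \eqref{kIJ.it} yields your identity \emph{plus} correction terms in $\partial_t(k^{[\bf n]}_{IJ}-\widetilde{k}^{[\bf n]}_{IJ})$ and $\mathrm{tr}\widetilde{k}^{[\bf n]}\widetilde{k}^{[\bf n]}_{IJ}-\mathrm{tr}k^{[\bf n]}k^{[\bf n]}_{IJ}$. You need a separate estimate showing $|\partial_t^r(k^{[\bf n]}_{IJ}-\widetilde{k}^{[\bf n]}_{IJ})|\lesssim t^{-1-r+n\varepsilon+|p_I-p_J|}$ before the argument closes; obtaining this requires explicitly unwinding \eqref{eIi.it} against the successive-iterate differences, which is Lemma \ref{lem:k.approx.2ndfund} in the paper and is not automatic. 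Your plan for ${\bf R}^{[\bf n]}_{0\mu}$ is on the right track but incomplete: before the Bianchi bootstrap one needs a one-$\varepsilon$ starting bound, obtained from the Gauss--Codazzi equations of ${\bf g}^{[\bf n]}$ together with Lemma \ref{lem:constraint} (this is where \eqref{frame.mom} and the algebraic Kasner relations actually enter), after which iterating the contracted Bianchi identity and feeding in the already-established $IJ$-bound upgrades $t^{-2+\varepsilon}$ to $t^{-2+n\varepsilon}$.
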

\begin{proof}
The points \ref{item.approx1}, \ref{item.approx2} are contained in Proposition \ref{prop:ind1}. Point \ref{item.approx3} is contained in Proposition \ref{prop:approx4}, for spatial indices $\mu,\nu=I,J$, and for the rest of the indices $\mu=0,\nu=0,I$ in Proposition \ref{prop:approx5}.
\end{proof}
\subsection{The zeroth iterates and the asymptotic differential constraint}

The zeroth frame coefficients determine the zeroth co-frame and metric components.
\begin{lemma}\label{lem:pre1}
Let $e_{Ia}^{[\bf 0]}$ be defined as in \eqref{eIi.kIJ.0} and let $\omega^{[\bf 0]}_{bC}$ represent the components of the inverse frame transformation, as in \eqref{omega.it}. Also, let $g_{ij}^{[\bf 0]}$ be the metric for which the frame $e^{[\bf 0]}_I=e^{[\bf 0]}_{Ia}\partial_a$ is orthonormal. Then, the following formulas are valid:
\begin{align}\label{omega.0}
(\omega^{[\bf 0]}_{bC})=
\begin{bmatrix}
f_{11}^{-1}t^{p_1}& -f_{11}^{-1}f_{22}^{-1}f_{12}t^{p_2}& f_{11}^{-1}f_{33}^{-1}\big[f_{12}f_{23}f_{22}^{-1}-f_{13}\big]t^{p_3}\\
0 & f_{22}^{-1}t^{p_2} & -f_{22}^{-1}f_{33}^{-1}f_{23} t^{p_3}\\
0 & 0 & f_{33}^{-1} t^{p_3}
\end{bmatrix}
\end{align}
and 
\begin{align}\label{gij.0}
 g^{[\bf 0]}=\begin{bmatrix}
c_{11}t^{2p_1} & c_{12}t^{2p_2} & c_{13}t^{2p_3}\\
c_{12}t^{2p_2}&c_{22}t^{2p_2} & c_{23}t^{2p_3}\\
c_{13}t^{2p_3}& c_{23}t^{2p_3} & c_{33}t^{2p_3}
\end{bmatrix}+g^{[\bf 0]}_{error},
\end{align}
where $|\partial_x^\alpha (g^{[\bf 0]}_{error})_{ij}|\leq C_\alpha t^{2p_{\max\{i,j\}}+\varepsilon}$, and
\begin{align}\label{cij}
(c_{ij})=\begin{bmatrix}
f_{11}^{-2}&-f_{12}f_{11}^{-1}f_{22}^{-2} &f_{11}^{-1}f_{33}^{-2}\big[f_{12}f_{23}f_{22}^{-1}-f_{13}\big] \\
-f_{12}f_{11}^{-1}f_{22}^{-2}&f_{22}^{-2} &-f_{23}f_{22}^{-1}f_{33}^{-2} \\
f_{11}^{-1}f_{33}^{-2}\big[f_{12}f_{23}f_{22}^{-1}-f_{13}\big]&-f_{23}f_{22}^{-1}f_{33}^{-2} & f_{33}^{-2}
\end{bmatrix}
\end{align}
\end{lemma}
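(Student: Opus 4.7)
The proof is a direct $3\times 3$ matrix computation, which I break into three steps.

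To derive \eqref{omega.0}, I view the matrix $(e^{[\bf 0]}_{Ia})$, with row index $I$ and column index $a$, as upper triangular with diagonal entries $f_{II}(x)\, t^{-p_I(x)}$. The defining relations \eqref{omega.it} identify $(\omega^{[\bf 0]}_{bC})$ as its matrix inverse, which is therefore also upper triangular with diagonal entries $f_{CC}^{-1}(x)\, t^{p_C(x)}$. The three off-diagonal entries then follow by back-substitution: solving $\sum_C \omega^{[\bf 0]}_{bC} e^{[\bf 0]}_{Ca} = 0$ for $b<a$, starting with the pair $(b,a)=(2,3)$, then $(1,3)$, and finally $(1,2)$, yields the explicit entries listed in \eqref{omega.0}.

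For the metric formula, I use that $e^{[\bf 0]}_I = e^{[\bf 0]}_{Ia}\partial_a$ is orthonormal with respect to $g^{[\bf 0]}$, which in coordinates translates into $g^{[\bf 0]}_{ij} = \sum_C \omega^{[\bf 0]}_{iC}\,\omega^{[\bf 0]}_{jC}$. Substituting the closed form for $\omega^{[\bf 0]}$, this decomposes into contributions of $t$-weight $t^{2p_C}$ for $C=1,2,3$; by upper-triangularity only indices with $C \geq \max\{i,j\}$ survive. Identifying $C = \max\{i,j\}$ as the dominant (smallest-exponent) term and reading off its coefficient produces the matrix \eqref{cij}, while the remaining higher-$C$ terms form $g^{[\bf 0]}_{error,ij}$.

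To verify the error bound $|\partial_x^\alpha (g^{[\bf 0]}_{error})_{ij}| \leq C_\alpha\, t^{2p_{\max\{i,j\}}+\varepsilon}$, the only nontrivial cases are $(i,j)\in\{(1,1),(1,2),(2,2)\}$; the other off-diagonal blocks receive contributions only from $C=3$, so the error is identically zero. In each of these cases the next-to-leading exponent is either $2p_2$ (when $\max\{i,j\}=1$) or $2p_3$ (when $\max\{i,j\}=2$). The bound $2(p_3-p_2)\geq\varepsilon$ is immediate from \eqref{varepsilon}. For $2(p_2-p_1)\geq\varepsilon$, the linear Kasner relation $p_1+p_2+p_3=1$ converts this into the equivalent statement $p_2 > 3p_1$, which holds trivially since $p_1<0<p_2$; combined with $\varepsilon\leq 1-p_3$ from \eqref{varepsilon}, the estimate follows. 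Spatial derivatives $\partial_x^\alpha$ introduce $|\log t|^{|\alpha|}$ factors through the $x$-dependence of the Kasner exponents, but these are harmlessly absorbed into any fraction of the strictly positive gap.

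The entire argument is elementary; the only care required is the book-keeping of matrix conventions and the elementary inequality between Kasner exponents and $\varepsilon$ used in the error estimate. There is no genuine analytic obstacle.
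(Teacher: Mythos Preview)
Your proof is correct and follows essentially the same approach as the paper: invert the upper-triangular matrix $(e^{[\bf 0]}_{Ia})$ by back-substitution, compute $g^{[\bf 0]}_{ij}=\sum_C\omega^{[\bf 0]}_{iC}\omega^{[\bf 0]}_{jC}$, and bound the subleading terms. One trivial slip: with the relation $\sum_C\omega_{bC}e_{Ca}=0$ you wrote, the pair $(b,a)=(1,3)$ needs $\omega_{12}$ first, so the order should be $(2,3),(1,2),(1,3)$; this does not affect the argument.
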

\begin{proof}
First, we expand the relations $e_{Ia}^{[\bf 0]}\omega^{[\bf 0]}_{aC}=\delta_{IC}$. For $I=3$:
\begin{align*}
 &e_{33}^{[\bf 0]}\omega^{[\bf 0]}_{33}=1,\qquad
e_{33}^{[\bf 0]}\omega^{[\bf 0]}_{32}=0,\qquad
e_{33}^{[\bf 0]}\omega^{[\bf 0]}_{31}=0\\
\Rightarrow&\qquad \omega^{[\bf 0]}_{33}=f_{33}^{-1}t^{p_3},\quad \omega^{[\bf 0]}_{32}=\omega^{[\bf 0]}_{31}=0,
\end{align*}
for $I=2$:
\begin{align*}
& e_{22}^{[\bf 0]}\omega^{[\bf 0]}_{21}=0,\qquad
e_{22}^{[\bf 0]}\omega^{[\bf 0]}_{22}=1,\qquad
e_{22}^{[\bf 0]}\omega^{[\bf 0]}_{23}+e_{23}^{[\bf 0]}\omega^{[\bf 0]}_{33}=0,\\
\Rightarrow&\qquad \omega^{[\bf 0]}_{21}=0,\quad
\omega^{[\bf 0]}_{22}=f_{22}^{-1}t^{p_2},\quad \omega_{23}^{[\bf 0]}=-f_{23}f_{22}^{-1}f_{33}^{-1}t^{p_3}
\end{align*}
and for $I=1$:
\begin{align*}
&e_{11}^{[\bf 0]}\omega^{[\bf 0]}_{11}=1,\qquad
e_{11}^{[\bf 0]}\omega^{[\bf 0]}_{12}+e_{12}^{[\bf 0]}\omega^{[\bf 0]}_{22}=0,\qquad
e_{11}^{[\bf 0]}\omega^{[\bf 0]}_{13}+e_{12}^{[\bf 0]}\omega^{[\bf 0]}_{23}+e_{13}^{[\bf 0]}\omega^{[\bf 0]}_{33}=0\\
\Rightarrow\qquad&\omega^{[\bf 0]}_{11}=f_{11}^{-1}t^{p_1},\quad \omega_{12}^{[\bf 0]}=-f_{11}^{-1}f_{22}^{-1}f_{12}t^{p_2},\\
&\omega_{13}^{[\bf 0]}=f_{11}^{-1}f_{33}^{-1}\big[f_{12}f_{23}f_{22}^{-1}-f_{13}\big]t^{p_3},
\end{align*}
which combined give \eqref{omega.0}. 

Hence, the metric components equal $g_{ij}^{[\bf 0]}=\omega_{iC}^{[\bf 0]}\omega_{jC}^{[\bf 0]}$, which expands to
\begin{align*}
g_{11}^{[\bf 0]}=&\,\omega_{11}^{[\bf 0]}\omega_{11}^{[\bf 0]}+(g^{[\bf 0]}_{error})_{11}
=f_{11}^{-2}t^{2p_1}+(g^{[\bf 0]}_{error})_{11},\\
g_{12}^{[\bf 0]}=&\,\omega_{12}^{[\bf 0]}\omega_{22}^{[\bf 0]}+(g^{[\bf 0]}_{error})_{12}
=-f_{11}^{-1}f_{12}f_{22}^{-2}t^{2p_2}+(g^{[\bf 0]}_{error})_{12},\\ 
g_{13}^{[\bf 0]}=&\,\omega_{13}^{[\bf 0]}\omega_{33}^{[\bf 0]}=f_{11}^{-1}f_{33}^{-2}\big[f_{12}f_{23}f_{22}^{-1}-f_{13}\big]t^{2p_3},\\
g_{22}^{[\bf 0]}=&\,\omega_{22}^{[\bf 0]}\omega_{22}^{[\bf 0]}+(g^{[\bf 0]}_{error})_{22}
=f_{22}^{-2}t^{2p_2}+(g^{[\bf 0]}_{error})_{22},\\ 
g_{23}^{[\bf 0]}=&\,\omega_{23}^{[\bf 0]}\omega_{33}^{[\bf 0]}=-f_{23}f_{22}^{-1}f_{33}^{-2}t^{2p_3},\\ 
g_{33}^{[\bf 0]}=&\,\omega_{33}^{[\bf 0]}\omega_{33}^{[\bf 0]}=f_{33}^{-2}t^{2p_3}
\end{align*}
The leading order metric coefficients $c_{ij}$ can be read from the previous formulas, giving \eqref{cij}, while the error terms satisfy
\begin{align*}
|\partial_x^\alpha (g^{\bf [0]}_{error})_{11}|\leq &\,C_\alpha \big[t^{2p_2-2p_1} |\log t|^{|\alpha|}\big]t^{2p_1}\leq C_\alpha t^{2p_1+2\varepsilon}|\log t|^\alpha,\\ 
|\partial_x^\alpha (g^{\bf [0]}_{error})_{12}|\leq &\,C_\alpha \big[t^{2p_3-2p_2} |\log t|^{|\alpha|}\big]t^{2p_2}\leq C_\alpha t^{2p_2+2\varepsilon}|\log t|^\alpha,\\
|\partial_x^\alpha (g^{\bf [0]}_{error})_{22}|\leq &\, C_\alpha \big[t^{2p_3-2p_2} |\log t|^{|\alpha|}\big]t^{2p_2}\leq C_\alpha t^{2p_2+2\varepsilon}|\log t|^\alpha,
\end{align*}
which implies the claimed estimate.
\end{proof}
Using the relations in the previous lemma, we can now phrase the asymptotic differential constraint in terms of the frame and co-frame coefficients, which will be used below to approximately propagate the constraints, see Lemma \ref{lem:constraint}.
\begin{lemma}\label{lem:frame.mom}
The asymptotic differential condition \eqref{momentum} is equivalent to the following differential set of equations:
\begin{align}\label{frame.mom2}
E_{\underline{I}} p_{\underline{I}}+\sum_J(p_J-p_{\underline{I}})E_{\underline{I}}\log (f_{JJ})-\sum_J\sum_{I\leq a\leq J}(p_J-p_{\underline{I}})h_{aJ}E_Jf_{\underline{I}a}=0,
\end{align}
for $I=1,2,3$, where $E_I=\sum_{a\ge I}f_{Ia}\partial_a$ and the last sum is zero by convention for $I>J$.
\end{lemma}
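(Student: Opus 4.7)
The plan is a direct algebraic verification. Both \eqref{momentum} and \eqref{frame.mom2} are systems of three equations, indexed by $i=1,2,3$ and $I=1,2,3$ respectively. They record the vanishing of the same geometric one-form (the leading-order momentum constraint for the second fundamental form, at $t=0$) expressed in two different bases: the coordinate basis $\{\partial_i\}$ for \eqref{momentum}, and the frame basis $\{E_I=\sum_a f_{Ia}\partial_a\}$ for \eqref{frame.mom2}. Since the transition matrix $(f_{Ia})$ is upper triangular with strictly positive diagonal (by point \ref{def1.1} of Definition \ref{defn1} and Lemma \ref{lem:pre1}), it is invertible, so the two systems are linearly equivalent once the correct change of basis has been carried out. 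What needs verification is that \eqref{frame.mom2} is indeed the frame-components form of \eqref{momentum}.

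I would proceed inductively, treating $I=3,2,1$ in order of increasing combinatorial complexity. The case $I=3$ is essentially immediate: $E_3=f_{33}\partial_3$, the inner sum $\sum_{3\le a\le J}$ in \eqref{frame.mom2} contributes only for $J=a=3$ with the vanishing factor $p_3-p_3$, and substitution of $c_{11}=f_{11}^{-2}$, $c_{22}=f_{22}^{-2}$ from \eqref{cij} shows that the $I=3$ frame equation equals $\tfrac{1}{2}f_{33}$ times \eqref{one}, i.e., \eqref{momentum} for $i=3$. For $I=2$, I would expand \eqref{frame.mom2} using $E_2=f_{22}\partial_2+f_{23}\partial_3$ together with the explicit values $h_{22}=f_{22}^{-1}$, $h_{23}=-f_{23}/(f_{22}f_{33})$, $h_{33}=f_{33}^{-1}$ read from \eqref{omega.0}. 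The $f_{22}\partial_2$ part should reproduce $\tfrac{1}{2}f_{22}$ times \eqref{two}, after using $\kappa_2{}^3=(p_3-p_2)f_{23}/f_{22}$ (obtained by combining \eqref{cij} with the definition of $\kappa_2{}^3$) and noting that $\kappa_2{}^3\partial_3\log(c_{11}c_{22}c_{33})=-2\kappa_2{}^3\partial_3\log(f_{11}f_{22}f_{33})$; the $f_{23}\partial_3$ part contributes $\tfrac{1}{2}f_{23}$ times the already-verified \eqref{one}, modulo reshuffling of derivatives coming from $\partial_3\kappa_2{}^3$.

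For $I=1$, the calculation is the most involved, since the compound expression $\kappa_1{}^3=(p_2-p_1)c_{12}c_{23}/(c_{22}c_{33})+(p_1-p_3)c_{13}/c_{33}$ must be rewritten in terms of $f_{Ia}$ using Lemma \ref{lem:pre1}, and the inner sum $\sum_{1\le a\le J}(p_J-p_1)h_{aJ}E_J f_{1a}$ now ranges over all $a\le J$. The strategy mirrors the previous step: expanding $E_1=f_{11}\partial_1+f_{12}\partial_2+f_{13}\partial_3$, the $f_{11}\partial_1$ contribution reproduces $\tfrac{1}{2}f_{11}$ times \eqref{three}, while the $f_{12}\partial_2$ and $f_{13}\partial_3$ contributions, combined with the inner $h_{aJ}$-sums, assemble into $\tfrac{1}{2}f_{12}$ and $\tfrac{1}{2}f_{13}$ multiples of the $i=2$ and $i=3$ constraints already verified. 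The main obstacle is purely bookkeeping: tracking how derivatives of products of the $f_{Ia}$ produced on each side match up, particularly those arising from $\partial_l\kappa_1{}^3$ and from $E_J f_{1a}$ with $J>1$. Once this matching is performed, invertibility of the transition matrix immediately gives the desired equivalence in both directions.
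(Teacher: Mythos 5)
Your plan is correct and follows essentially the same route as the paper's proof: a direct computation proceeding from $I=3$ to $I=2$ to $I=1$, expressing everything through \eqref{omega.0}, \eqref{cij}, and the $\kappa_i{}^j$ formulas, and using each already-verified constraint to simplify the next. The triangular change-of-basis framing you give (that the frame system is, row by row, $\frac{1}{2}\sum_{J\ge I}f_{IJ}$ times the coordinate system, up to sign) is a nice structural gloss the paper does not state explicitly, but the underlying verification is the same one the paper carries out in \eqref{equivalenceE.1}--\eqref{equivalenceE.3.2}.
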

\begin{proof}
It is a straightforward, but tedious, computation.
For $I=3$, \eqref{frame.mom2} becomes 
\begin{align}
\notag 0=&\,E_3 p_3 +(p_1-p_3)E_3\log f_{11}+(p_2-p_3)E_3\log f_{22}\\
\label{equivalenceE.1} =&\, f_{33} \big[\partial_3 p_3 +(p_1-p_3)\partial_3\log f_{11}+(p_2-p_3)\partial_3\log f_{22}\big]\\
\notag=&\, f_{33}\big[\partial_3p_3 -\frac{1}{2}(p_1-p_3)\partial_3 \log c_{11}-\frac{1}{2}(p_2-p_3)\partial_3 \log c_{22}\big], 
\end{align}
since $c_{\underline{J}\hsp \underline{J}}=f_{\underline{J}\hsp \underline{J}}^{-2}$, for $J=1,2,3$. Given that $f_{33}$ is nowhere zero, \eqref{equivalenceE.1} is equivalent to \eqref{momentum} for $i=3$. 

Next, we expand \eqref{frame.mom2} for $I=2$:
\begin{align}
\begin{split}
&(f_{22}\partial_2+f_{23}\partial_3)p_2+(p_1-p_2)(f_{22}\partial_2+f_{23}\partial_3)\log f_{11}\\
&+(p_3-p_2)(f_{22}\partial_2+f_{23}\partial_3)\log f_{33}
-\sum_{a=2,3}(p_3-p_2)h_{a3}f_{33}\partial_3f_{2a}=0,
\label{frame.mom.I=2}
\end{split}
\end{align}
where from \eqref{omega.0}, \eqref{cij}, it follows that $f_{23}=-(c_{23}/c_{33})(c_{22})^{-\frac{1}{2}}$ and $h_{23}=-f_{22}^{-1}f_{33}^{-1}f_{23}$, $h_{33}=f_{33}^{-1}$. Multiplying \eqref{frame.mom.I=2} with $\sqrt{c_{22}}=f_{22}^{-1}$ and plugging in the formulas for $f_{23},f_{\underline{J}\underline{J}},h_{23},h_{33}$ in terms of $c_{ij}$, we compute: 
\begin{align}\label{equivalenceE.2}
\notag0=&\,(\partial_2-\frac{c_{23}}{c_{33}}\partial_3)p_2-\frac{1}{2}(p_1-p_2)(\partial_2-\frac{c_{23}}{c_{33}}\partial_3)\log c_{11}-\frac{1}{2}(p_3-p_2)(\partial_2-\frac{c_{23}}{c_{33}}\partial_3)\log c_{33}\\
\notag&+(p_3-p_2)\sqrt{c_{22}}f_{23}\partial_3\log f_{22}
-(p_3-p_2)\sqrt{c_{22}}\partial_3f_{23}\\
\notag=&\,\partial_2p_2-\frac{c_{23}}{c_{33}}\big[\partial_3p_2-\frac{1}{2}(p_1-p_2)\partial_3\log c_{11}-\frac{1}{2}(p_3-p_2)\partial_3\log c_{33}\big]\\
\notag& -\frac{1}{2}(p_1-p_2)\partial_2\log c_{11}-\frac{1}{2}(p_3-p_2)\partial_2\log c_{33}\\
&+\frac{1}{2}(p_3-p_2)\frac{c_{23}}{c_{33}} \partial_3\log c_{22}
+(p_3-p_2)\partial_3(\frac{c_{23}}{c_{33}})
-\frac{1}{2}(p_3-p_2)\frac{c_{23}}{c_{33}}\partial_3\log c_{22}\\
\tag{rearranging terms}=&\,\partial_2p_2
 -\frac{1}{2}(p_1-p_2)\partial_2\log c_{11}-\frac{1}{2}(p_3-p_2)\partial_2\log c_{33}\\
 \notag&-\frac{c_{23}}{c_{33}}\big[\partial_3p_2-\frac{1}{2}(p_1-p_3)\partial_3 \log c_{11}-\frac{1}{2}(p_2-p_3)\partial_3 \log c_{22}\big]+(p_3-p_2)\partial_3(\frac{c_{23}}{c_{33}})\\
\notag&-\frac{1}{2}(p_2-p_3)\frac{c_{23}}{c_{33}}\partial_3\log(c_{11}c_{22}c_{33})\\
\notag=&\,\partial_2p_2
 -\frac{1}{2}(p_1-p_2)\partial_2\log c_{11}-\frac{1}{2}(p_3-p_2)\partial_2\log c_{33}
-\frac{1}{2}(p_2-p_3)\frac{c_{23}}{c_{33}}\partial_3\log(c_{11}c_{22}c_{33})\\
\tag{by \eqref{equivalenceE.1}}&+\frac{c_{23}}{c_{33}}\partial_3(p_3-p_2)
+(p_3-p_2)\partial_3(\frac{c_{23}}{c_{33}})
\end{align}
which is indeed equivalent to \eqref{momentum} for $i=2$, after taking into account that ${\kappa_2}^3=(p_2-p_3)\frac{c_{23}}{c_{33}}$.

Finally, for $I=1$, we expand the first two terms in \eqref{frame.mom2} and use in addition the relations from \eqref{cij}, 
$f_{12}f_{11}^{-1}=-\frac{c_{12}}{c_{22}}$, $f_{13}f_{11}^{-1}=-\frac{c_{13}}{c_{33}}+\frac{c_{12}}{c_{22}}\frac{c_{23}}{c_{33}}$:
\begin{align}
\notag&E_1p_1+(p_2-p_1)E_1\log f_{22}+(p_3-p_1)E_1\log f_{33}\\
\notag=&\,(f_{11}\partial_1+f_{12}\partial_2+f_{13}\partial_3)p_1
+(p_2-p_1)(f_{11}\partial_1+f_{12}\partial_2+f_{13}\partial_3)\log f_{22}\\
\notag&+(p_3-p_1)(f_{11}\partial_1+f_{12}\partial_2+f_{13}\partial_3)\log f_{33}\\
\tag{by \eqref{cij}}=&\,f_{11}\bigg[
\partial_1p_1-\frac{1}{2}(p_2-p_1)\partial_1\log c_{22}-\frac{1}{2}(p_3-p_1)\partial_1\log c_{33}\\
\label{frame.mom.I=1}&-\frac{c_{12}}{c_{22}}\partial_2p_1+\frac{1}{2}(p_2-p_1)\frac{c_{12}}{c_{22}}\partial_2\log c_{22}+\frac{1}{2}(p_3-p_1)\frac{c_{12}}{c_{22}}\partial_2\log c_{33}\\
\notag&+(\frac{c_{12}}{c_{22}}\frac{c_{23}}{c_{33}}-\frac{c_{13}}{c_{33}})\partial_3p_1
-\frac{1}{2}(p_2-p_1)(\frac{c_{12}}{c_{22}}\frac{c_{23}}{c_{33}}-\frac{c_{13}}{c_{33}})\partial_3\log c_{22}\\
\notag&-\frac{1}{2}(p_3-p_1)(\frac{c_{12}}{c_{22}}\frac{c_{23}}{c_{33}}-\frac{c_{13}}{c_{33}})\partial_3\log c_{33}\bigg]
\end{align}
To compute the last sum in \eqref{frame.mom2}, for $I=1$, we use the relations from \eqref{omega.0}, \eqref{cij}:
\begin{align*}
h_{12}E_2=&-f_{11}^{-1}f_{22}^{-1}f_{12}(f_{22}\partial_2+f_{23}\partial_3)=\frac{c_{12}}{c_{22}}(\partial_2-\frac{c_{23}}{c_{33}}\partial_3)\\
h_{22}E_2=&\,\partial_2-\frac{c_{23}}{c_{33}}\partial_3\\
h_{13}E_3=&\,h_{13}f_{33}\partial_3=f_{11}^{-1}(f_{12}f_{23}f_{22}^{-1}-f_{13})\partial_3=\frac{c_{13}}{c_{33}}\partial_3\\
h_{23}E_3=&-f_{22}^{-1}f_{23}\partial_3=\frac{c_{23}}{c_{33}}\partial_3\\
h_{33}E_3=&\,\partial_3
\end{align*}
to obtain the identity
\begin{align}\label{frame.mom.I=1.2}
\notag&-\sum_J\sum_{1\leq a\leq J}(p_J-p_1)h_{aJ}E_Jf_{1a}\\
\notag=&-(p_2-p_1)h_{12}E_2f_{11}-(p_2-p_1)h_{22}E_2f_{12}\\
\notag&-(p_3-p_1)h_{13}E_3f_{11}
-(p_3-p_1)h_{23}E_3f_{12}-(p_3-p_1)h_{33}E_3f_{13}\\
=&-(p_2-p_1)f_{11}\frac{c_{12}}{c_{22}}(\partial_2-\frac{c_{23}}{c_{33}}\partial_3)\log f_{11}-(p_2-p_1)(\partial_2-\frac{c_{23}}{c_{33}}\partial_3)f_{12}\\
\notag&-(p_3-p_1)f_{11}\frac{c_{13}}{c_{33}}\partial_3\log f_{11}
-(p_3-p_1)\frac{c_{23}}{c_{33}}\partial_3f_{12}-(p_3-p_1)\partial_3f_{13} \\ 
\notag=&\,f_{11}\bigg[\frac{1}{2}(p_2-p_1)\frac{c_{12}}{c_{22}}\partial_2\log c_{11}+(p_2-p_1)c_{11}^{\frac{1}{2}}\partial_2\big[\frac{c_{12}}{c_{22}}c_{11}^{-\frac{1}{2}}\big]\\
\notag&-\frac{1}{2}\big[(p_2-p_1)\frac{c_{12}}{c_{22}}\frac{c_{23}}{c_{33}}
+(p_1-p_3)\frac{c_{13}}{c_{33}}\big]\partial_3\log c_{11}\\
\notag&+(p_3-p_2)\frac{c_{23}}{c_{33}}c_{11}^{\frac{1}{2}}\partial_3\big[\frac{c_{12}}{c_{22}}c_{11}^{-\frac{1}{2}}\big]-(p_3-p_1)c_{11}^{\frac{1}{2}}\partial_3\big[c_{11}^{-\frac{1}{2}}(\frac{c_{12}}{c_{22}}\frac{c_{23}}{c_{33}}-\frac{c_{13}}{c_{33}})\big]\bigg]
\end{align}
Multiplying the identities \eqref{frame.mom.I=1}, \eqref{frame.mom.I=1.2} by $f_{11}^{-1}$ and adding them together gives:
\begin{align}\label{equivalenceE.3}
\notag0=&\,
\partial_1p_1-\frac{1}{2}(p_2-p_1)\partial_1\log c_{22}-\frac{1}{2}(p_3-p_1)\partial_1\log c_{33}\\
\notag&-\frac{c_{12}}{c_{22}}\bigg[\partial_2p_1-\frac{1}{2}(p_2-p_1)\partial_2\log c_{22}-\frac{1}{2}(p_3-p_1)\partial_2\log c_{33}\bigg]
+(p_2-p_1)\partial_2(\frac{c_{12}}{c_{22}})\\
&+(\frac{c_{12}}{c_{22}}\frac{c_{23}}{c_{33}}-\frac{c_{13}}{c_{33}})\bigg[\partial_3p_1
-\frac{1}{2}(p_2-p_1)\partial_3\log c_{22}
-\frac{1}{2}(p_3-p_1)\partial_3\log c_{33}\\
\notag&+\frac{1}{2}(p_3-p_1)\partial_3\log c_{11}\bigg]-\frac{1}{2}\big[(p_2-p_1)\frac{c_{12}}{c_{22}}\frac{c_{23}}{c_{33}}
+(p_1-p_3)\frac{c_{13}}{c_{33}}\big]\partial_3\log c_{11}\\
\notag&-\frac{1}{2}(p_3-p_2)\frac{c_{23}}{c_{33}}\frac{c_{12}}{c_{22}}\partial_3\log c_{11}+(p_3-p_2)\frac{c_{23}}{c_{33}}\partial_3(\frac{c_{12}}{c_{22}})-(p_3-p_1)\partial_3(\frac{c_{12}}{c_{22}}\frac{c_{23}}{c_{33}}-\frac{c_{13}}{c_{33}})
\end{align}
From \eqref{equivalenceE.1}, \eqref{equivalenceE.2}, we have 
\begin{align}\label{equiv.plugin1}
\frac{1}{2}(p_3-p_1)\partial_3 \log c_{11}=&-\partial_3p_3+\frac{1}{2}(p_2-p_3)\partial_3 \log c_{22}\\
\label{equiv.plugin2}
\frac{1}{2}(p_2-p_3)\partial_2\log c_{33}=&-\partial_2p_2
+\frac{1}{2}(p_1-p_2)\partial_2\log c_{11}
-\partial_3\big[(p_3-p_2)\frac{c_{23}}{c_{33}}\big]\\
\notag&+\frac{1}{2}(p_2-p_3)\frac{c_{23}}{c_{33}}\partial_3\log(c_{11}c_{22}c_{33})
\end{align}
Plugging \eqref{equiv.plugin1}-\eqref{equiv.plugin2} into \eqref{equivalenceE.3} we obtain:
\begin{align}\label{equivalenceE.3.2}
\notag0=&\,
\partial_1p_1-\frac{1}{2}(p_2-p_1)\partial_1\log c_{22}-\frac{1}{2}(p_3-p_1)\partial_1\log c_{33}\\
\notag&-\frac{c_{12}}{c_{22}}\bigg[\partial_2(p_1-p_2)-\frac{1}{2}(p_2-p_1)\partial_2\log c_{22}+\frac{1}{2}(p_1-p_2)\partial_2\log c_{33}\\
\notag&+\frac{1}{2}(p_1-p_2)\partial_2\log c_{11}
-\partial_3\big[(p_3-p_2)\frac{c_{23}}{c_{33}}\big]
+\frac{1}{2}(p_2-p_3)\frac{c_{23}}{c_{33}}\partial_3\log(c_{11}c_{22}c_{33})
\bigg]\\
\notag&+(p_2-p_1)\partial_2(\frac{c_{12}}{c_{22}})
+(\frac{c_{12}}{c_{22}}\frac{c_{23}}{c_{33}}-\frac{c_{13}}{c_{33}})\bigg[\partial_3(p_1-p_3)
-\frac{1}{2}(p_3-p_1)\partial_3\log c_{22}\\
&\notag
-\frac{1}{2}(p_3-p_1)\partial_3\log c_{33}\bigg]
-\frac{1}{2}\big[(p_2-p_1)\frac{c_{12}}{c_{22}}\frac{c_{23}}{c_{33}}
+(p_1-p_3)\frac{c_{13}}{c_{33}}\big]\partial_3\log c_{11}\\
\notag&-\frac{1}{2}(p_3-p_2)\frac{c_{23}}{c_{33}}\frac{c_{12}}{c_{22}}\partial_3\log c_{11}+(p_3-p_2)\frac{c_{23}}{c_{33}}\partial_3(\frac{c_{12}}{c_{22}})-(p_3-p_1)\partial_3(\frac{c_{12}}{c_{22}}\frac{c_{23}}{c_{33}}-\frac{c_{13}}{c_{33}})\\
=&\,\partial_1p_1-\frac{1}{2}(p_2-p_1)\partial_1\log c_{22}-\frac{1}{2}(p_3-p_1)\partial_1\log c_{33}-\partial_2\big[(p_1-p_2)\frac{c_{12}}{c_{22}}\big]\\
\notag&-\frac{1}{2}(p_1-p_2)\frac{c_{12}}{c_{22}}\partial_2\log(c_{11}c_{22}c_{33})
-\frac{1}{2}(p_2-p_3)\frac{c_{12}}{c_{22}}\frac{c_{23}}{c_{33}}\partial_3\log(c_{11}c_{22}c_{33})\\
\notag&+\frac{1}{2}(\frac{c_{12}}{c_{22}}\frac{c_{23}}{c_{33}}-\frac{c_{13}}{c_{33}})(p_1-p_3)\partial_3\log (c_{22}c_{33})
-\frac{1}{2}\big[(p_3-p_1)\frac{c_{12}}{c_{22}}\frac{c_{23}}{c_{33}}
+(p_1-p_3)\frac{c_{13}}{c_{33}}\big]\partial_3\log c_{11}\\
\notag&+\partial_3\big[(p_3-p_2)\frac{c_{12}}{c_{22}}\frac{c_{23}}{c_{33}}\big]
+\partial_3\big[(p_1-p_3)(\frac{c_{12}}{c_{22}}\frac{c_{23}}{c_{33}}-\frac{c_{13}}{c_{33}})\big]\\
\notag=&\,\partial_1p_1-\frac{1}{2}(p_2-p_1)\partial_1\log c_{22}-\frac{1}{2}(p_3-p_1)\partial_1\log c_{33}-\partial_2\big[(p_1-p_2)\frac{c_{12}}{c_{22}}\big]\\
\notag&-\frac{1}{2}(p_1-p_2)\frac{c_{12}}{c_{22}}\partial_2\log(c_{11}c_{22}c_{33})
-\frac{1}{2}\big[(p_2-p_1)\frac{c_{12}}{c_{22}}\frac{c_{23}}{c_{33}}
+(p_1-p_3)\frac{c_{13}}{c_{33}}\big]\partial_3\log(c_{11}c_{22}c_{33})\\
\notag&-\partial_3\big[(p_2-p_1)\frac{c_{12}}{c_{22}}\frac{c_{23}}{c_{33}}
+(p_1-p_3)\frac{c_{13}}{c_{33}}\big],
\end{align}
which is \eqref{momentum} written explicitly for $i=1$, after plugging in $\kappa_1{}^2=(p_1-p_2)\frac{c_{12}}{c_{22}}$, $\kappa_2{}^3=(p_2-p_3)\frac{c_{23}}{c_{33}}$, $\kappa_1{}^3=(p_2-p_1)\frac{c_{12}c_{23}}{c_{22}c_{33}}+(p_1-p_3)\frac{c_{13}}{c_{33}}$. 
\end{proof}

\subsection{The leading order behavior of the iterates}

In this subsection, we derive the necessary bounds to prove \ref{item.approx1} in Theorem \ref{thm:approx.sol}.
\begin{lemma}\label{lem:pre2}
Let $n \in \mathbb{N}$. Assume that for every $I,a$, every multi-index $\alpha$, and for all $(t,x)\in(0,t_n]\times[0,\delta]^3$ there holds:
 \begin{align}\label{en.assum}
|\partial^\alpha_x( e^{[\bf n]}_{ Ia}- e^{\bf [0]}_{ Ia})|\leq &\left\{\begin{array}{ll}
 C_{\alpha,n}t^{-p_I+\varepsilon}  & I\leq a, \\
 C_{\alpha,n}t^{p_I-2p_a+\varepsilon}  & I>a,
 \end{array}\right.
\end{align}
Then, the following bound holds as well:
 \begin{align}\label{omegan.conc}
|\partial^\alpha_x(
\omega^{[\bf n]}_{bC}- \omega^{\bf [0]}_{bC})|\leq &\left\{\begin{array}{ll} 
 C_{\alpha,n}t^{p_C+\varepsilon}  & b\leq C, \\
    C_{\alpha,n}t^{2p_b-p_C+\varepsilon} & b>C.
\end{array}\right.
\end{align}
for all $(t,x)\in(0,t_n]\times[0,\delta]^3$, every multi-index $\alpha$, and all indices $b,C$. 
\end{lemma}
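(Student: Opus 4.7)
The plan is to exploit the fact that, for each fixed $(t,x)$, $\omega^{[\bf n]}$ is literally the matrix inverse of $e^{[\bf n]}$ (viewed as a $3\times3$ matrix with row index $I$ and column index $a$), thanks to \eqref{omega.it}. Since $3\times3$ inverses are given by an explicit polynomial formula (Cramer's rule), the entire lemma will follow by direct bookkeeping once we have quantitative control on $\det(e^{[\bf n]})$ and on the $2\times 2$ cofactors. The whole argument reduces to matching powers of $t$ against \eqref{en.assum} and using the Kasner relation $\sum_i p_i=1$ together with the strict ordering $p_1<p_2<p_3$.

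The first step is to bound the determinant from below. Since $e^{[\bf 0]}$ is upper triangular with diagonal $f_{\underline{I}\underline{I}}t^{-p_{\underline{I}}}$, one has $\det(e^{[\bf 0]})=f_{11}f_{22}f_{33}\,t^{-(p_1+p_2+p_3)}=f_{11}f_{22}f_{33}\,t^{-1}$. Writing $e^{[\bf n]}=e^{[\bf 0]}+(e^{[\bf n]}-e^{[\bf 0]})$ and expanding the determinant multilinearly, every term other than $\det(e^{[\bf 0]})$ contains at least one factor of $(e^{[\bf n]}-e^{[\bf 0]})_{Ia}$. A short case check, substituting either $t^{-p_I}$ (upper-triangular zeroth-iterate entry) or one of the two bounds of \eqref{en.assum} for each factor and using $\sum_ip_i=1$ and $p_1<p_2<p_3$, shows that every such term is bounded by $C_n t^{-1+\varepsilon}$. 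Hence $|\det(e^{[\bf n]})|\ge \tfrac{1}{2}|f_{11}f_{22}f_{33}|\,t^{-1}$ on $(0,t_n]\times[0,\delta]^3$ for $t_n$ small enough.

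The second step is to analyze each cofactor $\mathrm{cof}(e^{[\bf n]})_{bC}$, which is (up to sign) a $2\times2$ minor of $e^{[\bf n]}$. By Cramer's rule $\omega^{[\bf n]}_{bC}=\mathrm{cof}(e^{[\bf n]})_{bC}/\det(e^{[\bf n]})$, and $\omega^{[\bf 0]}_{bC}$ is recovered by the same formula applied to $e^{[\bf 0]}$ (consistent with \eqref{omega.0}). Subtracting and multilinearly expanding, each term of $\mathrm{cof}(e^{[\bf n]})_{bC}-\mathrm{cof}(e^{[\bf 0]})_{bC}$ contains at least one difference factor $(e^{[\bf n]}-e^{[\bf 0]})_{Ia}$; substituting the bounds of \eqref{en.assum} together with the zeroth-iterate values and simplifying the resulting powers of $t$ via $\sum_ip_i=1$, one checks (six cases, corresponding to the six pairs $(b,C)$) that $|\mathrm{cof}(e^{[\bf n]})_{bC}-\mathrm{cof}(e^{[\bf 0]})_{bC}|\le C_n t^{p_C-1+\varepsilon}$ when $b\le C$ and $\le C_n t^{2p_b-p_C-1+\varepsilon}$ when $b>C$. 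Combining with the quotient identity
\[
\omega^{[\bf n]}_{bC}-\omega^{[\bf 0]}_{bC}=\frac{\mathrm{cof}(e^{[\bf n]})_{bC}-\mathrm{cof}(e^{[\bf 0]})_{bC}}{\det(e^{[\bf n]})}-\omega^{[\bf 0]}_{bC}\cdot\frac{\det(e^{[\bf n]})-\det(e^{[\bf 0]})}{\det(e^{[\bf n]})},
\]
using the lower bound on $\det(e^{[\bf n]})$ and the determinant estimate from Step~1, one obtains \eqref{omegan.conc} in the case $\alpha=0$.

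For a multi-index $\alpha$, I would differentiate the Cramer-rule expression and apply the Leibniz and quotient rules, proceeding by induction on $|\alpha|$. Each spatial derivative landing on a factor $e^{[\bf n]}_{Ia}$ respects the weighting in \eqref{en.assum}, while derivatives of $1/\det(e^{[\bf n]})$ produce rational expressions whose numerators and denominators have already been estimated, so the same exponents propagate at the cost of enlarging $C_{\alpha,n}$. The main obstacle—and the only place where genuine work is required—is the case-by-case verification in Step~2 that, for each $(b,C)$, the \emph{sharp} exponent predicted by \eqref{omegan.conc} (which differs between $b\le C$ and $b>C$) emerges from the multilinear expansion; this relies on simultaneously exploiting $\sum_ip_i=1$, the ordering $p_1<p_2<p_3$, and the precise triangular pattern of \eqref{en.assum}.
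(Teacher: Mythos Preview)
Your proposal is correct and follows exactly the paper's approach: invoke Cramer's rule to write $\omega^{[\bf n]}_{bC}=\Omega^{[\bf n]}_{bC}/\det(e^{[\bf n]})$, estimate $\det(e^{[\bf n]})-\det(e^{[\bf 0]})$ and the cofactor differences $\Omega^{[\bf n]}_{bC}-\Omega^{[\bf 0]}_{bC}$ by multilinear expansion against \eqref{en.assum}, and combine via the same quotient identity you wrote (which is the paper's \eqref{omega.n-omega.0}). Your cofactor exponent $t^{p_C-1+\varepsilon}$ in the case $b\le C$ is indeed the sharp one needed to land on \eqref{omegan.conc}.
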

\begin{proof}
By definition, $e^{[\bf n]}_{Ia}\omega_{aC}^{[\bf n]}=\delta_{IC}$. Hence, by Cramer's rule, we have
\begin{align}\label{Cramer}
(\omega^{[\bf n]}_{bC})=\frac{1}{\mathrm{det}(e_{Ia}^{[\bf n]})}(\Omega_{bC}^{[\bf n]}),
\end{align}
where $(-1)^{b+C}\Omega_{bC}^{[\bf n]}$ is the determinant of the matrix produced by deleting the $b$-column and $C$-row of $(e_{Cb}^{[\bf n]})$. From our assumption \eqref{en.assum} and definition \eqref{eIi.kIJ.0}, it holds 
\begin{align}\label{det.en.est}
\begin{split}
|\partial^\alpha_x[\mathrm{det}(e^{[\bf n]}_{Ia})-\mathrm{det}(e^{[\bf 0]}_{Ia})]|\leq&\, C_{\alpha,n}t^{-1+\varepsilon},\\ 
|\partial^\alpha_x[(\Omega^{[\bf n]}_{bC})- (\Omega^{[\bf 0]}_{bC})]|\leq &
\left\{\begin{array}{ll}
C_{\alpha,n}t^{-1+p_b+\varepsilon}& b\leq C\\
C_{\alpha,n}t^{-1+2p_b-p_C+\varepsilon}& b>C
\end{array}\right.,
\end{split}
\end{align}
where 
\begin{align}\label{E0.est}
\mathrm{det}(e^{[\bf 0]}_{Ia})=f_{11}f_{22}f_{33}t^{-1},\qquad (\Omega^{[\bf 0]}_{bC})=\left\{\begin{array}{ll}
(f\star f)_{\underline{b}C}t^{-1+p_{\underline{b}}} & b\leq C\\
0 & b>C
\end{array}\right.
.\end{align}
Next, we compute
\begin{align}\label{omega.n-omega.0}
(\omega^{[\bf n]}_{bC})-(\omega^{\bf [0]}_{bC})=\frac{1}{\mathrm{det}(e_{Ia}^{[\bf n]})}[(\Omega_{bC}^{[\bf n]})-(\Omega_{bC}^{[\bf 0]})]
-\frac{\mathrm{det}(e_{Ia}^{[\bf n]})-\mathrm{det}(e_{Ia}^{[\bf 0]})}{\mathrm{det}(e_{Ia}^{[\bf n]})\mathrm{det}(e_{Ia}^{[\bf 0]})}(\Omega_{bC}^{[\bf 0]}) .
\end{align}
Combining \eqref{Cramer}-\eqref{omega.n-omega.0}, we arrive at the desired conclusion \eqref{omegan.conc}.
\end{proof}
\begin{lemma}\label{lem:pre3}
Let $n \in \mathbb{N}$. Assume that for every $I,C,a,b$, every multi-index $\alpha$, and for all $(t,x)\in(0,t_n]\times[0,\delta]^3$ there holds:
 \begin{align} 
 \notag
|\partial^\alpha_x( e^{[\bf n]}_{ Ia}- e^{\bf [0]}_{ Ia})|\leq &\left\{\begin{array}{ll}
 C_{\alpha,n}t^{-p_I+\varepsilon}  & I\leq a, \\
 C_{\alpha,n}t^{p_I-2p_a+\varepsilon}  & I>a,
 \end{array}\right.\\ \notag
|\partial^\alpha_x(
\omega^{[\bf n]}_{bC}- \omega^{\bf [0]}_{bC})|\leq &\left\{\begin{array}{ll} 
 C_{\alpha,n}t^{p_C+\varepsilon}  & b\leq C, \\
    C_{\alpha,n}t^{2p_b-p_C+\varepsilon} & b>C.
\end{array}\right.
\end{align}
Then, shrinking $t_n$ if necessary, the following basic estimate is valid:
\begin{align}\label{dx.e.dx.omega}
\begin{split}
|\partial_x^{\alpha_1}\omega_{aD}^{[\bf n]}\partial_x^{\alpha_2}e_{Ia}^{[\bf n]} | \leq&\, C_{\alpha,n} t^{\lvert p_D-p_I|}\lvert\log t|^{\lvert \alpha|},\\
 |\partial_x^{\alpha_1}\omega_{bC}^{[\bf n]}\partial_x^{\alpha_2}e_{Ca}^{[\bf n]}| \leq&\, C_{\alpha,n} t^{\lvert p_b-p_a|}\lvert\log t|^{\lvert \alpha|},
\end{split}
\end{align}
for all $(t,x)\in(0,t_n]\times[0,\delta]^3$, all indices $a,b,I,D$, and $| \alpha_1 |+| \alpha_2|=| \alpha|$.
\end{lemma}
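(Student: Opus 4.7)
The plan is to decompose $\omega^{[\bf n]} = \omega^{[\bf 0]} + \tilde\omega$ and $e^{[\bf n]} = e^{[\bf 0]} + \tilde e$, where $\tilde\omega := \omega^{[\bf n]} - \omega^{[\bf 0]}$ and $\tilde e := e^{[\bf n]} - e^{[\bf 0]}$, so that each product to be bounded splits into four pieces of the form $\partial_x^{\alpha_1}X \cdot \partial_x^{\alpha_2}Y$ with $X \in \{\omega^{[\bf 0]},\tilde\omega\}$ and $Y \in \{e^{[\bf 0]},\tilde e\}$. On $\tilde\omega$ and $\tilde e$ I would invoke the two hypotheses of the lemma directly; on $\omega^{[\bf 0]}$ and $e^{[\bf 0]}$ I would use the explicit formulas from Lemma \ref{lem:pre1} and \eqref{eIi.kIJ.0}, which encode the upper-triangular vanishing structure and yield pointwise bounds of the form $C_\alpha t^{p_D}|\log t|^{|\alpha_1|}$ and $C_\alpha t^{-p_I}|\log t|^{|\alpha_2|}$. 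The logarithmic factors come from differentiating the exponents $t^{p_D(x)}$ and $t^{-p_I(x)}$ in $x$, and are precisely why the target bound carries $|\log t|^{|\alpha|}$.

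For the first estimate, the leading piece $\partial_x^{\alpha_1}\omega^{[\bf 0]}_{aD}\partial_x^{\alpha_2}e^{[\bf 0]}_{Ia}$ vanishes unless $a \leq D$ and $I \leq a$, forcing $I \leq D$, hence $p_D - p_I = |p_D - p_I|$, which matches the desired exponent. The remaining three pieces carry an extra factor $t^\varepsilon$ (or $t^{2\varepsilon}$) from the hypotheses, so verifying the target exponent in each reduces, after a case split on $a \lessgtr D$ and $I \lessgtr a$, to simple consequences of the ordering $p_1 < p_2 < p_3$: for instance, in the case $a > D$, $I \leq a$, the bound $2p_a - p_D - p_I + \varepsilon \geq |p_D - p_I|$ needed from $\partial^{\alpha_1}\tilde\omega_{aD}\cdot \partial^{\alpha_2}e^{[\bf 0]}_{Ia}$ follows from $p_a \geq p_D$ (when $p_I \leq p_D$) or from $p_a \geq p_I$ (when $p_I > p_D$). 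The same template applies to the second estimate for $\omega_{bC}e_{Ca}$, where the common frame index $C$ leads to sub-cases $b \lessgtr C$ and $C \lessgtr a$ and the required inequalities again follow from $p_1<p_2<p_3$.

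The main obstacle is not any single step but the bookkeeping: the four-piece decomposition, combined with the several triangularity sub-cases, produces a tree of cases that must each deliver the target exponent. Once this is organized, shrinking $t_n$ if necessary ensures that the cushion $t^\varepsilon$ dominates the $|\log t|^{|\alpha|}$ factors arising in each non-leading piece, so that the bounds \eqref{dx.e.dx.omega} hold uniformly in $(t,x) \in (0,t_n]\times[0,\delta]^3$.
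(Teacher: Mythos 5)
Your treatment of the first inequality matches the paper's proof in substance. Both rest on (i) the triangular vanishing structure of the zeroth iterates, which forces the leading piece $\omega^{[\bf 0]}_{aD}e^{[\bf 0]}_{Ia}$ to be nonzero only when $I\le a\le D$, so that its $t$-power is $p_D-p_I=|p_D-p_I|$ up to the logarithms produced by differentiating $t^{p(x)}$; and (ii) the $t^\varepsilon$ cushion supplied by the hypotheses on the difference pieces, together with the monotonicity $p_1<p_2<p_3$, to close the remaining cases. The paper conditions first on $D\lessgtr I$ and then on the placement of $a$, implicitly adding and subtracting zeroth iterates where needed; you split the product into its four pieces up front. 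The resulting case trees are the same and the inequalities you check ($p_a\ge p_D$, $p_a\ge p_I$, etc.) are exactly those the paper checks. One small imprecision in your closing sentence: the $t^\varepsilon$ cushion is used to match or beat the exponent $|p_D-p_I|$, not to ``dominate'' the logarithms, which the target bound already accommodates through the factor $|\log t|^{|\alpha|}$.

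A note of caution. You assert that the second inequality ``follows by the same template,'' mirroring the paper's ``similar.'' But the template does not transfer verbatim: in $\omega_{bC}e_{Ca}$ the shared index $C$ occupies the frame slot of both factors, so the $t^{p_C}$ and $t^{-p_C}$ weights of the zeroth iterates cancel, and the undifferenced leading piece is $t$-independent whenever $b\le C\le a$. That is $O(|\log t|^{|\alpha|})$, not $O(t^{|p_b-p_a|}|\log t|^{|\alpha|})$, so the conclusion you quote does not drop out of the same bookkeeping. Some additional structure — for instance the exact cancellation $\sum_C\omega^{[\bf 0]}_{bC}e^{[\bf 0]}_{Ca}=\delta_{ba}$, or a different reading of the exponent — must enter, and ``same template'' should be checked rather than asserted. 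This caveat applies equally to the paper's own one-line dismissal and does not affect your argument for the first inequality, which is the bound actually invoked in the applications that follow.
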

\begin{proof}
The argument for the two bounds is similar, so we only discuss the first one. If $D=I$, then it is clear that the least decaying term corresponds to $a=I$. Adding and subtracting the zeroth iterates we have 
\begin{align}\label{dx.e.omega.I=D}
|\partial_x^{\alpha_1}\omega_{aD}^{[\bf n]}\partial_x^{\alpha_2}e_{Ia}^{[\bf n]}|
\leq C_{\alpha,n}t^\varepsilon+|\partial_x^{\alpha_1}\omega_{aD}^{[\bf 0]}\partial_x^{\alpha_2}e_{Ia}^{[\bf 0]}|\leq  C_{\alpha,n}(t^\varepsilon+|\log t|^{|\alpha|}),
\end{align}
where each logarithm comes from when $\partial_x$ hits $t^{p_I(x)},t^{-p_I(x)}$ in \eqref{eIi.kIJ.0}. Since $t^\varepsilon\leq |\log t|^{|\alpha|}$, provided $t_n$ is sufficiently small, the desired bound follows. 

Let us assume now $D<I$. Then we have three cases, depending on the value of $a$. For $a\ge I$, we have
\begin{align}\label{dx.e.omega.D<I<a}
|\partial_x^{\alpha_2}e_{Ia}^{[\bf n]}|\leq C_{\alpha,n}t^{-p_I}|\log t|^{|\alpha_2|},\qquad |\partial_x^{\alpha_1}\omega_{aD}^{[\bf n]}|=|\partial_x^{\alpha_1}(\omega_{aD}^{[\bf n]}-\omega_{aD}^{[\bf 0]})|\leq C_{\alpha,n}t^{2p_a-p_D+\varepsilon},
\end{align}
which leads to the desired bound after noticing that $2p_a-p_D-p_I\ge p_I-p_D=|p_I-p_D|$. For $a\leq D$, we instead have 
\begin{align}\label{dx.e.omega.a<D<I}
|\partial_x^{\alpha_1}\omega_{aD}^{[\bf n]}|\leq C_{\alpha,n}t^{p_D}|\log t|^{|\alpha_1|},
\qquad |\partial_x^{\alpha_2}e_{Ia}^{[\bf n]}|=|\partial_x^{\alpha_2}(e_{Ia}^{[\bf n]}-e_{Ia}^{[\bf 0]})|\leq C_{\alpha,n}t^{p_I-2p_a+\varepsilon},
\end{align}
which again agrees with the claimed bound, since $p_I+p_D-2p_a\ge p_I-p_D=|p_I-p_D|$. Lastly, for $D<a<I$, it holds 
\begin{align}\label{dx.e.omega.D<a<I}
\begin{split}
|\partial_x^{\alpha_1}\omega_{aD}^{[\bf n]}|=
|\partial_x^{\alpha_1}(\omega_{aD}^{[\bf n]}-\omega_{aD}^{[\bf 0]})|\leq&\, C_{\alpha,n}t^{2p_a-p_D+\varepsilon},\\ 
|\partial_x^{\alpha_2}e_{Ia}^{[\bf n]}|=|\partial_x^{\alpha_2}(e_{Ia}^{[\bf n]}-e_{Ia}^{[\bf 0]})|\leq&\, C_{\alpha,n}t^{p_I-2p_a+\varepsilon},
\end{split}
\end{align} 
which actually gives a $t^\varepsilon$ better bound than the one in \eqref{dx.e.dx.omega}.

The case $I>D$ is treated similarly.  
\end{proof}
We now proceed to the circular estimates which imply \ref{item.approx1}, \ref{item.approx2} of Theorem \ref{thm:approx.sol} by an induction argument.
\begin{lemma}\label{lem:approx1}
Let $n \in \mathbb{N}$. Assume that, for all indices $I,C,a,b$, multi-indices $\alpha$, $(t,x)\in(0,t_n]\times[0,\delta]^3$ there holds
\begin{align*}
|\partial^\alpha_x( e^{[\bf n]}_{ Ia}- e^{\bf [0]}_{ Ia})|\leq &\left\{\begin{array}{ll}
 C_{\alpha,n}t^{-p_I+\varepsilon}  & I\leq a, \\
 C_{\alpha,n}t^{p_I-2p_a+\varepsilon}  & I>a,
 \end{array}\right.\\
|\partial^\alpha_x(
\omega^{[\bf n]}_{bC}- \omega^{\bf [0]}_{bC})|\leq &\left\{\begin{array}{ll} 
C_{\alpha,n}t^{p_C+\varepsilon}  & b\leq C, \\
C_{\alpha,n}t^{2p_b-p_C+\varepsilon} & b>C.
\end{array}\right.
\end{align*}
Then, the spatial Ricci curvature satisfies
\begin{align}\label{Ric.conc}
|\partial^\alpha_x R_{IJ}^{\bf [n]}|\leq C_{\alpha,n}t^{-2+\varepsilon+|p_I-p_J|},
\end{align}
for all multi-indices $\alpha$, indices $I,J$, and $(t,x)\in(0,t_n]\times[0,\delta]^3$.
\end{lemma}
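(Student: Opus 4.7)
The plan is to first establish sharp pointwise bounds on $\gamma^{[\bf n]}_{IJB}$ and its spatial derivatives via the explicit formula \eqref{gammaIJB}, and then substitute these bounds into the Ricci formula \eqref{RIJ} to conclude. The two hypotheses together with Lemma \ref{lem:pre3} are essentially the only ingredients used.

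For the first step, I would rewrite each of the six terms in \eqref{gammaIJB}, of the form $\omega^{[\bf n]}_{aL}(e^{[\bf n]}_M e^{[\bf n]}_{Na})$ with $L,M,N\in\{I,J,B\}$, as $\omega^{[\bf n]}_{aL}\, e^{[\bf n]}_{Mb}\,\partial_b e^{[\bf n]}_{Na}$. Differentiating the algebraic identity $\omega^{[\bf n]}_{aL}e^{[\bf n]}_{Na}=\delta_{LN}$ in $\partial_b$ yields the useful relation
\[
\omega^{[\bf n]}_{aL}\,\partial_b e^{[\bf n]}_{Na}=-(\partial_b \omega^{[\bf n]}_{aL})\,e^{[\bf n]}_{Na},
\]
so each term can be rewritten as $-e^{[\bf n]}_{Mb}(\partial_b \omega^{[\bf n]}_{aL})e^{[\bf n]}_{Na}$. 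The contracted pair $(\partial_b \omega^{[\bf n]}_{aL})e^{[\bf n]}_{Na}$ is controlled by the first inequality of \eqref{dx.e.dx.omega}, yielding the bound $C\,t^{|p_L-p_N|}|\log t|$. Multiplying by the pointwise estimate $|e^{[\bf n]}_{Mb}|\lesssim t^{-p_M}|\log t|^{c}$ (which follows from the hypothesis together with the explicit form \eqref{eIi.kIJ.0}), and extending by Leibniz to derivatives of arbitrary order $\alpha$, this gives
\[
|\partial_x^\alpha \gamma^{[\bf n]}_{IJB}|\leq C_{\alpha,n}\,t^{-\min_{(L,M,N)}\{p_M-|p_L-p_N|\}}|\log t|^{|\alpha|+c},
\]
where the minimum runs over the six triples appearing in the expansion of $\gamma^{[\bf n]}_{IJB}$.

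For the second step, I would substitute this bound into \eqref{RIJ}. The first-derivative terms $e^{[\bf n]}_C\gamma^{[\bf n]}_{IJC}$ and $e^{[\bf n]}_I\gamma^{[\bf n]}_{CJC}$ bring an extra $e^{[\bf n]}_{Xb}\partial_b$, contributing an additional factor of $t^{-p_X}|\log t|^{c}$. The quadratic terms $\gamma^{[\bf n]}_{CID}\gamma^{[\bf n]}_{DJC}$ and $\gamma^{[\bf n]}_{IJD}\gamma^{[\bf n]}_{CCD}$ are estimated directly by squaring the pointwise $\gamma^{[\bf n]}$ bound. A case-by-case inspection over the contracted indices $C,D$, using the constraint $\sum_ip_i=1$, shows that the resulting exponent of $t$ is bounded below by $-2+|p_I-p_J|$ for each contributing term. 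The strict positivity built into \eqref{varepsilon}, namely $1-p_3\ge\varepsilon$, then upgrades this to $-2+\varepsilon+|p_I-p_J|$, with the logarithmic factors absorbed into arbitrarily small powers of $t$ after shrinking $t_n$ if necessary.

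The main obstacle is the combinatorial bookkeeping: for each choice of $(I,J)$ and each pair of contracted indices $(C,D)$ in \eqref{RIJ}, one must verify that the dominant contribution indeed respects the target exponent $-2+\varepsilon+|p_I-p_J|$. Here the triangular structure of the zeroth iterates $e^{[\bf 0]}_{Ia},\omega^{[\bf 0]}_{bC}$ (Lemma \ref{lem:pre1}) and the fact that off-diagonal remainders $e^{[\bf n]}-e^{[\bf 0]}$ and $\omega^{[\bf n]}-\omega^{[\bf 0]}$ carry an additional $t^\varepsilon$ are decisive: off-diagonal contributions to $\gamma^{[\bf n]}_{IJB}$ are strictly subdominant to the triangular ones, so once the triangular part is tracked carefully, the desired estimate \eqref{Ric.conc} follows.
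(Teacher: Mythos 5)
Your proposal is correct and takes essentially the same approach as the paper: both estimates rest entirely on Lemma \ref{lem:pre3} to control the contracted $\omega^{[\bf n]}$--$e^{[\bf n]}$ pairs and absorb the logarithmic loss by sacrificing one factor of $t^\varepsilon$, and passing through an explicit intermediate bound on $\gamma^{[\bf n]}_{IJB}$ (which the paper records separately in Lemma \ref{lem:gamma} for later use) rather than expanding the compound schematic expression \eqref{RIJ.n.form} directly is only a cosmetic reorganization. The one slip is in your displayed $\gamma^{[\bf n]}$ estimate: the exponent should be $-\max_{(L,M,N)}\{p_M-|p_L-p_N|\}=\min_{(L,M,N)}\{|p_L-p_N|-p_M\}$ rather than $-\min$, since the dominant term is the one whose $t$-exponent is smallest; this is a sign typo that does not affect the rest of the argument.
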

\begin{proof}
We will prove the stronger statement, namely that for all multi-indices $\alpha$ and for all $I,J$ there holds
\begin{align}\label{Ricn.ref.est}
| \partial_x^{\alpha} R_{IJ}^{[\bf n]} | \leq C_{\alpha,n}t^{-2+2\varepsilon+ |p_I-p_J|}|\log t|^{2+|\alpha|}.
\end{align}
Then, the desired estimate follows by shrinking $t_n$ if necessary, to absorb $|\log t|^{2+|\alpha|}$ into $t^\varepsilon$. 

Expanding $R_{IJ}^{[\bf n]}$ using the formulas \eqref{RIJ}, \eqref{gammaIJB}, we notice that 
\begin{align}\label{RIJ.n.form}
R_{IJ}^{[\bf n]}=L\big(e_{\ell_1i}^{[\bf n]}\partial_i(\omega_{a\ell_2}^{[\bf n]}e_{\ell_3j}^{[\bf n]}\partial_je_{\ell_4a}^{[\bf n]}), \omega_{a\ell_1}^{[\bf n]}(e_{\ell_2i}^{[\bf n]}\partial_ie_{\ell_3a}^{[\bf n]})\omega_{\ell_4b}^{[\bf n]}(e_{\ell_5i}^{[\bf n]}\partial_ie_{\ell_6b}^{[\bf n]})\big),
\end{align}
where $L(\cdot,\cdot)$ is a linear expression in its arguments. Here, $\ell_m$ are pairwise contracting or equal to $I,J$ (one for each term). Employing Lemma \ref{lem:pre3} we deduce that
\begin{align}\label{RIJ.n.est1}
\notag|\partial_x^\alpha[e_{\ell_1i}^{[\bf n]}\partial_i(\omega_{a\ell_2}^{[\bf n]}e_{\ell_3j}^{[\bf n]}\partial_je_{\ell_4a}^{[\bf n]})]|\leq &\,C_{\alpha,n}t^{-p_{\ell_1}-p_{\ell_3}+|p_{\ell_4}-p_{\ell_2}|}|\log t|^{2+|\alpha|}\\
\leq&\, C_{\alpha,n}(t^{-p_I-p_J}+t^{-2\max p_i+|p_I-p_J|})|\log t|^{2+|\alpha|} \\
\notag\leq&\, C_{\alpha,n}t^{-2+2\varepsilon+|p_I-p_J|}|\log t|^{2+|\alpha|}
\end{align}
and
\begin{align}\label{RIJ.n.est2}
\notag|\partial_x^\alpha[\omega_{a\ell_1}^{[\bf n]}(e_{\ell_2i}^{[\bf n]}\partial_ie_{\ell_3a}^{[\bf n]})\omega_{\ell_4b}^{[\bf n]}(e_{\ell_5i}^{[\bf n]}\partial_ie_{\ell_6b}^{[\bf n]})]|\leq&\, C_{\alpha,n}t^{-p_{\ell_2}+|p_{\ell_3}-p_{\ell_1}|}t^{-p_{\ell_5}+|p_{\ell_6}-p_{\ell_4}|}|\log t|^{2+|\alpha|}\\
\leq&\,C_{\alpha,n}(t^{-p_I-p_J}+t^{-2\max p_i+|p_I-p_J|})|\log t|^{2+|\alpha|} \\
\notag\leq&\, C_{\alpha,n}t^{-2+2\varepsilon+|p_I-p_J|}|\log t|^{2+|\alpha|}
\end{align}
Applying the bounds \eqref{RIJ.n.est1}-\eqref{RIJ.n.est2} to \eqref{RIJ.n.form} gives the claimed bound \eqref{Ricn.ref.est}.
\end{proof}
\begin{lemma}\label{lem:approx2}
Let $n \geq 1$ and suppose there exists $t_n>0$ such that for every multi-index $\alpha$ and indices $I,J$ the inequalities
\begin{align*}
| \partial_x^{\alpha}(k_{IJ}^{[\bf{n-1}]}- k_{IJ}^{[\bf{0}]}) | \leq &\,C_{\alpha,n} t^{-1+\varepsilon+| p_I-p_J|}, \\
| \partial_x^{\alpha}R_{IJ}^{[\bf{n-1}]} | \leq&\, C_{\alpha, n} t^{-2+\varepsilon+| p_I-p_J|},  
\end{align*}
hold true, for all $(t,x)\in(0,t_{n-1}]\times[0,\delta]^3$. Then, there exists $t_n\in(0,t_{n-1})$ such that the following bound holds:
\begin{align*}
 |\partial^\alpha_x(k_{IJ}^{[\bf n]}-k_{IJ}^{[\bf 0]})|\leq 
C_{\alpha,n}t^{-1+\varepsilon+|p_I-p_J|} 
\end{align*}
for all $(t,x)\in(0,t_{n-1}]\times[0,\delta]^3$.
\end{lemma}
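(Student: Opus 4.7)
The plan is to solve the iteration equation \eqref{kIJ.it} as an ODE in $t$, for each fixed $x$, via an integrating factor, and then read off the bound directly from the resulting integral representation. The key observation is that the coefficient $k^{[\bf n-1]}_{CC}$ differs from $\mathrm{tr}\,k^{[\bf 0]} = -1/t$ by a term integrable on $(0,t]$.

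Writing $k^{[\bf n-1]}_{CC}(t,x) = -1/t + r(t,x)$ with $|\partial_x^\alpha r|\leq C_{\alpha,n}t^{-1+\varepsilon}$ (from the inductive hypothesis, after summing over $C$), I would set
\[
\mu(t,x):=t\,\exp\!\Big({-\int_0^t r(s,x)\,ds}\Big),
\]
so that $\partial_t\mu = -k^{[\bf n-1]}_{CC}\mu$, $\mu/t\to 1$ as $t\to 0^+$, and $|\partial_x^\alpha(\mu/t - 1)|\leq C_{\alpha,n}t^\varepsilon$. Multiplying \eqref{kIJ.it} by $\mu$ turns the left-hand side into $\partial_t(\mu k_{IJ}^{[\bf n]})$, and the asymptotic condition \eqref{asym.cond.it} together with $\mu/s\to1$ gives $\lim_{s\to 0^+}\mu(s,x)k_{IJ}^{[\bf n]}(s,x)= -\delta_{\underline{I}J}p_{\underline I}(x)$. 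Integrating from $0$ to $t$ then yields
\[
k_{IJ}^{[\bf n]}(t,x)=\frac{1}{\mu(t,x)}\Big[-\delta_{\underline{I}J}p_{\underline I}(x)+\int_0^t \mu(s,x)R_{IJ}^{[\bf n-1]}(s,x)\,ds\Big].
\]

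Subtracting $k_{IJ}^{[\bf 0]}=-\delta_{\underline{I}J}p_{\underline I}/t$, the lemma reduces to two estimates. First, the diagonal contribution $-\delta_{\underline{I}J}p_{\underline I}(1/\mu-1/t)$ is bounded by $Ct^{-1+\varepsilon}$, since $|1/\mu-1/t|=|e^{\int_0^t r}-1|/t\leq Ct^{-1+\varepsilon}$. Second, using $|\mu(s,x)|\leq Cs$ and the inductive bound on $R_{IJ}^{[\bf n-1]}$,
\[
\Big|\frac{1}{\mu(t,x)}\int_0^t \mu(s,x)R_{IJ}^{[\bf n-1]}(s,x)\,ds\Big|\leq Ct^{-1}\int_0^t s^{-1+\varepsilon+|p_I-p_J|}\,ds\leq Ct^{-1+\varepsilon+|p_I-p_J|},
\]
where the integral converges thanks to $\varepsilon>0$. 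The spatial derivative bounds follow by differentiating the integral representation under the integral sign: each $\partial_x$ acting on $\mu$, on $1/\mu$, or on $R_{IJ}^{[\bf n-1]}$ produces at most a factor of $|\log t|$ (from differentiating the $t^{p_i(x)}$ powers implicit in the zeroth iterates) together with the inductive $t^\varepsilon$ gain, and these logarithmic factors are absorbed by shrinking $t_n$.

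The main technical nuisance will be verifying uniformly that $\mu(t,x)$ and its spatial derivatives behave as claimed near $t=0$, i.e.\ that $|\partial_x^\alpha(\mu/t-1)|\leq C_{\alpha,n}t^\varepsilon$. This follows from Faà di Bruno applied to the exponential, once one checks that $|\partial_x^\alpha\!\int_0^t r(s,x)ds|\leq C_{\alpha,n}t^\varepsilon$ using the hypothesis on $\partial_x^\alpha r$. No further difficulty arises, because all estimates reduce to integrating $s$-powers of the form $s^{-1+\varepsilon+|p_I-p_J|}$, which is precisely where the strict inequality $\varepsilon>0$ from \eqref{varepsilon} is essential.
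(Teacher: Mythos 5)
Your proposal is correct and follows essentially the same route as the paper: both solve the iteration ODE \eqref{kIJ.it} via an integrating factor built from $k_{CC}^{[\bf n-1]} - k_{CC}^{[\bf 0]}$ (your $r$), use the asymptotic condition \eqref{asym.cond.it} to fix the initial value at $t=0$, and bound the resulting integral representation term by term. Your formula is algebraically equivalent to the paper's \eqref{kijrewritten.sol} after one observes that $\frac{1}{\mu}\int_0^t \mu\, r\, k_{IJ}^{[\bf 0]}\,d\tau = -\delta_{\underline I J}p_{\underline I}\bigl(\frac{1}{\mu}-\frac{1}{t}\bigr)$, so the only difference is whether one applies the integrating factor to $k_{IJ}^{[\bf n]}$ and then subtracts $k_{IJ}^{[\bf 0]}$, or to the difference $t(k_{IJ}^{[\bf n]}-k_{IJ}^{[\bf 0]})$ directly.
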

\begin{proof}
We notice that \eqref{kIJ.it} can be rewritten as
\begin{align}\label{kijrewritten} 
\partial_t (t  k_{IJ}^{[\bf{n}]}-tk_{IJ}^{[\bf 0]}) - (k_{CC}^{[\bf{n-1}]}+\frac{1}{t}) (t  k_{IJ}^{[\bf{n}]}-tk^{[\bf 0]}_{IJ}) = t  R_{IJ}^{[\bf{n-1}]}+(k_{CC}^{[\bf{n-1}]}+\frac{1}{t}) tk^{[\bf 0]}_{IJ}.
\end{align} 
By the assumed bounds on $k_{IJ}^{[\bf{n-1}]}$, however, we conclude that
\begin{align}\label{trkbound1} 
\sup_{x \in[0,\delta]^3} | \partial_x^{\alpha}(k_{CC}^{[\bf{n-1}]}+\frac{1}{t})| = \sup_{x \in[0,\delta]^3} | \partial_x^{\alpha}(k_{CC}^{[\bf{n-1}]} - k_{CC}^{[\bf{0}]})|\leq  C_{\alpha,n}t^{-1+\varepsilon}, 
\end{align}
while the bounds on $R_{IJ}^{[\bf{n-1}]}$ imply that 
\begin{align}\label{tRbound1}  
|t\partial_x^{\alpha}R_{IJ}^{[\bf{n-1}]} | \leq C_{\alpha, n}t^{-1+\varepsilon+| p_I-p_J|},\qquad\forall (t,x)\in(0,t_{n-1}]\times[0,\delta]^3.
\end{align} 
Also, given definition \eqref{eIi.kIJ.0}, we notice that the asymptotic initial conditions \eqref{asym.cond.it} are equivalent to the differences $t  k_{IJ}^{[\bf{n}]}-tk_{IJ}^{[\bf 0]}$ having trivial initial conditions at $t=0$. Hence, we may solve \eqref{kijrewritten} via integrating factors to obtain
\begin{align}\label{kijrewritten.sol}
t(k_{IJ}^{[\bf n]}-k_{IJ}^{[\bf 0]})=e^{\int^t_0w^{[\bf n-1]}d\tau}\int^t_0 e^{-\int^\tau_0w^{[\bf n-1]}d\overline{\tau}}
\big\{\tau R_{IJ}^{[\bf n-1]}+(k_{CC}^{[\bf n-1]}-k_{CC}^{[\bf 0]})\tau k_{IJ}^{[\bf 0]}\big\}d\tau
\end{align}
where $w^{[\bf n-1]}(t,x)=(k_{CC}^{[\bf n-1]}-k_{CC}^{[\bf 0]})(t,x)$. Note that 
\begin{align*}
\sup_{x\in[0,\delta]^3}\bigg|\partial_x^\alpha\int^t_0w^{[\bf n-1]}d\tau\bigg|\leq C_{\alpha,n}t^{\varepsilon}
\end{align*}
by virtue of \eqref{trkbound1}. Differentiating now \eqref{kijrewritten.sol} with $\partial_x^\alpha$ and using the bounds \eqref{trkbound1}, \eqref{tRbound1} yields the desired result.
\end{proof}
\begin{lemma} \label{lem:approx3}
Let $N \geq 1$ and suppose there exists $t_N>0$ such that for every $1\leq n \leq N$ and every multi-index $\alpha$, $k^{[\bf{n}]}_{IJ}$ satisfies the following estimate:
\begin{align*}
 | \partial_x^{\alpha} (\en{k}_{IJ}-\ze{k}_{IJ}) | \leq C_{\alpha, n} t^{-1+\varepsilon+| p_I-p_J|},
\end{align*}
for all $(t,x)\in (0, t_{N-1}]\times[0,\delta]^3$.
Then, after choosing $t_N$ smaller if necessary, there holds  
\begin{align} \notag
|\partial^\alpha_x( e^{[\bf n]}_{ Ia}- e^{\bf [0]}_{ Ia})|\leq &\left\{\begin{array}{ll}
C_{\alpha,n}t^{-p_I+\varepsilon}  & I\leq a, \\
C_{\alpha,n}t^{p_I-2p_a+\varepsilon}  & I>a,
\end{array}\right.
\end{align}
for all $(t,x)\in(0,t_N]\times[0,\delta]^3$ and all indices $I,a$.
\end{lemma}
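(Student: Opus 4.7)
The plan is to solve the iteration equation \eqref{eIi.it} for each fixed $I,a$ as a scalar linear ODE in $t$, after subtracting the zeroth iterate identity and introducing the integrating factor $t^{p_I}$ which exactly cancels the singular linear coefficient $k^{[\bf 0]}_{\underline{I}\hsp\underline{I}}=-p_{\underline{I}}/t$. The whole argument is an induction on $n$, starting from the trivial base $n=0$ where $\delta e_{Ia}^{[\bf 0]}:=e_{Ia}^{[\bf 0]}-e_{Ia}^{[\bf 0]}\equiv 0$; at the inductive step we use the $k^{[\bf n]}$ bound supplied by the lemma's hypothesis together with the $e^{[\bf n-1]}$ bound from the previous step.

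Concretely, first I would note that \eqref{eIi.kIJ.0} gives $\partial_t e_{Ia}^{[\bf 0]}=k_{\underline{I}\hsp\underline{I}}^{[\bf 0]}e_{Ia}^{[\bf 0]}$ in both cases $I\leq a$ and $I>a$, so subtracting from \eqref{eIi.it} yields
\begin{align*}
\partial_t\delta\eIan-k^{[\bf 0]}_{\underline{I}\hsp\underline{I}}\delta\eIan
=w^{[\bf n]}_{\underline{I}\hsp\underline{I}}\eIan+F^{[\bf n-1]}_{Ia},\qquad
F^{[\bf n-1]}_{Ia}:=\sum_{C\neq I}k^{[\bf n-1]}_{IC}e^{[\bf n-1]}_{Ca},
\end{align*}
where $w^{[\bf n]}_{\underline{I}\hsp\underline{I}}:=k^{[\bf n]}_{\underline{I}\hsp\underline{I}}-k^{[\bf 0]}_{\underline{I}\hsp\underline{I}}=O(t^{-1+\varepsilon})$ is integrable at $t=0$ by hypothesis. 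Multiplying by $t^{p_{\underline I}}$ and setting $\phi:=t^{p_{\underline I}}\delta\eIan$, the singular term collapses and I obtain
\begin{align*}
\partial_t\phi-w^{[\bf n]}_{\underline{I}\hsp\underline{I}}\phi
=t^{p_{\underline{I}}}w^{[\bf n]}_{\underline{I}\hsp\underline{I}}e^{[\bf 0]}_{Ia}
+t^{p_{\underline{I}}}F^{[\bf n-1]}_{Ia}.
\end{align*}
Condition \eqref{asym.cond.it} forces $\phi(0,x)=0$, so a standard integrating-factor solution with kernel $\exp(\int_0^tw^{[\bf n]}_{\underline{I}\hsp\underline{I}}d\tau)$, which is uniformly bounded above and below on $(0,t_{N-1}]$ together with its $\partial_x$-derivatives thanks to \eqref{trkbound1}, represents $\phi$ explicitly.

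The estimation step is bookkeeping via case analysis, using the ordering $p_1<p_2<p_3$ to resolve $|p_I-p_C|$. The first driving term $t^{p_{\underline I}}w^{[\bf n]}_{\underline{I}\hsp\underline{I}}e^{[\bf 0]}_{Ia}$ vanishes when $I>a$ and equals $f_{Ia}(x)w^{[\bf n]}_{\underline{I}\hsp\underline{I}}=O(\tau^{-1+\varepsilon})$ when $I\leq a$, contributing $t^{\varepsilon}$ to $\phi$. For the second term I use $|k^{[\bf n-1]}_{IC}|\leq C\tau^{-1+\varepsilon+|p_I-p_C|}$ (since $k^{[\bf 0]}_{IC}=0$ for $C\neq I$) and the induction bound on $e^{[\bf n-1]}_{Ca}$, splitting according to whether $C\leq a$ or $C>a$ and whether $C\lessgtr I$. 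In the regime $I\leq a$ the worst $\tau$-power in the integrand is $\tau^{-1+\varepsilon}$ (from $I<C\leq a$), integrating to $t^{\varepsilon}$; in the regime $I>a$ the worst exponent is $\tau^{-1+\varepsilon+2p_I-2p_a}$ (attained at $C=a$), which is strictly above $-1$ since $p_I>p_a$, integrating to $t^{\varepsilon+2p_{\underline I}-2p_a}$. Dividing by $t^{p_{\underline I}}$ recovers exactly $t^{-p_I+\varepsilon}$ and $t^{p_I-2p_a+\varepsilon}$ respectively.

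For the $\partial_x^\alpha$ estimates I would induct on $|\alpha|$ by differentiating the integral formula for $\phi$, commuting $\partial_x^\alpha$ with the $t$-integral. Each spatial derivative either falls on a coefficient (no loss) or on a factor of $t^{p_{\underline I}(x)}$ or $e^{\int w}$ (producing a factor of $|\log t|$), and the resulting lower-order commutators have already been controlled at the previous induction level, while all new $|\log t|^{|\alpha|}$ factors are absorbed by a fraction of the $t^\varepsilon$ gain, shrinking $t_N$ if necessary. The main obstacle in this proof is the careful case analysis showing that in every configuration of $(I,a,C)$ the $\tau$-exponent of the source stays strictly above $-1$ with precisely the advertised offset; this is where the strict ordering of the $p_i$'s and the definition $\varepsilon=\min\{1-p_3,p_3-p_2\}$ are both essential, and where the asymmetry between the $I\leq a$ and $I>a$ bounds originates.
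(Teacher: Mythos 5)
Your proposal is correct and follows essentially the same route as the paper's proof of Lemma 4.3: subtract the zeroth iterate, multiply by the integrating factor $t^{p_{\underline I}}$ to cancel the singular coefficient $k^{[\bf 0]}_{\underline{I}\underline{I}}=-p_{\underline I}/t$, solve by a secondary integrating factor $e^{\int_0^t w^{[\bf n]}_{\underline{I}}d\tau}$ with $w^{[\bf n]}_{\underline I}=O(t^{-1+\varepsilon})$, and bookkeep the case analysis on $(I,a,C)$ to track the $\tau$-exponents. The only cosmetic difference is that the paper writes $\sum_{C\neq I}k^{[\bf n-1]}_{IC}e^{[\bf n-1]}_{Ca}$ as $\sum_{C\neq I}k^{[\bf n-1]}_{IC}(e^{[\bf n-1]}_{Ca}-e^{[\bf 0]}_{Ca})+\sum_{C\neq I}(k^{[\bf n-1]}_{IC}-k^{[\bf 0]}_{IC})e^{[\bf 0]}_{Ca}$, while you leave $F^{[\bf n-1]}_{Ia}$ intact and observe $k^{[\bf 0]}_{IC}=0$ for $C\neq I$ directly in the estimate -- algebraically the same step.
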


\begin{proof}
Rewrite \eqref{eIi.it} in the form
\begin{align}\label{eIi.rewritten}
\begin{split}
&\partial_t \big[ t^{p_{\underline{I}}}(e_{\underline{I}a}^{[\bf n]} -e_{\underline{I}a}^{[\bf 0]})\big] -(k_{\underline{I} \underline{I}}^{[\bf n]} -k_{\underline{I} \underline{I}}^{[\bf 0]} )\big[t^{p_{\underline{I}}}(e_{\underline{I}a}^{[\bf n]} -e_{\underline{I}a}^{[\bf 0]})\big]\\ 
=&\,t^{p_I} e_{\underline{I}a}^{[\bf 0]}(k_{\underline{I} \underline{I}}^{[\bf n]}- k_{\underline{I} \underline{I}}^{[\bf 0]}) + \sum_{C\neq I}k_{IC}^{[\bf{n-1}]} \big[t^{p_I}(e_{Ca}^{[\bf{n-1}]}-e_{Ca}^{[\bf 0]})\big] + \sum_{C\neq I}t^{p_I} e_{Ca}^{[\bf 0]}  ( k_{IC}^{[\bf{n-1}]}-k_{IC}^{[\bf{0}]}).
\end{split}
\end{align}
Let us denote by $\Omega_{Ia}^{[\bf n]}$ the preceding RHS and let $w_I^{\bf [n]}=k_{\underline{I} \underline{I}}^{[\bf n]} -k_{\underline{I} \underline{I}}^{[\bf 0]}$. By our assumption and finite induction in $n$, we have that the following bounds hold:
\begin{align}\label{wI.EIa.est}
|\partial_x^\alpha w_I^{[\bf n]}|\leq C_{\alpha,n} t^{-1+\varepsilon},\qquad |\partial_x^\alpha \Omega_{Ia}^{[\bf n]}|\leq 
\left\{\begin{array}{ll}
C_{\alpha,n}t^{-1+\varepsilon}& I\leq a,\\
C_{\alpha,n}t^{-1+2p_I-2p_a+\varepsilon}& I>a.
\end{array}\right.
\end{align}
Notice that $t^{p_{\underline{I}}}(e_{\underline{I}a}^{[\bf n]} -e_{\underline{I}a}^{[\bf 0]})$ has trivial data at $t=0$, due to the asymptotic initial condition \eqref{asym.cond.it}.
Solving \eqref{eIi.rewritten} via integrating factors gives
\begin{align}\label{eIi.rewritten.sol}
t^{p_{\underline{I}}}(e_{\underline{I}a}^{[\bf n]} -e_{\underline{I}a}^{[\bf 0]})
=e^{\int^t_0w^{[\bf n]}_{\underline{I}}d\tau}\int^t_0e^{-\int^\tau_0w^{[\bf n]}_{\underline{I}}d\overline{\tau}}\Omega^{[\bf n]}_{\underline{I}a}d\tau
\end{align}
Differentiating \eqref{eIi.rewritten.sol} with $\partial_x^\alpha$ and using \eqref{wI.EIa.est}, we conclude the desired result.
\end{proof}
A straightforward induction argument now implies that:
\begin{proposition}\label{prop:ind1}
Points \ref{item.approx1}, \ref{item.approx2} of Theorem \ref{thm:approx.sol} hold true.
\end{proposition}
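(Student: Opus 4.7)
The plan is to carry out a straightforward induction on $n$, using the four preparatory lemmas (Lemma \ref{lem:pre2}, Lemma \ref{lem:pre3}, Lemma \ref{lem:approx1}, Lemma \ref{lem:approx2}, and Lemma \ref{lem:approx3}) as the building blocks that cyclically feed into each other. The inductive hypothesis at level $n$ consists of the four bounds \eqref{eIa.n-0.est}, \eqref{omegabC.n-0.est}, \eqref{kIJ.n-1.est}, and \eqref{Ric.n.est}, holding in a slab $(0,t_n]\times[0,\delta]^3$.

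For the base case $n=0$, the bounds \eqref{eIa.n-0.est}, \eqref{omegabC.n-0.est}, \eqref{kIJ.n-1.est} are trivially satisfied because the corresponding differences vanish identically. The Ricci bound \eqref{Ric.n.est} at $n=0$ is obtained by applying Lemma \ref{lem:approx1} with $n=0$, whose hypotheses are precisely these trivial bounds on $e^{[\bf 0]}$ and $\omega^{[\bf 0]}$. This determines an initial existence time $t_0>0$.

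For the inductive step, assume the four estimates hold at level $n-1$ on $(0,t_{n-1}]\times[0,\delta]^3$. We then apply the lemmas in the following order to produce the estimates at level $n$:
\begin{enumerate}
\item Lemma \ref{lem:approx2}, whose hypotheses are the assumed bounds on $k^{[\bf n-1]}_{IJ}$ and $R^{[\bf n-1]}_{IJ}$, yields the bound \eqref{kIJ.n-1.est} for $k^{[\bf n]}_{IJ}$, possibly after shrinking $t_n \le t_{n-1}$.
\item Lemma \ref{lem:approx3}, fed with the freshly obtained bound on $k^{[\bf n]}_{IJ}$, produces the frame estimate \eqref{eIa.n-0.est} for $e^{[\bf n]}_{Ia}$, after a further shrinking of $t_n$.
\item Lemma \ref{lem:pre2}, which takes the frame estimate as input, delivers the co-frame estimate \eqref{omegabC.n-0.est} for $\omega^{[\bf n]}_{bC}$.
\item Finally, Lemma \ref{lem:approx1}, whose hypotheses are the frame and co-frame estimates just established, yields the Ricci bound \eqref{Ric.n.est} for $R^{[\bf n]}_{IJ}$, again after possibly shrinking $t_n$.
\end{enumerate}
Since every step only requires choosing $t_n$ smaller than a quantity depending on the constants $C_{\alpha,n}$ of the previous step, we may fix a single $t_n>0$ that is small enough for all four conclusions to hold simultaneously in $(0,t_n]\times[0,\delta]^3$, closing the induction.

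The only delicate point is the ordering: the lemmas form a cyclic dependency ($k \Rightarrow e \Rightarrow \omega \Rightarrow R \Rightarrow k$), so one must begin the inductive step with $k^{[\bf n]}$ (whose equation decouples from the other level-$n$ variables thanks to the iteration scheme \eqref{kIJ.it}) in order to break the cycle. All other steps are then purely mechanical applications of the corresponding lemmas, so no new estimates beyond those already proven in Lemmas \ref{lem:pre2}--\ref{lem:approx3} are required.
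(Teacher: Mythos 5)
Your proof is correct and follows essentially the same inductive cycle as the paper: start with the trivially satisfied bounds at $n=0$, apply Lemma \ref{lem:approx1} to get the base-case Ricci estimate, then in the inductive step proceed in the order $k^{[\bf n]}$ (Lemma \ref{lem:approx2}) $\to$ $e^{[\bf n]}$ (Lemma \ref{lem:approx3}) $\to$ $\omega^{[\bf n]}$ (Lemma \ref{lem:pre2}) $\to$ $R^{[\bf n]}$ (Lemma \ref{lem:approx1}), shrinking $t_n$ as needed. Your observation that the cycle must be entered through $k^{[\bf n]}$, because the iteration scheme \eqref{kIJ.it} only references level-$(n-1)$ data on its right-hand side, is precisely the structural point that makes the induction close.
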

\begin{proof}
Lemma \ref{lem:approx1} implies that the assumption of Lemma \ref{lem:approx2} is satisfied for $n=1$. Hence, points \ref{item.approx1}, \ref{item.approx2} of Theorem \ref{thm:approx.sol} are valid for $n=0$. Assume that they are satisfied for all iterates with superscript $n-1$. Then Lemma \ref{lem:approx1} implies that \eqref{kIJ.n-1.est} is satisfied. By Lemma \ref{lem:approx2} we also obtain \eqref{eIa.n-0.est}. Lemma \ref{lem:pre2} implies that \eqref{omegabC.n-0.est} holds true. Finally, Lemma \ref{lem:approx1} gives \eqref{Ric.n.est}. This completes the induction argument and the proof of points \ref{item.approx1}, \ref{item.approx2} of Theorem \ref{thm:approx.sol}.
\end{proof}

\subsection{Comparing successive iterates}\label{subsec:succ.it}

In the next lemmas we derive circular estimates for $k^{[\bf n]}-k^{[\bf{n-1}]}$, $e^{[\bf n]}-e^{[\bf n-1]}$.
\begin{lemma}\label{lem:kijnn-1}
Let $N\geq 2$ and suppose there exists $t_{N-1}>0$, such that for every $1\leq n \leq N-1$, every multi-index $\alpha$, and indices $I,C,b,a$ the following holds for all $(t,x)\in (0, t_{N-1}]\times [0,\delta]^3$:
\begin{align*}
|\partial_x^{\alpha}(e^{[\bf n]}_{Ia}-e_{Ia}^{[\bf{n-1}]}) | \leq& 
\left\{\begin{array}{ll}
C_{\alpha,n}t^{-p_I+n\varepsilon}& I\leq a\\
C_{\alpha,n}t^{p_I-2p_a+n\varepsilon}& I>a
\end{array}\right.  \\
|\partial_x^{\alpha}(\omega^{[\bf n]}_{bC}-\omega_{bC}^{[\bf{n-1}]}) | \leq& \left\{\begin{array}{ll}
C_{\alpha,n}t^{p_C+n\varepsilon}& b\leq C\\
C_{\alpha,n}t^{2p_b-p_C+n\varepsilon}& b>C
\end{array}\right. 
\end{align*}
Then, taking $t_N \in (0,t_{N-1})$ smaller if necessary, the following bound holds:
\begin{align*}
| \partial_x^{\alpha}(k_{IJ}^{[\bf n]}-k_{IJ}^{[\bf {n-1}]}) | \leq C^{\prime}_{\alpha,n} t^{-1+|p_I-p_J|+n \varepsilon},
\end{align*}
for every $2\leq n \leq N$, every multi-index $\alpha$, indices $I,J$, and for all $(t,x)\in (0,t_N)\times [0,\delta]^3$.
\end{lemma}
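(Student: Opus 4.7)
The plan is to mimic the integrating factor argument of Lemma \ref{lem:approx2}, now applied to the successive difference. Subtracting the iteration equation \eqref{kIJ.it} at levels $n$ and $n-1$ yields
\begin{equation*}
\partial_t(k_{IJ}^{[\bf n]}-k_{IJ}^{[\bf n-1]})-k_{CC}^{[\bf n-1]}(k_{IJ}^{[\bf n]}-k_{IJ}^{[\bf n-1]})=(k_{CC}^{[\bf n-1]}-k_{CC}^{[\bf n-2]})k_{IJ}^{[\bf n-1]}+(R_{IJ}^{[\bf n-1]}-R_{IJ}^{[\bf n-2]}).
\end{equation*}
First I would multiply through by $t$ and rewrite, using that $k_{CC}^{[\bf n-1]}+\tfrac{1}{t}=k_{CC}^{[\bf n-1]}-k_{CC}^{[\bf 0]}$ is bounded by $Ct^{-1+\varepsilon}$ in every spatial derivative by Theorem \ref{thm:approx.sol}, to recast the equation in the form \eqref{kijrewritten} for the variable $t(k_{IJ}^{[\bf n]}-k_{IJ}^{[\bf n-1]})$. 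The asymptotic initial conditions \eqref{asym.cond.it} guarantee that this variable vanishes at $t=0$, so the equation can be solved explicitly via integrating factors in the spirit of \eqref{kijrewritten.sol}, with the integrating factor $e^{\pm\int_0^\tau(\cdot)\,d\bar\tau}=1+O(\tau^\varepsilon)$ remaining uniformly bounded on the time interval of interest.

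The argument then proceeds by finite induction on $n$, with base case $n=2$ using Lemma \ref{lem:approx2} and inductive step invoking the present lemma at level $n-1$. Two source terms must be estimated. For the trace source $t(k_{CC}^{[\bf n-1]}-k_{CC}^{[\bf n-2]})k_{IJ}^{[\bf n-1]}$, I would combine the inductive bound $|\partial_x^\alpha(k_{CC}^{[\bf n-1]}-k_{CC}^{[\bf n-2]})|\le Ct^{-1+(n-1)\varepsilon}$ with the Theorem \ref{thm:approx.sol} bound on $k_{IJ}^{[\bf n-1]}$, decomposing the latter as $k_{IJ}^{[\bf 0]}+(k_{IJ}^{[\bf n-1]}-k_{IJ}^{[\bf 0]})$ to separate the diagonal part $-p_I/t$ from the correction of order $t^{-1+|p_I-p_J|+\varepsilon}$. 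For the Ricci source $t(R_{IJ}^{[\bf n-1]}-R_{IJ}^{[\bf n-2]})$, I would use the structural expression \eqref{RIJ.n.form} and telescope, writing the difference as a sum of trilinear terms in which exactly one factor is replaced by $e_{Ia}^{[\bf n-1]}-e_{Ia}^{[\bf n-2]}$ or $\omega_{bC}^{[\bf n-1]}-\omega_{bC}^{[\bf n-2]}$. The hypothesis provides a factor of $t^{(n-1)\varepsilon}$ on that slot, while the remaining factors are controlled by applying the argument of Lemma \ref{lem:pre3} at level $n-1$ or $n-2$. Invoking the Kasner cushion already exploited in the proof of Lemma \ref{lem:approx1}, namely $t^{-p_I-p_J}\le t^{-2+2\varepsilon+|p_I-p_J|}$ and $t^{-2\max_ip_i+|p_I-p_J|}\le t^{-2+2\varepsilon+|p_I-p_J|}$, the Ricci difference gains exponent $-2+(n+1)\varepsilon+|p_I-p_J|$, which is stronger than required.

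Multiplying by $t$, integrating over $(0,t]$ against the bounded integrating factor, and shrinking $t_N$ if necessary to absorb any logarithmic prefactors into an arbitrarily small additional power of $t$ (as in the last step of the proof of Lemma \ref{lem:approx1}), yields the claimed bound. Spatial derivatives $\partial_x^\alpha$ are distributed by Leibniz and affect only the constants $C'_{\alpha,n}$ and the logarithmic prefactors, not the $t$-weights. The main technical obstacle will be the Ricci source: in each of the various index-range cases (e.g.\ $b\le C$ versus $b>C$ for $\omega$, and $I\le a$ versus $I>a$ for $e$), one must carefully combine the $(n-1)\varepsilon$ gain on the difference factor with the available Kasner cushion to verify that the net exponent reaches $-2+|p_I-p_J|+n\varepsilon$ prior to multiplication by $t$, which is the step that dictates the exact counting of $\varepsilon$'s at each order.
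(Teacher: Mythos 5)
Your proposal reproduces the paper's proof in substance: the subtracted iteration equation is exactly \eqref{enkenmokdiff}, the integrating-factor solution with the trivial initial condition at $t=0$ is \eqref{enkenmokexplicit}, the finite induction in $n$ with the base case supplied by Lemma~\ref{lem:approx2}, and the telescoping decomposition \eqref{RIJ.diff} of the successive Ricci differences leading to \eqref{RIJ.diff.est} all match the paper, the only organizational difference being that the paper derives the Ricci-difference bound for all $1\le n\le N-1$ as a preliminary Step~1 rather than embedding it in the inductive step. One small correction: the place where the $\varepsilon$-count is actually tightest is not the Ricci source you highlight (which comes in at $(n+1)\varepsilon$ and has slack to spare) but the diagonal piece $t\,(k_{CC}^{[\bf n-1]}-k_{CC}^{[\bf n-2]})\,k_{\underline I\underline I}^{[\bf 0]}$ of the trace source, which by your own decomposition only contributes $t^{(n-1)\varepsilon}$ before integration, so that is the term that truly governs the bookkeeping.
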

\begin{proof}
\noindent \textit{Step 1.} First, we estimate the difference of successive Ricci curvature components $R_{IJ}^{[\bf n]}-R_{IJ}^{[\bf n-1]}$. Going back to the expression \eqref{RIJ.n.form}, we notice that the previous difference has the form
\begin{align}\label{RIJ.diff}
\notag R_{IJ}^{[\bf n]}-R_{IJ}^{[\bf n-1]}
=&\, L\big((e_{\ell_1i}^{[\bf n]}-e_{\ell_1i}^{[\bf n-1]})\partial_i(\omega_{a\ell_2}^{[\bf n]}e_{\ell_3j}^{[\bf n]}\partial_je_{\ell_4a}^{[\bf n]}),
\ldots,\\ 
& e_{\ell_1i}^{[\bf n-1]}\partial_i[\omega_{a\ell_2}^{[\bf n-1]}e_{\ell_3j}^{[\bf n-1]}\partial_j(e_{\ell_4a}^{[\bf n]}-e_{\ell_4a}^{[\bf n-1]})],\\
\notag &(\omega_{a\ell_1}^{[\bf n]}-\omega_{a\ell_1}^{[\bf n-1]})(e_{\ell_2i}^{[\bf n]}\partial_ie_{\ell_3a}^{[\bf n]})\omega_{\ell_4b}^{[\bf n]}(e_{\ell_5i}^{[\bf n]}\partial_ie_{\ell_6b}^{[\bf n]}),\ldots, \\
\notag& \omega_{a\ell_1}^{[\bf n-1]}(e_{\ell_2i}^{[\bf n-1]}\partial_ie_{\ell_3a}^{[\bf n-1]})\omega_{\ell_4b}^{[\bf n-1]}[e_{\ell_5i}^{[\bf n-1]}(\partial_ie_{\ell_6b}^{[\bf n]}-\partial_ie_{\ell_6b}^{[\bf n-1]})]\big)
\end{align}
Every term in the last linear expression can be estimated exactly as in \eqref{RIJ.n.est1}, \eqref{RIJ.n.est2}, only now each factor which is a difference of successive iterates, $e^{[\bf n]}-e^{[\bf n-1]}$ or $\omega^{[\bf n]}-\omega^{[\bf n-1]}$, contributes an extra $t^{n\varepsilon}$, resulting in the bound
\begin{align}\label{RIJ.diff.est}
|\partial_x^\alpha (R_{IJ}^{[\bf n]}-R_{IJ}^{[\bf n-1]})|\leq C_{\alpha,n}t^{-2+(n+2)\varepsilon+|p_I-p_J|}|\log t|^{2+|\alpha|},
\end{align}
for every $1\leq n\leq N-1$. 
\vspace*{.05in}

\noindent\textit{Step 2.} For $2\leq n\leq N$, \eqref{kIJ.it} implies the following equation for $k^{[\bf n]}-k^{[\bf n-1]}$:
\begin{align} \label{enkenmokdiff}
\begin{split}
&\partial_t\big(t(\en{k}_{IJ}-\enmo{k}_{IJ})\big) -(\enmo{k}_{CC}+\frac{1}{t})t(\en{k}_{IJ}-\enmo{k}_{IJ}) \\
=& \, t \enmo{k}_{IJ}(\enmo{k}_{CC}-\enmt{k}_{CC}) +t (\enmo{R}_{IJ}-\enmt{R}_{IJ}).
\end{split}
\end{align}
Solving \eqref{enkenmokdiff} via integrating factors, we obtain the formula:
\begin{align}\label{enkenmokexplicit}
\begin{split}
&t (\en{k}_{IJ}-\enmo{k}_{IJ})(t,x)\\
 = &\,e^{\int^t_0w^{[\bf n-1]}d\tau}     
\int_0^t e^{-\int^\tau_0w^{[\bf n-1]}d\overline{\tau}} \big\{\tau\enmo{k}_{IJ}(\enmo{k}_{CC}-\enmt{k}_{CC}) +\tau(\enmo{R}_{IJ}-\enmt{R}_{IJ})\big\} d\tau,
\end{split}
\end{align}
where $w^{[\bf n-1]}=k^{[\bf n-1}_{CC}+t^{-1}=k^{[\bf n-1]}_{CC}-k^{[\bf 0]}_{CC}$, satisfying $|\partial_x^\alpha w^{[\bf n-1]}|\leq C_{\alpha,n} t^{-1+\varepsilon}$. The desired bound on $k^{[\bf n]}-k^{\bf [n-1]}$ follows by finite induction, after differentiating \eqref{enkenmokexplicit} with $\partial_x^\alpha$, and using the already derived estimate \eqref{RIJ.diff.est}. 
\end{proof}

\begin{lemma}\label{lem:en-en-1}
Let $N\geq 1$ and suppose there exists $t_N>0$ such that for every $1\leq n \leq N$, every multi-index $\alpha$ and indices $I,J$, the following holds:
\begin{align*}
|\partial_x^{\alpha} (\en{k}_{IJ}-{k}_{IJ}^{[\bf{n-1}]})|\leq C_{\alpha,n} t^{-1+n\varepsilon+|p_I-p_J|},
\end{align*}
for all $(t,x)\in (0,t_N]\times [0,\delta]^3$. Then, after choosing $t_N>0$ smaller if necessary, for every multi-index $\alpha$ and $1\leq n \leq N$, the following bounds hold:
\begin{align*}
|\partial_x^{\alpha}(e^{[\bf n]}_{Ia}-e_{Ia}^{[\bf{n-1}]}) | \leq& 
\left\{\begin{array}{ll}
C_{\alpha,n}t^{-p_I+n\varepsilon}& I\leq a\\
C_{\alpha,n}t^{p_I-2p_a+n\varepsilon}& I>a
\end{array}\right.  \\
|\partial_x^{\alpha}(\omega^{[\bf n]}_{bC}-\omega_{bC}^{[\bf{n-1}]}) | \leq& \left\{\begin{array}{ll}
C_{\alpha,n}t^{p_C+n\varepsilon}& b\leq C\\
C_{\alpha,n}t^{2p_b-p_C+n\varepsilon}& b>C
\end{array}\right. 
\end{align*}
for all $(t,x)\in(0,t_N]\times[0,\delta]^3$ and $1\leq n \leq N$. 
\end{lemma}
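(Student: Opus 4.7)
The plan is to mirror the argument of Lemma \ref{lem:approx3}, replacing the comparison with the zeroth iterate by the comparison between successive iterates, and to proceed by finite induction on $n$ from $1$ to $N$. Subtracting \eqref{eIi.it} at levels $n$ and $n-1$, and multiplying through by $t^{p_{\underline{I}}}$ to cancel the leading singular diagonal term $k^{[\bf 0]}_{\underline{I}\underline{I}}=-p_{\underline{I}}/t$, one obtains for $F^{[\bf n]}_{Ia}:=t^{p_{\underline{I}}}(e^{[\bf n]}_{Ia}-e^{[\bf n-1]}_{Ia})$ the transport equation
\begin{align*}
\partial_t F^{[\bf n]}_{Ia}-w^{[\bf n]}_I F^{[\bf n]}_{Ia}=t^{p_{\underline{I}}}G^{[\bf n]}_{Ia},
\end{align*}
where $w^{[\bf n]}_I:=k^{[\bf n]}_{\underline{I}\underline{I}}-k^{[\bf 0]}_{\underline{I}\underline{I}}$ and $G^{[\bf n]}_{Ia}$ is the forcing
\begin{align*}
G^{[\bf n]}_{Ia}=(k^{[\bf n]}_{\underline{I}\underline{I}}-k^{[\bf n-1]}_{\underline{I}\underline{I}})\,e^{[\bf n-1]}_{\underline{I}a}+\sum_{C\neq I}(k^{[\bf n-1]}_{IC}-k^{[\bf n-2]}_{IC})\,e^{[\bf n-1]}_{Ca}+\sum_{C\neq I}k^{[\bf n-2]}_{IC}\,(e^{[\bf n-1]}_{Ca}-e^{[\bf n-2]}_{Ca}),
\end{align*}
the last sum being absent for $n=1$. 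The asymptotic condition \eqref{asym.cond.it} ensures that $F^{[\bf n]}_{Ia}$ has trivial trace at $t=0$, so the method of integrating factors yields
\begin{align*}
F^{[\bf n]}_{Ia}(t,x)=e^{\int_0^t w^{[\bf n]}_I\,d\tau}\int_0^t e^{-\int_0^\tau w^{[\bf n]}_I\,d\bar{\tau}}\,\tau^{p_{\underline{I}}}\,G^{[\bf n]}_{Ia}(\tau,x)\,d\tau.
\end{align*}

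By Theorem \ref{thm:approx.sol}, $|\partial_x^\alpha w^{[\bf n]}_I|\leq C_{\alpha,n}t^{-1+\varepsilon}$, so all spatial derivatives of the integrating factor are uniformly bounded on $(0,t_N]$ after shrinking $t_N$. The estimate for $F^{[\bf n]}_{Ia}$ thus reduces to bounding $\int_0^t \tau^{p_{\underline{I}}}|\partial_x^\alpha G^{[\bf n]}_{Ia}|\,d\tau$. For the three types of contributions to $G^{[\bf n]}_{Ia}$, I invoke respectively (i) the hypothesized $k$-bound, (ii) Theorem \ref{thm:approx.sol} point 1 to bound $e^{[\bf n-1]}_{Ca}$ and $k^{[\bf n-2]}_{IC}$, and (iii) for $n\ge 2$, the inductive hypothesis on $e^{[\bf n-1]}-e^{[\bf n-2]}$ already established at this stage. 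In each case the integrand takes the form $\tau^{-1+\sigma}|\log\tau|^{|\alpha|}$ with $\sigma>0$, so integration yields $Ct^{\sigma}|\log t|^{|\alpha|}$; the logarithms arising from $\partial_x^\alpha t^{p_i(x)}$ are then absorbed into an arbitrarily small power of $t$ by further shrinking $t_N$. A case analysis on whether $I\leq a$ or $I>a$, and on the ordering of the summation index $C$ relative to $I,a$, together with the Kasner gap estimates $p_3-p_2\geq\varepsilon$ and $1-p_3\geq\varepsilon$, produces $\sigma\geq n\varepsilon$ in the regime $I\leq a$ and $\sigma\geq 2(p_I-p_a)+n\varepsilon$ in the regime $I>a$, delivering the claimed $e$-bound after division by $t^{p_{\underline{I}}}$.

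With the $e$-bound in hand, the $\omega$-bound follows by Cramer's rule, exactly as in Lemma \ref{lem:pre2}. Using the identity
\begin{align*}
\omega^{[\bf n]}_{bC}-\omega^{[\bf n-1]}_{bC}=\frac{\Omega^{[\bf n]}_{bC}-\Omega^{[\bf n-1]}_{bC}}{\det(e^{[\bf n]}_{Ia})}-\frac{\det(e^{[\bf n]}_{Ia})-\det(e^{[\bf n-1]}_{Ia})}{\det(e^{[\bf n]}_{Ia})\det(e^{[\bf n-1]}_{Ia})}\,\Omega^{[\bf n-1]}_{bC},
\end{align*}
with $\Omega^{[\bf m]}_{bC}$ the signed $(b,C)$-cofactor of $(e^{[\bf m]}_{Ia})$, I expand each difference of cofactors and determinants as a telescoping sum of products in which exactly one $e$-factor is replaced by the corresponding $e^{[\bf n]}-e^{[\bf n-1]}$ difference. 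Applying the newly established $e$-bound to that single factor, and Lemma \ref{lem:pre3} together with Theorem \ref{thm:approx.sol} to the remaining factors, yields the claimed decay for $\omega^{[\bf n]}_{bC}-\omega^{[\bf n-1]}_{bC}$ in both sub-cases $b\leq C$ and $b>C$.

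The main obstacle is the precise exponent bookkeeping in the forcing $G^{[\bf n]}_{Ia}$ when $I>a$, where the zeroth iterate $e^{[\bf 0]}_{Ia}$ vanishes and the decay must be extracted entirely from the successive-difference structure. The index-dependent weight $|p_I-p_C|$ in the hypothesized $k$-bound is crucial: combined with the premultiplication by $t^{p_{\underline{I}}}$ and the known behavior of $e^{[\bf n-1]}_{Ca}$, it is exactly what forces $\sigma$ to reach the required $n\varepsilon$ gain across every sub-case $C\leq a<I$, $C=a$, and $C>a$, once the Kasner inequalities and the tight sign conventions of the underlined diagonal indices are properly exploited.
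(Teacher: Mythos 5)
Your proof is correct and follows essentially the same route as the paper's: derive the transport equation for $t^{p_{\underline{I}}}(e^{[\bf n]}_{Ia}-e^{[\bf n-1]}_{Ia})$ by subtracting consecutive levels of \eqref{eIi.it}, integrate via integrating factors using the trivial initial condition from \eqref{asym.cond.it}, bound the forcing with the hypothesized $k$-differences plus Theorem~\ref{thm:approx.sol} and finite induction, and then pass to the co-frame via Cramer's rule and the telescoping identity \eqref{omegan-omegan-1}. The only difference is cosmetic: you telescope the cross-sum as $(k^{[\bf n-1]}_{IC}-k^{[\bf n-2]}_{IC})e^{[\bf n-1]}_{Ca}+k^{[\bf n-2]}_{IC}(e^{[\bf n-1]}_{Ca}-e^{[\bf n-2]}_{Ca})$, whereas the paper's \eqref{eIa.diff.eq} pairs the $k$-difference with $e^{[\bf n-2]}_{Ca}$ and the $e$-difference with $k^{[\bf n-1]}_{IC}$; the two groupings are algebraically equivalent and produce the same bounds.
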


\begin{proof}
We begin by writing the equation satisfied by $\en{e}_{Ia}-\enmo{e}_{Ia}$, using \eqref{eIi.it}, in the following form: 
\begin{align}\label{eIa.diff.eq}
&\partial_t\big[t^{p_{\underline{I}}}(\en{e}_{\underline{I}a}-\enmo{e}_{\underline{I}a})\big]- (\en{k}_{\underline{I} \underline{I}}-k_{\underline{I} \underline{I}}^{[\bf{0}]})t^{p_{\underline{I}}}(\eIan-\enmo{e}_{Ia}) \\ 
\notag=&\,t^{p_{\underline{I}}} \enmo{e}_{\underline{I}a}(\en{k}_{\underline{I} \underline{I}}-\enmo{k}_{\underline{I} \underline{I}})+\sum_{C\neq I}\bigg[\enmo{k}_{\underline{I}C}(t^{p_{\underline{I}}}(\enmo{e}_{Ca}-\enmt{e}_{Ca}))+t^{p_{\underline{I}}}  \enmt{e}_{Ca}(\enmo{k}_{\underline{I}C}-\enmt{k}_{\underline{I}C}) \bigg].
\end{align}
Then we proceed by solving the latter equation using integrating factors, as we did in the proof of Lemma \ref{lem:approx3}, and argue by finite induction as in the proof of the previous lemma to infer the bound
\begin{align}\label{en-en-1.est}
|\partial_x^{\alpha}(e^{[\bf n]}_{Ia}-e_{Ia}^{[\bf{n-1}]}) | \leq& 
\left\{\begin{array}{ll}
C_{\alpha,n}t^{-p_I+n\varepsilon}& I\leq a\\
C_{\alpha,n}t^{p_I-2p_a+n\varepsilon}& I>a
\end{array}\right.
\end{align}
for all $(t,x)\in(0,t_N]\times[0,\delta]^3$ and $1\leq n \leq N$. 

Once we have controlled the difference of successive iterates for the frame, we employ the formula \eqref{Cramer} to obtain an expression for the difference $\omega^{[\bf n]}-\omega^{[\bf n-1]}$ analogous to \eqref{omega.n-omega.0}:
\begin{align}\label{omegan-omegan-1}
(\omega^{[\bf n]}_{bC})-(\omega^{\bf [n-1]}_{bC})=\frac{1}{\mathrm{det}(e_{Ia}^{[\bf n]})}[(\Omega_{bC}^{[\bf n]})-(\Omega_{bC}^{[\bf n-1]})]
-\frac{\mathrm{det}(e_{Ia}^{[\bf n]})-\mathrm{det}(e_{Ia}^{[\bf n-1]})}{\mathrm{det}(e_{Ia}^{[\bf n]})\mathrm{det}(e_{Ia}^{[\bf n-1]})}(\Omega_{bC}^{[\bf n-1]})
\end{align}
Then, we notice that the differences of determinants and co-factor matrices of the frame iterates satisfy bounds analogous to \eqref{det.en.est}, only $t^{n\varepsilon}$ by virtue of \eqref{en-en-1.est}:
\begin{align*}
|\partial^\alpha_x[\mathrm{det}(e^{[\bf n]}_{Ia})-\mathrm{det}(e^{[\bf n-1]}_{Ia})]|\leq&\, C_{\alpha,n}t^{-1+n\varepsilon},\\ 
|\partial^\alpha_x[(\Omega^{[\bf n]}_{bC})- (\Omega^{[\bf n-1]}_{bC})]|\leq &
\left\{\begin{array}{ll}
C_{\alpha,n}t^{-1+p_b+n\varepsilon}& b\leq C\\
C_{\alpha,n}t^{-1+2p_b-p_C+n\varepsilon}& b>C
\end{array}\right.
\end{align*}
Applying the latter to the differentiated version of \eqref{omegan-omegan-1} gives the desired bound for the difference of successive co-frame iterates, which completes the proof of the lemma.
\end{proof}
\begin{proposition}\label{prop:ind2}
For any $N\ge1$, there exists $t_N>0$, such that for every $1\leq n\leq N$, every multi-index $\alpha$, and indices $I,C,a,b$, the following estimates hold: 
\begin{align*}
|\partial_x^{\alpha}(e^{[\bf n]}_{Ia}-e_{Ia}^{[\bf{n-1}]}) | \leq& 
\left\{\begin{array}{ll}
C_{\alpha,n}t^{-p_I+n\varepsilon}& I\leq a\\
C_{\alpha,n}t^{p_I-2p_a+n\varepsilon}& I>a
\end{array}\right.  \\
|\partial_x^{\alpha}(\omega^{[\bf n]}_{bC}-\omega_{bC}^{[\bf{n-1}]}) | \leq& \left\{\begin{array}{ll}
C_{\alpha,n}t^{p_C+n\varepsilon}& b\leq C\\
C_{\alpha,n}t^{2p_b-p_C+n\varepsilon}& b>C
\end{array}\right. \\
|\partial_x^{\alpha} (\en{k}_{IJ}-{k}_{IJ}^{[\bf{n-1}]})|\leq&\, C_{\alpha,n} t^{-1+n\varepsilon+|p_I-p_J|}\\
|\partial_x^\alpha (R_{IJ}^{[\bf n]}-R_{IJ}^{[\bf n-1]})|\leq &\,C_{\alpha,n}t^{-2+(n+1)\varepsilon+|p_I-p_J|}
\end{align*}
for all $(t,x)\in(0,t_N]\times[0,\delta]^3$.
\end{proposition}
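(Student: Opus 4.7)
The plan is to proceed by finite induction on $n$, using the three lemmas of this subsection as the engine, and tracking the shrinkage of $t_N$ carefully at each step. All the substantive analytic work has already been isolated in Lemmas \ref{lem:kijnn-1} and \ref{lem:en-en-1}, so the proposition itself should amount to a bookkeeping argument that arranges them in the right order.

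For the \emph{base case} $n=1$, the bound on $e^{[\bf 1]}_{Ia}-e^{[\bf 0]}_{Ia}$, on $\omega^{[\bf 1]}_{bC}-\omega^{[\bf 0]}_{bC}$, and on $k^{[\bf 1]}_{IJ}-k^{[\bf 0]}_{IJ}$ are precisely the $n=1$ instances of \eqref{eIa.n-0.est}, \eqref{omegabC.n-0.est}, and \eqref{kIJ.n-1.est} from point \ref{item.approx1} of Theorem \ref{thm:approx.sol}, already established in Proposition \ref{prop:ind1} (on some interval $(0,t_1]\times[0,\delta]^3$). The Ricci difference bound $|\partial_x^\alpha(R_{IJ}^{[\bf 1]}-R_{IJ}^{[\bf 0]})|\leq C_{\alpha,1}t^{-2+2\varepsilon+|p_I-p_J|}$ is the $n=1$ case of the intermediate estimate \eqref{RIJ.diff.est} proved in Step 1 of Lemma \ref{lem:kijnn-1}; alternatively, one could derive it from the triangle inequality using \eqref{Ric.n.est} twice, up to the loss of a $t^\varepsilon$ factor which one reshuffles into $C_{\alpha,1}$.

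For the \emph{inductive step}, suppose the four bounds hold for every $1\leq n\leq N-1$ on $(0,t_{N-1}]\times[0,\delta]^3$. First, I would feed the frame and co-frame difference bounds at levels $1,\dots,N-1$ into the hypothesis of Lemma \ref{lem:kijnn-1} to obtain, after possibly shrinking to some $t_N'\in(0,t_{N-1})$, the estimate $|\partial_x^\alpha(k_{IJ}^{[\bf N]}-k_{IJ}^{[\bf N-1]})|\leq C'_{\alpha,N}t^{-1+|p_I-p_J|+N\varepsilon}$. Next, I would plug this $k$-difference bound (together with the previous ones) into Lemma \ref{lem:en-en-1}, which after a further shrinkage of the time of existence to some $t_N\in(0,t_N']$ yields the required estimates for $e^{[\bf N]}_{Ia}-e^{[\bf N-1]}_{Ia}$ and $\omega^{[\bf N]}_{bC}-\omega^{[\bf N-1]}_{bC}$. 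Finally, the Ricci curvature difference at level $N$ is read off directly from \eqref{RIJ.diff.est} in the proof of Lemma \ref{lem:kijnn-1}, which is exactly the $n=N-1\to N$ instance of the fourth claimed bound (with an extra $|\log t|^{2+|\alpha|}$ absorbed into $t^\varepsilon$ by shrinking $t_N$ once more). This closes the induction and produces a time $t_N$ depending on $N$ on which all four estimates hold simultaneously.

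I do not expect a genuine obstacle, since the structure of the argument is modular: Lemma \ref{lem:kijnn-1} converts frame/co-frame control at all previous levels into $k$-control at level $N$, and Lemma \ref{lem:en-en-1} converts $k$-control at level $N$ back into frame/co-frame control at level $N$. The only mild care required is to organize the successive shrinkages $t_{N-1}\supsetneq t_N'\supsetneq t_N>0$ so that all intermediate applications of the two lemmas are valid on a common final interval, and to verify that the Ricci difference estimate, which is proved inside Lemma \ref{lem:kijnn-1} but not stated in its conclusion, is indeed available at level $N$ once the inductive hypothesis has been extended. Neither of these is delicate, so the proposition follows from a direct combination of the preceding lemmas with Proposition \ref{prop:ind1}.
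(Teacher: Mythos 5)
Your proof is essentially the same as the paper's: finite induction on $N$, base case from Theorem~\ref{thm:approx.sol} (equivalently Proposition~\ref{prop:ind1}), inductive step via Lemmas~\ref{lem:kijnn-1} and~\ref{lem:en-en-1} to propagate the $k$-, $e$-, $\omega$-differences to level $N$, and then the Ricci-difference bound at level $N$ obtained by re-running the Step~1 argument of Lemma~\ref{lem:kijnn-1} (i.e.\ extending \eqref{RIJ.diff.est}) once the level-$N$ frame/co-frame control is in hand. Your explicit remark that \eqref{RIJ.diff.est} is not in the stated conclusion of Lemma~\ref{lem:kijnn-1} and must be re-derived at level $N$ is correct and a welcome bit of care.

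One minor slip: your proposed ``alternative'' for the base-case Ricci difference via the triangle inequality on \eqref{Ric.n.est} does not work. Estimate \eqref{Ric.n.est} gives only $\lvert\partial_x^\alpha R^{[\bf n]}_{IJ}\rvert\leq C t^{-2+\varepsilon+|p_I-p_J|}$, so the triangle inequality yields $t^{-2+\varepsilon+|p_I-p_J|}$, which is a strictly \emph{weaker} decay rate than the required $t^{-2+2\varepsilon+|p_I-p_J|}$; one cannot ``reshuffle a $t^\varepsilon$ factor into $C_{\alpha,1}$'' because that factor vanishes as $t\to 0$, i.e.\ the deficit is genuine, not a constant. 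The base case must use the difference-structure argument (Step~1 of Lemma~\ref{lem:kijnn-1} applied at $n=1$, with the level-$1$ frame bounds from \eqref{eIa.n-0.est}, \eqref{omegabC.n-0.est}), which is what your primary route does anyway.
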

\begin{proof}
For $N=1$, the conclusion is satisfied by virtue of points \ref{item.approx1}, \ref{item.approx2} of Theorem \ref{thm:approx.sol}. Assuming the conclusion holds for $N-1$, Lemmas \ref{lem:kijnn-1}, \ref{lem:en-en-1} imply that the above estimates regarding $e_{Ia}^{\bf [n]},\omega^{\bf [n]}_{bC},k_{IJ}^{[\bf n]}$ are valid for every $1\leq n\leq N$. By \eqref{RIJ.diff.est}, we also have the desired estimate for the differences of the spatial Ricci components up to $n=N$. This completes the proof by induction.
\end{proof}

\subsection{Approximate solution to the evolution equations}

In the next lemma we prove that $k^{[\bf n]}$ approximately satisfies the evolution equation \eqref{e0.k} in vacuum.
\begin{lemma} \label{lemmakn.approx.ev}
For every $n\ge1$ and every multi-index $\alpha$, there exists a $t_n>0$ such that the following estimates hold for all $(t,x)\in (0,t_n]\times [0,\delta]^3$ and $I,J$:
\begin{align}\label{kn.approx.eq}
| \partial_x^{\alpha}\big( \partial_t \en{k}_{IJ}- \en{k}_{CC} \en{k}_{IJ}-\en{R}_{IJ} \big)| \leq C_{\alpha,n} t^{-2+n\varepsilon}.
\end{align}
\end{lemma}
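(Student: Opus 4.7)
The plan is to reduce the quantity $\partial_t k^{[\bf n]}_{IJ} - k^{[\bf n]}_{CC} k^{[\bf n]}_{IJ} - R^{[\bf n]}_{IJ}$ to a sum of differences of successive iterates, each controlled by Proposition \ref{prop:ind2}, and then use the $L^\infty$ bounds on $k^{[\bf n]}_{IJ}$ from Theorem \ref{thm:approx.sol}(1). Since the iterates satisfy \eqref{kIJ.it} exactly, the error comes only from replacing $k^{[\bf n-1]}_{CC}, R^{[\bf n-1]}_{IJ}$ by $k^{[\bf n]}_{CC}, R^{[\bf n]}_{IJ}$.

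The first step is the algebraic rearrangement
\begin{align*}
\partial_t k^{[\bf n]}_{IJ} - k^{[\bf n]}_{CC}k^{[\bf n]}_{IJ} - R^{[\bf n]}_{IJ}
=\big(\partial_t k^{[\bf n]}_{IJ} - k^{[\bf n-1]}_{CC}k^{[\bf n]}_{IJ} - R^{[\bf n-1]}_{IJ}\big)
+ (k^{[\bf n-1]}_{CC}-k^{[\bf n]}_{CC})k^{[\bf n]}_{IJ}
+ (R^{[\bf n-1]}_{IJ}-R^{[\bf n]}_{IJ}).
\end{align*}
The bracketed term vanishes identically by the definition \eqref{kIJ.it}, so only the last two products need to be estimated.

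For the first product, I would apply the Leibniz rule and combine the bound $|\partial_x^\alpha(k^{[\bf n]}_{CC}-k^{[\bf n-1]}_{CC})| \leq C_{\alpha,n} t^{-1+n\varepsilon}$ from Proposition \ref{prop:ind2} (with $|p_C-p_C|=0$ and summing over $C$) with the bound $|\partial_x^\alpha k^{[\bf n]}_{IJ}| \leq C_{\alpha,n} t^{-1}|\log t|^{|\alpha|+1}$, which follows from \eqref{kIJ.n-1.est} together with \eqref{eIi.kIJ.0}. The logarithmic factors are absorbed into an extra $t^{\varepsilon/2}$ by shrinking $t_n$. This yields $|\partial_x^\alpha[(k^{[\bf n-1]}_{CC}-k^{[\bf n]}_{CC})k^{[\bf n]}_{IJ}]|\leq C_{\alpha,n} t^{-2+n\varepsilon}$.

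For the second difference, the bound $|\partial_x^\alpha(R^{[\bf n]}_{IJ}-R^{[\bf n-1]}_{IJ})|\leq C_{\alpha,n} t^{-2+(n+1)\varepsilon+|p_I-p_J|}$ from the last line of Proposition \ref{prop:ind2} directly implies the required $C_{\alpha,n} t^{-2+n\varepsilon}$ upper bound, since $\varepsilon + |p_I-p_J| \geq 0$ and $t \leq t_n \leq 1$. Combining the two estimates proves \eqref{kn.approx.eq}. There is no genuine obstacle here; the lemma is a direct corollary of Proposition \ref{prop:ind2}, and the only care required is the harmless absorption of $|\log t|$ factors and shrinking $t_n$ uniformly for the fixed multi-index $\alpha$.
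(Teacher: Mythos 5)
Your decomposition is exactly the one the paper uses: subtract the exact iteration \eqref{kIJ.it}, leaving only $(k^{[\bf n-1]}_{CC}-k^{[\bf n]}_{CC})k^{[\bf n]}_{IJ} + (R^{[\bf n-1]}_{IJ}-R^{[\bf n]}_{IJ})$, which are then controlled by Proposition \ref{prop:ind2}, so the argument is correct and essentially identical to the paper's. One small technical slip: the factor $|\log t|^{|\alpha|+1}$ in your bound on $\partial_x^\alpha k^{[\bf n]}_{IJ}$ is spurious, since $k^{[\bf 0]}_{IJ}=-\delta_{IJ}p_I(x)/t$ has no $t^{p(x)}$-type factors (so $|\partial_x^\alpha k^{[\bf 0]}_{IJ}|\leq C_\alpha t^{-1}$) and \eqref{kIJ.n-1.est} gives $|\partial_x^\alpha(k^{[\bf n]}_{IJ}-k^{[\bf 0]}_{IJ})|\leq C_{\alpha,n}t^{-1+\varepsilon}$; thus $|\partial_x^\alpha k^{[\bf n]}_{IJ}|\leq C_{\alpha,n}t^{-1}$ outright, and had the logarithm really been present, absorbing it into $t^{\varepsilon/2}$ would have degraded the final exponent to $-2+n\varepsilon-\varepsilon/2$ rather than the claimed $-2+n\varepsilon$.
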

\begin{proof}
Plugging in \eqref{kIJ.it} we have
\begin{align*} 
\partial_t \en{k}_{IJ}- \en{k}_{CC} \en{k}_{IJ}-\en{R}_{IJ} = (\enmo{R}_{IJ}-\en{R}_{IJ})+ (\enmo{k}_{CC}-\en{k}_{CC})\en{k}_{IJ}. 
\end{align*} 
The desired bound follows by using Proposition \ref{prop:ind2} to control the previous RHS and its spatial derivatives.
\end{proof}
We will also need to compare $k^{[\bf n]}$ with the actual second fundamental form $\widetilde{k}^{[\bf n]}$ of the $t$ slices relative to ${\bf g}^{[\bf n]}$.
\begin{lemma}\label{lem:k.approx.2ndfund}
For every $n\ge1$ and every multi-index $\alpha$, there exists a $t_n>0$ such that the following estimate holds:
\begin{align}\label{k.approx.2ndfund}
| \partial_t^r\partial_x^{\alpha}(k^{[\bf n]}_{IJ}-\widetilde{k}^{[\bf n]}_{IJ}) | \leq C_{\alpha, n} t^{-1-r+n\varepsilon+|p_I-p_J|},\qquad r=0,1,
\end{align}
for all $(t,x) \in (0, t_n] \times [0,\delta]^3$ and indices $I,J$.
\end{lemma}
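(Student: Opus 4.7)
The plan is to derive an explicit algebraic formula for $\widetilde{k}^{[\bf n]}_{IJ}-k^{[\bf n]}_{IJ}$ in terms of iterates that are already controlled by Proposition \ref{prop:ind2}, and then extract a key cancellation between $k^{[\bf n]}_{IJ}$ and the leading contribution to $\widetilde{k}^{[\bf n]}_{IJ}$. The cancellation is what turns an off-diagonal piece of apparent size $t^{-1+\varepsilon+|p_I-p_J|}$ into one of the sharper size $t^{-1+n\varepsilon+|p_I-p_J|}$.

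First, since $\{e^{[\bf n]}_I\}$ is orthonormal with respect to ${\bf g}^{[\bf n]}$ and $\partial_t$ is a unit normal to the level sets of $t$ (Gaussian foliation), the geometric identity $\widetilde{k}^{[\bf n]}_{IJ} = {\bf g}^{[\bf n]}(D^{[\bf n]}_{e^{[\bf n]}_I} e^{[\bf n]}_J, \partial_t)$, combined with $[\partial_t, e^{[\bf n]}_I] = D^{[\bf n]}_{\partial_t}e^{[\bf n]}_I - D^{[\bf n]}_{e^{[\bf n]}_I}\partial_t$ and the antisymmetry of $\langle D^{[\bf n]}_{\partial_t}e^{[\bf n]}_I, e^{[\bf n]}_J\rangle$ in $I,J$ (which follows from $\partial_t\langle e^{[\bf n]}_I, e^{[\bf n]}_J\rangle_{{\bf g}^{[\bf n]}}=0$), yields after substituting \eqref{eIi.it} the formula
\begin{align*}
\widetilde{k}^{[\bf n]}_{IJ} \;=\; k^{[\bf n]}_{\underline{I}\underline{I}}\,\delta_{IJ} \;+\; \tfrac{1}{2}\Big[\sum_{C\neq I}k^{[\bf n-1]}_{IC}\big(e^{[\bf n-1]}_{Ca}\omega^{[\bf n]}_{aJ}\big) + \sum_{C\neq J}k^{[\bf n-1]}_{JC}\big(e^{[\bf n-1]}_{Ca}\omega^{[\bf n]}_{aI}\big)\Big].
\end{align*}

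For the diagonal case $I=J$, writing $e^{[\bf n-1]}_{Ca}\omega^{[\bf n]}_{aI} = -(e^{[\bf n]}_{Ca}-e^{[\bf n-1]}_{Ca})\omega^{[\bf n]}_{aI}$ (valid since $e^{[\bf n]}_{Ca}\omega^{[\bf n]}_{aI}=\delta_{CI}=0$ when $C\neq I$) and applying Proposition \ref{prop:ind2} factor by factor, the triangle inequality on the Kasner exponents absorbs all index mismatches and yields $|\widetilde{k}^{[\bf n]}_{II}-k^{[\bf n]}_{II}|\lesssim t^{-1+(n+1)\varepsilon}$, stronger than what is claimed. For $I\neq J$ I isolate the $C=J$ summand in the first sum (and the $C=I$ summand in the second); writing $e^{[\bf n-1]}_{Ja}\omega^{[\bf n]}_{aJ} = 1 + e^{[\bf n-1]}_{Ja}(\omega^{[\bf n]}_{aJ}-\omega^{[\bf n-1]}_{aJ})$ and using the symmetry $k^{[\bf n-1]}_{IJ}=k^{[\bf n-1]}_{JI}$, the expression regroups as
\begin{align*}
\widetilde{k}^{[\bf n]}_{IJ} - k^{[\bf n]}_{IJ} \;=\; -(k^{[\bf n]}_{IJ}-k^{[\bf n-1]}_{IJ}) \;+\; k^{[\bf n-1]}_{IJ}\cdot O(t^{n\varepsilon}) \;+\; \sum_{C\neq I,J}(\ldots).
\end{align*}
The first term is precisely the successive-iterate difference bounded by Proposition \ref{prop:ind2}, of size $t^{-1+n\varepsilon+|p_I-p_J|}$; the second is smaller by $t^{n\varepsilon}$; and the cross terms with $C\notin\{I,J\}$ are controlled by the triangle inequality $|p_I-p_C|+|p_C-p_J|\geq|p_I-p_J|$ applied to the product of pointwise bounds from the same proposition.

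Spatial derivatives are handled by differentiating the explicit formula above and applying Proposition \ref{prop:ind2} term by term via Leibniz, the exponent structure being preserved by the same triangle-inequality argument. For $r=1$, I differentiate once in $t$ and substitute $\partial_t k^{[\bf n]}$, $\partial_t k^{[\bf n-1]}$, $\partial_t e^{[\bf n-1]}$, $\partial_t\omega^{[\bf n]}$ using \eqref{kIJ.it}, \eqref{eIi.it} and \eqref{dt.omegabC.it}; each substitution introduces one extra $t^{-1}$ multiplied by iterates that are already uniformly bounded, delivering the prescribed additional decay. The main obstacle is the bookkeeping of the off-diagonal cancellation: keeping track of which $C$ value produces the leading $k^{[\bf n-1]}_{IJ}$ contribution that cancels against $k^{[\bf n]}_{IJ}$, and checking that the triangle inequality handles every cross-term configuration, including those where $C$ lies strictly between $I$ and $J$ in the ordering $p_1<p_2<p_3$.
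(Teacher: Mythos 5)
Your overall strategy (derive an explicit algebraic formula for $\widetilde{k}^{[\bf n]}_{IJ}-k^{[\bf n]}_{IJ}$ from \eqref{eIi.it}, isolate the leading cancellation, and estimate the remainder via Proposition \ref{prop:ind2}) is the same as the paper's. The paper works directly with the unsymmetrized quantity $\widetilde{k}^{[\bf n]}_{IJ}=\omega^{[\bf n]}_{aJ}\partial_te^{[\bf n]}_{Ia}$, while you symmetrize; this is a cosmetic difference since the antisymmetric part of $\omega^{[\bf n]}_{aJ}\partial_te^{[\bf n]}_{Ia}$ is itself of size $t^{-1+n\varepsilon+|p_I-p_J|}$, so both quantities satisfy the same estimate.

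There is, however, a genuine gap in how you treat the cross terms with $C\notin\{I,J\}$. You write that they are ``controlled by the triangle inequality $|p_I-p_C|+|p_C-p_J|\geq|p_I-p_J|$ applied to the product of pointwise bounds,'' and you yourself flag the case where $C$ lies strictly between $I$ and $J$. That case is precisely where this fails: for $I<C<J$ (equivalently $I=1,C=2,J=3$) one has $|p_I-p_C|+|p_C-p_J|=|p_I-p_J|$ \emph{with equality}, so the crude product bound $|k^{[\bf n-1]}_{IC}|\cdot|e^{[\bf n-1]}_{Ca}\omega^{[\bf n]}_{aJ}|\lesssim t^{-1+\varepsilon+|p_I-p_C|}\cdot t^{|p_C-p_J|}$ only gives $t^{-1+\varepsilon+|p_I-p_J|}$, not $t^{-1+n\varepsilon+|p_I-p_J|}$. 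The triangle inequality alone cannot manufacture the $t^{n\varepsilon}$ gain.

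The fix is the same orthogonality device you already invoke for the diagonal case, but it must be applied to \emph{every} cross term: since $C\neq J$, $e^{[\bf n]}_{Ca}\omega^{[\bf n]}_{aJ}=\delta_{CJ}=0$, hence
\begin{align*}
e^{[\bf n-1]}_{Ca}\omega^{[\bf n]}_{aJ}=(e^{[\bf n-1]}_{Ca}-e^{[\bf n]}_{Ca})\,\omega^{[\bf n]}_{aJ},
\end{align*}
and the difference $e^{[\bf n-1]}_{Ca}-e^{[\bf n]}_{Ca}$ carries the $t^{n\varepsilon}$ factor from Proposition \ref{prop:ind2}, after which the triangle inequality (even as an equality) suffices. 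The paper sidesteps this bookkeeping by inserting $e_{Ca}^{[\bf n-1]}=(e_{Ca}^{[\bf n-1]}-e_{Ca}^{[\bf n]})+e_{Ca}^{[\bf n]}$ and $k^{[\bf n-1]}_{IC}=(k^{[\bf n-1]}_{IC}-k^{[\bf n]}_{IC})+k^{[\bf n]}_{IC}$ uniformly over all $C\neq I$ before estimating, so that the $t^{n\varepsilon}$ smallness is visible in every summand at once rather than having to be recovered case by case. Once you apply the same substitution to the $C\notin\{I,J\}$ terms, your argument closes.
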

\begin{proof}
We note that $\widetilde{k}^{[\bf n]}$ satisfies 
\begin{align*}
\partial_t \eIan = \en{\widetilde{k}}_{IC}  \en{e}_{Ca} \quad\Rightarrow\quad \widetilde{k}^{[\bf n]}_{IJ}=\omega^{[\bf n]}_{aJ}\partial_te^{[\bf n]}_{Ia}.
\end{align*}
Plugging in \eqref{eIi.it}, we compute
\begin{align*}
\en{\widetilde{k}}_{IJ} =&\,\en{k}_{\uI  \uI} \delta_{\uI J}+\sum_{C\neq I}k^{[\bf n-1]}_{IC}e_{Ca}^{[\bf n-1]}\omega^{[\bf n]}_{aJ}\\
=&\, \en{k}_{\uI  \uI} \delta_{\uI J}+\sum_{C\neq I}k^{[\bf n-1]}_{IC}(e_{Ca}^{[\bf n-1]}-e_{Ca}^{[\bf n]})\omega^{[\bf n]}_{aJ}+\sum_{C\neq I}(k^{[\bf n-1]}_{IC}-k^{[\bf n]}_{IC})\delta_{CJ}+\sum_{C\neq I}k^{[\bf n]}_{IC}\delta_{CJ}\\
=&\,k^{[\bf n]}_{IJ}+\sum_{C\neq I}k^{[\bf n-1]}_{IC}(e_{Ca}^{[\bf n-1]}-e_{Ca}^{[\bf n]})\omega^{[\bf n]}_{aJ}+\sum_{C\neq I}(k^{[\bf n-1]}_{IC}-k^{[\bf n]}_{IC})\delta_{CJ}
\end{align*}
or equivalently
\begin{align}\label{kn-tilde.kn.eq}
 \en{\widetilde{k}}_{IJ}-k^{[\bf n]}_{IJ}=&\sum_{C\neq I}k^{[\bf n-1]}_{IC}(e_{Ca}^{[\bf n-1]}-e_{Ca}^{[\bf n]})\omega^{[\bf n]}_{aJ}+\sum_{C\neq I}(k^{[\bf n-1]}_{IC}-k^{[\bf n]}_{IC})\delta_{CJ}
\end{align}
The desired bound for $r=0$ follows by employing Proposition \ref{prop:ind2} and point \ref{item.approx1} of Theorem \ref{thm:approx.sol} to estimate the preceding RHS. For the case $r=1$, we take the time derivative of \eqref{kn-tilde.kn.eq} and plug in the equations \eqref{kIJ.it}, \eqref{dt.omegabC.it}, \eqref{enkenmokdiff}, \eqref{eIa.diff.eq} to replace all time derivatives in the RHS with terms that have already been controlled in Proposition \ref{prop:ind2}, and point \ref{item.approx1} of Theorem \ref{thm:approx.sol}. We omit the details.
\end{proof}
With the previous lemmas at our disposal, we are now ready to complete the proof of point \ref{item.approx3} of Theorem \ref{thm:approx.sol} for $\mu,\nu=I,J$. 
\begin{proposition}\label{prop:approx4}
For every $n\ge1$ and every multi-index $\alpha$, there exists a $t_n>0$ such that the following estimate hold:
\begin{align}\label{Ric4.n.est}
|\partial^\alpha_x {\bf R}_{IJ}^{{\bf [n]}}|\leq 
 C_{\alpha,n}t^{-2+n\varepsilon},
 \end{align}
for all $(t,x) \in (0, t_n] \times [0,\delta]^3$ and indices $I,J$.
\end{proposition}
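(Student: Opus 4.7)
The starting point is the Gauss-Codazzi-type identity \eqref{e0.k}, which holds for any Lorentzian metric of the form \eqref{metric}. Applied to ${\bf g}^{[\bf n]}$ with its actual (geometric) second fundamental form $\widetilde{k}^{[\bf n]}$, this gives the representation
\begin{align*}
{\bf R}^{[\bf n]}_{IJ} = R^{[\bf n]}_{IJ} - \partial_t \widetilde{k}^{[\bf n]}_{IJ} + \mathrm{tr}\widetilde{k}^{[\bf n]}\,\widetilde{k}^{[\bf n]}_{IJ}.
\end{align*}
The plan is to substitute $\widetilde{k}^{[\bf n]} = k^{[\bf n]} + (\widetilde{k}^{[\bf n]} - k^{[\bf n]})$ throughout, so that the terms involving $k^{[\bf n]}$ assemble into the quantity $\partial_t k^{[\bf n]}_{IJ} - k^{[\bf n]}_{CC} k^{[\bf n]}_{IJ} - R^{[\bf n]}_{IJ}$ controlled by Lemma \ref{lemmakn.approx.ev}, while the remaining differences are handled by Lemma \ref{lem:k.approx.2ndfund}.

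Concretely, expanding the product gives
\begin{align*}
{\bf R}^{[\bf n]}_{IJ} = &\,\big[R^{[\bf n]}_{IJ} - \partial_t k^{[\bf n]}_{IJ} + k^{[\bf n]}_{CC}\,k^{[\bf n]}_{IJ}\big] - \partial_t(\widetilde{k}^{[\bf n]}_{IJ} - k^{[\bf n]}_{IJ}) \\
&+ (\mathrm{tr}\widetilde{k}^{[\bf n]} - k^{[\bf n]}_{CC})\,k^{[\bf n]}_{IJ} + k^{[\bf n]}_{CC}(\widetilde{k}^{[\bf n]}_{IJ} - k^{[\bf n]}_{IJ}) + (\mathrm{tr}\widetilde{k}^{[\bf n]} - k^{[\bf n]}_{CC})(\widetilde{k}^{[\bf n]}_{IJ} - k^{[\bf n]}_{IJ}).
\end{align*}
By Lemma \ref{lemmakn.approx.ev}, the first bracket is $O(t^{-2+n\varepsilon})$. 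By Lemma \ref{lem:k.approx.2ndfund} with $r=1$, the second term is $O(t^{-2+n\varepsilon+|p_I-p_J|})$. For the three bilinear correction terms, I combine the $r=0$ bound from Lemma \ref{lem:k.approx.2ndfund}, giving $\widetilde{k}^{[\bf n]} - k^{[\bf n]} = O(t^{-1+n\varepsilon+|\cdot|})$, with the sharp $t^{-1}$ bounds on $k^{[\bf n]}$ from point \ref{item.approx1} of Theorem \ref{thm:approx.sol}; each product yields at worst $O(t^{-2+n\varepsilon+|p_I-p_J|})$. Since $t<1$ and $|p_I - p_J|\ge 0$, all such terms are dominated by $C_n t^{-2+n\varepsilon}$, which closes the argument at the $\alpha=0$ level.

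The extension to general multi-indices $\alpha$ is purely an application of Leibniz's rule: every factor appearing in the above decomposition, together with all of its spatial derivatives, is controlled uniformly on $(0,t_n]\times[0,\delta]^3$ by Proposition \ref{prop:ind2}, Lemma \ref{lemmakn.approx.ev}, and Lemma \ref{lem:k.approx.2ndfund} (the latter stated for $r=0,1$; no higher $t$-derivatives are needed since $\partial_t$ appears at most once in the identity). I do not anticipate a real obstacle here; the only point requiring minor care is that the shifted exponent $|p_I - p_J|$ appearing in several intermediate bounds must be recognized as a nonnegative power of $t<1$, so it can only improve the estimate, and the induction structure built up through Sections 4.2--4.3 already guarantees that the constants $C_{\alpha,n}$ remain finite after taking $t_n$ small enough to absorb any logarithmic factors in $|\log t|^{|\alpha|}$ into $t^{n\varepsilon/2}$.
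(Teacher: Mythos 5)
Your proof is correct and follows essentially the same route as the paper: both apply the evolution equation \eqref{e0.k} to ${\bf g}^{[\bf n]}$, substitute $\widetilde{k}^{[\bf n]}=k^{[\bf n]}+(\widetilde{k}^{[\bf n]}-k^{[\bf n]})$, isolate the quantity controlled by Lemma \ref{lemmakn.approx.ev}, and bound the remaining difference terms via Lemma \ref{lem:k.approx.2ndfund}. The only (immaterial) difference is that you split $\widetilde{k}^{[\bf n]}_{CC}\widetilde{k}^{[\bf n]}_{IJ}-k^{[\bf n]}_{CC}k^{[\bf n]}_{IJ}$ into three bilinear terms whereas the paper uses a two-term telescoping, and your closing remark about absorbing $|\log t|^{|\alpha|}$ factors is unnecessary here since the invoked lemmas already state log-free bounds.
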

\begin{proof}
The evolution equation \eqref{e0.k} holds true for any Lorentzian metric, hence, also for ${\bf g}^{[\bf n]}$. We thus have 
\begin{align*}
{\bf R}_{IJ}^{{\bf [n]}}=&\, R_{IJ}^{[\bf n]}-\partial_t\widetilde{k}_{IJ}^{[\bf n]}+\widetilde{k}_{CC}^{[\bf n]}\widetilde{k}^{[\bf n]}_{IJ}\\
=&\, R_{IJ}^{[\bf n]}-\partial_tk_{IJ}^{[\bf n]}+k_{CC}^{[\bf n]}k^{[\bf n]}_{IJ}\\
&-\partial_t(\widetilde{k}_{IJ}^{[\bf n]}-k_{IJ}^{[\bf n]})+(\widetilde{k}_{CC}^{[\bf n]}-k_{CC}^{[\bf n]})\widetilde{k}^{[\bf n]}_{IJ}
+k_{CC}^{[\bf n]}(\widetilde{k}^{[\bf n]}_{IJ}-k^{[\bf n]}_{IJ})
\end{align*}
The desired bound follows by using Lemmas \ref{lemmakn.approx.ev}, \ref{lem:k.approx.2ndfund}.
\end{proof}

\subsection{Approximate solution to the constraints}

The sequence of iterates ${\bf g}^{[\bf n]}$ we have constructed so far is an approximate solution to the evolutionary part of the Einstein vacuum equations, ie. \eqref{Ric4.n.est}. 
We will now proceed to show that for asymptotic data satisfying Definition \ref{defn1}, ${\bf g}^{[\bf n]}$ is also an approximate solution to the constraint equations. Let us denote by $\nabla^{[\bf n]},D^{[\bf n]}$ the connections intrinsic to $g^{[\bf n]},{\bf g}^{[\bf n]}$ respectively and let 
\begin{align}\label{gamma.n}
\gamma^{[\bf n]}_{IJB}={\bf g}^{[\bf n]}(D^{[\bf n]}_{e^{[\bf n]}_I}e^{[\bf n]}_J,e_B^{[\bf n]})=g^{[\bf n]}(\nabla^{[\bf n]}_{e^{[\bf n]}_I}e^{[\bf n]}_J,e_B^{[\bf n]}).
\end{align}
\begin{lemma}\label{lem:gamma}
For every multi-index and indices $I,J,B$, the $n$-th spatial connection coefficients satisfy
\begin{align*}
\notag |\partial^\alpha_x\gamma^{[\bf n]}_{IJB}|\leq&\,C_{\alpha,n}(t^{-p_I+|p_B-p_J|}+t^{-p_J+|p_B-p_I|}+t^{-p_B+|p_I-p_J|})|\log t|^{1+|\alpha|},\qquad I\neq J,B
\end{align*}
and
\begin{align*}
\big|\partial_x^\alpha \big[\gamma^{[\bf n]}_{\underline{I}J\underline{I}}+t^{-p_{\underline{J}}}E_{\underline{J}}\log(f_{\underline{I}\underline{I}}t^{-p_{\underline{I}}})-\sum_{J\leq a\leq I}h_{a\underline{I}}E_{\underline{I}}(t^{-p_{\underline{J}}}f_{\underline{J}a})\big]\big|\leq&\,C_{\alpha,n} t^{-p_J+\varepsilon},
\end{align*}
for all $(t,x)\in(0,t_n]\times[0,\delta]^3$, where $E_J=\sum_{b\ge J}f_{Jb}\partial_b$ and the last sum is not present when $J>I$. 
\end{lemma}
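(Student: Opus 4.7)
The plan is to apply the explicit formula \eqref{gammaIJB} for $\gamma_{IJB}^{[\bf n]}$ directly and then separately treat the off-diagonal and ``diagonal-type'' components.

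For the first estimate, assuming $I, J, B$ are pairwise distinct (the remaining cases follow from the antisymmetry $\gamma_{IJB}=-\gamma_{IBJ}$), formula \eqref{gammaIJB} expresses $\gamma_{IJB}^{[\bf n]}$ as a sum of terms of the form $\omega^{[\bf n]}_{aX}\,e^{[\bf n]}_{Yc}\,\partial_c e^{[\bf n]}_{Za}$, where $\{X,Y,Z\}$ is a permutation of $\{I,J,B\}$. The idea is to pair $\omega^{[\bf n]}_{aX}$ with $\partial_c e^{[\bf n]}_{Za}$ and apply Lemma \ref{lem:pre3} to bound this pair by $t^{|p_X-p_Z|}|\log t|^{|\alpha|+1}$, while the remaining factor $e^{[\bf n]}_{Yc}$ is bounded by $t^{-p_Y}|\log t|^{|\alpha|+1}$ — the latter uses \eqref{eIa.n-0.est} together with the elementary observation that, when $c<Y$, the weight $p_Y-2p_c+\varepsilon$ is strictly larger than $-p_Y$ since $p_Y>p_c$. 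Summing over the three pairings $(X,Y,Z)$ from \eqref{gammaIJB} and taking derivatives via Leibniz yields exactly the three summands $t^{-p_I+|p_B-p_J|}$, $t^{-p_J+|p_B-p_I|}$, $t^{-p_B+|p_I-p_J|}$, modulo the indicated powers of $|\log t|$.

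For the second estimate, we first use the antisymmetry $\gamma_{IJB}=-\gamma_{IBJ}$ to observe that the third summand in \eqref{gammaIJB} with $B=I$ vanishes identically, whence
\[
\gamma^{[\bf n]}_{\underline{I}J\underline{I}}=\omega^{[\bf n]}_{aI}\bigl(e^{[\bf n]}_I e^{[\bf n]}_{Ja}-e^{[\bf n]}_J e^{[\bf n]}_{Ia}\bigr).
\]
Using the explicit formulas \eqref{eIi.kIJ.0}, \eqref{omega.0}, \eqref{omega.asym} for the zeroth iterates, together with $h_{II}=f_{II}^{-1}$ and the identity $e^{[\bf 0]}_Jf=t^{-p_J}E_Jf$ for scalars $f$, a direct computation gives
\[
\gamma^{[\bf 0]}_{\underline{I}J\underline{I}}=\sum_{J\leq a\leq I}h_{aI}E_I(t^{-p_{\underline J}}f_{\underline J a})-t^{-p_{\underline J}}E_{\underline J}\log(f_{\underline I\underline I}t^{-p_{\underline I}}),
\]
where the summation range is dictated by the index restrictions $a\leq I$ in $\omega^{[\bf 0]}_{aI}$ and $J\leq a$ in $e^{[\bf 0]}_{Ja}$, while the single surviving value $a=I$ in the second term uses the restriction $I\leq a$ in $e^{[\bf 0]}_{Ia}$. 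This is precisely the combination subtracted in the claim.

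It then remains to control the difference $\gamma^{[\bf n]}_{\underline{I}J\underline{I}}-\gamma^{[\bf 0]}_{\underline{I}J\underline{I}}$. Splitting it in the standard telescoping manner,
\[
\begin{aligned}
\gamma^{[\bf n]}_{IJI}-\gamma^{[\bf 0]}_{IJI}
=&\,(\omega^{[\bf n]}_{aI}-\omega^{[\bf 0]}_{aI})(e^{[\bf n]}_I e^{[\bf n]}_{Ja}-e^{[\bf n]}_J e^{[\bf n]}_{Ia})\\
&+\omega^{[\bf 0]}_{aI}\bigl[(e^{[\bf n]}_{Ic}-e^{[\bf 0]}_{Ic})\partial_ce^{[\bf n]}_{Ja}+e^{[\bf 0]}_{Ic}\partial_c(e^{[\bf n]}_{Ja}-e^{[\bf 0]}_{Ja})\bigr]\\
&-\omega^{[\bf 0]}_{aI}\bigl[(e^{[\bf n]}_{Jc}-e^{[\bf 0]}_{Jc})\partial_ce^{[\bf n]}_{Ia}+e^{[\bf 0]}_{Jc}\partial_c(e^{[\bf n]}_{Ia}-e^{[\bf 0]}_{Ia})\bigr],
\end{aligned}
\]
every factor of $\omega^{[\bf n]}-\omega^{[\bf 0]}$ or $e^{[\bf n]}-e^{[\bf 0]}$ contributes an additional $t^\varepsilon$ by \eqref{eIa.n-0.est}, \eqref{omegabC.n-0.est}, while the remaining factors reproduce the overall $t^{-p_J}$ scaling of $\gamma^{[\bf 0]}_{IJI}$, as verified in the paragraph above. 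Spatial derivatives are handled by Leibniz, generating only extra powers of $|\log t|$ which are absorbed into $t^\varepsilon$ upon shrinking $t_n$ if necessary. I expect the main obstacle to be the combinatorial bookkeeping for the zeroth-iterate identity: one must carefully track the index restrictions $a\leq I$, $J\leq a$, $I\leq a$, $c\geq I$, $c\geq J$ and verify that the surviving terms organize into precisely $E_I(t^{-p_J}f_{Ja})$ and $E_J\log(f_{II}t^{-p_I})$; once this algebraic identity is secured, the difference estimate is routine.
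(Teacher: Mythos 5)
Your proposal is correct and follows essentially the same route as the paper's: the first bound comes from pairing $\omega^{[\bf n]}_{aX}$ with $\partial_c e^{[\bf n]}_{Za}$ using the estimates of Proposition \ref{prop:ind1} (you apply the packaged version, Lemma \ref{lem:pre3}), and the second bound is obtained by identifying the subtracted combination as exactly $\gamma^{[\bf 0]}_{\underline{I}J\underline{I}}$ and then bounding $\gamma^{[\bf n]}_{\underline{I}J\underline{I}}-\gamma^{[\bf 0]}_{\underline{I}J\underline{I}}$ via Theorem \ref{thm:approx.sol}. The algebraic bookkeeping you flag as the main obstacle does close as you expect: the index restrictions from \eqref{eIi.kIJ.0}, \eqref{omega.asym} force $J\le a\le I$ in the first term and $a=I$ in the second, and the identity $h_{\underline{I}\underline{I}}=f_{\underline{I}\underline{I}}^{-1}$ together with $E_{\underline{J}}(f_{\underline{I}\underline{I}}t^{-p_{\underline{I}}})=f_{\underline{I}\underline{I}}t^{-p_{\underline{I}}}E_{\underline{J}}\log(f_{\underline{I}\underline{I}}t^{-p_{\underline{I}}})$ yields the claimed logarithmic form.
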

\begin{proof}
The first bound readily follows using the formula \eqref{gammaIJB} and Proposition \ref{prop:ind1}. For the second bound, employing \eqref{gammaIJB} and Proposition \ref{prop:ind1} once more we have
\begin{align*}
\gamma^{[\bf n]}_{\underline{I}J\underline{I}}=&-\omega_{\underline{I}\underline{I}}^{[\bf n]}e_J^{\bf [n]}e^{[\bf n]}_{\underline{I}\underline{I}}
+\omega_{a\underline{I}}^{[\bf n]}e^{\bf [n]}_{\underline{I}}e^{\bf [n]}_{Ja}\\
=&-t^{-p_{\underline{J}}}E_{\underline{J}}\log(f_{\underline{I}\underline{I}}t^{-p_{\underline{I}}})
+\sum_{J\leq a\leq I}t^{-p_{\underline I}}\omega^{[\bf 0]}_{a\underline{I}}E_{\underline{I}}(t^{-p_{\underline{J}}}f_{\underline{J}a})
+(\gamma^{[\bf n]}_{\underline{I}J\underline{I}})_{error}\\
=&-t^{-p_{\underline{J}}}E_{\underline{J}}\log(f_{\underline{I}\underline{I}}t^{-p_{\underline{I}}})
+\sum_{J\leq a\leq I}h_{a\underline{I}}E_{\underline{I}}(t^{-p_{\underline{J}}}f_{\underline{J}a})
+(\gamma^{[\bf n]}_{\underline{I}J\underline{I}})_{error},
\end{align*}
where $|\partial_x^\alpha(\gamma^{[\bf n]}_{\underline{I}J\underline{I}})_{error}|\leq C_{\alpha,n}t^{-p_J+\varepsilon}$. This completes the proof of the lemma.
\end{proof}
Next, we show that the constraints are satisfied to leading order.
\begin{lemma} \label{lem:constraint}
Let $p_I,f_{Ia}$ satisfy the algebraic Kasner conditions and the asymptotic differential conditions \eqref{frame.mom}. Then,
for every $n\in \mathbb{N}$ and every multi-index $\alpha$, there exists a $t_n>0$ such that the following estimates hold:
\begin{align}\label{approx.const1}
\begin{split}
\big| \partial_x^{\alpha}\big[R^{[\bf n]}-| \en{k}|^2 +(\mathrm{tr}  \en{k})^2\big]\big| \leq &\,C_{\alpha,n}t^{-2+\varepsilon}, \\
\big| \partial_x^{\alpha}\big[\en{\nabla}_J\en{k}_{IJ} -\en{e}_I\mathrm{tr}\en{k}\big]\big| \leq&\, C_{\alpha,n}t^{-1-p_I+\varepsilon}.
\end{split}
\end{align}
for all $(t,x)\in(0,t_n]\times[0,\delta]^3$ and index $I$.
\end{lemma}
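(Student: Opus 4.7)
My plan is to verify both inequalities in \eqref{approx.const1} by direct asymptotic expansion, using Lemma \ref{lem:gamma} for the leading behavior of the spatial connection coefficients, combined with the algebraic Kasner identities $\sum p_i = \sum p_i^2 = 1$ for the Hamiltonian constraint and the asymptotic differential identity \eqref{frame.mom2} (equivalent to \eqref{momentum} by Lemma \ref{lem:frame.mom}) for the momentum constraint.

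For the Hamiltonian constraint, I would first use the diagonal leading order $k^{[\bf n]}_{IJ} = -p_{\underline I}\delta_{\underline I J}/t + O(t^{-1+\varepsilon + |p_I - p_J|})$ from \eqref{kIJ.n-1.est} to compute
\[
-|k^{[\bf n]}|^2 + (\mathrm{tr}\,k^{[\bf n]})^2 = \tfrac{1}{t^2}\bigl((\textstyle\sum_i p_i)^2 - \sum_i p_i^2\bigr) + O(t^{-2+\varepsilon}) = O(t^{-2+\varepsilon}).
\]
Rather than expanding $R^{[\bf n]}$ directly through \eqref{RIJ}, I would invoke the ADM equation \eqref{e0.k} applied to the true second fundamental form $\widetilde k^{[\bf n]}$ of the $t$-slices in ${\bf g}^{[\bf n]}$, yielding $R^{[\bf n]}_{IJ} = \partial_t \widetilde k^{[\bf n]}_{IJ} - \mathrm{tr}\,\widetilde k^{[\bf n]}\,\widetilde k^{[\bf n]}_{IJ} + {\bf R}^{[\bf n]}_{IJ}$. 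Contracting over $I=J$, swapping $\widetilde k^{[\bf n]}$ for $k^{[\bf n]}$ via Lemma \ref{lem:k.approx.2ndfund} (both $r=0$ and $r=1$), and invoking the approximate evolution \eqref{kn.approx.eq} together with Proposition \ref{prop:approx4}, one obtains
\[
R^{[\bf n]} = \partial_t \mathrm{tr}\,k^{[\bf n]} - (\mathrm{tr}\,k^{[\bf n]})^2 + O(t^{-2+n\varepsilon}).
\]
Substituting $\partial_t \mathrm{tr}\,k^{[\bf n]} = \sum_i p_i / t^2 + O(t^{-2+\varepsilon})$, recognizing the surviving $1/t^2$ coefficient as $\sum p_i - (\sum p_i)^2 + (\sum p_i)^2 - \sum p_i^2 = 0$, closes the Hamiltonian estimate.

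For the momentum constraint, I would write $\nabla^{[\bf n]}_J k^{[\bf n]}_{IJ} - e^{[\bf n]}_I \mathrm{tr}\,k^{[\bf n]}$ explicitly in the orthonormal frame as
\[
e^{[\bf n]}_J k^{[\bf n]}_{IJ} - e^{[\bf n]}_I k^{[\bf n]}_{JJ} - \gamma^{[\bf n]}_{JIC} k^{[\bf n]}_{CJ} - \gamma^{[\bf n]}_{JJC} k^{[\bf n]}_{IC},
\]
substitute the diagonal leading order of $k^{[\bf n]}$ and the asymptotic expansion of $\gamma^{[\bf n]}_{IJB}$ from Lemma \ref{lem:gamma}, and collect the surviving $t^{-1-p_I}$ terms. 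After simplification these reorganize into a multiple of
\[
E_I p_I + \sum_J (p_J - p_I) E_I \log f_{JJ} - \sum_J \sum_{I\leq a\leq J} (p_J - p_I) h_{aJ} E_J f_{Ia},
\]
which vanishes by hypothesis \eqref{frame.mom2}. Subleading corrections to both $k^{[\bf n]}$ and $\gamma^{[\bf n]}$ contribute an extra $t^\varepsilon$, delivering the claimed $O(t^{-1-p_I+\varepsilon})$ bound.

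The hard part will be the bookkeeping in the momentum computation: the products $\gamma^{[\bf n]}_{IJB} k^{[\bf n]}_{CD}$ generate many contributions of the same nominal size $t^{-1-p_I}$, and isolating the leading coefficient so it matches exactly the LHS of \eqref{frame.mom2} requires coordinated use of the upper-triangular structure of $e^{[\bf n]}_{Ia}$ and the asymptotic diagonality of $k^{[\bf n]}$, together with the precise form of the second identity in Lemma \ref{lem:gamma}. Once the principal coefficient is correctly matched, the cancellation supplied by \eqref{frame.mom2} is immediate; for the spatial derivatives $\partial_x^\alpha$, one differentiates all the expansions above and notes that each $\partial_x$ acting on a $t^p$ factor produces at most a harmless $\log t$ that is absorbed into $t^\varepsilon$ after shrinking $t_n$.
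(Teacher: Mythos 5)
Your overall strategy---asymptotic expansion using Lemma \ref{lem:gamma} and the iterate estimates, with the Kasner algebraic relations closing the Hamiltonian constraint and \eqref{frame.mom2} closing the momentum constraint---is the right plan, and the Hamiltonian part works. (For that part your route through the ADM evolution equation is more circuitous than needed: the paper obtains $|\partial_x^\alpha R^{[\bf n]}|\leq C_{\alpha,n}t^{-2+\varepsilon}$ immediately by contracting \eqref{Ric.n.est} in point~\ref{item.approx2} of Theorem~\ref{thm:approx.sol}, without invoking \eqref{e0.k}, Lemma \ref{lem:k.approx.2ndfund}, or Proposition \ref{prop:approx4}.)

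There is, however, a genuine gap in your momentum argument. When you substitute the second expansion in Lemma~\ref{lem:gamma} for $\gamma^{[\bf n]}_{\underline I J \underline I}$, the term $t^{-p_{\underline J}}E_{\underline J}\log(f_{\underline I\underline I}t^{-p_{\underline I}})$ produces a contribution $-(E_{\underline J}p_{\underline I})\log t$ at the \emph{leading} power $t^{-1-p_I}$, and a similar $\log t$ arises from $E_J$ acting on the $t^{-p_{\underline J}}$ inside the $\sum_{J\le a\le I} h_{aI}E_I(\cdot)$ term. These $\log t$ factors are present already for $\alpha=0$ and cannot be "absorbed into $t^\varepsilon$ after shrinking $t_n$": $t^{-1-p_I}|\log t|$ is \emph{worse} than $t^{-1-p_I+\varepsilon}$, not better. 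For the claimed estimate to hold, these leading-order $\log t$ terms must cancel exactly. As the paper's computation \eqref{momconst.n.canc} shows, one of them is killed by the upper-triangular orthonormality ($\sum_{I\le a\le J}f_{Ia}h_{aJ}=\delta_{IJ}$) and the other by the \emph{algebraic} Kasner relations, via the identity $\sum_J(p_J-p_{\underline I})E_{\underline I}p_J=\tfrac12 E_{\underline I}(\sum_J p_J^2)-p_{\underline I}E_{\underline I}(\sum_J p_J)=0$. Your proposal assigns the algebraic Kasner conditions a role only in the Hamiltonian constraint and treats every $\log t$ as harmless, so as written the momentum estimate does not close. Separately, the actual matching of the $t^{-1-p_I}$ coefficient to the LHS of \eqref{frame.mom2}, which constitutes the bulk of the paper's proof \eqref{momconst.n.exp}--\eqref{momconst.n.canc}, is acknowledged but not carried out.
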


\begin{proof}
From point \ref{item.approx2} of Theorem \ref{thm:approx.sol}, it follows that 
\begin{align*} 
| \partial_x^{\alpha}R^{[\bf n]}| \leq C_{\alpha,n} t^{-2+\varepsilon}.
\end{align*}
We also have
\begin{align*}
  |\en{k}|^2 -(\tr \en{k})^2 =&\,(\en{k}-k^{[\bf 0]})_{IJ} (\en{k}-k^{[\bf 0]})_{IJ}-( \text{tr}\en{k}-\text{tr}k^{[\bf 0]})^2\\ 
  &+2(\en{k}-k^{[\bf 0]})_{IJ} k^{[\bf 0]}_{JI}- 2({{k^{[\bf 0]}})_\ell}^{\ell}  {(\en{k}-k^{[\bf 0]})_\ell}^\ell\\
&+\frac{1}{t^2}\sum_i p_i^2-\frac{1}{t^2}\bigg(\sum_i p_i \bigg)^2
  \end{align*}
The last line cancels, by virtue of the Kasner algebraic conditions. From point \ref{item.approx1} of Theorem \ref{thm:approx.sol}, the remaining terms and their spatial derivatives are bounded by $C_{\alpha,n}t^{-2+\varepsilon}$. This gives the first bound in \eqref{approx.const1}.

For the second bound in \eqref{approx.const1}, we expand the momentum constraint:
\begin{align}\label{momconst.n.exp}
\notag&\en{\nabla}_J\en{k}_{IJ}-e_I^{[\bf n]}\tr \en{k}\\
\notag=&\, \en{e}_{Ja}\partial_a\en{k}_{IJ}-\eIan\partial_a(\tr\en{k}) -\en{\gamma}_{JIC}\en{k}_{CJ}-\en{\gamma}_{JJC}\en{k}_{IC}\\
\notag=&\, \en{e}_{Ja}\partial_a(\en{k}_{IJ}-k^{[\bf 0]}_{IJ})-\eIan\partial_a(\tr\en{k}-\text{tr}k^{[\bf 0]})\\
&-\en{\gamma}_{JIC}(\en{k}_{CJ}-k_{CJ}^{[\bf 0]})+\en{\gamma}_{JCJ}(\en{k}_{IC}-k^{[\bf 0]}_{IC})
 -(e_{\underline{I}a}^{[\bf n]}-e^{[\bf 0]}_{\underline{I}a})\frac{\partial_ap_{\underline{I}}}{t}\\
\notag&+\sum_J\bigg(\big[\gamma_{J\underline{I}J}^{[\bf n]}+t^{-p_{\underline{I}}}E_{\underline{I}}\log (f_{JJ}t^{-p_J})
-\sum_{I\leq a\leq J}t^{-p_{\underline J}}\omega^{[\bf 0]}_{a\underline{J}}E_{\underline{J}}(t^{-p_{\underline{I}}}f_{\underline{I}a})\big]\frac{p_J-p_{\underline{I}}}{t}\bigg)\\
\notag& -t^{-1-p_{\underline{I}}}E_{\underline{I}} p_{\underline{I}}
-\sum_Jt^{-1-p_{\underline{I}}}(p_J-p_{\underline{I}})\bigg[E_{\underline{I}}\log (f_{JJ}t^{-p_J})-\sum_{I\leq a\leq J}t^{p_{\underline{I}}-p_{ J}}\omega^{[\bf 0]}_{aJ}E_{J}(t^{-p_{\underline{I}}}f_{\underline{I}a})\bigg]
\end{align}
From point \ref{item.approx1} of Theorem \ref{thm:approx.sol} and Lemma \ref{lem:gamma}, we have 
\begin{align}\label{momconst.n.error}
&\notag\big|\partial_x^\alpha\big[\en{e}_{Ja}\partial_a(\en{k}_{IJ}-k^{[\bf 0]}_{IJ})-\eIan\partial_a(\tr\en{k}-\text{tr}k^{[\bf 0]})\big]\big|\\
\leq&\, C_{\alpha,n}(t^{-p_J}t^{-1+\varepsilon+|p_I-p_J|}+t^{-p_I}t^{-1+\varepsilon})\\
\notag\leq&\, C_{\alpha,n}t^{-1-p_I+\varepsilon},
\end{align} 
\begin{align}\label{momconst.n.error2}
\big|\partial_x^\alpha\big[(e_{\underline{I}a}^{[\bf n]}-e^{[\bf 0]}_{\underline{I}a})\frac{\partial_ap_{\underline{I}}}{t}
-\sum_J(\gamma_{J\underline{I}J}^{[\bf n]}+t^{-p_{\underline{I}}}E_{\underline{I}}\log (f_{JJ}t^{-p_J})\frac{p_J-p_{\underline{I}}}{t}\big]\big|\leq C_{\alpha,n}t^{-1-p_I+\varepsilon},
\end{align} 
and
\begin{align}\label{momconst.n.error3}
\notag&\big|\partial_x^\alpha\big[\en{\gamma}_{JIC}(\en{k}_{CJ}-k_{CJ}^{[\bf 0]})+\en{\gamma}_{JJC}(\en{k}_{IC}-k^{[\bf 0]}_{IC})\big]\big|\\
\leq&\,C_{\alpha,n}t^{-1+\varepsilon+|p_C-p_J|}(t^{-p_I+|p_C-p_J|}+t^{-p_J+|p_C-p_I|}+t^{-p_C+|p_I-p_J|})|\log t|^{1+|\alpha|})\\
\notag&+C_{\alpha,n}t^{-1+\varepsilon+|p_I-p_C|}t^{-p_C}\\
\notag\leq &\,C_{\alpha,n}t^{-1-p_I+\varepsilon}
\end{align} 
Finally, we observe that 
\begin{align}\label{momconst.n.canc}
\notag& t^{-1-p_{\underline{I}}}E_{\underline{I}} p_{\underline{I}}
+\sum_Jt^{-1-p_{\underline{I}}}(p_J-p_{\underline{I}})\bigg[E_{\underline{I}}\log (f_{JJ}t^{-p_J})-\sum_{I\leq a\leq J}t^{p_{\underline{I}}-p_{ J}}\omega^{[\bf 0]}_{aJ}E_{J}(t^{-p_{\underline{I}}}f_{\underline{I}a})\bigg]\\
=&\,t^{-1-p_{\underline{I}}}\bigg[E_{\underline{I}} p_{\underline{I}}+\sum_J(p_J-p_{\underline{I}})E_{\underline{I}}\log (f_{JJ})-\log t\sum_J(p_J-p_{\underline{I}})E_{\underline{I}}p_J\\
\notag&-\sum_J\sum_{I\leq a\leq J}(p_J-p_{\underline{I}})h_{aJ}E_Jf_{\underline{I}a}+\log t\sum_J(p_J-p_{\underline{I}})\delta_{\underline{I}J}E_Jp_{\underline{I}}\bigg]\\
\notag=&\,t^{-1-p_{\underline{I}}}\bigg[E_{\underline{I}} p_{\underline{I}}+\sum_J(p_J-p_{\underline{I}})E_{\underline{I}}\log (f_{JJ})-\sum_J\sum_{I\leq a\leq J}(p_J-p_{\underline{I}})h_{aJ}E_Jf_{\underline{I}a}\bigg]
\end{align}
where in the last equality, we used the algebraic Kasner conditions. The last line vanishes by assumption \eqref{frame.mom}, see also Lemma \ref{lem:frame.mom}. Combining \eqref{momconst.n.exp}-\eqref{momconst.n.canc}, we deduce the second inequality in \eqref{approx.const1}. The proof of the lemma is complete.
\end{proof}

\begin{proposition}\label{prop:approx5}
Assume that the conclusion of Lemma \ref{lem:constraint} holds true. Then for every $n\ge1$ the spacetime Ricci components of ${\bf g}^{\bf [n]}$  satisfy:
\begin{align}\label{Ric4.n.est.const}
|\partial_x^\alpha {\bf R}_{0\nu}^{[\bf n]}|\leq C_{\alpha,n}t^{-2+n\varepsilon},
\end{align}
for every multi-index $\alpha$ and $\nu=0,I$. 
\end{proposition}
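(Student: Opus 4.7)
The plan is to upgrade the preliminary $t^{-2+\varepsilon}$ bounds implicit in Lemma \ref{lem:constraint} into the desired $t^{-2+n\varepsilon}$ estimate by propagating the constraint violation along ${\bf g}^{[\bf n]}$ via the twice-contracted Bianchi identity, which is an exact identity for any Lorentzian metric.

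First, I would observe that the Gauss and Codazzi identities
\[
2{\bf G}^{[\bf n]}_{00}=R^{[\bf n]}-|\widetilde{k}^{[\bf n]}|^2+(\mathrm{tr}\,\widetilde{k}^{[\bf n]})^2,\qquad {\bf R}^{[\bf n]}_{0I}=\nabla^{[\bf n]}_J\widetilde{k}^{[\bf n]}_{IJ}-e_I^{[\bf n]}\,\mathrm{tr}\,\widetilde{k}^{[\bf n]},
\]
hold for any metric of the form \eqref{metric}, where $\widetilde{k}^{[\bf n]}$ denotes the actual second fundamental form of $\Sigma_t$ in ${\bf g}^{[\bf n]}$. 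Combining these with Lemma \ref{lem:constraint} (which bounds the analogues with $k^{[\bf n]}$), Lemma \ref{lem:k.approx.2ndfund} (the discrepancy $k^{[\bf n]}-\widetilde{k}^{[\bf n]}$), the algebraic identity ${\bf R}^{[\bf n]}_{00}=2{\bf G}^{[\bf n]}_{00}-\sum_I{\bf R}^{[\bf n]}_{II}$, and Proposition \ref{prop:approx4} would yield the preliminary estimates
\[
|\partial_x^\alpha {\bf R}^{[\bf n]}_{00}|\leq C_{\alpha,n}t^{-2+\varepsilon},\qquad |\partial_x^\alpha {\bf R}^{[\bf n]}_{0I}|\leq C_{\alpha,n}t^{-1-p_I+\varepsilon},
\]
already proving the proposition for $n=1$ and guaranteeing $t^2{\bf R}^{[\bf n]}_{00},\,t^{1+p_J}{\bf R}^{[\bf n]}_{0J}\to 0$ as $t\to 0$.

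Next, I would exploit the contracted Bianchi identity $\nabla^\mu{\bf R}^{[\bf n]}_{\mu\nu}=\tfrac12\nabla_\nu{\bf R}^{[\bf n]}$, which projected on $e_0,e_J$ via the parallel-frame relations $D_{e_0}e_\mu=0$, $D_{e_I}e_0=\widetilde{k}^{[\bf n]}_{IJ}e_J$, $D_{e_I}e_J=-\widetilde{k}^{[\bf n]}_{IJ}e_0+\widetilde{\gamma}^{[\bf n]}_{IJB}e_B$ (for the Levi-Civita connection of ${\bf g}^{[\bf n]}$) produces a coupled first-order system of the schematic form
\begin{align*}
\partial_t{\bf R}^{[\bf n]}_{00}+\tfrac{2}{t}{\bf R}^{[\bf n]}_{00}&=\mathcal{S}_0+\mathcal{C}_0,\\
\partial_t{\bf R}^{[\bf n]}_{0J}+\tfrac{1+p_J}{t}{\bf R}^{[\bf n]}_{0J}&=\mathcal{S}_J+\mathcal{C}_J,
\end{align*}
after extracting the leading singular coefficients from $-2\,\mathrm{tr}\,\widetilde{k}^{[\bf n]}$ and $-(\mathrm{tr}\,\widetilde{k}^{[\bf n]}+\widetilde{k}^{[\bf n]}_{JJ})$. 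The sources $\mathcal{S}_0,\mathcal{S}_J$ would involve ${\bf R}^{[\bf n]}_{IJ}$, $e_I{\bf R}^{[\bf n]}_{JK}$ and $\partial_t\sum_I{\bf R}^{[\bf n]}_{II}$, all of order $O(t^{-3+n\varepsilon})$ or better by Proposition \ref{prop:approx4}; the couplings $\mathcal{C}_0,\mathcal{C}_J$ would be linear in ${\bf R}^{[\bf n]}_{0\nu}$ and their frame derivatives, with coefficients of size $O(t^{-1+\varepsilon})$ (subleading remainders of the singular coefficients) or bounded by the connection components controlled in Lemma \ref{lem:gamma}.

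Multiplying by the integrating factors $t^2$ and $t^{1+p_J}$ and integrating from $0$ using the vanishing data from the preliminary step, I would then run a finite bootstrap on the decay order: the source terms integrate to $O(t^{n\varepsilon})$, yielding an $O(t^{-2+n\varepsilon})$ contribution, while the coupling integrand, under the inductive hypothesis $|\partial_x^{\alpha'}{\bf R}^{[\bf n]}_{0\nu}|\leq C\,t^{-2+j\varepsilon}$ for $|\alpha'|\leq|\alpha|+1$, is bounded by $\tau^a\cdot O(\tau^{-1+\varepsilon})\cdot O(\tau^{-2+j\varepsilon})=O(\tau^{a-3+(j+1)\varepsilon})$; integrating and dividing back out gives $O(t^{-2+(j+1)\varepsilon})$, gaining one factor of $t^\varepsilon$ per iteration, so that at most $n$ cycles suffice. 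Higher-order spatial derivatives would be treated by differentiating the Bianchi system and using Theorem \ref{thm:approx.sol} together with Lemma \ref{lem:gamma} to control the commutator contributions. The hard part will be managing the coupled induction cleanly, since each bootstrap step requires an improved bound on one extra spatial derivative of ${\bf R}^{[\bf n]}_{0\nu}$ and, for $J$ with $p_J$ close to $0$, one must also check integrability of the source near $\tau=0$ (which is eventually ensured by taking $n$ sufficiently large relative to $\varepsilon^{-1}$). An alternative, likely cleaner route would be to apply weighted $L^2$ energy estimates directly to the linearized Bianchi system for $({\bf R}^{[\bf n]}_{00},{\bf R}^{[\bf n]}_{0J})$, in the spirit of the energy argument developed later in the paper.
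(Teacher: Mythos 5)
Your proposal follows essentially the same route as the paper: preliminary bounds on ${\bf R}^{[\bf n]}_{00},{\bf R}^{[\bf n]}_{0I}$ from the Gauss and Codazzi identities combined with Lemma \ref{lem:constraint} and Lemma \ref{lem:k.approx.2ndfund}, then an iterative improvement via the twice-contracted Bianchi identity projected on the orthonormal frame, solved by integrating factors with leading singular coefficients $2/t$ and $(1+p_I)/t$, gaining one factor of $t^\varepsilon$ per pass. Your preliminary bound $|\partial_x^\alpha{\bf R}^{[\bf n]}_{0I}|\lesssim t^{-1-p_I+\varepsilon}$ is in fact the one that makes the boundary term $\tau^{1+p_I}{\bf R}^{[\bf n]}_{0I}\to 0$ at $\tau=0$ manifest, consistent with what Lemma \ref{lem:constraint} yields.
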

\begin{proof}
First, we recall the Gauss and Codazzi equations for the metric ${\bf g}^{[\bf n]}$:
\begin{align}
\label{Gauss.n}
R(g^{[\bf n]})-|\widetilde{k}^{[\bf n]}|^2+(\text{tr}\tilde{k}^{[\bf n]})^2=&\,{\bf R}^{[\bf n]}+2{\bf R}_{00}^{[\bf n]}\\
\label{Codazzi.n}\nabla^{[\bf n]}_J\widetilde{k}^{[\bf n]}_{IJ}-e^{[\bf n]}_I\text{tr}\widetilde{k}^{[\bf n]}=&\,{\bf R}^{[\bf n]}_{0I}
\end{align}
By Lemma \ref{lem:constraint} and Proposition \ref{prop:approx4} we deduce that 
\begin{align}\label{R0I.R00.est}
|\partial_x^\alpha {\bf R}_{0I}^{[\bf n]}|\leq C_{\alpha,n}t^{-1+p_I+\varepsilon}\qquad|\partial_x^\alpha {\bf R}_{00}^{[\bf n]}|\leq C_{\alpha,n}t^{-2+\varepsilon}.
\end{align}
The twice contracted second Bianchi identity reads:
\begin{align}\label{2nd.Bianchi}
D^{[\bf n]}_0{\bf R}_{0I}^{[\bf n]}=D^{[\bf n]}_C{\bf R}_{CI}^{[\bf n]}-\frac{1}{2}D^{[\bf n]}_I{\bf R}^{[\bf n]},\qquad D^{[\bf n]}_0{\bf R}_{00}^{[\bf n]}=D^{[\bf n]}_C{\bf R}_{C0}^{[\bf n]}-\frac{1}{2}D^{[\bf n]}_0{\bf R}^{[\bf n]}.
\end{align}
Expanding the covariant derivatives and subtracting the background variables, we rewrite \eqref{2nd.Bianchi} in the form
\begin{align}
\label{R0I.eq}
\partial_t{\bf R}_{0I}^{[\bf n]}-(\text{tr}\widetilde{k}^{[\bf n]}+\widetilde{k}^{[\bf n]}_{\underline{I}\underline{I}}){\bf R}_{0\underline{I}}^{[\bf n]}=&\,\frac{1}{2}e_I^{[\bf n]}{\bf R}_{00}^{\bf [n]}+e_C^{[\bf n]}{\bf R}_{CI}^{[\bf n]}-\frac{1}{2}e_I^{[\bf n]}{\bf R}^{[\bf n]}_{CC}\\
\notag&+\gamma_{CJC}^{[\bf n]}{\bf R}_{CI}^{[\bf n]}-\gamma_{CID}^{[\bf n]}{\bf R}_{CD}^{[\bf n]}+\sum_{C\neq I}\widetilde{k}^{[\bf n]}_{CI}{\bf R}_{0C}^{[\bf n]}\\
\label{R00.eq}\partial_t{\bf R}_{00}^{[\bf n]}-2\text{tr}\widetilde{k}^{[\bf n]}{\bf R}_{00}^{[\bf n]}=&\,2e^{[\bf n]}_C{\bf R}_{0C}^{[\bf n]}-\partial_t{\bf R}^{[\bf n]}_{CC}+\gamma_{CJC}^{[\bf n]}{\bf R}_{0J}^{[\bf n]}-\widetilde{k}^{[\bf n]}_{CD}{\bf R}^{[\bf n]}_{CD}
\end{align}
Subtracting the iterates $k^{[\bf n]},k^{[\bf 0]}$, we are left with the system
\begin{align}
\label{R0I.eq2}
\partial_t{\bf R}_{0I}^{[\bf n]}+\frac{1+p_{\underline{I}}}{t}{\bf R}_{0\underline{I}}^{[\bf n]}=&\,\frac{1}{2}e_I^{[\bf n]}{\bf R}_{00}^{\bf [n]}+\sum_{C\neq I}\widetilde{k}^{[\bf n]}_{CI}{\bf R}_{0C}^{[\bf n]}+F_I^{[\bf n]}\\
\notag&+(\text{tr}\widetilde{k}^{[\bf n]}-\text{tr}k^{[\bf n]}+\text{tr}k^{[\bf n]}-\text{tr}k^{[\bf 0]}+\widetilde{k}^{[\bf n]}_{\underline{I}\underline{I}}-k^{[\bf n]}_{\underline{I}\underline{I}}+k^{[\bf n]}_{\underline{I}\underline{I}}-{k^{[\bf 0]}_{\underline{I}\underline{I}}}){\bf R}_{0\underline{I}}^{[\bf n]}\\
\label{R00.eq2}\partial_t{\bf R}_{00}^{[\bf n]}+\frac{2}{t}{\bf R}_{00}^{[\bf n]}=&\,2e^{[\bf n]}_C{\bf R}_{0C}^{[\bf n]}+\gamma_{CJC}^{[\bf n]}{\bf R}_{0J}^{[\bf n]}+F_0^{[\bf n]}\\
\notag&+2(\text{tr}\widetilde{k}^{[\bf n]}-\text{tr}k^{[\bf n]}+\text{tr}k^{[\bf n]}-\text{tr}k^{[\bf 0]}){\bf R}_{00}^{[\bf n]}
\end{align}
where 
\begin{align}
\label{FI}F^{[\bf n]}_I=&\,e_C^{[\bf n]}{\bf R}_{CI}^{[\bf n]}-\frac{1}{2}e_I^{[\bf n]}{\bf R}^{[\bf n]}_{CC}
+\gamma_{CJC}^{[\bf n]}{\bf R}_{CI}^{[\bf n]}-\gamma_{CID}^{[\bf n]}{\bf R}_{CD}^{[\bf n]}\\
\label{F0}F^{[\bf n]}_0=&-\partial_t{\bf R}^{[\bf n]}_{CC}-\widetilde{k}^{[\bf n]}_{CD}{\bf R}^{[\bf n]}_{CD}
\end{align}
Using integrating factors and \eqref{R0I.R00.est}, we obtain the formulas 
\begin{align}
\label{R0I.eq3}
{\bf R}_{0I}^{[\bf n]}=&\,t^{-1-p_{\underline{I}}}\int^t_0\tau^{1+p_{\underline{I}}}\bigg[\frac{1}{2}e_{\underline{I}}^{[\bf n]}{\bf R}_{00}^{\bf [n]}+\sum_{C\neq I}\widetilde{k}^{[\bf n]}_{C\underline{I}}{\bf R}_{0C}^{[\bf n]}+F_{\underline{I}}^{[\bf n]}\\
\notag&+(\text{tr}\widetilde{k}^{[\bf n]}-\text{tr}k^{[\bf n]}+\text{tr}k^{[\bf n]}-\text{tr}k^{[\bf 0]}+\widetilde{k}^{[\bf n]}_{\underline{I}\underline{I}}-k^{[\bf n]}_{\underline{I}\underline{I}}+k^{[\bf n]}_{\underline{I}\underline{I}}-\widetilde{k}^{[\bf 0]}_{\underline{I}\underline{I}}){\bf R}_{0\underline{I}}^{[\bf n]}\bigg]d\tau\\
\label{R00.eq3}{\bf R}_{00}^{[\bf n]}=&\,t^{-2}\int^t_0\tau^2\bigg[2e^{[\bf n]}_C{\bf R}_{0C}^{[\bf n]}+\gamma_{CJC}^{[\bf n]}{\bf R}_{0J}^{[\bf n]}+F_0^{[\bf n]}\\
\notag&+2(\text{tr}\widetilde{k}^{[\bf n]}-\text{tr}k^{[\bf n]}+\text{tr}k^{[\bf n]}-\text{tr}k^{[\bf 0]}){\bf R}_{00}^{[\bf n]}\bigg]d\tau
\end{align}
By Lemma \ref{lem:k.approx.2ndfund}, Proposition \ref{prop:approx4} and Lemma \ref{lem:gamma}, we deduce the estimates
\begin{align}
\label{FI.est}
\bigg|\partial_x^\alpha\bigg(t^{-1-p_I}\int^t_0\tau^{1+p_I}F_I^{[\bf n]} d\tau\bigg)\bigg|\leq&\, C_{\alpha,n}t^{-2+(n+1)\varepsilon}|\log t|^{|\alpha|}\\
\notag\leq&\, C_{\alpha,n}t^{-2+n\varepsilon}\\
\label{F0.est}\bigg|\partial_x^\alpha\bigg(t^{-2}\int^t_0\tau^{2}F_0^{\bf[n]}d\tau\bigg)\bigg|=&\,\bigg|\partial_x^\alpha\bigg(t^{-2}\int^t_0(2\tau{\bf R}^{[\bf n]}_{CC}-\tau^2\widetilde{k}^{[\bf n]}_{CD}{\bf R}^{[\bf n]}_{CD})d\tau-{\bf R}^{[\bf n]}_{CC}\bigg)\bigg|\\
\notag\leq&\,C_{\alpha,n}t^{-2+n\varepsilon} 
\end{align}
Applying \eqref{R0I.R00.est}, \eqref{FI.est}, \eqref{F0.est} to \eqref{R00.eq3} gives the bound 
\begin{align}\label{R00.est2}
|\partial_x^\alpha {\bf R}_{00}^{[\bf n]}|\leq C_{\alpha,n}t^{-2+2\varepsilon}
+C_{\alpha,n}t^{-2+n\varepsilon}.
\end{align}
Using the latter, along with \eqref{R0I.R00.est}, \eqref{FI.est}, \eqref{F0.est}, and Lemma \ref{k.approx.2ndfund}, we obtain 
\begin{align}\label{R0I.est2}
|\partial_x^\alpha {\bf R}_{0I}^{[\bf n]}|\leq C_{\alpha,n}t^{-2+2\varepsilon}+C_{\alpha,n}t^{-2+n\varepsilon}.
\end{align}
Note that for $n>1$, the bounds \eqref{R00.est2}, \eqref{R0I.est2} are an improvement by $t^\varepsilon$ over \eqref{R0I.R00.est}. Repeating the above argument $n-2$ times, using \eqref{R00.est2}, \eqref{R0I.est2}, instead of \eqref{R0I.R00.est} and so on and so forth, yields the desired \eqref{Ric4.n.est.const}.
\end{proof}

\section{Construction of an actual solution to the modified evolution equations}\label{sec:actual.sol}

In this section, we carry out a localized construction of a singular solution $e_{Ia},k_{IJ},\gamma_{IJB}$ to the set of equations \eqref{e0.eIi2}, \eqref{e0.kIJ.mod2}, \eqref{e0.gamma_IJB.mod2}, whose behavior matches that of the iterates $e^{[\bf n]}_{Ia},k^{[\bf n]}_{IJ},\gamma^{[\bf n]}_{IJB}$ to a sufficiently large polynomial order, as $t\rightarrow0$.

\subsection{Domain of definition}\label{subsec:domain}

The domain on which the actual solution will be living is defined using the metric ${\bf g}^{[\bf n]}$, for some sufficiently large $n$ which is fixed in the end. 
Let $\Sigma_t$ denote the level sets of $t$ in $[0,T]\times[0,\delta]^3$. Denote the boundary of the initial slice by $S_0=\partial\Sigma_0=\cup_{a=1}^3S_{0,a}^\pm$, where $S_{0,a}^+=\{x^a=0\}$, $S_{0,a}^-=\{x^a=\delta\}$.
We then define $\mathcal{H}=\cup_{a=1}^3\mathcal{H}_a^\pm$, where $\mathcal{H}^\pm_a$ are the ingoing hypersurfaces obtained by flowing each side $S_{0,a}^\pm$ of the cube $S_0$ through the corresponding vector field
\begin{align}\label{flow.vec}
X_a^\pm=\partial_t\pm\sigma\frac{\nabla^{[\bf n]}x^a}{|\nabla^{[\bf n]}x^a|_{g^{\bf [n]}}},\qquad \nabla^{[\bf n]}x^a=e_{Ia}^{[\bf n]}e_I^{[\bf n]}=(g^{[\bf n]})^{ia}\partial_i,\qquad\sigma>0.
\end{align}
For simplicity, we suppress the index $n$ in the notation for the domain and its geometry. Let $U_t\subset\Sigma_t$ denote the slices whose boundary  $S_t=\cup_{a=1}^3S_{t,a}^\pm=\partial U_t$ is in turn the slicing of $\mathcal{H}$ induced by the above flow, $t\in[0,T]$. 
\begin{remark}\label{rem:sigma}
The constant $\sigma$ is chosen such that $X_a^\pm$ is spacelike and sufficiently ingoing. It will be fixed below in order to absorb any energy flux terms in the estimates coming from $\mathcal{H}$, see the proof of Proposition \ref{prop:en.est} and \eqref{choice.sigma}. 
\end{remark}
\begin{lemma}\label{lem:domain}
The flow of $X^\pm_a$ is well defined in $[0,T]$ and  
the hypersurface $\mathcal{H}$ is spacelike. The coordinate functions $x^b$, $b\neq a$, induce coordinate vector fields $\slashed{\partial}_b$ on $TS^\pm_{t,a}$ that satisfy
\begin{align}\label{partial.St}
\slashed{\partial}_b=f_{bc}^{a,\pm}\partial_c,\qquad |\partial_x^\alpha (f_{bc}^{a,\pm}-\delta_{bc})|\leq C_{\alpha,n}t^{1+p_a-2p_{\min\{a,c\}}}|\log t|^{|\alpha|+1}
\end{align}
In these coordinates, the induced metric $g_{\slashed{b}\slashed{c}}^{[\bf n]}=g^{[\bf n]}(\slashed{\partial}_b,\slashed{\partial}_c)$ on $S_{t,a}^\pm$ satisfies 
\begin{align}\label{metric.St}
|\partial_x^\alpha (g_{\slashed{b}\slashed{c}}^{[\bf n]}-g_{bc}^{[\bf n]})|\leq C_{\alpha,n}t^{2p_{\max\{b,c\}}+\varepsilon},\qquad |\partial_x^\alpha[ (g^{[\bf n]})^{\slashed{b}\slashed{c}}-(g^{[\bf n]})^{bc}]|\leq C_{\alpha,n}t^{-2p_{\min\{b,c\}}+\varepsilon}
\end{align}
Moreover, the inward unit normal to $S_{t,a}^\pm$ in $U_t$ is a perturbation of
$\pm\frac{\nabla^{[\bf n]}x^a}{|\nabla^{[\bf n]}x^a|_{g^{\bf [n]}}}=\pm \frac{e_{Ia}^{[\bf n]}e_I^{[\bf n]}}{\sqrt{e_{C\underline a}^{[\bf n]}e_{C\underline a}^{[\bf n]}}}$:
\begin{align}\label{n.St}
n_{S_{t,a}^\pm}=n^I_{S_{t,a}^\pm}e_I^{[\bf n]},\qquad \big|\partial_x^\alpha\big[n^I_{S_{t,a}^\pm}\mp  \frac{e_{Ia}^{[\bf n]}}{\sqrt{e_{C\underline a}^{[\bf n]}e_{C\underline a}^{[\bf n]}}}\big]\big|\leq C_{\alpha,n}t^{|p_a-p_I|+\varepsilon}.
\end{align}
The future unit normal to $S_{t,a}^\pm$ within $\mathcal{H}$ is a perturbation of\\ $X_a^\pm=(\sigma^2-1)^{-\frac{1}{2}}\big[\partial_t\pm\sigma (e_{C\underline a}^{[\bf n]}e_{C\underline a}^{[\bf n]})^{-\frac{1}{2}}e_{Ia}^{[\bf n]}e_I^{[\bf n]}\big]$:
\begin{align}\label{n.St.H}
\begin{split}
n_{S_{t,a}^\pm}^{\mathcal{H}}&\,=(n_{S_{t,a}^\pm}^{\mathcal{H}})^0\partial_t+(n_{S_{t,a}^\pm}^{\mathcal{H}})^I e_I^{[\bf n]},\qquad
|\partial_x^\alpha[(n_{S_{t,a}^\pm}^{\mathcal{H}})^0-(\sigma^2-1)^{-\frac{1}{2}}]|\leq C_{\alpha,n}t^\varepsilon,\\
&\big|\partial_x^\alpha\big[(n_{S_{t,a}^\pm}^{\mathcal{H}})^I\mp(\sigma^2-1)^{-\frac{1}{2}}\sigma (e_{C\underline a}^{[\bf n]}e_{C\underline a}^{[\bf n]})^{-\frac{1}{2}}e_{Ia}^{[\bf n]}\big]\big|\leq C_{\alpha,n}t^{|p_a-p_I|+\varepsilon}.
\end{split}
\end{align}
The future unit normal to $\mathcal{H}^\pm_a$ is a perturbation of $(\sigma^2-1)^{-\frac{1}{2}}\big[\sigma\partial_t\pm (e_{C\underline a}^{[\bf n]}e_{C\underline a}^{[\bf n]})^{-\frac{1}{2}}e_{Ia}^{[\bf n]}e_I^{[\bf n]}\big]$:
\begin{align}\label{n.H}
\begin{split}
n_{\mathcal{H}^\pm_a}&\,=n_{\mathcal{H}^\pm_a}^0\partial_t+n_{\mathcal{H}^\pm_a}^Ie_I^{[\bf n]},\qquad |\partial_x^\alpha[n_{\mathcal{H}^\pm_a}^0- \sigma(\sigma^2-1)^{-\frac{1}{2}} ]|\leq C_{\alpha,n}t^\varepsilon,\\
&\big|\partial_x^\alpha\big[n_{\mathcal{H}^\pm_a}^I\mp  (\sigma^2-1)^{-\frac{1}{2}} (e_{C\underline a}^{[\bf n]}e_{C\underline a}^{[\bf n]})^{-\frac{1}{2}}e_{Ia}^{[\bf n]}\big]\big|\leq C_{\alpha,n}t^{|p_a-p_I|+\varepsilon}.
\end{split}
\end{align}
The induced volume forms on $S_{t,a}^\pm,\mathcal{H}_a^\pm$ satisfy:
\begin{align}\label{vol.St}
\big|\partial_x^\alpha\big[\frac{\mathrm{vol}_{S_{t,a}^\pm}}{\slashed{d}x^b\slashed{d}x^c}-\sqrt{c_{bb}c_{cc}}t^{p_b+p_c}\big]\big|\leq C_{\alpha,n} t^\varepsilon,\qquad\big|\partial_x^\alpha\big[\frac{\mathrm{vol}_{\mathcal{H}^\pm_a}}{dt \mathrm{vol}_{S_{t,a}^\pm}}-(\sigma^2-1)^{-\frac{1}{2}}\big]\big|\leq C_{\alpha,n}t^\varepsilon,
\end{align}
where $\slashed{d}x^b,\slashed{d}x^c$ are the 1-forms dual to $\slashed{\partial}_b,\slashed{\partial}_c$, $b,c\neq a$.
\end{lemma}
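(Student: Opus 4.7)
The strategy is to establish each claim by a direct perturbative computation starting from the leading-order expressions \eqref{eIi.kIJ.0} for $e^{[\bf n]}_{Ia}$, together with the decay bounds on $e^{[\bf n]}_{Ia}-e^{[\bf 0]}_{Ia}$, $\omega^{[\bf n]}_{bC}-\omega^{[\bf 0]}_{bC}$, and $g^{[\bf n]}_{ij}$ supplied by Theorem \ref{thm:approx.sol} and Lemma \ref{lem:pre1}. The key qualitative inputs are that $e^{[\bf n]}_{Ia}$ is upper-triangular at leading order with $e^{[\bf n]}_{aa}=f_{aa}t^{-p_a}(1+O(t^\varepsilon))$, and $|\nabla^{[\bf n]}x^a|_{g^{[\bf n]}}=\sqrt{e^{[\bf n]}_{Ca}e^{[\bf n]}_{Ca}}=f_{aa}t^{-p_a}(1+O(t^\varepsilon))$, since the $C=a$ summand dominates among $C\leq a$. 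A direct computation gives ${\bf g}^{[\bf n]}(X_a^\pm,X_a^\pm)=-1+\sigma^2>0$ for $\sigma>1$, so $X_a^\pm$ is spacelike and $\mathcal{H}$ is spacelike (its normal being verified timelike below). Reading off the coordinate components of $X_a^\pm$ from $\nabla^{[\bf n]}x^a=e^{[\bf n]}_{Ia}e^{[\bf n]}_I$ shows that the coefficient of $\partial_i$ is $O(t^{p_a-p_i})$; since $p_i<1$, the ensuing ODE has an integrable singularity at $t=0$, and a standard fixed-point argument extends the flow uniquely to $[0,T]$, with $T$ taken small enough that the integral curves remain inside $[0,\delta]^3$.

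Next, I parameterize $\mathcal{H}^\pm_a$ by $(t,x^b,x^c)$ with $b,c\neq a$, writing the $x^a$-coordinate of the flowed point as $\phi^\pm_a(t,x^b,x^c)$, with $\phi^\pm_a(0,\cdot)\in\{0,\delta\}$. The flow ODE for $\phi^\pm_a$, together with the bounds from Theorem \ref{thm:approx.sol}, yields after $\alpha$-differentiation the estimate $|\partial^\alpha_x\phi^\pm_a|\leq C_{\alpha,n}t^{1-p_a}|\log t|^{|\alpha|+1}$, which produces \eqref{partial.St} since $\slashed{\partial}_b=\partial_b+\partial_b\phi^\pm_a\,\partial_a$ and $f^{a,\pm}_{bc}=\delta_{bc}$ whenever $c\neq a$. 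The induced metric expansion
\begin{equation*}
g^{[\bf n]}(\slashed{\partial}_b,\slashed{\partial}_c)=g^{[\bf n]}_{bc}+g^{[\bf n]}_{ac}f^{a,\pm}_{ba}+g^{[\bf n]}_{ab}f^{a,\pm}_{ca}+g^{[\bf n]}_{aa}f^{a,\pm}_{ba}f^{a,\pm}_{ca}
\end{equation*}
combined with Lemma \ref{lem:pre1} then yields \eqref{metric.St}, after checking case-by-case that every correction carries the required $t^\varepsilon$ improvement, using the Kasner constraints $p_1+p_2+p_3=1$ and the strict ordering $p_1<p_2<p_3<1$. The bound on $(g^{[\bf n]})^{\slashed b\slashed c}$ follows by inverting this $2\times 2$ matrix as in Lemma \ref{lem:pre2}.

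The three unit normals \eqref{n.St}, \eqref{n.St.H}, \eqref{n.H} are each derived by writing down the leading-order candidate and verifying orthogonality and normalization up to $O(t^\varepsilon)$. Namely, $n_{S_{t,a}^\pm}$ is the normalized $g^{[\bf n]}$-gradient, inside $\Sigma_t$, of the defining function $F=x^a-\phi^\pm_a$, whose $\phi^\pm_a$-contribution enters at order $t^{1-p_a}\leq t^\varepsilon$; $n_{S_{t,a}^\pm}^{\mathcal{H}}$ is the normalized projection of $X_a^\pm$ onto the $g^{[\bf n]}$-orthogonal complement of $TS_{t,a}^\pm$ in $T\mathcal{H}^\pm_a$, using ${\bf g}^{[\bf n]}(X_a^\pm,\slashed{\partial}_b)=\pm\sigma f^{a,\pm}_{ba}/\sqrt{e^{[\bf n]}_{Ca}e^{[\bf n]}_{Ca}}=O(t)$ and ${\bf g}^{[\bf n]}(X_a^\pm,X_a^\pm)=\sigma^2-1$; and $n_{\mathcal{H}^\pm_a}$ is determined by the algebraic system of ${\bf g}^{[\bf n]}$-orthogonality to $X_a^\pm,\slashed{\partial}_b,\slashed{\partial}_c$ together with the timelike future-pointing unit-norm condition, whose solution at leading order reproduces \eqref{n.H} and is controlled perturbatively via Theorem \ref{thm:approx.sol}. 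The volume-form identities \eqref{vol.St} follow from the Gram determinants of the appropriate tangent frames: for $S_{t,a}^\pm$ the $2\times 2$ Gram matrix of $(\slashed{\partial}_b,\slashed{\partial}_c)$ equals $g^{[\bf n]}_{\slashed b\slashed c}$, with leading determinant $c_{bb}c_{cc}t^{2(p_b+p_c)}$ by Lemma \ref{lem:pre1}, while for $\mathcal{H}^\pm_a$ the $3\times 3$ Gram matrix of $(X_a^\pm,\slashed{\partial}_b,\slashed{\partial}_c)$ is approximately block-diagonal with $(1,1)$-entry $\sigma^2-1$.

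The main obstacle is purely combinatorial bookkeeping: the lemma packages many pointwise estimates with different weight exponents parameterized by the ordered indices, and for every one of them one must verify that the remainder gains an extra $t^\varepsilon$. In each case the verification reduces to an elementary inequality among the $p_i$'s, e.g.\ $1+p_a\geq 2p_{\max\{b,c\}}+\varepsilon$ whenever $a\notin\{b,c\}$, which always holds by the Kasner algebraic relations combined with the strict ordering $p_1<p_2<p_3<1$. No new analytic ingredient beyond what was developed in Section \ref{sec:approx.sol} is needed.
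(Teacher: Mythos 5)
Your perturbative, order-by-order strategy is in the same spirit as the paper's proof, and the treatment of the normals and volume forms (subtracting tangential projections, then normalizing; Gram determinants) matches the paper's argument closely. However, the opening step contains a genuine conceptual error that undermines the derivation of \eqref{partial.St} and the metric expansion that you build on it.

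You claim that the flow of $X_a^\pm$ can be tracked solely through the $x^a$-coordinate, writing $\phi^\pm_a(t,x^b,x^c)$ and concluding that $\slashed{\partial}_b=\partial_b+\partial_b\phi^\pm_a\,\partial_a$, i.e., that $f^{a,\pm}_{bc}=\delta_{bc}$ identically for $c\neq a$. This is false: since $\nabla^{[\bf n]}x^a=(g^{[\bf n]})^{ia}\partial_i$ has nonvanishing components along every coordinate direction (the approximate metric \eqref{gij.0} is not diagonal), the integral curves of $X_a^\pm$ displace \emph{all} of $x^1,x^2,x^3$, not only $x^a$. Consequently the Lie-propagated frame $\slashed{\partial}_b$ picks up nontrivial components $f^{a,\pm}_{bd}\partial_d$ for $d\neq a,b$, and the estimate $|\partial_x^\alpha(f_{bc}^{a,\pm}-\delta_{bc})|\leq C_{\alpha,n}t^{1+p_a-2p_{\min\{a,c\}}}|\log t|^{|\alpha|+1}$ in \eqref{partial.St} is a genuine \emph{statement to be proved} for those off-diagonal $c\neq a$ entries, not a triviality. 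Your approach simply asserts those entries vanish and so does not prove \eqref{partial.St}. The paper's proof handles this by Lie-transporting the full frame along $X_a^\pm$, which gives the first-order transport PDE \eqref{fbc.eq} for the entire $3\times3$ matrix $f^{a,\pm}_{bc}$, whose source terms (via \eqref{fbc.eq.coeff}) are uniformly of size $t^{p_a-2p_{\min\{a,c\}}}|\log t|^{|\alpha|}$, integrable, and yield the stated bounds for all $c$. Relatedly, since $\slashed{\partial}_b$ in general has components along more than two coordinate directions, the two-term induced-metric expansion you wrote for $g^{[\bf n]}(\slashed{\partial}_b,\slashed{\partial}_c)$ is incomplete; the missing cross terms are controlled, but they must be included. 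If you instead intended to parameterize $S^\pm_{t,a}$ as a graph over the $\{x^a=\text{const}\}$ plane (which does give $f^{a,\pm}_{bc}=\delta_{bc}$ for $c\neq a$ by fiat), then $\phi^\pm_a$ no longer solves the flow ODE you invoke, and you additionally need to justify that the image of $S^\pm_{0,a}$ under the flow remains a graph, neither of which you address. Fixing either reading essentially requires redoing the analysis of the transport PDE \eqref{fbc.eq} that the paper carries out.
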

\begin{proof}
The tangential vector fields to $S^\pm_{0,a}$ are $\{\partial_b\}_{b\neq a}$. Lie propagating them along $X_a^\pm$ gives $\slashed{\partial}_b\in TS_{t,a}^\pm$, $b\neq a$:
\begin{align}\label{fbc.eq}
[X_a^\pm,f_{bc}^{a,\pm}\partial_c]=0\quad\Rightarrow\quad \bigg(\partial_t \pm\sigma\frac{e^{[\bf n]}_{Ia}e^{[\bf n]}_{Ii}}{\sqrt{e^{[\bf n]}_{I\underline{a}}e^{[\bf n]}_{I\underline{a}}}}\partial_i\bigg)f_{bc}^{a,\pm}\mp \sigma f_{bi}^{a,\pm}\partial_i \bigg(\frac{e^{[\bf n]}_{Ia}e^{[\bf n]}_{Ic}}{\sqrt{e^{[\bf n]}_{I\underline{a}}e^{[\bf n]}_{I\underline{a}}}}\bigg)=0.
\end{align}
From point \ref{item.approx1} of Theorem \ref{thm:approx.sol}, we observe that the coefficients in the previous equation and their $\partial_x^\alpha$ derivatives satisfy the bound:
\begin{align}\label{fbc.eq.coeff}
\bigg|\partial_x^\alpha\bigg(\frac{e^{[\bf n]}_{Ia}e^{[\bf n]}_{Ic}}{\sqrt{e^{[\bf n]}_{I\underline{a}}e^{[\bf n]}_{I\underline{a}}}}\bigg)\bigg|\leq C_{\alpha,n}t^{p_a-2\min\{p_a,p_c\}}|\log t|^{|\alpha|}
\end{align}
In particular, the coefficients are bounded by $Ct^{-1+\varepsilon}$, that is to say, they are uniformly integrable in $[0,t]$. Since \eqref{fbc.eq} is a transport equation in $X_a^\pm$, for $f_{bc}^{a,\pm}$ with initial data $\delta_{bc}$ at $t=0$, the flow is well-defined in $[0,T]$ and $f_{bc}^{a,\pm}$ satisfy \eqref{partial.St}. 

The estimate \eqref{metric.St} for the induced metric components on $S_{t,a}^\pm$ follows from \eqref{partial.St} and \eqref{gij.0}. 

For the inward unit normal $n_{S_{t,a}^\pm}$ to $S_{t,a}^\pm$ in $U_t$, we first subtract from $\pm\frac{\nabla^{[\bf n]}x^a}{|\nabla^{[\bf n]}x^a|_{g^{\bf [n]}}}$ its projections to $\slashed{\partial}_b$, $b\neq a$, to obtain (without summing in $a$)
\begin{align}\label{n.St.calc}
\notag V_{S_{t,a}^\pm}=&\pm\frac{\nabla^{[\bf n]}x^a}{|\nabla^{[\bf n]}x^a|_{g^{\bf [n]}}}\mp\sum_{b\neq a}\frac{g^{[\bf n]}(\nabla^{[\bf n]}x^a,\slashed{\partial}_b)}{|\nabla^{[\bf n]}x^a|_{g^{\bf [n]}}g^{[\bf n]}_{\slashed{b}\slashed{b}}}\slashed{\partial}_b\\
=&\pm [(g^{\bf n]})^{aa}]^{-\frac{1}{2}}(g^{\bf [n]})^{ia}\partial_i\mp  \sum_{b\neq a}[(g^{\bf n]})^{aa}]^{-\frac{1}{2}}(g^{[\bf n]}_{\slashed{b}\slashed{b}})^{-1}f_{ba}^{a,\pm}f_{bi}^{a,\pm}\partial_i
\end{align}
From the estimates \eqref{partial.St}-\eqref{metric.St} we have that 
\begin{align}\label{n.St.error}
\notag&|\partial_x^\alpha \{[(g^{\bf n]})^{aa}]^{-\frac{1}{2}}(g^{[\bf n]}_{\slashed{b}\slashed{b}})^{-1}f_{ba}^{a,\pm}f_{bi}^{a,\pm}\}|\\
\leq&\, C_{\alpha,n}t^{p_a-2p_b+1+p_a-2p_{\min\{a,b\}}+1}t^{p_a-2p_{\min\{a,i\}}}|\log t|^{|\alpha|+1}\\
\notag\leq&\, C_{\alpha,n}t^{p_a-2p_{\min\{a,i\}}+\varepsilon},
\end{align}
since $2-2p_b+2p_a-2p_{\min\{a,b\}}\ge 2\varepsilon$. Recall that $[(g^{\bf n]})^{aa}]^{-\frac{1}{2}}(g^{\bf [n]})^{ia}\sim t^{p_a-2p_{\min\{a,i\}}}$. Hence, the first term in the RHS of \eqref{n.St.calc} is of leading order. Rewrite
\begin{align*}
[(g^{\bf n]})^{aa}]^{-\frac{1}{2}}(g^{\bf [n]})^{ia}\partial_i=\frac{e_{Ia}^{[\bf n]}e_{Ii}^{[\bf n]}}{\sqrt{e_{C\underline a}^{[\bf n]}e_{C\underline a}^{[\bf n]}}}\omega^{[\bf n]}_{iD}e_D^{[\bf n]}=\frac{e_{Ia}^{[\bf n]}}{\sqrt{e_{C\underline a}^{[\bf n]}e_{C\underline a}^{[\bf n]}}}e_I^{[\bf n]},\qquad \frac{e_{Ia}^{[\bf n]}}{\sqrt{e_{C\underline a}^{[\bf n]}e_{C\underline a}^{[\bf n]}}}\sim t^{|p_a-p_I|}
\end{align*}
To conclude the desired estimate \eqref{n.St}, we observe that $n_{S_{t,a}^\pm}= V_{S_{t,a}^\pm}/|V_{S_{t,a}^\pm}|_{g^{[\bf n]}}$ and that $V_{S_{t,a}^\pm}$ is unit to leading order as $t\rightarrow0$.

The estimate \eqref{n.St.H} for $n_{S_{t,a}^\pm}^{\mathcal{H}}$ is obtained similarly, by subtracting from $X_a^\pm$ its projections to $\slashed{\partial}_b$, $b\neq a$, and normalizing the resulting vector field. In turn, \eqref{n.H} is immediate from \eqref{n.St.H}. Finally, the bounds for volume forms are implied by \eqref{metric.St} and the form of the normal $n^{\mathcal{H}}_{S_{t,a}^\pm}$.
\end{proof}
%
%

%
\subsection{The modified system of equations for the remainder terms}\label{subsec:rem.eq} 

We now consider the system of equations \eqref{e0.e}, \eqref{e0.kIJ.mod}, \eqref{e0.gamma_IJB.mod}, dropping all $R_{\mu\nu}^{(4)}$ terms, since we are working in vacuum:
\begin{align}
\label{e0.eIi2}\partial_te_{Ia}=&\,k_{IC}e_{Ca},\\
\label{e0.kIJ.mod2}\partial_tk_{IJ}-\text{tr}k\,k_{IJ}=&\,\frac{1}{2}\bigg[e_C\gamma_{IJC}-e_I\gamma_{CJC}
+e_C\gamma_{JIC}-e_J\gamma_{CIC}\\
\notag&-\gamma_{CID}\gamma_{DJC}-\gamma_{CCD}\gamma_{IJD}
-\gamma_{CJD}\gamma_{DIC}-\gamma_{CCD}\gamma_{JID}\bigg]\\
\notag&+\frac{1}{2}\delta_{IJ}\bigg[2e_D\gamma_{CDC}+\gamma_{CED}\gamma_{DEC}
+\gamma_{CCD}\gamma_{EED}+k_{CD}k_{CD}-(k_{CC})^2\bigg],\\
\label{e0.gamma_IJB.mod2}\partial_t\gamma_{IJB}-k_{IC}\gamma_{CJB}=&\,e_Bk_{JI}-e_Jk_{BI}\\
\notag&+\gamma_{JBC}k_{CI}+\gamma_{JIC}k_{BC}-\gamma_{BJC}k_{CI}-\gamma_{BIC}k_{JC}\\
\notag&-\delta_{IB}\bigg[e_Ck_{CJ}-\gamma_{CCD}k_{DJ}-\gamma_{CJD}k_{CD}-e_J\text{tr}k\bigg]\\
\notag&+\delta_{IJ}\bigg[e_Ck_{CB}-\gamma_{CCD}k_{DB}-\gamma_{CBD}k_{CD}-e_B\text{tr}k\bigg].
\end{align}
We would like to produce a solution $e_{Ia},k_{IJ},\gamma_{IJB}$, each component of which is equal to the corresponding $e_{Ia}^{[\bf n]},k^{[\bf n]}_{IJ},\gamma^{[\bf n]}_{IJB}$ plus a sufficiently decaying term, as $t\rightarrow0$. 

For this purpose, define the remainder terms
\begin{align}\label{e.k.gamma.d}
e^{(d)}_{Ii}=e_{Ii}-e^{\bf [n]}_{Ii},\qquad
\gamma^{(d)}_{IJB}=\gamma_{IJB}-\gamma_{IJB}^{[\bf n]},\qquad
k^{(d)}_{IJ}=k_{IJ}-k^{[\bf n]}_{IJ}.
\end{align}
Next, we plug \eqref{e.k.gamma.d} into \eqref{e0.eIi2}-\eqref{e0.kIJ.mod2} 
to obtain the system of equations satisfied by the differences $e^{(d)}_{Ii},\gamma^{(d)}_{IJB},k^{(d)}_{IJ}$. We use in the process the version of the equations \eqref{e0.kIJ.mod}, \eqref{e0.gamma_IJB.mod} for $\widetilde{k}^{[\bf n]}_{IJ},\gamma_{IJB}^{[\bf n]}$ and \eqref{eIi.it} to replace the terms which only contain iterates. A tedious, but straightforward, computation gives the equations:
\begin{align}
\label{e0.eI.d}\partial_te_{Ia}^{(d)}+\frac{p_{\underline{I}}}{t}e^{(d)}_{\underline{I}a}=&\,
(k_{IC}^{[\bf n]}-k^{[\bf 0]}_{IC})e^{(d)}_{Ca}
+k_{IC}^{(d)}e^{[\bf n]}_{Ca}+k_{IC}^{(d)}e^{(d)}_{Ca}+(\mathcal{I}^{[\bf n]}_e)_{Ia},\\
\label{e0.kIJ.d}\partial_tk_{IJ}^{(d)}+\frac{1}{t}k^{(d)}_{IJ}
=&\,\frac{1}{2}\big\{e_C\gamma_{IJC}^{(d)}-e_I\gamma_{CJC}^{(d)}
+e_C\gamma_{JIC}^{(d)}-e_J\gamma_{CIC}^{(d)}+2\delta_{IJ}e_D\gamma_{CDC}^{(d)}\big\}\\
\notag&-\delta_{\underline{I}J}\frac{p_{\underline{I}}}{t}k^{(d)}_{CC}+\delta_{IJ}\frac{1}{t}k^{(d)}_{CC}-\delta_{IJ}\sum_C\frac{p_C}{t}k_{CC}^{(d)}
+\mathfrak{K}_{IJ}^{(d)}+(\mathcal{I}_k^{[\bf n]})_{IJ},\\
\label{e0.gamma_IJB.d} \partial_t\gamma_{IJB}^{(d)}+\frac{p_{\underline{I}}}{t}\gamma_{\underline{I}JB}^{(d)}=&\,e_Bk_{JI}^{(d)}-e_Jk_{BI}^{(d)}
-\delta_{IB}\big[e_Ck_{CJ}^{(d)}-e_Jk_{CC}^{(d)}\big]+\delta_{IJ}\big[e_Ck_{CB}^{(d)}-e_Bk^{(d)}_{CC}\big]\\
\notag&-\frac{p_{\underline{I}}}{t}\gamma_{JB\underline{I}}^{(d)}-\frac{p_{\underline{B}}}{t}\gamma_{JI\underline{B}}^{(d)}+\frac{p_{\underline{I}}}{t}\gamma_{BJ\underline{I}}^{(d)}+\frac{p_{\underline{J}}}{t}\gamma_{BI\underline{J}}^{(d)}\\
\notag&-\delta_{IB}\bigg[\frac{p_{\underline{J}}}{t}\gamma_{CCJ}^{(d)}+\sum_C\frac{p_C}{t}\gamma_{CJC}^{(d)}\bigg]
+\delta_{IJ}\bigg[\frac{p_{\underline{B}}}{t}\gamma_{CC\underline{B}}^{(d)}+\sum_C\frac{p_C}{t}\gamma_{CBC}^{(d)}\bigg]\\
\notag&-\delta_{\underline{I}J}\frac{\partial_ap_{\underline{I}}}{t}e^{(d)}_{Ba}
+\delta_{\underline{I}B}\frac{\partial_ap_{\underline{I}}}{t}e_{Ja}^{(d)}
+\delta_{IB}\frac{\partial_ap_{\underline{J}}}{t}e_{\underline{J}a}^{(d)}-\delta_{IJ}\frac{\partial_ap_{\underline{B}}}{t}e_{\underline{B}a}^{(d)}\\
\notag&+\mathfrak{G}_{IJB}^{(d)}+(\mathcal{I}_\gamma^{[\bf n]})_{IJB},
\end{align}
where
\begin{align}
\label{frak.K}\mathfrak{K}_{IJ}^{(d)}=&\,\frac{1}{2}\bigg[e_{Ca}^{(d)}\partial_a\gamma_{IJC}^{[\bf n]}-e_{Ia}^{(d)}\partial_a\gamma_{CJC}^{[\bf n]}
+e_{Ca}^{(d)}\partial_a\gamma_{JIC}^{[\bf n]}-e_{Ja}^{(d)}\partial_a\gamma_{CIC}^{[\bf n]}+2\delta_{IJ}e_{Da}^{(d)}\partial_a\gamma_{CDC}^{[\bf n]}\bigg]\\
\notag&-\frac{1}{2}\bigg[\gamma_{CID}^{[\bf n]}\gamma_{DJC}^{(d)}+\gamma_{CCD}^{[\bf n]}\gamma_{IJD}^{(d)}
+\gamma_{CJD}^{[\bf n]}\gamma_{DIC}^{(d)}+\gamma_{CCD}^{[\bf n]}\gamma_{JID}^{(d)}\\
\notag&+\gamma_{CID}^{(d)}\gamma_{DJC}^{[\bf n]}+\gamma_{CCD}^{(d)}\gamma_{IJD}^{[\bf n]}
+\gamma_{CJD}^{(d)}\gamma_{DIC}^{[\bf n]}+\gamma_{CCD}^{(d)}\gamma_{JID}^{[\bf n]}\\
\notag&+\gamma_{CID}^{(d)}\gamma_{DJC}^{(d)}+\gamma_{CCD}^{(d)}\gamma_{IJD}^{(d)}
+\gamma_{CJD}^{(d)}\gamma_{DIC}^{(d)}+\gamma_{CCD}^{(d)}\gamma_{JID}^{(d)}\bigg]\\
\notag&+\delta_{IJ}\bigg[(k^{[\bf n]}_{CD}-k^{[\bf 0]}_{CD})k^{(d)}_{CD}-(k_{CC}^{\bf n]}-k^{[\bf 0]}_{CC})k_{DD}^{(d)}
+\frac{1}{2}k_{CD}^{(d)}k_{CD}^{(d)}-\frac{1}{2}(k_{CC}^{(d)})^2\bigg]\\
\notag&+\frac{1}{2}\delta_{IJ}\bigg[\gamma_{CED}^{[\bf n]}\gamma_{DEC}^{(d)}
+\gamma_{CCD}^{[\bf n]}\gamma_{EED}^{(d)}
+\gamma_{CED}^{(d)}\gamma_{DEC}^{[\bf n]}
+\gamma_{CCD}^{(d)}\gamma_{EED}^{[\bf n]}
+\gamma_{CED}^{(d)}\gamma_{DEC}^{(d)}\\
\notag&+\gamma_{CCD}^{(d)}\gamma_{EED}^{(d)}\bigg]
+(\text{tr}k^{[\bf n]}-\text{tr}k^{[\bf 0]})k_{IJ}^{(d)}
+\text{tr}k^{(d)}(k^{[\bf n]}_{IJ}-k^{[\bf 0]}_{IJ})
+\text{tr}k^{(d)}k^{(d)}_{IJ},\\
\label{frak.G}\mathfrak{G}_{IJB}^{(d)}=&\,(k_{IC}^{[\bf n]}-k_{IC}^{[\bf 0]})\gamma_{CJB}^{(d)}
+k_{IC}^{(d)}\gamma_{CJB}^{[\bf n]}+k_{IC}^{(d)}\gamma_{CJB}^{(d)}
+\gamma_{JBC}^{[\bf n]}k_{CI}^{(d)}+\gamma_{JIC}^{[\bf n]}k_{BC}^{(d)}\\
\notag&-\gamma_{BJC}^{[\bf n]}k_{CI}^{(d)}-\gamma_{BIC}^{[\bf n]}k_{JC}^{(d)}
+\gamma_{JBC}^{(d)}(k_{CI}^{[\bf n]}-k_{CI}^{[\bf 0]})+\gamma_{JIC}^{(d)}(k_{BC}^{[\bf n]}-k_{BC}^{[\bf 0]})\\
\notag&-\gamma_{BJC}^{(d)}(k_{CI}^{[\bf n]}-k_{CI}^{[\bf 0]})
-\gamma_{BIC}^{(d)}(k_{JC}^{[\bf n]}-k_{JC}^{[\bf 0]})
+\gamma_{JBC}^{(d)}k_{CI}^{(d)}+\gamma_{JIC}^{(d)}k_{BC}^{(d)}\\
\notag&-\gamma_{BJC}^{(d)}k_{CI}^{(d)}-\gamma_{BIC}^{(d)}k_{JC}^{(d)}
+\delta_{IB}\bigg[\gamma_{CCD}^{[\bf n]}k_{DJ}^{(d)}+\gamma_{CJD}^{[\bf n]}k_{CD}^{(d)}
+\gamma_{CCD}^{(d)}(k_{DJ}^{[\bf n]}-k_{DJ}^{[\bf 0]})\\
\notag&+\gamma_{CJD}^{(d)}(k_{CD}^{[\bf n]}-k_{CD}^{[\bf 0]})+\gamma_{CCD}^{(d)}k_{DJ}^{(d)}+\gamma_{CJD}^{(d)}k_{CD}^{(d)}\bigg]
-\delta_{IJ}\bigg[\gamma_{CCD}^{[\bf n]}k_{DB}^{(d)}+\gamma_{CBD}^{[\bf n]}k_{CD}^{(d)}\\
\notag&
+\gamma_{CCD}^{(d)}(k_{DB}^{[\bf n]}-k_{DB}^{[\bf 0]})
+\gamma_{CBD}^{(d)}(k_{CD}^{[\bf n]}-k_{CD}^{[\bf 0]})
+\gamma_{CCD}^{(d)}k_{DB}^{(d)}+\gamma_{CBD}^{(d)}k_{CD}^{(d)}\bigg]\\
\notag&+e^{(d)}_{Ba}\partial_a(k_{JI}^{\bf [n]}-k_{JI}^{[\bf 0]})-e_{Ja}^{(d)}\partial_a(k_{BI}^{\bf [n]}-k_{BI}^{[\bf 0]})
-\delta_{IB}\big\{e_{Ca}^{(d)}\partial_a(k_{CJ}^{[\bf n]}-k_{CJ}^{[\bf 0]})\\
\notag&-e_{Ja}^{(d)}\partial_a(k_{CC}^{[\bf n]}-k_{CC}^{[\bf 0]})\big\}
+\delta_{IJ}\big\{e_{Ca}^{(d)}\partial_a(k_{CB}^{[\bf n]}-k_{CB}^{[\bf 0]})-e_{Ba}^{(d)}\partial_a(k_{CC}^{[\bf n]}-k^{[\bf 0]}_{CC})\big\}
\end{align}
and
\begin{align}
\label{cal.I.e}(\mathcal{I}^{[\bf n]}_e)_{Ia}=&\sum_{C\neq I}(k^{[\bf n]}_{IC}e_{Ca}^{[\bf n]}-k^{[\bf n-1]}_{IC}e_{Ca}^{[\bf n-1]})\\
\label{cal.I.k}(\mathcal{I}^{[\bf n]}_k)_{IJ}=&\,\frac{1}{2}({\bf R}_{IJ}^{\bf [n]}+{\bf R}_{JI}^{\bf [n]})
+\delta_{IJ}({\bf R}_{00}^{\bf [n]}+\frac{1}{2}{\bf R}^{\bf [n]})\\
\notag&-\partial_t(k_{IJ}^{[\bf n]}-\widetilde{k}^{[\bf n]}_{IJ})
+\text{tr}k^{[\bf n]}k^{[\bf n]}_{IJ}-\text{tr}\widetilde{k}^{[\bf n]}\widetilde{k}^{[\bf n]}_{IJ}\\
\notag&+\frac{1}{2}\delta_{IJ}\big[k_{CD}^{[\bf n]}k^{[\bf n]}_{CD}-(k_{CC}^{[\bf n]})^2\big]
-\frac{1}{2}\delta_{IJ}\big[\widetilde{k}_{CD}^{[\bf n]}\widetilde{k}^{[\bf n]}_{CD}-(\widetilde{k}_{CC}^{[\bf n]})^2\big]
\\
\label{cal.I.gamma}(\mathcal{I}^{[\bf n]}_\gamma)_{IJB}=&\,\delta_{IB}{\bf R}_{0J}^{\bf [n]}-\delta_{IJ}{\bf R}_{0B}^{[\bf n]}\\
\notag&+(k^{[\bf n]}_{IC}-\widetilde{k}^{[\bf n]}_{IC})\gamma_{CJB}^{[\bf n]}
+e_B^{[\bf n]}(k_{JI}^{[\bf n]}-\widetilde{k}^{[\bf n]}_{JI})-e_J^{[\bf n]}(k^{[\bf n]}_{BI}-\widetilde{k}^{[\bf n]}_{BI})\\
\notag&+\gamma_{JBC}^{[\bf n]}(k_{CI}^{[\bf n]}-\widetilde{k}_{CI}^{[\bf n]})+\gamma_{JIC}^{[\bf n]}(k^{[\bf n]}_{BC}-\widetilde{k}^{[\bf n]}_{BC})-\gamma_{BJC}^{[\bf n]}(k^{[\bf n]}_{CI}-\widetilde{k}^{[\bf n]}_{CI})\\
\notag&-\gamma_{BIC}^{[\bf n]}(k_{JC}^{[\bf n]}-\widetilde{k}_{JC}^{[\bf n]})
-\delta_{IB}\bigg[e_C^{[\bf n]}(k_{CJ}^{[\bf n]}-\widetilde{k}_{CJ}^{[\bf n]})-\gamma_{CCD}^{[\bf n]}(k_{DJ}^{[\bf n]}-\widetilde{k}_{DJ}^{[\bf n]})\\
\notag&-\gamma_{CJD}^{[\bf n]}(k_{CD}^{[\bf n]}-\widetilde{k}_{CD}^{[\bf n]})-e_J^{[\bf n]}(\text{tr}k^{[\bf n]}-\text{tr}\widetilde{k}^{[\bf n]})\bigg]
+\delta_{IJ}\bigg[e_C^{[\bf n]}(k_{CB}^{[\bf n]}-\widetilde{k}_{CB}^{[\bf n]})\\
\notag&-\gamma_{CCD}^{[\bf n]}(k_{DB}^{[\bf n]}-\widetilde{k}_{DB}^{[\bf n]})-\gamma_{CBD}^{[\bf n]}(k_{CD}^{[\bf n]}-\widetilde{k}_{CD}^{[\bf n]})-e_B^{[\bf n]}(\text{tr}k^{[\bf n]}-\text{tr}\widetilde{k}^{[\bf n]})\bigg]
\end{align}
The terms in \eqref{frak.K}, \eqref{frak.G} should be viewed as error terms in the energy estimates below. On the other hand, the terms explicitly written in the equations \eqref{e0.eI.d}-\eqref{e0.kIJ.d} with $t^{-1}$ coefficients need more careful handling. They will be absorbed by considering appropriately large $t$-weights in our norms. To this end, we will need $(\mathcal{I}^{[\bf n]}_e)_{Ia}, (\mathcal{I}^{[\bf n]}_k)_{IJ}, (\mathcal{I}^{[\bf n]}_\gamma)_{IJB}$ to decay at a sufficiently fast polynomial rate in $t$. It is immediate from Theorem \ref{thm:approx.sol} and Lemma \ref{lem:k.approx.2ndfund} that
\begin{lemma}\label{lem:cal.I}
For every multi-index $\alpha$ and all indices $I,J,B,a$, the expressions \eqref{cal.I.e}-\eqref{cal.I.gamma} satisfy the following bounds:
\begin{align}\label{cal.I.est}
|\partial_x^\alpha(\mathcal{I}^{[\bf n]}_e)_{Ia}|,\,|\partial_x^\alpha(\mathcal{I}^{[\bf n]}_k)_{IJ}|,\,|\partial_x^\alpha(\mathcal{I}^{[\bf n]}_\gamma)_{IJB}|\leq C_{\alpha,n}t^{M},
\end{align}
for all $(t,x)\in(0,t_n]\times[0,\delta]^3$,
where $M=M(n)\rightarrow+\infty$, as $n\rightarrow+\infty$. 
\end{lemma}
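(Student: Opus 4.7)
The plan is to treat each of the three quantities in turn, expand via Leibniz, and apply the bounds already established. All three expressions are built out of two types of building blocks whose fast decay has already been proven: the Ricci components ${\bf R}_{\mu\nu}^{[\bf n]}$ of the approximate spacetime metric, controlled by point \ref{item.approx3} of Theorem \ref{thm:approx.sol} and Proposition \ref{prop:approx5} at rate $t^{-2+n\varepsilon}$, and the discrepancy $k_{IJ}^{[\bf n]}-\widetilde k_{IJ}^{[\bf n]}$ between our iterate and the actual second fundamental form, controlled by Lemma \ref{lem:k.approx.2ndfund} at rate $t^{-1-r+n\varepsilon+|p_I-p_J|}$ for $r=0,1$. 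The remaining factors in each product only involve frame, co-frame, connection, and extrinsic curvature iterates, whose $(t,x)$-behavior is bounded by Theorem \ref{thm:approx.sol} and Lemma \ref{lem:gamma}.

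For $(\mathcal{I}^{[\bf n]}_e)_{Ia}$ I would use the telescoping identity
\begin{equation*}
k^{[\bf n]}_{IC}e^{[\bf n]}_{Ca}-k^{[\bf n-1]}_{IC}e^{[\bf n-1]}_{Ca}=(k^{[\bf n]}_{IC}-k^{[\bf n-1]}_{IC})e^{[\bf n]}_{Ca}+k^{[\bf n-1]}_{IC}(e^{[\bf n]}_{Ca}-e^{[\bf n-1]}_{Ca}),
\end{equation*}
apply the successive-iterate estimates of Proposition \ref{prop:ind2} to the difference factors and the uniform bounds of Theorem \ref{thm:approx.sol} to the other factors. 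Restricting to $C\neq I$ ensures that $k^{[\bf n-1]}_{IC}-k^{[\bf 0]}_{IC}=k^{[\bf n-1]}_{IC}$, so that the $t^{-1}$ leading behavior of $k^{[\bf 0]}$ never appears. The result is of the form $t^{-1+n\varepsilon+|p_I-p_C|}\cdot t^{-p_C}+t^{-1+|p_I-p_C|+\varepsilon}\cdot t^{-p_C+n\varepsilon}$, which rearranges to $t^{-1-p_I+n\varepsilon+\cdots}$, and Leibniz's rule produces analogous bounds on every spatial derivative.

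For $(\mathcal{I}^{[\bf n]}_k)_{IJ}$ I would handle the Ricci terms directly via Theorem \ref{thm:approx.sol}, point \ref{item.approx3}. The remaining terms all collect into expressions of the form $\partial_t(k^{[\bf n]}-\widetilde k^{[\bf n]})$ and, after telescoping once more as above, products of the type $(k^{[\bf n]}-\widetilde k^{[\bf n]})\cdot(k^{[\bf n]}$ or $\widetilde k^{[\bf n]})$. Each such product is handled by Lemma \ref{lem:k.approx.2ndfund} times the standard bounds for the second factor; the $t^{n\varepsilon}$ gain compensates the worst $t^{-1}$ factor and produces a polynomial power of $t$ that grows with $n$. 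The treatment of $(\mathcal{I}^{[\bf n]}_\gamma)_{IJB}$ is identical: the Ricci components ${\bf R}^{[\bf n]}_{0J}$, ${\bf R}^{[\bf n]}_{0B}$ are bounded by Proposition \ref{prop:approx5}, while every other term is linear in $(k^{[\bf n]}-\widetilde k^{[\bf n]})$ or its spatial derivative, and can be paired with the $\gamma^{[\bf n]}$ or $e^{[\bf n]}$ bounds of Lemma \ref{lem:gamma} and Theorem \ref{thm:approx.sol}.

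The only bookkeeping obstacle is to track, for each term, how the negative exponents coming from $k^{[\bf 0]}\sim t^{-1}$, $e^{[\bf 0]}\sim t^{-p_I}$, $\omega^{[\bf 0]}\sim t^{p_C}$, and $\gamma^{[\bf n]}$ combine with the positive $n\varepsilon$ gain; in each term this leads to an exponent of the form $-A(\alpha,p_1,p_2,p_3)+n\varepsilon$ where $A$ depends only on $|\alpha|$ and the Kasner data. Since $A$ is independent of $n$ while the gain is $n\varepsilon$, the resulting exponent $M(n)$ tends to $+\infty$, as claimed. No new estimate is required beyond those quoted from Theorem \ref{thm:approx.sol}, Lemma \ref{lem:k.approx.2ndfund}, Lemma \ref{lem:gamma}, and Proposition \ref{prop:ind2}.
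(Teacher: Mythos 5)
Your proposal is correct and follows exactly the paper's intended route: the paper simply states that the lemma ``is immediate from Theorem~\ref{thm:approx.sol} and Lemma~\ref{lem:k.approx.2ndfund},'' and your write-up unpacks the telescoping and product estimates one would carry out. The only small addition you make — correctly — is the explicit use of Proposition~\ref{prop:ind2} for the successive-iterate differences in $(\mathcal{I}^{[\bf n]}_e)_{Ia}$, which the paper's one-line lead-in omits but which is indeed needed there.
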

The following lemma makes evident the hyperbolic structure of the system \eqref{e0.eI.d}-\eqref{e0.gamma_IJB.d}.
\begin{lemma}\label{lem:var.d.symm}
The iterates satisfy $k_{IJ}^{[\bf n]}=k_{JI}^{[\bf n]}$, $\gamma_{IJB}^{[\bf n]}=-\gamma_{IBJ}^{[\bf n]}$, for all $(t,x)\in(0,T]\times [0,\delta]^3$. Moreover, 
the variables $k_{IJ}^{(d)},\gamma^{(d)}_{IJB}$ enjoy the same symmetry/antisymmetry properties 
\begin{align}\label{k.gamma.d.symm}
k_{IJ}^{(d)}=k_{JI}^{(d)},\qquad\gamma^{(d)}_{IJB}=-\gamma_{IBJ}^{(d)}
\end{align}
for all $(t,x)\in \{U_t\}_{t\in[\eta,T]}$, provided that they are valid on $U_\eta$, $\eta>0$. The same symmetries hold for $k_{IJ},\gamma_{IJB}$.
\end{lemma}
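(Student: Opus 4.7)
The plan has two stages: establish the iterate symmetries by induction on $n$, then propagate them for the full variables $k_{IJ},\gamma_{IJB}$ by extracting a closed linear homogeneous ODE for their anti-/symmetric components.

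For the iterates, the base case is immediate: $k_{IJ}^{[\bf 0]}=-\delta_{\underline{I}J}p_{\underline{I}}/t$ is diagonal, and $\gamma_{IJB}^{[\bf 0]}+\gamma_{IBJ}^{[\bf 0]}=0$ follows from metric compatibility of the Levi-Civita connection of $g^{[\bf 0]}$ applied to the orthonormal frame $\{e_I^{[\bf 0]}\}$, via $0=e_I^{[\bf 0]}g^{[\bf 0]}(e_J^{[\bf 0]},e_B^{[\bf 0]})$. Inductively, if $k_{IJ}^{[\bf n-1]}$ is symmetric, then the Ricci tensor $R_{IJ}^{[\bf n-1]}$ of $g^{[\bf n-1]}$ is automatically symmetric, so the iteration \eqref{kIJ.it} is a linear ODE in $t$ whose RHS and asymptotic data \eqref{asym.cond.it} are both symmetric in $I,J$; uniqueness then forces $k_{IJ}^{[\bf n]}=k_{JI}^{[\bf n]}$. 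The antisymmetry of $\gamma_{IJB}^{[\bf n]}$ in $J,B$ again follows from metric compatibility of $\nabla^{[\bf n]}$ in the orthonormal frame $\{e_I^{[\bf n]}\}$.

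For the full variables, define $A_{IJ}:=k_{IJ}-k_{JI}$ and $S_{IJB}:=\gamma_{IJB}+\gamma_{IBJ}$. A term-by-term inspection of \eqref{e0.kIJ.mod2} shows that its RHS is manifestly symmetric under $I\leftrightarrow J$: the four $e\gamma$ terms and the four $\gamma\gamma$ products each pair up under the swap, while every $\delta_{IJ}$-prefixed term is symmetric. Subtracting the $(I\leftrightarrow J)$-swapped equation from the original therefore yields
\begin{equation*}
\partial_t A_{IJ}=\mathrm{tr}(k)\,A_{IJ}.
\end{equation*}
Analogously, every term on the RHS of \eqref{e0.gamma_IJB.mod2} is antisymmetric under $J\leftrightarrow B$ (the $e_B k_{JI}-e_J k_{BI}$ pair, the four $\gamma k$ products, and the $\delta_{IB}$ versus $\delta_{IJ}$ blocks all swap and cancel upon addition), producing
\begin{equation*}
\partial_t S_{IJB}=k_{IC}\,S_{CJB}.
\end{equation*}
These are \emph{decoupled} linear homogeneous ODEs in $t$ with coefficients $\mathrm{tr}(k)$, $k_{IC}$ bounded on $\{U_t\}_{t\in[\eta,T]}$ by the a priori bounds of Section \ref{subsec:exist}. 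Since no spatial derivatives appear, no domain-of-dependence argument is required. Given the assumed vanishing of $A,S$ on $U_\eta$, Gr\"onwall forces $A\equiv S\equiv0$ on the whole time interval, giving the claim for $(k,\gamma)$; subtracting the iterate identities from the first stage then yields the same for $(k^{(d)},\gamma^{(d)})$.

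The only mild technical point is the term-by-term verification of the (anti)symmetries of the RHS of the modified system, which relies on the specific placement of the $\delta_{IB},\delta_{IJ}$ blocks introduced following \cite{FSm}. The fact that the equations for $A$ and $S$ turn out to decouple is pleasant: it removes the one place where a circular Gr\"onwall loop between them could have been a concern.
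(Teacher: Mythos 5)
Your proof is essentially the same as the paper's, and it is correct. Both arguments: (a) get $\gamma_{IJB}^{[\bf n]}=-\gamma_{IBJ}^{[\bf n]}$ directly from metric compatibility of the Levi--Civita connection of $g^{[\bf n]}$ in the orthonormal frame (i.e., definition \eqref{gamma.n}); (b) deduce symmetry of $k^{[\bf n]}_{IJ}$ from the linear homogeneous transport ODE for $k^{[\bf n]}_{IJ}-k^{[\bf n]}_{JI}$ with symmetric source $R^{[\bf n-1]}_{IJ}$ and trivial asymptotic data; and (c) verify by inspection that the RHS of \eqref{e0.kIJ.mod2} is $I\leftrightarrow J$ symmetric and the RHS of \eqref{e0.gamma_IJB.mod2} is $J\leftrightarrow B$ antisymmetric, so that $A_{IJ}$ and $S_{IJB}$ each satisfy a decoupled linear homogeneous transport ODE in $\partial_t$, whence vanishing on $U_\eta$ propagates forward. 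Two small remarks: your inductive framing for step (b) is unnecessary, since $R^{[\bf n-1]}_{IJ}$ is the Ricci tensor of a Riemannian metric contracted against an orthonormal frame and is therefore symmetric with no hypothesis on $k^{[\bf n-1]}$; and your appeal to ``uniqueness'' of the iterate ODE at the singular endpoint $t=0$ is a bit terse, since the coefficient $k^{[\bf n-1]}_{CC}\sim -1/t$ is singular there --- the paper handles this by multiplying by $t$ and solving explicitly via integrating factors (cf.\ Lemma \ref{lem:approx2}), which you should cite or reproduce to close that point.
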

\begin{remark}
Equations \eqref{e0.kIJ.d}, \eqref{e0.gamma_IJB.d} form a first order symmetric hyperbolic system for $k_{IJ}^{(d)},\gamma_{IJB}^{(d)}$, provided \eqref{k.gamma.d.symm} holds true. Indeed, multiplying \eqref{e0.kIJ.d} with $k_{IJ}^{(d)}$, \eqref{e0.gamma_IJB.d} with $\frac{1}{2}\gamma_{IJB}^{(d)}$, and adding the resulting identities we notice that the first derivatives in the resulting RHS combine to give only whole derivatives of products $k^{(d)}\star\gamma^{(d)}$.
%
%
%
%
\end{remark}
\begin{proof}
First, notice that the corresponding properties hold for $k_{IJ}^{[\bf n]},\gamma_{IJB}^{[\bf n]}$ in $[0,t_n]\times[0,\delta]^3\supset \{U_t\}_{t\in[0,T]}$. Indeed, for $\gamma_{IJB}^{[\bf n]}$ this is clear by definition \eqref{gamma.n}. For $k_{IJ}^{[\bf n]}$, equation \eqref{kIJ.it} implies that
\begin{align*}
\partial_t (k_{IJ}^{[\bf n]}-k_{JI}^{[\bf n]})-k_{CC}^{[\bf n-1]} (k_{IJ}^{[\bf n]}-k_{JI}^{[\bf n]})=0.
\end{align*}
Rewrite
\begin{align*}
\partial_t[t(k_{IJ}^{[\bf n]}-k_{JI}^{[\bf n]})]-k_{CC}^{[\bf n-1]} (k_{IJ}^{[\bf n]}-k_{JI}^{[\bf n]})-(k_{CC}^{[\bf n-1]}-k_{CC}^{[\bf 0]}) t(k_{IJ}^{[\bf n]}-k_{JI}^{[\bf n]})=0
\end{align*}
By point \ref{item.approx1} in Theorem \ref{thm:approx.sol}, $|k_{CC}^{[\bf n-1]}-k_{CC}^{[\bf 0]}|\leq C_nt^{-1+\varepsilon}$. Also, from \eqref{asym.cond.it} it follows that
\begin{align*}
\lim_{t\rightarrow0}t(k_{IJ}^{[\bf n]}-k_{JI}^{[\bf n]})=0
\end{align*}
Hence, 
solving the above ODE via integrating factors for $t(k_{IJ}^{[\bf n]}-k_{JI}^{[\bf n]})$, we conclude that $t(k_{IJ}^{[\bf n]}-k_{JI}^{[\bf n]})=0$ everywhere.

Now, if $k^{(d)}_{IJ},\gamma^{(d)}_{IJB}$ satisfy \eqref{k.gamma.d.symm} on $U_T$, so do $k_{IJ},\gamma_{IJB}$. Returning to equations \eqref{e0.kIJ.mod2}, \eqref{e0.gamma_IJB.mod2}, we have that
\begin{align*}
\partial_t(k_{IJ}-k_{JI})-\text{tr}k(k_{IJ}-k_{JI})=0,\qquad\partial_t(\gamma_{IJB}+\gamma_{IBJ})-k_{IC}(\gamma_{CJB}+\gamma_{CBJ})=0
\end{align*}
The conclusion follows by observing that the initial data of $k_{IJ}-k_{JI},\gamma_{IJB}+\gamma_{IBJ}$ are trivial on $U_\delta$ and that the integral curves of $\partial_t$ emanating from all points in $U_\delta$ rule the entire domain $\{U_t\}_{t\in[\delta,T]}$.
\end{proof}

\subsection{Local existence}\label{subsec:exist} 

Our goal in this section is to prove the following. 
\begin{theorem}\label{thm:loc.exist}
For every $s\ge 4$ and $N_0\in\mathbb{N}$, there exists $n_{N_0,s}\in\mathbb{N}$ sufficiently large, such that for every $n\ge n_{N_0,s}$, there exists $T=T_{N_0,s,n}>0$ sufficiently small and a solution $e_{Ia},k_{IJ},\gamma_{IJB}$ to \eqref{e0.eIi2}-\eqref{e0.gamma_IJB.mod2}, in the domain $\{U_t\}_{t\in(0,T_{N_0,s,n}]}$ (see Section \ref{subsec:domain}), such that the following estimate holds:
\begin{align}\label{loc.exist.est}
\|e^{(d)}\|^2_{H^s(U_t)}+\|k^{(d)}\|^2_{H^s(U_t)}+\|\gamma^{(d)}\|^2_{H^s(U_t)}\leq t^{2N_0},
\end{align}
for all $t\in(0,T_{N_0,s,n}]$,
where the remainder terms $e_{Ia}^{(d)},k_{IJ}^{(d)},\gamma_{IJB}^{(d)}$ are as in \eqref{e.k.gamma.d}.
\end{theorem}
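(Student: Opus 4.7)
The plan is a standard truncate-and-pass-to-the-limit scheme. Fix $s\ge 4$ and $N_0\in\mathbb N$. For each small $\eta>0$ I would first solve the Cauchy problem for the system \eqref{e0.eI.d}--\eqref{e0.gamma_IJB.d} on $\{U_t\}_{t\in[\eta,T]}$ with trivial initial data $(e^{(d)},k^{(d)},\gamma^{(d)})|_{U_\eta}=0$. By the remark after Lemma \ref{lem:var.d.symm}, the principal part in $(k^{(d)},\gamma^{(d)})$ is symmetric hyperbolic while \eqref{e0.eI.d} is transported along $\partial_t$, so standard local well-posedness theory for first-order symmetric hyperbolic systems produces a $C^0_t H^s$ solution on some short time interval. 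The task is then to derive weighted $H^s$ estimates uniform in $\eta$ that simultaneously force the decay $\|\cdot\|_{H^s}^2\lesssim t^{2N_0}$ as $t\to 0^+$; a weak-$*$ compactness extraction as $\eta\to 0^+$ then produces the solution on $(0,T_{N_0,s,n}]$.

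The heart of the argument is the weighted energy
\[
E^{(s)}(t):=t^{-2N_0}\sum_{|\alpha|\le s}\int_{U_t}\Big(|\partial^\alpha_x e^{(d)}|^2+|\partial^\alpha_x k^{(d)}|^2+|\partial^\alpha_x \gamma^{(d)}|^2\Big)\,d\text{vol}_{g^{[\bf n]}}.
\]
Commuting $\partial^\alpha_x$ through the system and pairing \eqref{e0.eI.d}, \eqref{e0.kIJ.d}, \eqref{e0.gamma_IJB.d} with $e^{(d)}$, $k^{(d)}$, $\tfrac12\gamma^{(d)}$ respectively, the symmetry/antisymmetry recorded in Lemma \ref{lem:var.d.symm} forces the top-order spatial derivatives on the right-hand sides of the $k$- and $\gamma$-equations to combine into a total divergence. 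Stokes' theorem turns this into a boundary flux on $S_t\subset\mathcal{H}$; taking $\sigma$ in \eqref{flow.vec} large enough so that the normal \eqref{n.H} to $\mathcal{H}$ dominates the principal symbol of the system, this flux is non-negative and may be dropped in a forward-in-time estimate. The explicit $t^{-1}$ coefficient terms in \eqref{e0.eI.d}--\eqref{e0.gamma_IJB.d} then contribute at most $\tfrac{C_s}{t}E^{(s)}(t)$, with $C_s$ depending only on the asymptotic data (via the $p_I$'s and the already-controlled $H^s$ norms of $e^{[\bf n]},k^{[\bf n]},\gamma^{[\bf n]}$). The cubic remainders $\mathfrak{K}^{(d)},\mathfrak{G}^{(d)}$ from \eqref{frak.K}--\eqref{frak.G} are estimated by standard Moser inequalities under the bootstrap $E^{(s)}\le 1$ (allowed since $s\ge 4$ gives $H^s\hookrightarrow W^{1,\infty}$), and by Lemma \ref{lem:cal.I} the inhomogeneities contribute at most $C_{s,n}t^{M(n)-N_0-1}(E^{(s)})^{1/2}$ with $M(n)\to\infty$.

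Adding the $-\tfrac{2N_0}{t}$ contribution from differentiating the weight $t^{-2N_0}$, one arrives schematically at
\[
\frac{d}{dt}E^{(s)}(t)\le -\frac{2(N_0'-C_s)}{t}E^{(s)}(t)+C_{s,n}t^{2(M(n)-N_0')-1},
\]
where if necessary I replace $N_0$ by $N_0':=\max(N_0,\lceil C_s\rceil+1)$; this is harmless because for $t<1$ the stronger bound $\|\cdot\|_{H^s}^2\le t^{2N_0'}$ implies the one sought. Choosing $n\ge n_{N_0,s}$ large enough that $M(n)\ge N_0'+1$ and integrating via Gronwall from $E^{(s)}(\eta)=0$ yields $E^{(s)}(t)\le C_{N_0,s,n}t^{2(M(n)-N_0')}$ uniformly in $\eta$, which after shrinking $T_{N_0,s,n}$ to absorb the constant translates into \eqref{loc.exist.est}. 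Weak-$*$ compactness in $H^s$ delivers a limit as $\eta\to 0^+$, and lower semicontinuity of the norm preserves the bound on the limiting solution.

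The main obstacle is the interplay between the symmetric-hyperbolic structure and the large number of singular $t^{-1}$-coefficients in \eqref{e0.eI.d}--\eqref{e0.gamma_IJB.d}: each of these anisotropic borderline terms (built from the $p_I$'s and Kronecker contractions) must be simultaneously dominated by the single universal weight $\tfrac{N_0}{t}$, which is what pins down the choice of $N_0$ in terms of the asymptotic data. The symmetrization from \cite{FSm} recorded in \eqref{e0.kIJ.mod2}--\eqref{e0.gamma_IJB.mod2} is decisive here, since it is precisely what makes the top-order right-hand sides cancel up to the favorable boundary flux and thereby avoids the derivative loss that would otherwise block the closure of the weighted energy estimate.
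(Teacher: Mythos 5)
Your proposal is correct and follows essentially the same route as the paper: start from trivial data at $t=\eta$, derive weighted $H^s$ energy estimates using the symmetric-hyperbolic structure plus a favorable boundary flux on $\mathcal{H}$ for $\sigma$ large, absorb the singular $t^{-1}$-coefficients with a large weight exponent, use Lemma \ref{lem:cal.I} to make the inhomogeneities harmless for $n$ large, close via Gr\"onwall and a bootstrap, and pass to the limit $\eta\to 0$. The only cosmetic differences are that the paper's bootstrap assumption is the milder $t^{11}$ rather than the full weighted bound, and the paper is slightly more explicit that the limiting extraction should use $C^1$ (via Arzel\`a--Ascoli and Sobolev embedding, $s\ge 4$) to ensure the limit actually satisfies the equations, with weak $H^s$ convergence only being invoked to preserve the energy bound.
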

\begin{proof}
It is split in Propositions \ref{prop:loc.exist1}, \ref{prop:loc.exist2}.
\end{proof}
We first begin with a solution defined in the future of a non-singular time, furnished by standard local existence.
\begin{lemma}\label{lem:loc.exist}
For every $\eta>0$ sufficiently small and $n\in\mathbb{N}$, there exists $T=T(\eta,n)$ and a unique smooth solution $e_{Ia},k_{IJ},\gamma_{IJB}$ to \eqref{e0.eIi2}, \eqref{e0.kIJ.mod2}, \eqref{e0.gamma_IJB.mod2} in $\{U_t\}_{t\in[\eta,T]}$, such that 
\begin{align*}
e_{Ia}(\eta,x)=e_{Ia}^{[\bf n]}(\eta,x),\qquad k_{IJ}(\eta,x)=k_{IJ}^{[\bf n]}(\eta,x),\qquad \gamma_{IJB}(\eta,x)=\gamma_{IJB}^{[\bf n]}(\eta,x), 
\end{align*}
for all $x\in U_\eta$.
Moreover $k_{IJ}=k_{JI}$, $\gamma_{IJB}=-\gamma_{IBJ}$.  
\end{lemma}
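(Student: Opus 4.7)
The lemma is essentially a standard local well-posedness statement for a quasilinear first order symmetric hyperbolic system away from the singularity, so the plan is to reduce it to that. At $t=\eta>0$ the approximate data $e^{[\bf n]}_{Ia}(\eta,\cdot),k^{[\bf n]}_{IJ}(\eta,\cdot),\gamma^{[\bf n]}_{IJB}(\eta,\cdot)$ are smooth by Theorem \ref{thm:approx.sol}, and the frame $e^{[\bf n]}_{Ia}(\eta,\cdot)$ is non-degenerate: it is a bounded perturbation of the invertible diagonal matrix $e^{[\bf 0]}_{Ia}(\eta,\cdot)$ (by \eqref{eIa.n-0.est}, the error is of size $\eta^\varepsilon$ times the leading order terms). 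Hence the initial-value problem for \eqref{e0.eIi2}, \eqref{e0.kIJ.mod2}, \eqref{e0.gamma_IJB.mod2} at $t=\eta$ is a classical hyperbolic Cauchy problem in which the coefficients $e_{Ia},\omega_{aC}$ remain regular on a short time interval.

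The plan is first to recognize \eqref{e0.kIJ.mod2}-\eqref{e0.gamma_IJB.mod2} as a quasilinear symmetric hyperbolic system in the unknowns $(k_{IJ},\gamma_{IJB})$, with characteristic speeds determined by $e_{Ia}$. Contracting \eqref{e0.kIJ.mod2} with $k_{IJ}$ and \eqref{e0.gamma_IJB.mod2} with $\tfrac{1}{2}\gamma_{IJB}$, the top-order terms combine into spatial divergences of bilinears $k^{(d)}\star\gamma^{(d)}$, which is the manifestation of the symmetrizer (see the remark after Lemma \ref{lem:var.d.symm}). Coupled to the transport equation \eqref{e0.eIi2} for $e_{Ia}$ (which is ODE-like in $t$ with no spatial derivatives), the full system is quasilinear symmetric hyperbolic, with coefficients smooth functions of $(e,k,\gamma)$.

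Local existence, uniqueness and smoothness then follow by invoking standard results on quasilinear symmetric hyperbolic Cauchy problems (in the spirit of Kato / Majda). The slight subtlety is that the domain $\{U_t\}_{t\in[\eta,T]}$ has a boundary $\mathcal{H}$, but $\mathcal{H}$ is spacelike (Lemma \ref{lem:domain}) and ingoing, so by finite speed of propagation / the usual localized energy argument no boundary condition is required: one extends the data on $U_\eta$ smoothly past $S_\eta$ to all of $\Sigma_\eta$, solves the unbounded Cauchy problem on a slab $[\eta,T]\times\Sigma_\eta$, and restricts back; the spacelike nature of $\mathcal{H}$ ensures the restriction is independent of the extension and that the solution exists on $\{U_t\}_{t\in[\eta,T]}$ for some $T=T(\eta,n)>\eta$. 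Smoothness in space and time follows by propagating higher $H^s$ norms in the standard way.

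Finally, the symmetry and antisymmetry properties $k_{IJ}=k_{JI}$ and $\gamma_{IJB}=-\gamma_{IBJ}$ are propagated exactly as in the second half of Lemma \ref{lem:var.d.symm}: the initial data satisfy them because $k^{[\bf n]}_{IJ},\gamma^{[\bf n]}_{IJB}$ do (by Lemma \ref{lem:var.d.symm} and \eqref{gamma.n}), and the quantities $k_{IJ}-k_{JI}$ and $\gamma_{IJB}+\gamma_{IBJ}$ are readily seen from \eqref{e0.kIJ.mod2}-\eqref{e0.gamma_IJB.mod2} to satisfy a homogeneous linear first order system, which therefore admits only the trivial solution on $\{U_t\}_{t\in[\eta,T]}$. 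The only real obstacle I anticipate is bookkeeping the spacelike-boundary version of symmetric hyperbolic well-posedness, but since $\mathcal{H}$ is uniformly spacelike for $t\ge\eta>0$ (with a uniform angle controlled by $\sigma$ and the bounds of Lemma \ref{lem:domain}), this reduces cleanly to the standard Cauchy problem by extension.
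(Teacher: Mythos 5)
Your proposal is correct and follows essentially the same route as the paper. The paper phrases the argument in terms of the future domain of dependence $\mathcal{D}(U_\eta)$ rather than your extension-and-restriction formulation, but these are the same idea: standard symmetric-hyperbolic local well-posedness plus the observation that $\mathcal{H}$ is spacelike and sufficiently ingoing. Two small remarks. First, the paper explicitly flags the concern that modifying \eqref{e0.k}, \eqref{gammaIJB.eq} could in principle change the domain of dependence, and dismisses it by noting $\mathcal{D}(U_\eta)$ is determined by the principal symbol, which is unchanged; you sidestep this implicitly via finite speed of propagation, which is fine but worth naming since the modification is the whole point of using \eqref{e0.kIJ.mod2}--\eqref{e0.gamma_IJB.mod2}. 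Second, you say the uniform spacelike angle of $\mathcal{H}$ is "controlled by $\sigma$ and the bounds of Lemma \ref{lem:domain}"; the paper is more precise here, forward-referencing the energy estimates of Section \ref{subsec:en.est} where $\sigma$ is fixed (cf.\ \eqref{choice.sigma}) so that the boundary flux has the right sign --- this is also what makes $T$ eventually independent of $\eta$ in Proposition \ref{prop:loc.exist1}. At the level of this lemma, where $T$ may depend on $\eta$, your argument is complete; the propagation of $k_{IJ}=k_{JI}$, $\gamma_{IJB}=-\gamma_{IBJ}$ via Lemma \ref{lem:var.d.symm} is handled the same way as in the paper.
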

\begin{proof}
By standard local existence for first order symmetric hyperbolic systems, we have that a solution to \eqref{e0.eIi2}-\eqref{e0.gamma_IJB.mod2} exists in the future domain of dependence of $U_\eta$, denoted by $\mathcal{D}(U_\eta)$. However, given that we have modified the original equations \eqref{e0.k}, \eqref{gammaIJB.eq}, the latter domain might not a priori be the same as the domain of dependence for the Einstein vacuum equations. $\mathcal{D}(U_\eta)$ is the largest domain for which the energy associated to the linearized version of \eqref{e0.kIJ.mod2}-\eqref{e0.gamma_IJB.mod2}, on a future time slice, can be bounded from the initial one. Moreover, $\mathcal{D}(U_\eta)$ is determined from the principal terms in the equations. Hence, it is the same for the system \eqref{e0.eI.d}-\eqref{e0.gamma_IJB.d}. As we will show  in Section \ref{subsec:en.est}, by deriving such energy estimates for the variables $e_{Ia}^{(d)},k_{IJ}^{(d)},\gamma_{IJB}^{(d)}$ in $\{U_t\}_{t\in[\eta,T]}$, we can choose the constant $\sigma$ in \eqref{flow.vec} sufficiently large 
and $T$ sufficiently small (independently of $\eta$)
to guarantee that $\mathcal{D}(U_\eta)$ necessarily contains $\{U_t\}_{t\in[\eta,T]}$. From Lemma \ref{lem:var.d.symm}, we also have the desired symmetry/antisymmetry relations.
\end{proof}
%

Let
\begin{align}\label{norms}
\notag\|e^{(d)}\|^2_{H^s(U_t)}=&\sum_{I,a}\|e^{(d)}_{Ia}\|^2_{H^s(U_t)},
\qquad \|e^{(d)}\|_{W^s(U_t)}=\sum_{I,a}\|e^{(d)}_{Ia}\|_{W^s(U_t)},\\
\|k^{(d)}\|^2_{H^s(U_t)}=&\sum_{I,J}\|k^{(d)}_{IJ}\|^2_{H^s(U_t)},
\qquad \|k^{(d)}\|_{W^s(U_t)}=\sum_{I,J}\|k^{(d)}_{IJ}\|_{W^s(U_t)},\\
\notag\|\gamma^{(d)}\|^2_{H^s(U_t)}=&\sum_{I,J,B}\|\gamma^{(d)}_{IJB}\|^2_{H^s(U_t)},\qquad
\|\gamma^{(d)}\|_{W^s(U_t)}=\sum_{I,J,B}\|\gamma^{(d)}_{IJB}\|_{W^s(U_t)},
\end{align}
where for a function $f:\{U_t\}_{t\in[0,T]}\to\mathbb{R}$ with the appropriate regularity, the $H^s(U_t),W^s(U_t)$ norms are defined as follows:
\begin{align}\label{norms.f}
\begin{split}
\|f\|_{H^s(U_t)}^2=&\sum_{|\alpha|\leq s}\int_{U_t}[\partial_x^\alpha f(t,x)]^2\mathrm{vol}_{g^{[\bf n]}},\qquad\mathrm{vol}_{g^{[\bf n]}}=\sqrt{|g^{[\bf n]}|}dx^1dx^2dx^3,\\ 
\|f\|_{W^s(U_t)}=&\sum_{|\alpha|\leq s}\text{esssup}_{x\in U_t}|\partial_x^\alpha f(t,x)|.
\end{split}
\end{align}
We will bootstrap energy estimates with suitably large weights, which will guarantee that the time of existence in Lemma \ref{lem:loc.exist} is independent of $\eta>0$ and that the differences \eqref{e.k.gamma.d} decay to sufficiently large polynomial order.
\begin{proposition}\label{prop:loc.exist1}
Let $e_{Ia},k_{IJ},\gamma_{IJB}$ be as in Lemma \ref{lem:loc.exist}. 
For every $s\ge 4$ and $N_0\in\mathbb{N}$, there exists $n_{N_0,s}\in\mathbb{N}$ sufficiently large, such that for every $n\ge n_{N_0,s}$, there exists $T=T_{N_0,s,n}>0$ sufficiently small such that the following estimate holds:
\begin{align}\label{loc.exist.est2}
\|e^{(d)}\|^2_{H^s(U_t)}+\|k^{(d)}\|^2_{H^s(U_t)}+\|\gamma^{(d)}\|^2_{H^s(U_t)}\leq t^{2N_0},
\end{align}
for all $t\in[\eta,T_{N_0,s,n}]$. In particular, the time of existence $T(\eta,n)$ given by Lemma \ref{lem:loc.exist} can be fixed to be $T_{N_0,s,n}$, independent of $\eta>0$.
\end{proposition}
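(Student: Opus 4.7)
The plan is a continuity/bootstrap argument based on the weighted energy
\[
\mathcal{E}_s^{(N)}(t) := t^{-2N}\bigl(\|e^{(d)}\|^2_{H^s(U_t)}+\|k^{(d)}\|^2_{H^s(U_t)}+\|\gamma^{(d)}\|^2_{H^s(U_t)}\bigr),
\]
with a large weight $N=N(s,N_0)\ge N_0$ to be chosen; a uniform bound $\mathcal{E}_s^{(N)}\le 1$ then implies \eqref{loc.exist.est2} at once, since $N\ge N_0$. I would bootstrap $\mathcal{E}_s^{(N)}(t)\le 1$ on a maximal subinterval $[\eta,T^*]\subset[\eta,T_{N_0,s,n}]$ and improve it to, say, $\le \tfrac12$, which by continuity extends $T^*$ to $T_{N_0,s,n}$. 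The starting value vanishes, $\mathcal{E}_s^{(N)}(\eta)=0$, thanks to the matching initial data in Lemma \ref{lem:loc.exist}.

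Concretely, I would apply $\partial_x^\alpha$ to \eqref{e0.eI.d}--\eqref{e0.gamma_IJB.d} for $|\alpha|\le s$ and pair the resulting equations with $\partial^\alpha e^{(d)}$, $\partial^\alpha k^{(d)}$, and $\tfrac12\partial^\alpha\gamma^{(d)}$ before integrating over $U_t$. The symmetric hyperbolic cancellation highlighted in the remark after Lemma \ref{lem:var.d.symm} (enabled by the symmetries $k^{(d)}_{IJ}=k^{(d)}_{JI}$, $\gamma^{(d)}_{IJB}=-\gamma^{(d)}_{IBJ}$) converts the principal spatial-derivative couplings between $k^{(d)}$ and $\gamma^{(d)}$ into divergences, so the only boundary contribution sits on $\mathcal{H}$. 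Using \eqref{n.H} and Remark \ref{rem:sigma}, I would then fix $\sigma$ large enough that this flux has a favorable sign and can be dropped. The remaining bulk integrals split into three groups: the explicit $t^{-1}$ terms from \eqref{e0.eI.d}--\eqref{e0.gamma_IJB.d}, whose contribution is bounded by $\tfrac{C_1}{t}E_s(t)$ with $E_s:=\|e^{(d)}\|^2_{H^s}+\|k^{(d)}\|^2_{H^s}+\|\gamma^{(d)}\|^2_{H^s}$ and $C_1=C_1(s,\text{asymptotic data})$ uniform in $n,\eta$ via Theorem \ref{thm:approx.sol}; the nonlinear remainders \eqref{frak.K}--\eqref{frak.G}, handled through the Banach algebra property of $H^s(U_t)$ for $s\ge 4$ (since $\dim U_t=3$) together with the bootstrap and Theorem \ref{thm:approx.sol}, contributing at most $O(t^{\varepsilon-1})E_s$ plus cubic-in-energy terms; and the inhomogeneities $\mathcal{I}^{[\bf n]}$, which by Lemma \ref{lem:cal.I} contribute at most $C_s E_s^{1/2} t^{M(n)}$ with $M(n)\to\infty$.

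Assembling these yields a differential inequality of the form
\[
\frac{d}{dt}\mathcal{E}_s^{(N)}\le \frac{C_1-2N}{t}\mathcal{E}_s^{(N)}+C_2 t^{\varepsilon-1}\mathcal{E}_s^{(N)}+C_3 \bigl(\mathcal{E}_s^{(N)}\bigr)^{1/2}t^{M(n)-N}+(\text{cubic in }\mathcal{E}_s^{(N)}).
\]
I would then fix $N:=\max\{N_0,\, C_1/2+1\}$ so that the leading coefficient is strictly negative, and pick $n_{N_0,s}$ large enough that $M(n)\ge 2N$. A Gr\"onwall argument starting from $\mathcal{E}_s^{(N)}(\eta)=0$ gives $\mathcal{E}_s^{(N)}(t)\le C(T_{N_0,s,n})$ with $C\to 0$ as $T_{N_0,s,n}\to 0$; shrinking $T_{N_0,s,n}$ improves the bootstrap to $\tfrac12$ uniformly in $\eta$, and simultaneously extends the time of existence from Lemma \ref{lem:loc.exist} to $T_{N_0,s,n}$ independently of $\eta$.

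The main obstacle I anticipate is the careful tracking of the constant $C_1$ after commuting $s$ spatial derivatives through \eqref{e0.eI.d}--\eqref{e0.gamma_IJB.d}: one must verify that no commutator or lower-order coefficient produces a worse-than-$t^{-1}$ singularity in any index configuration $I,J,B$, which forces invoking the sharp iterate bounds of Theorem \ref{thm:approx.sol} together with the structural formula for $\gamma^{[\bf n]}$ of Lemma \ref{lem:gamma}; only then can $N$ be chosen finite independently of $n$. A subsidiary obstacle is pinning down $\sigma$: verifying positivity of the $\mathcal{H}$-flux requires bounding the principal symbol of the symmetric hyperbolic system against the normal \eqref{n.H} uniformly in $n$, using the geometric estimates of Lemma \ref{lem:domain}, and one has to check that the value of $\sigma$ chosen there is independent of $s$ and $N_0$ so that the domain $\{U_t\}$ does not shrink with the regularity.
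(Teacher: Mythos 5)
Your proposal is correct and follows essentially the same route as the paper's Section \ref{subsec:en.est}: the $\partial_x^\alpha$-commuted symmetric hyperbolic energy estimate with $\sigma$ chosen (cf.\ Remark \ref{rem:sigma} and \eqref{choice.sigma}) so the $\mathcal{H}$-flux is favorable, the $t^{-1}$ coefficients absorbed by a weight exponent exceeding a constant $C_*$ depending only on the $p_i$'s and their $s$ derivatives (uniformly in $n,\eta$), the inhomogeneities killed by taking $n$ large via Lemma \ref{lem:cal.I}, and a Gr\"onwall plus continuity argument to close the bootstrap. Your decoupling of the energy weight $N$ from the target exponent $N_0$, bootstrapping $\mathcal{E}_s^{(N)}\le 1$, is a minor repackaging of the paper's choice of weighting by $t^{-2N_0}$ directly with the fixed bootstrap $E_s\le t^{11}$ and the requirement $N_0>\max\{C_*,5\}$; your variant makes the claim for small $N_0$ slightly more transparent.
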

\begin{proof}
It is given in the end of Section \ref{subsec:en.est}, after deriving the main weighted energy estimates, see Proposition \ref{prop:en.est}.
\end{proof}
Since the estimate \eqref{loc.exist.est} is independent of $\eta>0$, it is clear now that we can a extract a subsequence which will satisfy Theorem \ref{thm:loc.exist}.
\begin{proposition}\label{prop:loc.exist2}
Let $s,N_0,n$ be as in Proposition \ref{prop:loc.exist1}. Then there exists a sequence of initial times $\eta_m\rightarrow0$ such that \\[5pt]
1. The corresponding sequence of solutions $(e_{Ia,m},k_{IJ,m},\gamma_{IJB,m})$ furnished by Lemma \ref{lem:loc.exist} converges in $C^1$, as $\eta\rightarrow0$, to a limit $(e_{Ia},k_{IJ},\gamma_{IJB})$. \\[5pt]
2. The limit solves the system \eqref{e0.eIi2}-\eqref{e0.gamma_IJB.mod2} in $\{U_t\}_{t\in(0,T_{N_0,s,n}]}$.\\[5pt]
3. Moreover, the corresponding differences $e^{(d)}_{Ia},k^{(d)}_{IJ},\gamma^{(d)}_{IJB}$ satisfy the estimate \eqref{loc.exist.est}, for all $t\in(0,T_{N_0,s,n}]$.
\end{proposition}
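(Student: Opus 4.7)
The plan is to take any sequence $\eta_m\to 0^+$, consider the family of solutions $(e_{Ia,m},k_{IJ,m},\gamma_{IJB,m})$ on $\{U_t\}_{t\in[\eta_m,T_{N_0,s,n}]}$ furnished by Lemma \ref{lem:loc.exist}, and extract a subsequence converging to a limit defined on the full open interval $(0,T_{N_0,s,n}]$. The uniform estimate \eqref{loc.exist.est2} of Proposition \ref{prop:loc.exist1} is the key input: it gives the differences $(e^{(d)}_m,k^{(d)}_m,\gamma^{(d)}_{IJB,m})$ a bound in $H^s(U_t)$ independent of $\eta_m$, valid at every $t\in[\eta_m,T_{N_0,s,n}]$. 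Since the approximate iterates $e^{[\bf n]}_{Ia},k^{[\bf n]}_{IJ},\gamma^{[\bf n]}_{IJB}$ are smooth on any compact subset of $(0,t_n]\times[0,\delta]^3$ by Theorem \ref{thm:approx.sol} and Lemma \ref{lem:gamma}, the uniform control on the differences transfers to the full variables on every compact slab that is bounded away from the singularity.

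Next, I would fix an arbitrary $\tau\in(0,T_{N_0,s,n}]$ and restrict attention to indices $m$ with $\eta_m<\tau$. Since $s\ge 4$ and the spatial dimension is three, Sobolev embedding gives $H^s(U_t)\hookrightarrow C^{s-2}(U_t)\hookrightarrow C^2(U_t)$ with a constant that depends only on the geometry of $U_t$ (and hence, by the smoothness of the domain established in Lemma \ref{lem:domain}, is uniform for $t\in[\tau,T_{N_0,s,n}]$). This yields a uniform $C^2$ spatial bound on $(e^{(d)}_m,k^{(d)}_m,\gamma^{(d)}_{IJB,m})$, and therefore on the full $(e_{Ia,m},k_{IJ,m},\gamma_{IJB,m})$. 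Using the evolution equations \eqref{e0.eIi2}--\eqref{e0.gamma_IJB.mod2}, which express $\partial_t$ of every variable as a polynomial in the variables themselves and their single spatial derivatives, I obtain a matching uniform bound on the $t$-derivatives; combined this gives equicontinuity in $C^1$ on the compact set $\{U_t\}_{t\in[\tau,T_{N_0,s,n}]}$.

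Arzel\`a--Ascoli then produces a subsequence converging in $C^1(\{U_t\}_{t\in[\tau,T_{N_0,s,n}]})$. Running this on a countable sequence $\tau_j\downarrow 0$ and diagonalizing yields a single subsequence, which I relabel $\eta_m$, converging in $C^1_{loc}(\{U_t\}_{t\in(0,T_{N_0,s,n}]})$ to a limit $(e_{Ia},k_{IJ},\gamma_{IJB})$. Because the system \eqref{e0.eIi2}--\eqref{e0.gamma_IJB.mod2} is first order in all derivatives and polynomial in the unknowns, $C^1$ convergence is exactly what is needed to pass to the limit termwise and conclude that the limit is a classical solution on $\{U_t\}_{t\in(0,T_{N_0,s,n}]}$. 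The symmetry/antisymmetry relations of Lemma \ref{lem:var.d.symm} also pass to the limit by continuity.

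It remains to promote the uniform bound \eqref{loc.exist.est2} to the limit. For each fixed $t\in(0,T_{N_0,s,n}]$, the sequence of differences $(e^{(d)}_m,k^{(d)}_m,\gamma^{(d)}_{IJB,m})(t,\cdot)$ is bounded in the Hilbert space $H^s(U_t)$; by reflexivity I can extract a further weakly convergent subsequence. Its weak limit must coincide with the pointwise $C^1$ limit $(e^{(d)},k^{(d)},\gamma^{(d)}_{IJB})(t,\cdot)$, and weak lower semicontinuity of the norm then delivers $\|e^{(d)}\|^2_{H^s(U_t)}+\|k^{(d)}\|^2_{H^s(U_t)}+\|\gamma^{(d)}\|^2_{H^s(U_t)}\le t^{2N_0}$, which is exactly \eqref{loc.exist.est}. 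The main obstacle I anticipate is bookkeeping around the singular limit $t\to 0$: the extraction must be done compactly away from $t=0$, yet the conclusion has to hold on the open interval up to the singularity, which is precisely why the uniform-in-$\eta$ nature of the estimate produced in Proposition \ref{prop:loc.exist1} (ultimately traceable to the fact that the $t^{-1}$ coefficients in \eqref{e0.eI.d}--\eqref{e0.gamma_IJB.d} depend only on the asymptotic data $p_i$) is indispensable.
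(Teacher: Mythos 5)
Your proposal is correct and follows essentially the same route as the paper's proof: a uniform‐in‐$\eta$ estimate plus Arzel\`a--Ascoli and a diagonal argument on compact time slabs away from $t=0$ to extract a $C^1_{loc}$ limit that solves the system, followed by weak $H^s$ compactness and identification of the weak limit with the $C^1$ limit to recover the $t^{2N_0}$ bound. You spell out the Sobolev embedding and the equicontinuity in $t$ via the evolution equations, details the paper leaves implicit, but the argument is the same.
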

\begin{proof}
Using the uniform estimate \eqref{loc.exist.est2}, Arzelà-Ascoli, and a standard diagonal argument, we infer that the sequence $(e_{Ia,m},k_{IJ,m},\gamma_{IJB,m})$ has a subsequence converging in $C^1$ to a limit $(e_{Ia},k_{IJ},\gamma_{IJB})$ for every fixed $t\in(0,T_{N_0,s,n}]$. Also, the class $C^1$ is enough to ensure that the limit satisfies the system \eqref{e0.eIi2}-\eqref{e0.gamma_IJB.mod2}. Moreover, for every $t\in(0,T_{N_0,s,n}]$, the former subsequence has a subsequence converging weakly in $H^s(U_t)$ and the limit satisfies \eqref{loc.exist.est2}. By uniqueness of limits, we conclude that the limit $(e_{Ia},k_{IJ},\gamma_{IJB})$ satisfies \eqref{loc.exist.est2} for all $t\in(0,T_{N_0,s,n}]$.
\end{proof}
\subsection{Weighted energy estimates for the remainder terms}\label{subsec:en.est}

In this subsection we derive the main energy estimates for the variables $e^{(d)}_{Ia},k^{(d)}_{IJ},\gamma_{IJB}^{(d)}$, see Proposition \ref{prop:en.est}, that complete the proof Proposition \ref{prop:loc.exist1} in Section \ref{subsubsec:prop.proof}.

\subsubsection{Bootstrap assumptions and basic implications}

Consider the solution furnished by Lemma \ref{lem:loc.exist} and fix $s\ge4$. We make the bootstrap assumptions
\begin{align}\label{Boots}
\|e^{(d)}\|^2_{H^s(U_t)}+\|k^{(d)}\|^2_{H^s(U_t)}+\|\gamma^{(d)}\|^2_{H^s(U_t)}\leq t^{11},
\end{align}
for all $t\in[\eta,T_{Boot})$, where $T_{Boot}<T(\eta,n)$. Notice that such a bootstrap time exists by continuity, since the variables $e^{(d)}_{Ia},k^{(d)}_{IJ},\gamma^{(d)}_{IJB}$ vanish on $U_\eta$. 
\begin{remark}\label{rem:cont}
Deriving the estimate \eqref{loc.exist.est2} for $N_0>5$ clearly improves the bootstrap assumptions. A standard continuity argument then implies that the time of existence can be pushed to some $T=T_{N_0,s,n}$, independent of $\eta$. 
\end{remark}
By \eqref{gij.0} and point \ref{item.approx1} of Theorem \ref{thm:approx.sol}, we have that 
\begin{align}\label{vol.gn}
\big|\sqrt{|g^{[\bf n]}|}-\sqrt{c_{11}c_{22}c_{33}}t\big|\leq C_nt^{1+\varepsilon}
\end{align}
Hence, the bootstrap assumptions and classical Sobolev embedding imply the bound
\begin{align}\label{Sob}
\|e^{(d)}\|_{W^{s-2}(U_t)}+\|k^{(d)}\|_{W^{s-2}(U_t)}+\|\gamma^{(d)}\|_{W^{s-2}(U_t)}\leq Ct^5.
\end{align}
\begin{lemma}\label{lem:IBP}
Given a function $f:\{U_t\}_{t\in[\eta,T_{Boot})}\to\mathbb{R}$, the following inequalities hold true:
\begin{align}
\label{IBP:ineq.dt}
\int_{U_t}f\mathrm{vol}_{g^{[\bf n]}}+\sum_{a,\pm}\int_{\mathcal{H}^\pm_a}n_{\mathcal{H}^\pm_a}^0f\mathrm{vol}_{\mathcal{H}_a^\pm}
\leq&\int_{U_\eta}f\mathrm{vol}_{g^{[\bf n]}}
+\int_\eta^t\int_{U_\tau}\partial_\tau f\mathrm{vol}_{g^{[\bf n]}}d\tau\\
\notag&+\int_\eta^t\int_{U_\tau}(\tau^{-1}+C\tau^{-1+\varepsilon})|f|\mathrm{vol}_{g^{[\bf n]}}d\tau,\\
\label{IBP:ineq.eD}\int_\eta^t\int_{U_\tau}e_If\mathrm{vol}_{g^{[\bf n]}}d\tau\leq& \int_\eta^t\int_{U_\tau}\tau^{-1+\varepsilon}|f|\mathrm{vol}_{g^{[\bf n]}}d\tau\\
\notag&+\sum_{a,\pm}\int_{\mathcal{H}^\pm_a}n^I_{\mathcal{H}^\pm_a}f+Ct^4|f|\mathrm{vol}_{\mathcal{H}_a^\pm}.
\end{align}
\end{lemma}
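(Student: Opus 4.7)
The plan is to apply Stokes' theorem in the spacetime region $\mathcal{R}_{\eta,t}:=\bigcup_{\tau\in[\eta,t]}U_\tau$, whose boundary decomposes as $U_t\cup U_\eta\cup(\mathcal{H}|_{[\eta,t]})$. Both inequalities will arise from choosing an appropriate 3-form whose exterior derivative reproduces the bulk integrand on the left-hand side; the two spacelike slice contributions capture the $U_t$ and $U_\eta$ terms, the lateral piece produces the $\mathcal{H}$ flux, and the lower-order divergence factor is absorbed via the triangle inequality to yield the weighted $|f|$ error term. Throughout, the integrand is compared against the background iterates via Theorem~\ref{thm:approx.sol}, Lemma~\ref{lem:gamma}, and the bootstrap/Sobolev bound \eqref{Sob}.

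For \eqref{IBP:ineq.dt}, the natural choice is $\omega:=f\,\iota_{\partial_t}d\mathrm{vol}_{{\bf g}^{[\bf n]}}$, for which
\[
d\omega = \Big(\partial_t f + f\,\frac{\partial_t\sqrt{|g^{[\bf n]}|}}{\sqrt{|g^{[\bf n]}|}}\Big)\,d\mathrm{vol}_{{\bf g}^{[\bf n]}}.
\]
The volume asymptotics \eqref{vol.gn} give $\partial_t\sqrt{|g^{[\bf n]}|}/\sqrt{|g^{[\bf n]}|}=\tau^{-1}+O(\tau^{-1+\varepsilon})$. On $U_t$ and $U_\eta$ the 3-form restricts to $\pm f\,\mathrm{vol}_{g^{[\bf n]}}$, while on each $\mathcal{H}_a^{\pm}$ one has $\omega|_{\mathcal{H}_a^{\pm}} = f\,n^0_{\mathcal{H}_a^\pm}\,\mathrm{vol}_{\mathcal{H}_a^\pm}$ up to a sign fixed by orientation, using \eqref{n.H} and \eqref{vol.St}. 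Rearranging the identity produced by Stokes and estimating the $f\cdot(\tau^{-1}+O(\tau^{-1+\varepsilon}))$ piece from below by $-(\tau^{-1}+C\tau^{-1+\varepsilon})|f|$ produces \eqref{IBP:ineq.dt}.

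For \eqref{IBP:ineq.eD}, apply the same machinery to $\omega_I:=f\,\iota_{e_I}d\mathrm{vol}_{{\bf g}^{[\bf n]}}$, where $e_I=e_{Ia}\partial_a$ is the solution frame. Since $e_I$ is tangent to each $U_\tau$, the top and bottom slices contribute nothing and only $\mathcal{H}$ survives. The bulk term becomes $\int_\eta^t\!\!\int_{U_\tau}(e_I f + f\,\mathrm{div}_{g^{[\bf n]}}e_I)\,\mathrm{vol}_{g^{[\bf n]}}d\tau$, and the crux is the claim $|\mathrm{div}_{g^{[\bf n]}}e_I|\leq C\tau^{-1+\varepsilon}$. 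Writing $\mathrm{div}_{g^{[\bf n]}}e_I^{[\bf n]}=\sum_{C}\gamma^{[\bf n]}_{CIC}$, the second bound of Lemma~\ref{lem:gamma} (with repeated outer index) yields $|\gamma^{[\bf n]}_{CIC}|\lesssim t^{-p_I}|\log t|$, and $p_I\leq p_3\leq 1-\varepsilon$ then gives the desired $t^{-1+\varepsilon}$ bound after absorbing the logarithm. The contribution from $e^{(d)}$ to the divergence is $O(t^5)$ by the Sobolev embedding \eqref{Sob}, so it is harmless. On $\mathcal{H}_a^{\pm}$, expanding ${\bf g}^{[\bf n]}(e_I,n_{\mathcal{H}_a^\pm})=n^I_{\mathcal{H}_a^\pm}+\omega^{[\bf n]}_{aJ}e^{(d)}_{Ia}n^J_{\mathcal{H}_a^\pm}$ and invoking \eqref{Sob} together with the asymptotic sizes of $\omega^{[\bf n]}$ from Theorem~\ref{thm:approx.sol} bounds the discrepancy by $Ct^4|f|\,d\mathrm{vol}_{\mathcal{H}_a^\pm}$, which accounts for the stated error on the boundary.

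The main subtleties are bookkeeping in nature: one must keep track of the Lorentzian orientation signs so that the $\mathcal{H}$-flux in both inequalities appears with the correct sign for absorption in the subsequent energy estimates of Proposition~\ref{prop:en.est}, and one must verify the divergence bound for $e_I$ uniformly across all index cases using the two distinct structural estimates of Lemma~\ref{lem:gamma}. Neither step introduces new ideas beyond those already used to control the iterates, so the proof should reduce to a careful application of Stokes, with the weighted-$|f|$ error absorbing the few places where the volume form and frame divergence behave like $\tau^{-1}$ times a background correction.
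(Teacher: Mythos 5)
Your argument coincides with the paper's: both apply the Stokes theorem to $\mathrm{div}^{[\bf n]}(f\partial_t)$ and $\mathrm{div}^{[\bf n]}(fe_I)$ (your interior products $f\,\iota_X\,d\mathrm{vol}_{{\bf g}^{[\bf n]}}$ reproduce exactly these divergences), read off the slice and $\mathcal{H}$-flux contributions from Lemma \ref{lem:domain}, and bound $\mathrm{div}^{[\bf n]}e_I$ by $\gamma^{[\bf n]}_{DID}$ plus Sobolev-controlled $e^{(d)}$-corrections. The one imprecision is your citation of \eqref{vol.gn} for $\partial_t\log\sqrt{|g^{[\bf n]}|}=\tau^{-1}+O(\tau^{-1+\varepsilon})$: a pointwise bound on $\sqrt{|g^{[\bf n]}|}$ does not control its time derivative, and the paper instead identifies $\partial_t\log\sqrt{|g^{[\bf n]}|}=-\widetilde{k}^{[\bf n]}_{CC}$ and obtains $|\widetilde{k}^{[\bf n]}_{CC}+t^{-1}|\leq Ct^{-1+\varepsilon}$ from Lemma \ref{lem:k.approx.2ndfund} together with point \ref{item.approx1} of Theorem \ref{thm:approx.sol}.
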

\begin{proof}
Applying the Stokes theorem to the divergences ${\bf div}^{[\bf n]}(f\partial_t)$, ${\bf div}^{[\bf n]}(fe_I)$, in the region $\{U_\tau\}_{\tau\in[\eta,t]}$, for $t\in[\eta,T_{Boot})$, gives the identities: 
\begin{align}\label{IBP:dt}
\int^t_\eta\int_{U_\tau}{\bf div}^{[\bf n]}(f\partial_t)\mathrm{vol}_{g^{[\bf n]}}d\tau
=&\int_{U_\eta}{\bf g}^{[\bf n]}(\partial_t,\partial_t)f\mathrm{vol}_{g^{[\bf n]}}\\
\notag&-\int_{U_t}{\bf g}^{[\bf n]}(\partial_t,\partial_t)f\mathrm{vol}_{g^{[\bf n]}}
-\sum_{a,\pm}\int_{\mathcal{H}^\pm_a}{\bf g}^{[\bf n]}(\partial_t,n_{\mathcal{H}^\pm_a})f\mathrm{vol}_{\mathcal{H}_a^\pm},\\
\label{IBP:eD}\int^t_\eta\int_{U_\tau}{\bf div}^{[\bf n]}(fe_I)\mathrm{vol}_{g^{[\bf n]}}d\tau
=&\int_{U_\eta}{\bf g}^{[\bf n]}(e_I,\partial_t)f\mathrm{vol}_{g^{[\bf n]}}\\
\notag&-\int_{U_t}{\bf g}^{[\bf n]}(e_I,\partial_t)f\mathrm{vol}_{g^{[\bf n]}}
-\sum_{a,\pm}\int_{\mathcal{H}^\pm_a}{\bf g}^{[\bf n]}(e_I,n_{\mathcal{H}^\pm_a})f\mathrm{vol}_{\mathcal{H}_a^\pm}.
\end{align}
On the other hand, by \eqref{Sob} we have 
\begin{align}\label{IBP:error}
\begin{split}
{\bf g}^{[\bf n]}_{tt}=-1,\qquad {\bf g}^{[\bf n]}(\partial_t,n_{\mathcal{H}^\pm_a})=-n_{\mathcal{H}^\pm_a}^0,
{\bf g}^{[\bf n]}(e_I,\partial_t)=0,\\
\big|{\bf g}^{[\bf n]}(e_I,n_{\mathcal{H}^\pm_a})-n_{\mathcal{H}^\pm_a}^I\big|=|e^{(d)}_{Ib}\omega_{bD}^{[\bf n]}n_{\mathcal{H}^\pm_a}^D|\leq Ct^4
\end{split}
\end{align}
and
\begin{align}\label{IBP:error2}
\begin{split}
{\bf div}^{[\bf n]}(f\partial_t)=&\,\partial_t f-\widetilde{k}^{[\bf n]}_{CC}f,\qquad |\widetilde{k}_{CC}^{[\bf n]}+t^{-1}|\leq Ct^{-1+\varepsilon},\\
{\bf div}^{[\bf n]}(fe_I)=&\,e_If+g^{[\bf n]}(\nabla^{[\bf n]}_{e_D^{[\bf n]}}e_I,e^{[\bf n]}_D),
\end{split}
\end{align}
where 
\begin{align}\label{IBP:error3}
\notag g^{[\bf n]}(\nabla^{[\bf n]}_{e_D^{[\bf n]}}e_I,e^{[\bf n]}_D)=&\,\gamma_{DID}^{[\bf n]}
+g^{[\bf n]}(\nabla^{[\bf n]}_{e_D^{[\bf n]}}(e_{Ib}^{(d)}\partial_b),e^{[\bf n]}_D)\\
=&\,\gamma_{DID}^{[\bf n]}+g^{[\bf n]}(\nabla^{[\bf n]}_{e_D^{[\bf n]}}(e_{Ib}^{(d)}\omega^{[\bf n]}_{bC}e_C^{[\bf n]}),e^{[\bf n]}_D)\\
\notag=&\,\gamma_{DID}^{[\bf n]}+e_{Ib}^{(d)}\omega^{[\bf n]}_{bC}\gamma_{DCD}^{[\bf n]}
+e_C^{[\bf n]}(e_{Ib}^{(d)}\omega^{[\bf n]}_{bC}).
\end{align}
By Lemma \ref{lem:gamma}, point \ref{item.approx1} of Theorem \ref{thm:approx.sol}, and \eqref{Sob}, we have that
\begin{align}\label{IBP:error4}
|g^{[\bf n]}(\nabla^{[\bf n]}_{e_D^{[\bf n]}}e_I,e^{[\bf n]}_D)|\leq Ct^{-1+\varepsilon}
\end{align}
Combining \eqref{IBP:dt}-\eqref{IBP:error4}, we conclude the desired inequalities.
\end{proof}

\subsubsection{Main estimates}\label{subsubsec:main.est}

The overall weighted energy estimate that we derive in this section is stated in the following proposition.
\begin{proposition}\label{prop:en.est}
Assume that the bootstrap assumptions \eqref{Boots} are valid for some $s\ge4$. Then, there exists $\sigma>0$ sufficiently large in \eqref{flow.vec}, depending only on the principal symbol of \eqref{e0.kIJ.d}-\eqref{e0.gamma_IJB.d}, such that the following energy inequality holds:
\begin{align}\label{main.en.ineq}
\notag& \frac{1}{2}t^{-2N_0}\big\{\|e^{(d)}\|^2_{H^s(U_t)}+\|k^{(d)}\|^2_{H^s(U_t)}+\frac{1}{2}\|\gamma^{(d)}\|^2_{H^s(U_t)}\big\}\\
&+\int^t_\eta\frac{N_0-C_*}{\tau}\tau^{-2N_0}\big\{\|e^{(d)}\|^2_{H^s(U_\tau)}+\|k^{(d)}\|^2_{H^s(U_\tau)}+\frac{1}{2}\|\gamma^{(d)}\|^2_{H^s(U_\tau)}\big\}d\tau\\
\notag\leq&\int^t_\eta C\tau^{-1+\varepsilon}\tau^{-2N_0}\big\{\|e^{(d)}\|^2_{H^s(U_\tau)}+\|k^{(d)}\|^2_{H^s(U_\tau)}+\|\gamma^{(d)}\|^2_{H^s(U_\tau)}\big\}d\tau+C t^{2M-2N_0+2-\varepsilon},
\end{align}
for all $t\in[\eta,T_{Boot})$, where $M$ is as in Lemma \ref{lem:cal.I}. The constant $C_*>0$ depends only on the Kasner exponents $p_I$, their $s$ spatial derivatives, and it is independent of $n$, while the constant $C$ is allowed to also depend on $n$.
\end{proposition}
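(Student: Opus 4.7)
The plan is to run a weighted $H^s$ energy estimate tailored to the symmetric hyperbolic structure of \eqref{e0.kIJ.d}--\eqref{e0.gamma_IJB.d} coupled to the transport equation \eqref{e0.eI.d}, exploiting the time weight $t^{-2N_0}$ to absorb the singular $\tfrac{1}{t}$ coefficients that appear explicitly in those equations.

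First, for each multi-index $|\alpha|\leq s$ I would apply $\partial_x^\alpha$ to \eqref{e0.eI.d}--\eqref{e0.gamma_IJB.d}. Multiplying the commuted $e^{(d)}$-equation by $\partial_x^\alpha e^{(d)}_{Ia}$, the commuted $k^{(d)}$-equation by $\partial_x^\alpha k^{(d)}_{IJ}$, and the commuted $\gamma^{(d)}$-equation by $\tfrac{1}{2}\partial_x^\alpha \gamma^{(d)}_{IJB}$, then summing in all free indices, the principal parts collapse: by the antisymmetry $\gamma^{(d)}_{IJB}=-\gamma^{(d)}_{IBJ}$ and the symmetry $k^{(d)}_{IJ}=k^{(d)}_{JI}$ from Lemma \ref{lem:var.d.symm}, the cross-terms $\partial_x^\alpha k^{(d)}_{IJ}\cdot e_C\partial_x^\alpha \gamma^{(d)}_{IJC}$ etc. combine (up to commutators of $e_C$ with $\partial_x^\alpha$) with $\partial_x^\alpha \gamma^{(d)}_{IJB}\cdot e_B\partial_x^\alpha k^{(d)}_{JI}$ to produce the divergence $\tfrac{1}{2}e_C\bigl[\partial_x^\alpha k^{(d)}\star\partial_x^\alpha \gamma^{(d)}\bigr]$. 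Integrating over $U_\tau$ and applying \eqref{IBP:eD} converts these into boundary terms on $\mathcal{H}$ plus an $O(\tau^{-1+\varepsilon})$ bulk error.

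Next, I would apply the $\partial_t$ identity \eqref{IBP:ineq.dt} to the scalar $f=\tau^{-2N_0}\bigl(|\partial_x^\alpha e^{(d)}|^2+|\partial_x^\alpha k^{(d)}|^2+\tfrac{1}{2}|\partial_x^\alpha \gamma^{(d)}|^2\bigr)$. Differentiating the weight yields the dissipative term $\tfrac{2N_0}{\tau}\tau^{-2N_0}\|\cdot\|^2$, while the explicit $\tfrac{1}{\tau}$ zeroth-order terms in \eqref{e0.eI.d}--\eqref{e0.gamma_IJB.d} (the Kasner exponent coefficients $p_I/\tau$, the shifts involving $k^{(d)}_{CC}$, and the $\partial_a p_I/\tau$ couplings between $\gamma^{(d)}$ and $e^{(d)}$) contribute at most $\tfrac{C_*}{\tau}\|\cdot\|^2$ after Cauchy--Schwarz; since $C_*$ depends only on $\|p_I\|_{W^s(U_0)}$ and on the structural constants, one obtains the net dissipative coefficient $\tfrac{N_0-C_*}{\tau}$. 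The error contributions $\mathfrak{K}^{(d)}$, $\mathfrak{G}^{(d)}$ from \eqref{frak.K}--\eqref{frak.G}, together with the commutators $[\partial_x^\alpha,e_C]\gamma^{(d)}$ and $[\partial_x^\alpha, e_C]k^{(d)}$, are tame: each such term carries either a factor from Theorem \ref{thm:approx.sol} of size $O(\tau^{-1+\varepsilon})$ or a quadratic remainder bounded by the bootstrap \eqref{Sob}, giving the $\int C\tau^{-1+\varepsilon}\tau^{-2N_0}(\cdots)d\tau$ term. The inhomogeneities \eqref{cal.I.e}--\eqref{cal.I.gamma}, controlled pointwise by $C\tau^M$ via Lemma \ref{lem:cal.I}, are absorbed with Young's inequality into $\tau^{-1+\varepsilon}\|\cdot\|^2+ C\tau^{2M+1-\varepsilon}\tau^{-2N_0}$; integrating in $\tau$ gives the inhomogeneous term $Ct^{2M-2N_0+2-\varepsilon}$.

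The main obstacle, and the reason for the explicit role of $\sigma$, is the boundary integrand on each $\mathcal{H}^\pm_a$ produced by the symmetric-hyperbolic integration by parts together with \eqref{IBP:ineq.dt}. After combining the contributions this integrand takes the schematic form $\tau^{-2N_0}\,Q\bigl(n^0_{\mathcal{H}^\pm_a},n^I_{\mathcal{H}^\pm_a};k^{(d)},\gamma^{(d)},e^{(d)}\bigr)$, where $Q$ is the boundary quadratic form associated to the principal symbol. By \eqref{n.H} in Lemma \ref{lem:domain}, $n^0_{\mathcal{H}^\pm_a}\approx\sigma(\sigma^2-1)^{-1/2}$ while $|n^I_{\mathcal{H}^\pm_a}|\approx(\sigma^2-1)^{-1/2}|e^{[\bf n]}_{Ia}/\sqrt{e^{[\bf n]}_{C\underline a}e^{[\bf n]}_{C\underline a}}|\leq(\sigma^2-1)^{-1/2}$, so choosing
\begin{equation}\label{choice.sigma}
\sigma\ \text{large enough (depending only on the principal symbol)}
\end{equation}
makes $Q$ pointwise nonnegative, i.e.\ the flux on $\mathcal{H}$ has a favorable sign and can be dropped from the RHS of \eqref{IBP:ineq.dt}. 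The $Ct^4$ error from the mismatch in \eqref{IBP:error} is absorbed by the bootstrap \eqref{Sob}. Summing in $|\alpha|\leq s$ and using \eqref{vol.gn} to compare $\mathrm{vol}_{g^{[\bf n]}}$ with $dx$ (with harmless $O(t^\varepsilon)$ corrections absorbed into the $C\tau^{-1+\varepsilon}$ term) yields \eqref{main.en.ineq}.
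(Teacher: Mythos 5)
Your proposal is correct and follows essentially the same route as the paper: derive a pointwise weighted differential identity using the symmetric-hyperbolic structure and Lemma \ref{lem:var.d.symm}, integrate over $\{U_\tau\}_{\tau\in[\eta,t]}$, control the quadratic errors and commutators by the bootstrap, control the inhomogeneities by Lemma \ref{lem:cal.I} and Young's inequality, and finally choose $\sigma$ large so that the $\mathcal{H}$-boundary quadratic form has a favorable sign. One minor imprecision: the $C\tau^4$ boundary error from \eqref{IBP:error} is not absorbed by the bootstrap alone but rather by the $\sigma$-condition \eqref{choice.sigma} together with shrinking the time of existence, which the paper makes explicit.
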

To derive the $H^s(U_t)$ estimate \eqref{main.en.ineq}, we first differentiate the system \eqref{e0.eI.d}-\eqref{e0.gamma_IJB.d} with $\partial_x^\alpha$, for $|\alpha|\leq s$, to obtain: 
\begin{align}
\label{e0.eI.d.diff}\partial_t\partial_x^\alpha e_{Ia}^{(d)}+\partial_x^\alpha(\frac{p_{\underline{I}}}{t}e^{(d)}_{\underline{I}a})=&\,
\partial_x^\alpha\big\{(k_{IC}^{[\bf n]}-k^{[\bf 0]}_{IC})e^{(d)}_{Ca}
+k_{IC}^{(d)}e^{[\bf n]}_{Ca}+k_{IC}^{(d)}e^{(d)}_{Ca}\big\}+\partial_x^\alpha(\mathcal{I}^{[\bf n]}_e)_{Ia}
\end{align}
and
\begin{align}
\notag&\partial_t\partial_x^\alpha k_{IJ}^{(d)}+\frac{1}{t}\partial_x^\alpha k^{(d)}_{IJ}\\
\label{e0.kIJ.d.diff}=&\,\frac{1}{2}\big\{e_C\partial_x^\alpha\gamma_{IJC}^{(d)}-e_I\partial_x^\alpha\gamma_{CJC}^{(d)}
+e_C\partial_x^\alpha\gamma_{JIC}^{(d)}-e_J\partial_x^\alpha\gamma_{CIC}^{(d)}+2\delta_{IJ}e_D\partial_x^\alpha\gamma_{CDC}^{(d)}\big\}\\
\notag&-\partial_x^\alpha\big\{\delta_{\underline{I}J}\frac{p_{\underline{I}}}{t}k^{(d)}_{CC}-\delta_{IJ}\frac{1}{t}k^{(d)}_{CC}+\delta_{IJ}\sum_C\frac{p_C}{t}k_{CC}^{(d)}\big\}
+\mathfrak{K}_{IJ}^{(d),\alpha}+\partial_x^\alpha (\mathcal{I}_k^{[\bf n]})_{IJ},\\
\notag&\partial_t \partial_x^{\alpha} \gamma_{IJB}^{(d)}+\partial_x^{\alpha}(\frac{p_{\underline{I}}}{t} \gamma_{\underline{I}JB}^{(d)}) \\
\label{e0.gamma_IJB.d.diff}=&\,e_B\partial_x^{\alpha} k_{JI}^{(d)}-e_J\partial_x^{\alpha}k_{BI}^{(d)}
-\delta_{IB}\big[e_C\partial_x^{\alpha}k_{CJ}^{(d)}-e_J\partial_x^{\alpha}k_{CC}^{(d)}\big]+\delta_{IJ}\big[e_C\partial_x^{\alpha}k_{CB}^{(d)}-e_B\partial_x^{\alpha}k^{(d)}_{CC}\big]\\
\notag&- \partial_x^{\alpha}\bigg(\frac{p_{\underline{I}}}{t}\gamma_{JB\underline{I}}^{(d)}-\frac{p_{\underline{B}}}{t}\gamma_{JI\underline{B}}^{(d)}+\frac{p_{\underline{I}}}{t}\gamma_{BJ\underline{I}}^{(d)}+\frac{p_{\underline{J}}}{t}\gamma_{BI\underline{J}}^{(d)}\bigg)\\
\notag&-\delta_{IB}\partial_x^{\alpha}\bigg[\frac{p_{\underline{J}}}{t}\gamma_{CCJ}^{(d)}+\sum_C\frac{p_C}{t}\gamma_{CJC}^{(d)}\bigg]
+\delta_{IJ}\partial_x^{\alpha}\bigg[\frac{p_{\underline{B}}}{t}\gamma_{CC\underline{B}}^{(d)}+\sum_C\frac{p_C}{t}\gamma_{CBC}^{(d)}\bigg]\\
\notag&-\partial_x^{\alpha}\bigg[\delta_{\underline{I}J}\frac{\partial_ap_{\underline{I}}}{t}e^{(d)}_{Ba}
+\delta_{\underline{I}B}\frac{\partial_ap_{\underline{I}}}{t}e_{Ja}^{(d)}
+\delta_{IB}\frac{\partial_ap_{\underline{J}}}{t}e_{\underline{J}a}^{(d)}-\delta_{IJ}\frac{\partial_ap_{\underline{B}}}{t}e_{\underline{B}a}^{(d)} \bigg] \\ \notag&
+\mathfrak{G}_{IJB}^{(d),\alpha}+\partial_x^{\alpha}(\mathcal{I}_\gamma^{[\bf n]})_{IJB},
\end{align}
where 
\begin{align}
\label{frak.K.diff}\mathfrak{K}_{IJ}^{(d),\alpha}=&\sum_{\substack{|\alpha_1|+|\alpha_2|=|\alpha|\\|\alpha_2|<|\alpha|}}\frac{1}{2}\bigg[\partial_x^{\alpha_1}e_{Cb}\partial_x^{\alpha_2}\partial_b\gamma_{IJC}^{(d)}-\partial_x^{\alpha_1}e_{Ib}\partial_x^{\alpha_2}\partial_b\gamma_{CJC}^{(d)}
+\partial_x^{\alpha_1}e_{Cb}\partial_x^{\alpha_2}\partial_b\gamma_{JIC}^{(d)}\\
\notag&-\partial_x^{\alpha_1}e_{Jb}\partial_x^{\alpha_2}\partial_b\gamma_{CIC}^{(d)}+2\delta_{IJ}\partial_x^{\alpha_1}e_{Db}\partial_x^{\alpha_2}\partial_b\gamma_{CDC}^{(d)}\bigg]+\partial_x^\alpha\mathfrak{K}_{IJ}^{(d)},\\
\label{frak.G.diff}\mathfrak{G}_{IJB}^{(d),\alpha}=&\sum_{\substack{|\alpha_1|+|\alpha_2|=|\alpha|\\|\alpha_2|<|\alpha|}}\bigg[\partial_x^{\alpha_1}e_{Ba}\partial^{\alpha_2}\partial_ak_{JI}^{(d)}-\partial_x^{\alpha_1}e_{Ja}\partial^{\alpha_2}\partial_ak_{BI}^{(d)}\\
\notag&-\delta_{IB}\big[\partial_x^{\alpha_1}e_{Ca}\partial^{\alpha_2}\partial_ak_{CJ}^{(d)}-\partial_x^{\alpha_1}e_{Ja}\partial^{\alpha_2}\partial_ak_{CC}^{(d)}\big]
\\
\notag&+\delta_{IJ}\big[\partial_x^{\alpha_1}e_{Ca}\partial^{\alpha_2}\partial_ak_{CB}^{(d)}-\partial_x^{\alpha_1}e_{Ba}\partial^{\alpha_2}\partial_ak^{(d)}_{CC}\big]\bigg]
+\partial_x^\alpha\mathfrak{G}_{IJB}^{(d)}.
\end{align}
Next, we write the overall differential inequality for the \eqref{e0.eI.d.diff}-\eqref{e0.gamma_IJB.d.diff}.
\begin{lemma}\label{lem:en.id}
There exists a constant $C_*$, depending only on the Kasner exponents and their $s$ spatial derivatives, such that the following differential inequality holds true:
\begin{align}\label{en.id}
\notag&\partial_t\Big(\sum_{|\alpha|\leq s}\frac{1}{2}t^{-2N_0}\big\{\partial_x^\alpha e_{Ia}^{(d)}\partial_x^\alpha e_{Ia}^{(d)}+\partial_x^\alpha k_{IJ}^{(d)}\partial_x^\alpha k_{IJ}^{(d)}+\frac{1}{2}\partial_x^\alpha \gamma_{IJB}^{(d)}\partial_x^\alpha \gamma_{IJB}^{(d)}\big\}\Big)\\
\notag&+\sum_{|\alpha|\leq s}\frac{N_0-C_*}{t}t^{-2N_0}\big\{\partial_x^\alpha e_{Ia}^{(d)}\partial_x^\alpha e_{Ia}^{(d)}+\partial_x^\alpha k_{IJ}^{(d)}\partial_x^\alpha k_{IJ}^{(d)}+\frac{1}{2}\partial_x^\alpha \gamma_{IJB}^{(d)}\partial_x^\alpha \gamma_{IJB}^{(d)}\big\}\\
\leq&\sum_{|\alpha|\leq s}\bigg[t^{-2N_0}\big\{e_C\big[\partial_x^\alpha k_{IJ}^{(d)}\partial_x^\alpha \gamma_{IJC}^{(d)}\big]-e_I\big[\partial_x^\alpha k_{IJ}^{(d)}\partial_x^\alpha \gamma_{CJC}^{(d)}\big]+e_D\big[\partial_x^\alpha k_{II}^{(d)}\partial_x^\alpha \gamma_{CDC}^{(d)}\big]\big\}\\
\notag&+t^{-2N_0}\partial_x^\alpha e_{Ia}^{(d)}\partial_x^\alpha\big\{(k_{IC}^{[\bf n]}-k^{[\bf 0]}_{IC})e^{(d)}_{Ca}
+k_{IC}^{(d)}e^{[\bf n]}_{Ca}+k_{IC}^{(d)}e^{(d)}_{Ca}\big\}+t^{-2N_0}\partial_x^\alpha e_{Ia}^{(d)}\partial_x^\alpha(\mathcal{I}^{[\bf n]}_e)_{Ia}\\
\notag&+t^{-2N_0}\partial_x^\alpha k_{IJ}^{(d)}\big[\mathfrak{K}_{IJ}^{(d),\alpha}+\partial_x^\alpha (\mathcal{I}_k^{[\bf n]})_{IJ}\big]
+\frac{1}{2}t^{-2N_0}\partial_x^\alpha \gamma_{IJB}^{(d)}\big[\mathfrak{G}_{IJB}^{(d),\alpha}+\partial_x^\alpha (\mathcal{I}_\gamma^{[\bf n]})_{IJB}\big]\bigg],
\end{align}
for all $t\in[\eta,T_{Boot})$.
\end{lemma}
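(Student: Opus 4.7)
The plan is to perform a standard $L^2$ multiplier computation for the symmetric hyperbolic system \eqref{e0.eI.d.diff}--\eqref{e0.gamma_IJB.d.diff}, weighted by $t^{-2N_0}$. Concretely, I multiply \eqref{e0.eI.d.diff} by $\partial_x^\alpha e^{(d)}_{Ia}$, \eqref{e0.kIJ.d.diff} by $\partial_x^\alpha k^{(d)}_{IJ}$, and \eqref{e0.gamma_IJB.d.diff} by $\tfrac{1}{2}\partial_x^\alpha\gamma^{(d)}_{IJB}$, sum over all indices and all $|\alpha|\le s$, and recognize the time-derivative terms as $\tfrac{1}{2}\partial_t$ of the quadratic quantities appearing inside the braces on the LHS of \eqref{en.id}.

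The key algebraic step is the symmetrization of the principal-order cross couplings between $k^{(d)}$ and $\gamma^{(d)}$. Using the symmetry $k^{(d)}_{IJ}=k^{(d)}_{JI}$ and antisymmetry $\gamma^{(d)}_{IJB}=-\gamma^{(d)}_{IBJ}$ furnished by Lemma \ref{lem:var.d.symm}, the combination
\[
\tfrac{1}{2}\partial_x^\alpha k_{IJ}^{(d)}\bigl(e_C\partial_x^\alpha\gamma_{IJC}^{(d)}+e_C\partial_x^\alpha\gamma_{JIC}^{(d)}\bigr)+\tfrac{1}{2}\partial_x^\alpha\gamma_{IJB}^{(d)}\bigl(e_B\partial_x^\alpha k_{JI}^{(d)}-e_J\partial_x^\alpha k_{BI}^{(d)}\bigr)
\]
collapses after relabeling to the single total-derivative expression $e_C[\partial_x^\alpha k^{(d)}_{IJ}\partial_x^\alpha\gamma^{(d)}_{IJC}]$, and analogously for the trace/divergence terms; this is exactly what produces the three $e_C[\cdots]$ expressions on the first line of the RHS of \eqref{en.id}. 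Everything else that involves a derivative of $k^{(d)}$ or $\gamma^{(d)}$ with at least one factor of $\partial_x^\alpha$ peeled off the product comes from commuting $\partial_x^\alpha$ with $e_C=e_{Cb}\partial_b$, and is already packaged in $\mathfrak{K}^{(d),\alpha}_{IJ}, \mathfrak{G}^{(d),\alpha}_{IJB}$, so I leave those as undifferentiated error contributions on the RHS, together with the cubic/background terms of $\mathfrak{K}^{(d)},\mathfrak{G}^{(d)}$ and the inhomogeneities $\mathcal{I}^{[\mathbf n]}_{\cdot}$.

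Next I handle the singular $t^{-1}$ terms in the equations. Each such term is of the form $\tfrac{P(x)}{t}$ times one of the variables, where $P(x)$ is a polynomial in the $p_I$'s (or their first derivatives, in the $e^{(d)}$ coupling of \eqref{e0.gamma_IJB.d.diff}). When $\partial_x^\alpha$ hits such a coefficient via Leibniz, at least one variable factor survives, and Cauchy--Schwarz against the multiplier produces a pointwise bound of the form $\tfrac{C_*}{t}$ times the quadratic energy density at level $|\alpha|$, where $C_*$ depends only on the $C^s$-norms of the Kasner exponents $p_i(x)$. In particular $C_*$ is independent of $n$, the bootstrap size of the remainders, and $\eta$. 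This step is the place where the index pattern $\delta_{\underline I J}$, $\delta_{IB}$, etc., must be tracked carefully to verify that no positive sign is lost and that each such contribution is genuinely quadratic in the multipliers.

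Finally, I multiply the resulting identity by $t^{-2N_0}$ and use
\[
\partial_t\bigl(t^{-2N_0}F\bigr)=t^{-2N_0}\partial_t F-\tfrac{2N_0}{t}\,t^{-2N_0}F
\]
to absorb the weight's time derivative: the $+\tfrac{N_0}{t}t^{-2N_0}(\cdots)$ term produced on the LHS, combined with $-\tfrac{C_*}{t}t^{-2N_0}(\cdots)$ from the previous step, yields the coefficient $\tfrac{N_0-C_*}{t}$ in the second line of \eqref{en.id}. The main technical obstacle will be the careful book-keeping of the symmetrization cancellations (especially the $\delta_{IJ}$-contracted and $\delta_{IB}$-contracted pieces in \eqref{e0.gamma_IJB.d.diff}) and the verification that the $t^{-1}$ quadratic forms are controlled by a constant depending \emph{only} on the asymptotic data, so that later, in Proposition \ref{prop:en.est}, one may choose $N_0$ large relative to $C_*$ and thereby gain the decisive positive bulk term.
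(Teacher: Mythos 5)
Your proposal is correct and follows essentially the same route as the paper's (very terse) proof: form the algebraic combination $\sum_{|\alpha|\leq s}\{t^{-2N_0}\partial_x^\alpha e^{(d)}\cdot(\ref{e0.eI.d.diff})+t^{-2N_0}\partial_x^\alpha k^{(d)}\cdot(\ref{e0.kIJ.d.diff})+\frac12 t^{-2N_0}\partial_x^\alpha\gamma^{(d)}\cdot(\ref{e0.gamma_IJB.d.diff})\}$, use Lemma~\ref{lem:var.d.symm} to symmetrize the principal $e_C$-terms into whole derivatives, and absorb the $t^{-1}$-coefficient lower-order terms via Young's inequality into the $C_*$-bracket. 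Your explicit relabeling check for the three total-derivative combinations and your accounting of the commutator pieces inside $\mathfrak{K}^{(d),\alpha}$, $\mathfrak{G}^{(d),\alpha}$ reproduce what the paper sketches in two sentences.
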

\begin{proof}
Consider the algebraic combination of equations
\begin{align*}
\sum_{|\alpha|\leq s}\big\{t^{-2N_0}\partial_x^\alpha e^{(d)}_{Ia}\times\eqref{e0.eI.d.diff}+t^{-2N_0}\partial_x^\alpha k^{(d)}_{IJ}\times\eqref{e0.kIJ.d.diff}+\frac{1}{2}t^{-2N_0}\partial_x^\alpha\gamma_{IJB}^{(d)}\times\eqref{e0.gamma_IJB.d.diff}\big\}, 
\end{align*}
differentiate by parts in the top order terms and use Lemma \ref{lem:var.d.symm} to write them as whole derivatives. 
The lower order terms with $t^{-1}$ coefficients are grouped together using Young's inequality to give the brackets in the LHS having the $C_*$ constant coefficient.
\end{proof}




%
We will make use of the following error estimates.
\begin{lemma}\label{lem:error.est}
Assume that the bootstrap assumptions are valid for some $s\ge4$.
The following expressions satisfy
\begin{align}\label{error.est}
\notag&\int_{U_t}t^{-2N_0}\bigg[\partial_x^\alpha e_{Ia}^{(d)}\partial_x^\alpha\big\{(k_{IC}^{[\bf n]}-k^{[\bf 0]}_{IC})e^{(d)}_{Ca}
+k_{IC}^{(d)}e^{[\bf n]}_{Ca}+k_{IC}^{(d)}e^{(d)}_{Ca}\big\}+\partial_x^\alpha e_{Ia}^{(d)}\partial_x^\alpha(\mathcal{I}^{[\bf n]}_e)_{Ia}\\
&+\partial_x^\alpha k_{IJ}^{(d)}\big[\mathfrak{K}_{IJ}^{(d),\alpha}+\partial_x^\alpha (\mathcal{I}_k^{[\bf n]})_{IJ}\big]
+\frac{1}{2}\partial_x^\alpha \gamma_{IJB}^{(d)}\big[\mathfrak{K}_{IJB}^{(d),\alpha}+\partial_x^\alpha (\mathcal{I}_\gamma^{[\bf n]})_{IJB}\big]\bigg]\mathrm{vol}_{g^{\bf[n]}}\\
\notag\leq&\,Ct^{-1+\varepsilon}t^{-2N_0}\big\{\|e^{(d)}\|^2_{H^s(U_t)}+\|k^{(d)}\|^2_{H^s(U_t)}+\|\gamma^{(d)}\|^2_{H^s(U_t)}\big\}+Ct^{1-\varepsilon}t^{2M-2N_0},
\end{align}
for all $t\in[\eta,T_{Boot})$ and $|\alpha|\leq s$, where $M$ is as in Lemma \ref{lem:cal.I}.
\end{lemma}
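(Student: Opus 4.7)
The plan is to estimate each summand in the integrand separately, sorting them into three categories: (i) terms that are linear in the remainders $e^{(d)},k^{(d)},\gamma^{(d)}$ with coefficients built out of iterates; (ii) terms that are purely quadratic in the remainders; and (iii) the source-type contributions coming from $(\mathcal{I}_e^{[\bf n]})_{Ia}$, $(\mathcal{I}_k^{[\bf n]})_{IJ}$, $(\mathcal{I}_\gamma^{[\bf n]})_{IJB}$.

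For category (i), I would apply the Leibniz rule to products such as $\partial_x^\alpha\{(k^{[\bf n]}_{IC}-k^{[\bf 0]}_{IC})e^{(d)}_{Ca}\}$ and bound every spatial derivative of $k^{[\bf n]}-k^{[\bf 0]}$ in $L^\infty$ by $C_n t^{-1+\varepsilon}$ using point \ref{item.approx1} of Theorem \ref{thm:approx.sol}. The analogous pieces of $\mathfrak{K}^{(d),\alpha}_{IJ}$ and $\mathfrak{G}^{(d),\alpha}_{IJB}$ of the form $\gamma^{[\bf n]}\cdot\gamma^{(d)}$, $\gamma^{[\bf n]}\cdot k^{(d)}$, or $(k^{[\bf n]}-k^{[\bf 0]})\cdot\gamma^{(d)}$ are treated identically, using Lemma \ref{lem:gamma} and Proposition \ref{prop:ind2} to bound the iterate factors by $C_n t^{-1+\varepsilon}$; any logarithmic losses are absorbed into the spare $t^\varepsilon$. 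The top-order commutator contributions in \eqref{frak.K.diff}, \eqref{frak.G.diff}, which take the form $\partial_x^{\alpha_1} e\cdot\partial_x^{\alpha_2+1}\gamma^{(d)}$ or $\partial_x^{\alpha_1} e\cdot\partial_x^{\alpha_2+1} k^{(d)}$ with $|\alpha_2|<|\alpha|$, are handled by placing the factor carrying $\leq s-2$ derivatives in $L^\infty$ via Sobolev embedding and \eqref{Sob}. In each case, Cauchy--Schwarz in space and a multiplication by $t^{-2N_0}$ yield a bound of the form $C_n t^{-1+\varepsilon}t^{-2N_0}\{\|e^{(d)}\|^2_{H^s}+\|k^{(d)}\|^2_{H^s}+\|\gamma^{(d)}\|^2_{H^s}\}$, exactly the first term on the right of \eqref{error.est}.

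For category (ii), any purely quadratic combination such as $k^{(d)}_{IC}e^{(d)}_{Ca}$, $(k^{(d)}_{CC})^2$, $\gamma^{(d)}\gamma^{(d)}$ or $\gamma^{(d)}k^{(d)}$ in \eqref{frak.K}--\eqref{frak.G} is again expanded via Leibniz, but now whichever factor carries $\leq s-2$ derivatives is placed in $L^\infty$ using the Sobolev-type bound \eqref{Sob}; the resulting $L^\infty$ factor is of size $Ct^5$, far smaller than the threshold $t^{-1+\varepsilon}$, so these contributions are also absorbed into the first term on the right of \eqref{error.est}. The source terms in category (iii) are handled by Cauchy--Schwarz and Young's inequality, e.g.
\begin{align*}
\int_{U_t} t^{-2N_0}\,\partial_x^\alpha e^{(d)}_{Ia}\,\partial_x^\alpha(\mathcal{I}_e^{[\bf n]})_{Ia}\,\mathrm{vol}_{g^{[\bf n]}}\leq \tfrac12 t^{-1+\varepsilon}t^{-2N_0}\|e^{(d)}\|^2_{H^s(U_t)}+\tfrac12 t^{1-\varepsilon}t^{-2N_0}\|(\mathcal{I}_e^{[\bf n]})\|^2_{H^s(U_t)},
\end{align*}
and analogously for the $k^{(d)}\cdot\mathcal{I}_k^{[\bf n]}$ and $\gamma^{(d)}\cdot\mathcal{I}_\gamma^{[\bf n]}$ integrals. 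Lemma \ref{lem:cal.I} combined with the uniform volume bound \eqref{vol.gn} then yields $\|\mathcal{I}^{[\bf n]}\|_{H^s(U_t)}\leq C_n t^M$, giving the $Ct^{1-\varepsilon+2M-2N_0}$ contribution on the right of \eqref{error.est}.

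The delicate step is the bookkeeping of $t$-weights carried by $\gamma^{[\bf n]}$, since by Lemma \ref{lem:gamma} these connection coefficients can blow up as $t^{-p_J}$ with $p_J$ possibly positive. To keep the loss down to $C_n t^{-1+\varepsilon}$, one must exploit both the antisymmetry $\gamma_{IJB}=-\gamma_{IBJ}$ and the specific index pattern with which $\gamma^{[\bf n]}$ appears in \eqref{frak.K}--\eqref{frak.G} --- always contracted against a single remainder factor --- together with the fact that all purely iterate quadratic combinations that would otherwise produce worse weights have already been extracted into $(\mathcal{I}_k^{[\bf n]})_{IJ},(\mathcal{I}_\gamma^{[\bf n]})_{IJB}$ through the construction of the approximate solution in Section \ref{sec:approx.sol}, and are therefore controlled by Lemma \ref{lem:cal.I}.
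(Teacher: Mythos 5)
Your proposal is correct and reproduces the paper's own argument: Leibniz expansion, $L^\infty$ control of iterate coefficients via point \ref{item.approx1} of Theorem \ref{thm:approx.sol} and Lemma \ref{lem:gamma}, the Sobolev bound \eqref{Sob} to place the extra factor of a cubic-in-remainder term in $W^{s-2,\infty}$, and Young's inequality together with Lemma \ref{lem:cal.I} for the source terms, so the two proofs coincide in structure. One small imprecision in your last paragraph: neither the antisymmetry $\gamma_{IJB}=-\gamma_{IBJ}$ nor the particular contraction pattern is what keeps the $\gamma^{[\bf n]}$ coefficients down to $C_n t^{-1+\varepsilon}$; the decisive fact is simply the Kasner constraint $p_I\leq p_3\leq 1-\varepsilon$, which by Lemma \ref{lem:gamma} already bounds every component $\gamma^{[\bf n]}_{IJB}$ and all its spatial derivatives by $C_{\alpha,n}t^{-1+\varepsilon}$ up to logarithmic corrections, putting them on exactly the same footing as the $k^{[\bf n]}-k^{[\bf 0]}$ coefficients.
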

\begin{proof}
The last term in the RHS of \eqref{error.est} comes from Young's inequality, using Lemma \ref{lem:cal.I}. Similarly, all quadratic terms in $e^{(d)}_{Ia},k^{(d)}_{Ia},\gamma^{(d)}_{IJB}$ are treated by Young's inequality, observing that their coefficients are bounded by $C_{\alpha,n}t^{-1+\varepsilon}$, by virtue of point \ref{item.approx1} in Theorem \ref{thm:approx.sol}. For the cubic terms in the differences, we recall \eqref{frak.K}-\eqref{frak.G} and notice that at least one factor has at most $s-2$ spatial derivatives, since $s\ge4$. We estimate that factor in $W^{s-2,\infty}(U_t)$ using \eqref{Sob} and apply Young's inequality once more. These terms are actually much better behaved, thanks to our bootstrap assumptions \eqref{Boots}, which are used whenever the terms $e_{Ia}^{(d)}, k_{IJ}^{(d)}, \gamma_{IJB}^{(d)}$ are encountered. This accounts for all terms in the LHS of \eqref{error.est}.  
\end{proof}
Now we can proceed to the 
\begin{proof}[Proof of Proposition \ref{prop:en.est}]
Integrate \eqref{en.id} in the domain $\{U_\tau\}_{\tau\in[\eta,t]}$ and employ Lemma \ref{lem:error.est} to obtain the inequality 
\begin{align}\label{en.ineq}
\notag&\int_\eta^t\int_{U_\tau}\partial_{\tau}\sum_{|\alpha|\leq s}\frac{1}{2}\tau^{-2N_0}\big\{\partial_x^\alpha e_{Ia}^{(d)}\partial_x^\alpha e_{Ia}^{(d)}+\partial_x^\alpha k_{IJ}^{(d)}\partial_x^\alpha k_{IJ}^{(d)}+\frac{1}{2}\partial_x^\alpha \gamma_{IJB}^{(d)}\partial_x^\alpha \gamma_{IJB}^{(d)}\big\}\mathrm{vol}_{g^{[\bf n]}}d\tau\\
\notag&+\int^t_\eta\frac{N_0-C_*}{\tau}\tau^{-2N_0}\big\{\|e^{(d)}\|^2_{H^s(U_\tau)}+\|k^{(d)}\|^2_{H^s(U_\tau)}+\frac{1}{2}\|\gamma^{(d)}\|^2_{H^s(U_\tau)}\big\}
\mathrm{vol}_{g^{[\bf n]}}d\tau\\
\leq&\sum_{|\alpha|\leq s}\int_\eta^t\int_{U_\tau}
\tau^{-2N_0}\big\{e_C\big[\partial_x^\alpha k_{IJ}^{(d)}\partial_x^\alpha \gamma_{IJC}^{(d)}\big]-e_I\big[\partial_x^\alpha k_{IJ}^{(d)}\partial_x^\alpha \gamma_{CJC}^{(d)}\big]\\
\notag&+e_D\big[\partial_x^\alpha k_{II}^{(d)}\partial_x^\alpha \gamma_{CDC}^{(d)}\big]\big\}\mathrm{vol}_{g^{[\bf n]}}d\tau
+\int^t_\eta C\tau^{-1+\varepsilon}\tau^{-2N_0}\big\{\|e^{(d)}\|^2_{H^s(U_\tau)}\\
\notag&+\|k^{(d)}\|^2_{H^s(U_\tau)}+\|\gamma^{(d)}\|^2_{H^s(U_\tau)}\big\}\mathrm{vol}_{g^{[\bf n]}}d\tau+Ct^{2-\varepsilon}t^{2M-2N_0}
\end{align}
Next, we use Lemma \ref{lem:IBP} to integrate by parts in $\partial_t,e_C,e_I,e_D$, recalling that $e^{(d)},k^{(d)},\gamma^{(d)}$ vanish on $U_\eta$:
\begin{align}\label{en.ineq2}
\notag&\frac{1}{2}t^{-2N_0}\big\{\|e^{(d)}\|^2_{H^s(U_t)}+\|k^{(d)}\|^2_{H^s(U_t)}+\frac{1}{2}\|\gamma^{(d)}\|^2_{H^s(U_t)}\big\}\\
\notag&+\sum_{|\alpha|\leq s}\sum_{b,\pm}\int_{\mathcal{H}^\pm_b}\frac{1}{2}\tau^{-2N_0}n_{\mathcal{H}_b^\pm}^0\big\{\partial_x^\alpha e_{Ia}^{(d)}\partial_x^\alpha e_{Ia}^{(d)}+\partial_x^\alpha k_{IJ}^{(d)}\partial_x^\alpha k_{IJ}^{(d)}+\frac{1}{2}\partial_x^\alpha \gamma_{IJB}^{(d)}\partial_x^\alpha \gamma_{IJB}^{(d)}\big\}\mathrm{vol}_{\mathcal{H}^\pm_b}\\
&+\int^t_\eta\frac{N_0-C_*-1}{\tau}\tau^{-2N_0}\big\{\|e^{(d)}\|^2_{H^s(U_\tau)}+\|k^{(d)}\|^2_{H^s(U_\tau)}+\frac{1}{2}\|\gamma^{(d)}\|^2_{H^s(U_\tau)}\big\}d\tau\\
\notag\leq&\sum_{|\alpha|\leq s}\sum_{b,\pm}\int_{\mathcal{H}_b^\pm}
\tau^{-2N_0}n^D_{\mathcal{H}^\pm_b}\big[\partial_x^\alpha k_{IJ}^{(d)}\partial_x^\alpha \gamma_{IJD}^{(d)}-\partial_x^\alpha k_{DJ}^{(d)}\partial_x^\alpha \gamma_{IJI}^{(d)}+\partial_x^\alpha k_{II}^{(d)}\partial_x^\alpha \gamma_{CDC}^{(d)}\big]\mathrm{vol}_{\mathcal{H}_b^\pm}\\
\notag&+\sum_{|\alpha|\leq s}\sum_{b,\pm}\int_{\mathcal{H}_b^\pm}
C\tau^4\tau^{-2N_0}\big\{\partial_x^\alpha e_{Ia}^{(d)}\partial_x^\alpha e_{Ia}^{(d)}+\partial_x^\alpha k_{IJ}^{(d)}\partial_x^\alpha k_{IJ}^{(d)}+\frac{1}{2}\partial_x^\alpha \gamma_{IJB}^{(d)}\partial_x^\alpha \gamma_{IJB}^{(d)}\big\}
\mathrm{vol}_{\mathcal{H}_b^\pm}\\
\notag&+\int^t_\eta C\tau^{-1+\varepsilon}\tau^{-2N_0}\big\{\|e^{(d)}\|^2_{H^s(U_\tau)}
+\|k^{(d)}\|^2_{H^s(U_\tau)}+\|\gamma^{(d)}\|^2_{H^s(U_\tau)}\big\}d\tau+Ct^{2-\varepsilon}t^{2M-2N_0}
\end{align}
To conclude the estimate \eqref{main.en.ineq}, we need to show that the $\mathcal{H}$-boundary terms in the previous LHS can absorb the ones in the RHS. 
This is clearly possible by shrinking the interval of existence in a manner that depends on $n$, but it is independent of $\eta$, and by taking $\sigma$ sufficiently large such that 
\begin{align}\label{choice.sigma}
C\tau^4+10\sqrt{n^D_{\mathcal{H}^\pm_{\underline{b}}}n^D_{\mathcal{H}^\pm_{\underline{b}}}}<\frac{1}{4}n^0_{\mathcal{H}^\pm_b}
\end{align}
The latter is possible since $n^D_{\mathcal{H}^\pm_b}\sim \sigma^{-1}$, $n^0_{\mathcal{H}^\pm_b}\sim1$, for $\sigma$ sufficiently large. Replacing $C_*+1$ by another constant labeled again $C_*$, still independent of $n$, completes the proof of the proposition.
\end{proof}
%


\subsubsection{Proof of Proposition \ref{prop:loc.exist1}}\label{subsubsec:prop.proof}

With the energy inequality \eqref{main.en.ineq} at our disposal,
we choose $N_0\in\mathbb{N}$ such that $N_0>C_*$. The latter number depends only on the $p_I$'s and their $s$ coordinate derivatives, for some $s\ge4$, which are all fixed to begin with. By Lemma \ref{lem:cal.I}, we then choose $n=n(N_0)$ sufficiently large, such that $2M-2N_0+2-\varepsilon>0$. For these choices of parameters, \eqref{main.en.ineq} implies that
\begin{align}\label{main.en.ineq3}
& t^{-2N_0}\big\{\|e^{(d)}\|^2_{H^s(U_t)}+\|k^{(d)}\|^2_{H^s(U_t)}+\|\gamma^{(d)}\|^2_{H^s(U_t)}\big\}\\
\leq&\int^t_\eta C\tau^{-1+\varepsilon}\tau^{-2N_0}\big\{\|e^{(d)}\|^2_{H^s(U_\tau)}+\|k^{(d)}\|^2_{H^s(U_\tau)}+\|\gamma^{(d)}\|^2_{H^s(U_\tau)}\big\}d\tau+C t^{2M-2N_0+2-\varepsilon}.\notag
\end{align}
Applying Gr\"onwall's inequality in $[\eta,t]$ yields the energy estimate
\begin{align}\label{main.en.ineq3}
t^{-2N_0}\big\{\|e^{(d)}\|^2_{H^s(U_t)}+\|k^{(d)}\|^2_{H^s(U_t)}+\|\gamma^{(d)}\|^2_{H^s(U_t)}\big\}\leq C t^{2M-2N_0+2-\varepsilon} e^{\int^t_\eta  C\tau^{-1+\varepsilon}}.
\end{align}
In particular, shrinking the original interval of existence $[\eta,T(\eta,n)]$ if necessary, in a way that only depends on $n$, we have that
\begin{align}\label{main.en.ineq4}
\|e^{(d)}\|^2_{H^s(U_t)}+\|k^{(d)}\|^2_{H^s(U_t)}+\|\gamma^{(d)}\|^2_{H^s(U_t)}\leq t^{2N_0},
\end{align}
for all $t\in[\eta,T_{Boot})$. For $N_0>5$, the latter estimate is an improvement of our bootstrap assumptions \eqref{Boots}. A standard continuation argument implies that the time interval on which \eqref{main.en.ineq4} holds true can be enlarged up to some $[\eta,T]$, where $T=T_{N_0,s,n}$ depends only on the parameters chosen above and not on $\eta$. 

\section{Recovery of the Einstein vacuum equations}\label{sec:EVE}

In Section \ref{sec:actual.sol} we constructed a singular solution $e_{Ia},k_{IJ},\gamma_{IJB}$ to the modified system of equations \eqref{e0.eIi2}-\eqref{e0.gamma_IJB.mod2}. Now we need to show that it actually corresponds to a metric that satisfies the Einstein vacuum equations. 

Consider the metric ${\bf g}$ of the form \eqref{metric}, for which $e_I=e_{Ia}\partial_a$ is a $g$-orthonormal frame. This completely determines the metric. The variables $k_{IJ},\gamma_{IJB}$ that we have solved for, using \eqref{e0.kIJ.mod2}-\eqref{e0.gamma_IJB.mod2} are not a priori the connection coefficients of $e_0=\partial_t$, $e_I$, since the equations \eqref{e0.k}, \eqref{gammaIJB.eq} have been modified using the constraints. Nevertheless, they define a connection $\widetilde{D}$ as follows:
\begin{align}\label{D.tilde}
\widetilde{D}_{e_0}e_\mu=0,\qquad\widetilde{D}_{e_I}e_0=-k_{IJ}e_J,\qquad \widetilde{D}_{e_I}e_J=-k_{IJ}e_0+\gamma_{IJB}e_B.
\end{align}
By Lemma \ref{lem:var.d.symm}, we have that $\widetilde{D}$ is compatible with ${\bf g}$. However, it is not necessarily torsion-free. Define 
\begin{align}\label{torsion}
C_{\alpha\mu\nu}={\bf g}([e_\alpha,e_\mu]-\widetilde{D}_{e_\alpha}e_\mu+\widetilde{D}_{e_\mu}e_\alpha,e_\nu)=-C_{\mu\alpha\nu}
\end{align}
and the curvatures
\begin{align}\label{Riem.tilde}
\notag\widetilde{\bf R}_{\alpha\beta\mu\nu}=&\,{\bf g}\big((\widetilde{D}_{e_\alpha}\widetilde{D}_{e_\beta}-\widetilde{D}_{e_\alpha}\widetilde{D}_{e_\beta}-\widetilde{D}_{\widetilde{D}_{e_\alpha}e_\beta-\widetilde{D}_{e_\beta}e_\alpha})e_\mu,e_\nu\big),\\ 
\widetilde{\bf R}_{\beta\mu}=&-\widetilde{\bf R}_{0\beta\mu0}+\widetilde{\bf R}_{I\beta\mu I},\\ 
\notag\widetilde{\bf R}=&-\widetilde{\bf R}_{00}+\widetilde{\bf R}_{II}.
\end{align}
It turns out that proving $\widetilde{D}$ is the actual Levi-Civita connection $D$ of ${\bf g}$ and $k_{IJ},\gamma_{IJB}$ the expected connection coefficients, must be done at the same time as showing that ${\bf g}$ is a solution to the Einstein vacuum equations. The following lemma is contained in \cite[Section 4]{FSm}.
\begin{lemma}\label{lem:const.prop}
The variables $C_{\alpha\mu\nu},\widetilde{\bf R}_{\beta\mu}$ satisfy:
\begin{align}\label{C.R.id}
C_{\alpha\beta0}=C_{0\alpha\beta}=0,\qquad
\widetilde{\bf R}_{IJ}+\widetilde{\bf R}_{JI}=-\delta_{IJ}\widetilde{\bf R}_{00},
\qquad\widetilde{\bf R}_{0I}=-\widetilde{\bf R}_{I0}
\end{align}
and
\begin{align}\label{C.R.eq}
\notag\partial_tC_{IJB}=&\,(k\star C)_{IJB}-\delta_{IB}\widetilde{\bf R}_{J0}+\delta_{JB}\widetilde{\bf R}_{I0},\\
\partial_t\widetilde{\bf R}_{I0}=&\,e_I\widetilde{\bf R}_{00}+\frac{1}{2}e_J(\widetilde{\bf R}_{IJ}-\widetilde{\bf R}_{JI})+(k\star \widetilde{\bf R}+\gamma\star \widetilde{\bf R})_I+(C\star Q)_I,\\
\notag\partial_t\widetilde{\bf R}_{00}=&\,e_I\widetilde{\bf R}_{I0}+k\star \widetilde{\bf R}+\gamma\star \widetilde{\bf R}+C\star Q,\\
\notag \partial_t(\widetilde{\bf R}_{IJ}-\widetilde{\bf R}_{JI})=&\,e_J\widetilde{\bf R}_{I0}-e_I\widetilde{\bf R}_{J0}+(k\star \widetilde{\bf R}+\gamma\star \widetilde{\bf R})_{IJ}+(C\star Q)_{IJ},
\end{align}
where the indices in the $\star$ product terms do not matter, the factors $\widetilde{\bf R}$ in these terms are $\widetilde{\bf R}_{\beta\mu}$ components, and $Q$ is a linear expression in $C,\gamma\star\gamma,\gamma\star k,k\star k, ek,e\gamma$ (different in each equation).
\end{lemma}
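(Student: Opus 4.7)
The proof unfolds in three stages, parallel to \cite[Section 4]{FSm}.

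\textbf{Algebraic identities \eqref{C.R.id}.} I would start by checking each of these directly from the definitions \eqref{D.tilde}, \eqref{torsion}, \eqref{Riem.tilde}. For $C_{0I\nu}$, the evolution equation \eqref{e0.eIi2} gives $[e_0,e_I]=(\partial_t e_{Ia})\partial_a=k_{IC}e_C$, while \eqref{D.tilde} gives $\widetilde D_{e_0}e_I=0$ and $\widetilde D_{e_I}e_0=-k_{IC}e_C$, so the bracket inside $C_{0I\nu}$ vanishes identically; antisymmetry in the first two slots then gives $C_{I0\nu}=0$. For $C_{IJ0}$, the commutator $[e_I,e_J]$ is purely spatial and contributes nothing against $e_0$, and the remaining two terms evaluate to $-k_{IJ}+k_{JI}$, which is zero by the symmetry of $k$ established in Lemma \ref{lem:var.d.symm}. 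The symmetries of $\widetilde{\bf R}$ are more interesting: expanding $\widetilde{\bf R}_{IJ}+\widetilde{\bf R}_{JI}$ using \eqref{Riem.tilde} and substituting the RHS of the modified equation \eqref{e0.kIJ.mod2} for the $\partial_t k_{IJ}$ factors that appear, the terms telescope precisely into $-\delta_{IJ}\widetilde{\bf R}_{00}$. This is exactly the symmetrization that motivated the choice of \eqref{e0.kIJ.mod2}-\eqref{e0.gamma_IJB.mod2}; likewise the antisymmetry $\widetilde{\bf R}_{0I}=-\widetilde{\bf R}_{I0}$ comes from combining the antisymmetry $\gamma_{IJB}=-\gamma_{IBJ}$ (Lemma \ref{lem:var.d.symm}) with the modification terms in \eqref{e0.gamma_IJB.mod2}.

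\textbf{Evolution equation for the torsion.} I would compute $\partial_tC_{IJB}$ by $t$-differentiating \eqref{torsion}. The commutator term $\partial_t[e_I,e_J]$ is handled via the Jacobi identity together with $[e_0,e_I]=k_{IC}e_C$; the connection pieces $\partial_t\widetilde D_{e_I}e_J$ are read off \eqref{D.tilde} once $\partial_t k_{IJ}$, $\partial_t\gamma_{IJB}$, $\partial_t e_{Ia}$ are replaced using \eqref{e0.eIi2}-\eqref{e0.gamma_IJB.mod2}. After grouping, the principal contributions repackage as $\widetilde{\bf R}_{J0}$ and $\widetilde{\bf R}_{I0}$ contracted with the Kronecker symbols $\delta_{IB},\delta_{JB}$ (using the first algebraic step to clean up terms involving $C_{0\alpha\beta}$), while every remaining subprincipal term is quadratic in $k$ and $C$ and is absorbed into the schematic $(k\star C)_{IJB}$.

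\textbf{Evolution equations for the curvature.} These are obtained by applying the twice-contracted second Bianchi identity to $\widetilde D$. In the Levi-Civita setting one has $D^\alpha{\bf R}_{\alpha\beta}=\tfrac12 D_\beta{\bf R}$; for a ${\bf g}$-compatible but torsion-carrying connection, a standard frame computation (commuting three $\widetilde D$'s and tracking torsion) yields the same identity modulo correction terms linear in $C$ multiplied by quadratic polynomials $Q$ in $\gamma, k, e\gamma, ek$. Splitting $\alpha$ into its temporal and spatial parts, using Lemma \ref{lem:var.d.symm} to transfer $\widetilde D$'s through $e_0$, and applying the algebraic identities from the first step (especially $\widetilde{\bf R}_{IJ}+\widetilde{\bf R}_{JI}=-\delta_{IJ}\widetilde{\bf R}_{00}$ to reduce symmetric-part contractions to $\widetilde{\bf R}_{00}$), one isolates $\partial_t\widetilde{\bf R}_{00}$, $\partial_t\widetilde{\bf R}_{I0}$, and $\partial_t(\widetilde{\bf R}_{IJ}-\widetilde{\bf R}_{JI})$ in the stated form.

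\textbf{Main obstacle.} The principal difficulty is bookkeeping rather than ideas: the torsion-corrected Bianchi identity produces a large number of terms, and one must verify that every non-schematic piece lands in one of the buckets $(k\star C)$, $(C\star Q)$, or $(k\star\widetilde{\bf R}+\gamma\star\widetilde{\bf R})$. The cleanness of this accounting relies crucially on the algebraic identities \eqref{C.R.id}, which eliminate whole families of terms ($C_{0\alpha\beta}$ and $\widetilde{\bf R}_{IJ}+\widetilde{\bf R}_{JI}+\delta_{IJ}\widetilde{\bf R}_{00}$) that would otherwise prevent the system from closing. Since the computation is long but essentially routine once the symmetries of step one are in hand, I would refer the reader to \cite{FSm} for the detailed verification and only reproduce the structural points above.
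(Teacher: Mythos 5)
Your sketch is correct and is consistent with the computation in \cite[Section 4]{FSm}, to which the paper defers without reproducing any details. The three-stage structure you outline — algebraic identities via direct substitution of \eqref{D.tilde}, \eqref{e0.eIi2}, and the modified equations; torsion evolution by $t$-differentiating \eqref{torsion}; curvature evolution via the torsion-corrected twice-contracted second Bianchi identity, with the algebraic identities used to reduce the curvature unknowns to $\widetilde{\bf R}_{00},\widetilde{\bf R}_{I0},\widetilde{\bf R}_{IJ}-\widetilde{\bf R}_{JI}$ so the system closes — is exactly the intended argument.
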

\begin{remark}
The equations in \eqref{C.R.eq} constitute a first order symmetric hyperbolic system for $C_{IJB}, \widetilde{\bf R}_{I0},\widetilde{\bf R}_{00},\widetilde{\bf R}_{IJ}-\widetilde{\bf R}_{JI}$.
\end{remark}
Next, we argue that the corresponding variables for the approximate solution given by Theorem \ref{thm:approx.sol} are increasingly decaying for $n$ sufficiently large, as $t\rightarrow0$. 
\begin{lemma}\label{lem:tilde.approx}
Consider $\widetilde{D}^{[\bf n]}$, $\widetilde{\bf R}_{\beta\mu}^{[\bf n]}, C_{\alpha\beta\mu}^{[\bf n]}$ to have exactly analogous definitions to \eqref{torsion}, \eqref{Riem.tilde}, but with ${\bf g},e_I,k_{IJ},\gamma_{IJB}$ replaced by the iterates ${\bf g}^{[\bf n]},e_I^{[\bf n]},k^{[\bf n]}_{IJ},\gamma^{[\bf n]}_{IJB}$. Then we have the following bounds:
\begin{align}\label{tilde.approx}
|C_{\alpha\beta\mu}^{[\bf n]}|\leq C t^{-1+n\varepsilon},\qquad |\widetilde{\bf R}_{\beta\mu}^{[\bf n]}|\leq Ct^{-2+n\varepsilon},
\end{align}
for all indices $\alpha,\beta,\mu$, and for all $t\in(0,T_{N_0,s,n}]$.
\end{lemma}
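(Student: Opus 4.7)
The plan is to reduce both bounds to the defect $\widetilde{k}^{[\bf n]}_{IJ} - k^{[\bf n]}_{IJ}$ controlled by Lemma \ref{lem:k.approx.2ndfund}, combined with the spacetime Ricci estimate $|{\bf R}^{[\bf n]}_{\mu\nu}|\leq Ct^{-2+n\varepsilon}$ from point \ref{item.approx3} of Theorem \ref{thm:approx.sol}.

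\textbf{Step 1 (torsion).} The identity $\partial_t e^{[\bf n]}_{Ia}=\widetilde{k}^{[\bf n]}_{IC}e^{[\bf n]}_{Ca}$ used in the proof of Lemma \ref{lem:k.approx.2ndfund} gives $[e_0,e_I^{[\bf n]}]=\widetilde{k}^{[\bf n]}_{IC}e_C^{[\bf n]}$. Expanding \eqref{torsion} using \eqref{D.tilde} and the symmetry $k^{[\bf n]}_{IJ}=k^{[\bf n]}_{JI}$ from Lemma \ref{lem:var.d.symm}, I find that $C^{[\bf n]}_{00\mu}$, $C^{[\bf n]}_{0I0}$ and $C^{[\bf n]}_{IJ0}$ all vanish identically. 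The spatial torsion $C^{[\bf n]}_{IJB}$ also vanishes, because $\gamma^{[\bf n]}_{IJB}$ is, by \eqref{gamma.n}, the Levi-Civita spatial connection coefficient of $g^{[\bf n]}$, and therefore satisfies the torsion-free identity $\gamma^{[\bf n]}_{IJB}-\gamma^{[\bf n]}_{JIB}={\bf g}^{[\bf n]}([e_I^{[\bf n]},e_J^{[\bf n]}],e_B^{[\bf n]})$. The single surviving component is
\[
C^{[\bf n]}_{0IB}=\widetilde{k}^{[\bf n]}_{IB}-k^{[\bf n]}_{IB},
\]
which is $O(t^{-1+n\varepsilon})$ by Lemma \ref{lem:k.approx.2ndfund}.

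\textbf{Step 2 (Ricci).} Write $\widetilde{D}^{[\bf n]}=D^{[\bf n]}+A^{[\bf n]}$, where $D^{[\bf n]}$ is the Levi-Civita connection of ${\bf g}^{[\bf n]}$. Parallel transport for $D^{[\bf n]}$ gives $D^{[\bf n]}_{e_0}e_I^{[\bf n]}=D^{[\bf n]}_{e_I^{[\bf n]}}e_0+[e_0,e_I^{[\bf n]}]=-\widetilde{k}^{[\bf n]}_{IJ}e_J^{[\bf n]}+\widetilde{k}^{[\bf n]}_{IC}e_C^{[\bf n]}=0$, hence $A^{[\bf n]}(e_0,\cdot)=0$, while the remaining non-trivial components are
\[
A^{[\bf n]}(e_I^{[\bf n]},e_J^{[\bf n]})=(\widetilde{k}^{[\bf n]}_{IJ}-k^{[\bf n]}_{IJ})e_0,\qquad A^{[\bf n]}(e_I^{[\bf n]},e_0)=(\widetilde{k}^{[\bf n]}_{IJ}-k^{[\bf n]}_{IJ})e_J^{[\bf n]}.
\]
The standard comparison formula for the Riemann tensors of two connections, applied in the presence of the torsion $C^{[\bf n]}$ bounded in Step 1 and contracted as in \eqref{Riem.tilde}, yields schematically
\[
\widetilde{\bf R}^{[\bf n]}_{\beta\mu}={\bf R}^{[\bf n]}_{\beta\mu}+D^{[\bf n]}A^{[\bf n]}+A^{[\bf n]}\star A^{[\bf n]}+A^{[\bf n]}\star C^{[\bf n]}.
\]
The quadratic terms are $O(t^{-2+2n\varepsilon})$. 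The time-derivative part of $D^{[\bf n]}A^{[\bf n]}$ reduces to $\partial_t(\widetilde{k}^{[\bf n]}-k^{[\bf n]})=O(t^{-2+n\varepsilon})$ by Lemma \ref{lem:k.approx.2ndfund} with $r=1$, while the spatial part $e^{[\bf n]}_{Ia}\partial_a(\widetilde{k}^{[\bf n]}-k^{[\bf n]})$ achieves the same decay rate $O(t^{-2+n\varepsilon})$ by combining $|e^{[\bf n]}_{Ia}|\lesssim t^{-p_{\max}}|\log t|^C$ with the spatial-derivative gain $t^{|p_J-p_K|}$ in Lemma \ref{lem:k.approx.2ndfund} and the Kasner bound $p_{\max}=p_3<1$. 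The Christoffel corrections in $D^{[\bf n]}A^{[\bf n]}$ multiply $A^{[\bf n]}$ against $\widetilde{k}^{[\bf n]}$ and $\gamma^{[\bf n]}$, bounded via point \ref{item.approx1} of Theorem \ref{thm:approx.sol} and Lemma \ref{lem:gamma}. Combining with $|{\bf R}^{[\bf n]}_{\beta\mu}|\leq Ct^{-2+n\varepsilon}$ from Propositions \ref{prop:approx4}--\ref{prop:approx5} closes the estimate.

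\textbf{Main obstacle.} The principal bookkeeping task is confirming that the spatial contribution $e^{[\bf n]}_{Ia}\partial_a(\widetilde{k}^{[\bf n]}_{JK}-k^{[\bf n]}_{JK})$ genuinely decays at least as fast as $t^{-2+n\varepsilon}$, uniformly in the frame indices $I,J,K$, despite the a priori growth $t^{-p_{\max}}$ of $e^{[\bf n]}_{Ia}$. This is where the algebraic structure of the Kasner exponents (encoded in $\varepsilon=1-p_3>0$) must be married with the frame-component gain $t^{|p_J-p_K|}$ in Lemma \ref{lem:k.approx.2ndfund}; both features are built into the approximate construction and absorbing the harmless $|\log t|^C$ factors into a small fraction of $t^{\varepsilon}$ completes the argument.
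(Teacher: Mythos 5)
Your proposal is correct and follows essentially the same route as the paper: you reduce the surviving torsion component to $\widetilde{k}^{[\bf n]}-k^{[\bf n]}$ controlled by Lemma \ref{lem:k.approx.2ndfund}, and compare $\widetilde{\bf R}^{[\bf n]}$ with ${\bf R}^{[\bf n]}$ via the schematic curvature-difference formula built from the same defect, invoking Propositions \ref{prop:approx4}--\ref{prop:approx5} for the Levi-Civita Ricci bound. Your Step 1 is a touch cleaner than the paper's (it cites Lemma \ref{lem:k.approx.2ndfund} directly rather than recomputing via \eqref{kn-tilde.kn.eq}), and your explicit observation that $D^{[\bf n]}_{e_0}e^{[\bf n]}_I=0$, so $A^{[\bf n]}(e_0,\cdot)=0$, makes the bookkeeping in Step 2 transparent, but the argument is the same.
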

\begin{proof}
By \eqref{gamma.n}, we notice that $C_{IJB}^{[\bf n]}$. For the rest of the indices we have 
\begin{align*}
C_{IJ0}^{[\bf n]}=&-\widetilde{k}^{[\bf n]}_{IJ}+\widetilde{k}_{JI}^{[\bf n]}=0,\\
C_{0IJ}^{[\bf n]}=&\,{\bf g}^{[\bf n]}([e_0,e_I^{[\bf n]}],e_J^{[\bf n]})+{\bf g}^{[\bf n]}(\widetilde{D}_{e_I^{[\bf n]}}^{[\bf n]}e_0,e_J^{[\bf n]})
=(\partial_te_{Ia}^{[\bf n]})\omega_{aJ}^{[\bf n]}-k^{[\bf n]}_{IJ}\\
\tag{by \eqref{eIi.it}}=&\,\bigg(k_{\underline I \underline I}^{[\bf n]}e_{\underline Ia}^{[\bf n]}+\sum_{C\neq I}k_{IC}^{[\bf n-1]}e_{Ca}^{[\bf n-1]}\bigg)\omega^{[\bf n]}_{aJ}-k^{[\bf n]}_{IJ}\\
=&\,k_{\underline I \underline I}^{[\bf n]}\delta_{\underline I J}-k_{IJ}^{[\bf n]}+\sum_{C\neq I}k_{IC}^{[\bf n-1]}\delta_{CJ}
+\sum_{C\neq I}k_{IC}^{[\bf n-1]}e_{Ca}^{[\bf n-1]}(\omega^{[\bf n]}_{aJ}-\omega^{[\bf n-1]}_{aJ})
\end{align*}  
Taking cases $I=J$ and $I\neq J$, the first three terms in the last RHS amount to 0 and $k_{IJ}^{[\bf n-1]}-k_{IJ}^{[\bf n]}$ respectively. Hence, the bound \eqref{tilde.approx} for $C_{\alpha\beta \mu}^{[\bf n]}$ follows from point \ref{item.approx1} of Theorem \ref{thm:approx.sol} and Proposition \ref{prop:ind2}. 

Given the definition of curvature $\widetilde{\bf R}_{\alpha\beta\mu\nu}^{[\bf n]}$, analogous to \eqref{Riem.tilde}, we have the schematic relations
\begin{align*}
\widetilde{\bf R}_{\alpha\beta\mu\nu}^{[\bf n]}= {\bf R}_{\alpha\beta\mu\nu}^{[\bf n]}+e^{\bf [n]}(k^{[\bf n]} -\widetilde{k}^{[\bf n]})+\gamma^{\bf [n]}\star(k^{\bf [n]}-\widetilde{k}^{\bf [n]})+Q(k^{\bf [n]})-Q(\widetilde{k}^{\bf [n]})
\end{align*}
where $e^{\bf [n]}=\partial_t,e_I^{\bf [n]}$ and $Q$ a homogeneous quadratic expression. Contracting $\alpha, \nu$, we obtain the bound \eqref{tilde.approx} for ${\bf R}_{\beta\mu}^{\bf [n]}$ by using Theorem \ref{thm:approx.sol} and Lemma \ref{lem:k.approx.2ndfund}.
\end{proof}
We can now proceed to show that $\widetilde{D}=D$ and the vanishing of the Ricci tensor of ${\bf g}$.
\begin{proposition}\label{prop:EVE}
Let ${\bf g},\widetilde{D}$ be the metric and connection constructed from the solution furnished by Theorem \ref{thm:loc.exist}, for some $N_0,n_{N_0,s}$ sufficiently large, as discussed in the beginning of this section. Then $\widetilde{D}=D$ is the Levi-Civita connection of ${\bf g}$ and moreover, ${\bf g}$ is a solution to the Einstein vacuum equations, ie. ${\bf R}_{\beta\mu}=0$ for all indices.   
\end{proposition}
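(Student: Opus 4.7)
\noindent
The plan is to apply weighted energy estimates in the spirit of Proposition \ref{prop:en.est} to the first order symmetric hyperbolic system \eqref{C.R.eq} satisfied by the constraint tuple $\mathcal{Q}:=(C_{IJB},\widetilde{\bf R}_{I0},\widetilde{\bf R}_{00},\widetilde{\bf R}_{IJ}-\widetilde{\bf R}_{JI})$, in order to show that $\mathcal{Q}\equiv 0$ on $\{U_t\}_{t\in(0,T_{N_0,s,n}]}$. Once $\mathcal{Q}\equiv 0$, the algebraic identities \eqref{C.R.id} force every component of the torsion $C$ and of the modified Ricci $\widetilde{\bf R}$ to vanish. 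Combined with the metric compatibility of $\widetilde{D}$ (Lemma \ref{lem:var.d.symm}), $C\equiv 0$ gives $\widetilde{D}=D$; then $\widetilde{\bf R}_{\beta\mu}$ coincides with the genuine Ricci of ${\bf g}$, and the conclusion ${\bf R}_{\beta\mu}=0$ is immediate.

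First I would establish pointwise Sobolev control on $\mathcal{Q}$. Writing $e=e^{[\bf n]}+e^{(d)}$, $k=k^{[\bf n]}+k^{(d)}$, $\gamma=\gamma^{[\bf n]}+\gamma^{(d)}$, each constraint quantity decomposes as its iterate counterpart plus corrections that are polynomial in the remainders, after using \eqref{e0.eIi2}--\eqref{e0.gamma_IJB.mod2} to eliminate time derivatives entering the definition of $\widetilde{\bf R}$. Combining Lemma \ref{lem:tilde.approx}, Theorem \ref{thm:approx.sol}, and the Sobolev embedding \eqref{Sob} applied to the estimate \eqref{loc.exist.est} of Theorem \ref{thm:loc.exist}, one obtains
\begin{align*}
\|C\|_{H^{s-2}(U_t)}\leq C_n t^{-1+n\varepsilon},\qquad \|\widetilde{\bf R}\|_{H^{s-2}(U_t)}\leq C_n t^{-2+n\varepsilon},
\end{align*}
provided $N_0$ is chosen large enough that the remainder-driven corrections are negligible relative to the iterate contribution.

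Next I would run a weighted $H^{s'}$ energy estimate for $\mathcal{Q}$ with weight $t^{-2N_1}$ and some $s'\leq s-3$, mirroring the derivation in Section \ref{subsec:en.est}: symmetrize the system, integrate by parts using Lemma \ref{lem:IBP}, and absorb the flux on $\mathcal{H}$ by choosing $\sigma$ in \eqref{flow.vec} sufficiently large (this is possible since the principal symbol of \eqref{C.R.eq} matches that of \eqref{e0.kIJ.d}--\eqref{e0.gamma_IJB.d}). The $t^{-1}$-singular coefficients $k\star C$ and $k\star\widetilde{\bf R}$ combine with the weight to produce an effective Gronwall coefficient $(N_1-C_\sharp)/t$, where $C_\sharp$ depends only on the $p_I$ and finitely many of their spatial derivatives, crucially independently of $n$. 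The auxiliary factors $e k, e\gamma, k\star k, \gamma\star k$ entering $Q$ are controlled in $L^\infty$ by the bootstrap/Sobolev bounds already established. The result, for $0<\eta\leq t\leq T_{N_0,s,n}$, is
\begin{align*}
t^{-2N_1}\|\mathcal{Q}\|^2_{H^{s'}(U_t)}\leq \eta^{-2N_1}\|\mathcal{Q}\|^2_{H^{s'}(U_\eta)}+\int^t_\eta C\tau^{-1+\varepsilon}\tau^{-2N_1}\|\mathcal{Q}\|^2_{H^{s'}(U_\tau)}\,d\tau.
\end{align*}
Finally, sending $\eta\to 0$ and using the pointwise bounds of the first step,
\begin{align*}
\eta^{-2N_1}\|\mathcal{Q}\|^2_{H^{s'}(U_\eta)}\leq C_n\eta^{-2N_1-4+2n\varepsilon}.
\end{align*}
For any $N_1>C_\sharp$, a choice of $n$ with $n\varepsilon>N_1+2$ makes this initial contribution vanish in the limit, and Gronwall's inequality forces $\mathcal{Q}\equiv 0$.

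The main obstacle I expect is verifying that $C_\sharp$ in the Gronwall coefficient is genuinely independent of $n$ (depending only on the Kasner exponents and a bounded number of their derivatives), since only this allows one to first fix $N_1>C_\sharp$ and then take $n$ arbitrarily large. A secondary, bookkeeping issue is to increase $N_0$ and $n_{N_0,s}$ from Theorem \ref{thm:loc.exist} at the outset, so that the $H^{s'}$ energy estimate closes at the required derivative count and so that the largeness requirement $n\varepsilon>N_1+2$ is compatible with the parameters fixed there.
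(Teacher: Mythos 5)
Your overall strategy matches the paper's: run a weighted energy estimate on the first order symmetric hyperbolic system \eqref{C.R.eq} for $\mathcal{Q}=(C_{IJB},\widetilde{\bf R}_{I0},\widetilde{\bf R}_{00},\widetilde{\bf R}_{IJ}-\widetilde{\bf R}_{JI})$, use Lemma~\ref{lem:tilde.approx} and the existence estimate \eqref{loc.exist.est} to conclude the weighted energy tends to zero at the singularity, then Gr\"onwall gives $\mathcal{Q}\equiv0$, and the identities \eqref{C.R.id} upgrade this to full vanishing of $C$ and $\widetilde{\bf R}$. The independence of the Gr\"onwall constant from $n$ is also correctly identified. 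The paper works with $L^2(U_t)$ rather than your $H^{s'}$, which is sufficient since the coefficients are controlled pointwise via the Sobolev bound \eqref{Sob}; the higher-norm formulation is harmless overkill.

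However, there is a concrete gap: the uniform weight $t^{-2N_1}$ across all components of $\mathcal{Q}$ will not close the energy estimate. The source terms $C\star Q$ in the equations for $\widetilde{\bf R}_{I0},\widetilde{\bf R}_{00},\widetilde{\bf R}_{IJ}-\widetilde{\bf R}_{JI}$ contain factors (e.g.\ $k\star k$, $\partial_t k$, $e\gamma$) whose leading part scales like $t^{-2}$, so the coupling between $\widetilde{\bf R}$ and $C$ has a genuinely $t^{-2}$-singular coefficient, not $t^{-1}$ as you assert. Young-splitting $t^{-2N_1}\,\widetilde{\bf R}\cdot C\cdot t^{-2}$ with a uniform weight produces $\frac{1}{t}t^{-2N_1}\widetilde{\bf R}^2 + \frac{1}{t^3}t^{-2N_1}C^2$, and the $t^{-3}$ prefactor on the $C$-term cannot be absorbed by the favorable $(N_1-C_*)/t$ term. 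The paper's energy \eqref{C.R.en} weights $C_{IJB}$ by $t^{-2N_1-2}$ while weighting the $\widetilde{\bf R}$-components by $t^{-2N_1}$; with this hierarchy both cross terms $t^{-2N_1-2}C\widetilde{\bf R}$ (from the $C$-equation) and $t^{-2N_1-2}\widetilde{\bf R}C$ (from the $\widetilde{\bf R}$-equation) Young-split into $\frac{1}{t}t^{-2N_1-2}C^2+\frac{1}{t}t^{-2N_1}\widetilde{\bf R}^2$, which the good term dominates once $N_1$ is large. This anisotropic weight is essential, and without it the Gr\"onwall argument does not go through.
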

\begin{proof}
Define the energy 
\begin{align}\label{C.R.en}
\begin{split}
E(t):=&\,t^{-2N_1-2}\sum_{I,J,B}\|C_{IJB}\|_{L^2(U_t)}^2+t^{-2N_1}\sum_I\|\widetilde{\bf R}_{I0}\|^2_{L^2(U_t)}\\
&+t^{-2N_1}\|\widetilde{\bf R}_{00}\|^2_{L^2(U_t)}+\frac{1}{2}t^{-2N_1}\sum_{IJ}\|\widetilde{\bf R}_{IJ}-\widetilde{\bf R}_{JI}\|^2_{L^2(U_t)}
\end{split}
\end{align}
By virtue of the bounds \eqref{tilde.approx} and the energy estimate \eqref{loc.exist.est}, we notice for any $N_1\in\mathbb{N}$, there exists $N_0,n$ sufficiently large, such that $E(t)\rightarrow0$, as $t\rightarrow0$. 

For this choice of parameters $N_0,n$, a similar energy argument to the one in the proof of Proposition \ref{prop:en.est} gives the energy inequality
\begin{align}\label{C.R.en.ineq}
\frac{1}{2} E(t)\leq \int^t_0(\frac{N_1-C_*}{\tau}+C\tau^{-1+\varepsilon})E(\tau)d\tau,
\end{align} 
for all $t\in(0,T_{N_0,s,n}]$. The terms that contribute to the constant $C_*$ are $k\star C$, $\delta_{IB}\widetilde{\bf R}_{J0}$, $\delta_{JB}\widetilde{\bf R}_{I0}$ in the RHS of the equation for $C_{IJB}$ in \eqref{C.R.eq}, $k\star\widetilde{\bf R}$ and $C\star (k\star k)$, $C\star \partial_t k$ in the RHSs of the equations for $\widetilde{\bf R}_{I0},\widetilde{\bf R}_{00},\widetilde{\bf R}_{IJ}-\widetilde{\bf R}_{JI}$. More precisely, only the leading order part of the factors $k=k^{[\bf 0]}+O(t^{-1+\varepsilon})$, $\partial_tk=\partial_tk^{[\bf 0]}+O(t^{-2+\varepsilon})$ gives rise to terms that contribute to $C_*$. Hence, $C_*$ depends only on the $p_I$'s and is independent of $n$. Taking $N_1\ge C_*$ to begin with (possibly increasing $N_0,n$), we conclude that 
\begin{align}\label{C.R.en.ineq}
E(t)\leq \int^t_0C\tau^{-1+\varepsilon}E(\tau)d\tau,\qquad t\in(0,T_{N_0,s,n}].
\end{align} 
Gr\"onwall's inequality implies that $E(t)\equiv0$.

Thus, $C_{IJB},\widetilde{\bf R}_{I0},\widetilde{\bf R}_{00},\widetilde{\bf R}_{IJ}-\widetilde{\bf R}_{JI}$ vanish everywhere. By \eqref{C.R.id}, we have that $C_{\alpha\beta\mu},\widetilde{\bf R}_{\beta\mu}=0$ everywhere for all indices. Hence, $\widetilde{D}=D$ and $\widetilde{\bf R}_{\beta\mu}={\bf R}_{\beta\mu}=0$.
\end{proof}
\section{Uniqueness and smoothness of solutions: Proof of Theorems \ref{thm:smooth}, \ref{thm:uniq}}\label{sec:smooth.uniq}

First, we prove the uniqueness statement $(i)$ in Theorem \ref{thm:uniq}. This will in fact be used to prove Theorem \ref{thm:smooth}. Then we prove that point $(ii)$ in Theorem \ref{thm:uniq} implies point $(i)$. 

\subsection{Uniqueness}\label{subsec:uniq}

{\it Point $(i)$ in Theorem \ref{thm:uniq}.} Let ${\bf g},\widetilde{\bf g}$ be two solutions to \eqref{EVE} of the form \eqref{metric}, satisfying \eqref{uniq.cond.a}-\eqref{uniq.cond.b}. We first note that they have the same asymptotic data $p_i,c_{ij}$, hence, they admit the same approximate metric ${\bf g}^{[\bf n]}$. Define 
\begin{align}\label{hat.var}
\notag\widehat{e}_{Ia}=&\,e_{Ia}-\widetilde{e}_{Ia}=(e_{Ia}-e^{[\bf n]}_{Ia})-(\widetilde{e}_{Ia}-e^{[\bf n]}_{Ia})=e^{(d)}_{Ia}-\widetilde{e}^{(d)}_{Ia}\\
\widehat{k}_{IJ}=&\,k_{IJ}-\widetilde{k}_{IJ}=(k_{IJ}-k^{[\bf n]}_{IJ})-(\widetilde{k}_{IJ}-k^{[\bf n]}_{IJ})=k^{(d)}_{IJ}-\widetilde{k}^{(d)}_{IJ}\\
\notag\widehat{\gamma}_{IJB}=&\,\gamma_{IJB}-\widetilde{\gamma}_{IJB}=(\gamma_{IJB}-\gamma^{[\bf n]}_{IJB})-(\widetilde{\gamma}_{IJB}-\gamma^{[\bf n]}_{IJB})
=\gamma_{IJB}^{(d)}-\widetilde{\gamma}_{IJB}^{(d)}
\end{align}
The variables $\widehat{e}_{Ia},\widehat{k}_{IJ},\widehat{\gamma}_{IJB}$ satisfy a system similar to \eqref{e0.eI.d}-\eqref{e0.gamma_IJB.d}, only without the terms $(\mathcal{I}^{[\bf n]}_e)_{Ia},(\mathcal{I}^{[\bf n]}_k)_{IJ},(\mathcal{I}_\gamma^{[\bf n]})_{IJB}$ in the corresponding RHSs. By taking $M_0>N_0$ in \eqref{uniq.cond.b}, we have that
\begin{align}\label{lim.hat.L2}
\lim_{t\rightarrow0}t^{-2N_0}\big\{\|\widehat{e}\|^2_{L^2(U_t)}+\|\widehat{k}\|_{L^2(U_t)}^2+\|\widehat{\gamma}\|_{L^2(U_t)}^2\big\}=0,
\end{align}
where $N_0\ge C_*$ depends only on the $L^\infty(U_0)$ norms of the $p_i$'s.
Next, we observe that thanks to \eqref{uniq.cond.a}, we may derive an $L^2(U_t)$ estimate  for the variables $\widehat{e}_{Ia},\widehat{k}_{IJ},\widehat{\gamma}_{IJB}$, similar to \eqref{main.en.ineq} for $s=0$, in all of $(0,T]$, since we control the pointwise leading order behavior of the original two sets variables and their first derivatives (for $\gamma,\widetilde{\gamma}$, we use the formula \eqref{gammaIJB} and the behavior of $\omega,\widetilde{\omega}$, inferred by the assumed bounds on the frame coefficients, cf. Lemma \ref{lem:pre2}). Thus, for $N_0\ge C_*$, we have 
\begin{align*}
&t^{-2N_0}\big\{\|\widehat{e}^{(d)}\|^2_{L^2(U_t)}+\|\widehat{k}^{(d)}\|^2_{L^2(U_t)}+\frac{1}{2}\|\widehat{\gamma}^{(d)}\|^2_{L^2(U_t)}\big\}\\
\leq&\int^t_0 C\tau^{-1+\varepsilon}\tau^{-2N_0}\big\{\|\widehat e^{(d)}\|^2_{L^2(U_\tau)}+\|\widehat k^{(d)}\|^2_{L^2(U_\tau)}+\|\widehat\gamma^{(d)}\|^2_{L^2(U_\tau)}\big\}d\tau,
\end{align*}
where we note that there is no term in the previous RHS analogous to $Ct^{2M-2N_0+2-\varepsilon}$ in \eqref{main.en.ineq}, since there are no inhomogeneous terms depending purely on the iterates. Gronwall's inequality yields that the differences $\widehat{e}_{Ia},\widehat{k}_{IJ},\widehat{\gamma}_{IJB}$ vanish everywhere. Thus, the two solutions ${\bf g},\widetilde{\bf g}$ coincide.

{\it Point $(ii)$ in Theorem \ref{thm:uniq}.} Condition \eqref{uniq.cond2} clearly contains \eqref{uniq.cond.a}. We will show that \eqref{uniq.cond.b} in point $(i)$ also holds, hence, implying that the two solutions are equal. Let $e^{(d)}_{Ia},k_{IJ}^{(d)},\gamma_{IJB}^{(d)}$ be as in \eqref{e.k.gamma.d}. By point \ref{item.approx1} in Theorem \ref{thm:approx.sol} and the triangle inequality we have that 
\begin{align}\label{e.k.d.CM.est}
|\partial_x^\alpha e_{Ia}^{(d)}|\leq C_{\alpha,n} t^{p_I-2\min\{p_I,p_a\}+\varepsilon},\qquad
|\partial_x^\alpha k_{IJ}^{(d)}|\leq C_{\alpha,n} t^{-1+|p_I-p_J|+\varepsilon},
\end{align}
for every $(t,x)\in\{U_t\}_{(0,T_{N_0,s,n_0}]}$ and $|\alpha|\leq M_1$. Similarly to Lemma \ref{lem:pre2}, from the bound \eqref{e.k.d.CM.est} on the frame components $e_{Ia}^{(d)}$, we deduce the corresponding bound on the co-frame components $\omega^{(d)}_{bC}=\omega_{bC}-\omega_{bC}^{[\bf n]}$: 
\begin{align}\label{omega.d.CM.est}
|\partial_x^\alpha \omega_{bC}^{(d)}|\leq C_{\alpha,n} t^{-p_C+2\max\{p_b,p_C\}+\varepsilon}
\end{align}
for every $(t,x)\in\{U_t\}_{(0,T_{N_0,s,n}]}$ and $|\alpha|\leq M_1$. 

Multiplying \eqref{e0.eI.d} with $t^{p_{\underline I}}$, integrating in $(0,T_{N_0,s,n}]$, differentiating in $\partial_x^\alpha$, and using the bounds \eqref{e.k.d.CM.est}, \eqref{cal.I.est} for $(\mathcal{I}_e^{[\bf n]})_{Ia}$, we obtain the estimate:
\begin{align}\label{e.d.CM.est.imp}
|\partial_x^\alpha e_{Ia}^{(d)}|\leq C_{\alpha,n} (t^{p_I-2\min\{p_I,p_a\}+2\varepsilon}+t^{M}),
\end{align}
for all $|\alpha|\leq M_1$,
which is $t^\varepsilon$ better than \eqref{e.k.d.CM.est}. Recall that in Lemma \ref{lem:cal.I}, $M=M(n)\rightarrow+\infty$, as $n\rightarrow+\infty$, so we can take $n$ sufficiently large to begin with, such that $M\ge M_0$. The latter also implies the corresponding improved bound for the co-frame components:
\begin{align}\label{omega.d.CM.est.imp}
|\partial_x^\alpha \omega_{bC}^{(d)}|\leq C_{\alpha,n} (t^{-p_C+2\max\{p_b,p_C\}+2\varepsilon}+t^{M}),
\end{align}

Next, we recall equations \eqref{e0.k}, \eqref{kIJ.it} to compute
\begin{align}\label{k.d.eq}
\notag\partial_tk_{IJ}^{(d)}+\frac{1}{t}k^{(d)}_{IJ}+\delta_{\underline I J}\frac{p_{\underline I}}{t}k_{CC}^{(d)}=&\,R_{IJ}-R_{IJ}^{[\bf n]}\\
&+k_{CC}^{(d)}(k^{[\bf n]}_{IJ}-k^{[\bf 0]}_{IJ})
+(k_{CC}^{[\bf n]}-k_{CC}^{[\bf 0]})k^{(d)}_{IJ}+k_{CC}^{(d)}k^{(d)}_{IJ}\\
\notag&+R_{IJ}^{[\bf n]}-R_{IJ}^{[\bf n-1]}+(k^{[\bf n]}_{CC}-k_{CC}^{[\bf n-1]})k^{[\bf n]}_{IJ}
\end{align}
By Proposition \ref{prop:ind2}, we have that 
\begin{align}\label{k.d.eq.inh}
\big|\partial_x^\alpha\big[R_{IJ}^{[\bf n]}-R_{IJ}^{[\bf n-1]}+(k^{[\bf n]}_{CC}-k_{CC}^{[\bf n-1]})k^{[\bf n]}_{IJ}\big]\big|\leq C_{\alpha,n} t^{-1+|p_I-p_J|+n\varepsilon}
\end{align}
Moreover, as in Lemma \ref{lem:approx1}, expressing $R_{IJ}-R_{IJ}^{[\bf n]}$ in terms of $e_{Ia}^{(d)},\omega_{bC}^{(d)}$ and using the bounds \eqref{e.d.CM.est.imp}, \eqref{omega.d.CM.est.imp}, we deduce the estimate
\begin{align}\label{Ric.d.CM.est}
|\partial_x^\alpha (R_{IJ}-R_{IJ}^{[\bf n]})|\leq C_{\alpha,n}(t^{-2+|p_I-p_J|+2\varepsilon}+t^M)
\end{align}
Also, from \eqref{e.k.d.CM.est} and point \ref{item.approx1} in Theorem \ref{thm:approx.sol}, it holds
\begin{align}\label{k.d.eq.hom}
\big|\partial_x^\alpha\big[k_{CC}^{(d)}(k^{[\bf n]}_{IJ}-k^{[\bf 0]}_{IJ})
+(k_{CC}^{[\bf n]}-k_{CC}^{[\bf 0]})k^{(d)}_{IJ}+k_{CC}^{(d)}k^{(d)}_{IJ}\big]\big|\leq C_{\alpha,n}t^{-1+|p_I-p_J|+2\varepsilon}.
\end{align}
Combining \eqref{k.d.eq.inh}-\eqref{k.d.eq.hom}, we improve the bound \eqref{e.k.d.CM.est} for $k_{IJ}^{(d)}$ as follows: First, trace \eqref{k.d.eq} in $(I;J)$, multiply with $t^2$, differentiate in $\partial_x^\alpha$, and integrate in $(0,T_{N_0,s,n_0}]$ to obtain the bound
\begin{align}\label{trk.d.CM.est.imp}
|\partial_x^\alpha k_{CC}^{(d)}|\leq C_{\alpha,n}t^{-1+2\varepsilon}.
\end{align}
Then, we take the term $\delta_{\underline I J}p_{\underline I}t^{-1}k_{CC}^{(d)}$ to the RHS in \eqref{k.d.eq}, multiply with $t$, integrate in $(0,T_{N_0,s,n_0}]$, differentiate in $\partial_x^\alpha$, and apply the bounds \eqref{k.d.eq.inh}-\eqref{trk.d.CM.est.imp} to conclude that
\begin{align}\label{k.d.CM.est.imp}
|\partial_x^\alpha k_{IJ}^{(d)}|\leq C_{\alpha,n}t^{-1+|p_I-p_J|+2\varepsilon},
\end{align}
for all $(t,x)\in\{U_t\}_{(0,T_{N_0,s,n}]}$ and $|\alpha|\leq M_1-2$.
We may continue iteratively improving the bounds on $e^{(d)}_{Ia},\omega^{(d)}_{bC},k_{IJ}^{(d)}$ by $t^\varepsilon$ each time, as long as $n\varepsilon\leq M-1$, hence, deriving the bounds
\begin{align}\label{e.k.d.CM.est.imp}
|\partial_x^\alpha e_{Ia}^{(d)}|\leq C_{\alpha,n} t^{M-2},\qquad
|\partial_x^\alpha \omega_{bC}^{(d)}|\leq C_{\alpha,n} t^{M-2},\qquad
|\partial_x^\alpha k_{IJ}^{(d)}|\leq C_{\alpha,n} t^{M-2}
\end{align}
Of course, for each $t^\varepsilon$ improvement we sacrifice two spatial derivatives, which is possible provided $M_1\sim M/\varepsilon$. This in turn implies that 
\begin{align}\label{gamma.d.CM.est}
|\partial_x^\alpha\gamma_{IJB}^{(d)}|\leq C_{\alpha,n}t^{M-4},
\end{align}
Thus, taking into account the volume form and shrinking $T_{N_0,s,n_0}$ if necessary, the following energy estimate is valid:
\begin{align}\label{e.k.gamma.d.H4.est}
\|e^{(d)}\|^2_{H^4(U_t)}+\|k^{(d)}\|^2_{H^4(U_t)}+\|\gamma^{(d)}\|^2_{H^4(U_t)}\leq t^{2M-7}.
\end{align}
Obviously, given condition \eqref{uniq.cond2}, the above argument applies also to the reduced variables $\widetilde{e}_{Ia},\widetilde{k}_{IJ},\widetilde{\gamma}_{IJB}$ of the solution $\widetilde{\bf g}$. Hence, the corresponding differences $\widetilde{e}^{(d)}_{Ia},\widetilde{k}^{(d)}_{IJ},\widetilde{\gamma}^{(d)}_{IJB}$ satisfy the analogous energy estimate for the same $n$: 
\begin{align}\label{tilde.e.k.gamma.d.H4.est}
\|\widetilde{e}^{(d)}\|^2_{H^4(U_t)}+\|\widetilde{k}^{(d)}\|^2_{H^4(U_t)}+\|\widetilde{\gamma}^{(d)}\|^2_{H^4(U_t)}\leq t^{2M-7}.
\end{align}
Condition \eqref{uniq.cond.b} now follows from \eqref{e.k.gamma.d.H4.est}, \eqref{tilde.e.k.gamma.d.H4.est} and the triangle inequality, by taking $n$ sufficiently large such that $2M-7\ge 2M_0$. Hence, point $(i)$ can be employed to conclude that the two solutions ${\bf g},\bf\widetilde{g}$ are equal.

\subsection{Smoothness}\label{subsec:smooth}

Let ${\bf g}$ be the solution furnished by Theorem \ref{thm:loc.exist}, for some $s_0\ge4$, defined in the domain $\{U_t\}_{t\in(0,T_{N_0,s_0,n_0}]}$, which is in turn defined relative to ${\bf g}^{[\bf n_0]}$. The corresponding variables $e_{Ia}^{(d)},k^{(d)}_{IJ},\gamma^{(d)}_{IJB}$ satisfy the estimate \eqref{loc.exist.est}, for all $t\in(0,T_{N_0,s_0,n_0}]$. 

Consider an $s_1>s_0$ and the corresponding solution $\widetilde{\bf g}$ furnished by Theorem \ref{thm:loc.exist}. Let $N_1,n_1,T_{N_1,s_1,n_1}$ be the parameters and existence time associated with $\widetilde{\bf g}$. It is evident from Proposition \ref{prop:en.est} that the constant $C_*$ in \eqref{main.en.ineq}, for an $H^{s_1}(U_t)$ energy estimate, will be larger from the corresponding constant required for $H^{s_0}(U_t)$, since $C_*$ depends on the $p_i$'s and their $s_1$ derivatives. Therefore, $n_1,N_1$ will possibly be larger than $n_0,N_0$. Nevertheless, we observe that defining the domain $\{U_t\}_{t\in(0,T_{N_1,s_1,n_1}]}$ in Section \ref{subsec:domain} relative to ${\bf g}^{[\bf n_0]}$ does not affect the existence proof. Hence, we may assume that the slicing for the two solutions ${\bf g},\widetilde{\bf g}$ is the same, albeit the times of existence differ, $T_{N_1,s_1,n_1}<T_{N_0,s_0,n_0}$.

Next, we observe that the assumptions \eqref{uniq.cond.a}-\eqref{uniq.cond.b} in point $(i)$ of Theorem \ref{thm:uniq} are satisfied by both sets of reduced variables $e_{Ia},k_{IJ},\gamma_{IJB}$ and $\widetilde{e}_{Ia},\widetilde{k}_{IJ},\widetilde{\gamma}_{IJB}$ corresponding to ${\bf g},\widetilde{\bf g}$ respectively. Indeed, this is immediate provided $N_0,N_1>M_0$. Since $M_0$ depends only on the $L^\infty(U_0)$ norms of the $p_i$'s and not their derivatives, we conclude that the two solutions must coincide in their common domain of the definition. Thus, ${\bf g}$ is $H^{s_1}(U_t)$ regular, for every $t\in(0,T_{N_1,s_1,n_1)}]$. Also, it is $H^{s_0}(U_t)$ regular in the time interval $[T_{N_1,s_1,n_1},T_{N_0,s_0,n_0}]$, satisfying the uniform bound \eqref{exist.d.est} for $s=s_0$. By standard continuation criteria for first order symmetric hyperbolic systems, the variables $e_{Ia},k_{IJ},\gamma_{IJB}$ are in fact 
$H^{s_1}(U_t)$ regular, for every $t\in[T_{N_1,s_1,n_1},T_{N_0,s_0,n_0}]$. Since $s_1>s_0$ is arbitrary, we conclude that $e_{Ia},k_{IJ},\gamma_{IJB}$, and hence ${\bf g}$, are smooth in $x$. Using the equations \eqref{e0.e}, \eqref{e0.k}, \eqref{gammaIJB.eq}, iteratively differentiating in $t$, we infer that $e_{Ia},k_{IJ},\gamma_{IJB}$ and ${\bf g}$ are also smooth in $(t,x)\in \{U_t\}_{t\in(0,T_{N_0,s_0,n_0}]}$. This completes the proof of Theorem \ref{thm:smooth}.

\end{document}